\newcommand{\smskip}{\par\vspace{5pt}}
\providecommand{\1}{\textbf{1}}
\newcommand{\ao}[1]{{#1}}
\newcommand{\aoc}[1]{{\color{red} #1}}
\newcommand{\JMH}[1]{{#1}}
\def\R{\mathbb{R}}
\newtheorem{theorem}{Theorem}[chapter]
\newtheorem{proposition}{Proposition}[chapter]
\newtheorem{lemma}{Lemma}[chapter]
\def\jnt#1{{{#1}}}
\long\def\red#1{{{\color{red}#1}}}
\def\ao#1{{{#1}}}
\def\alexo#1{{{#1}}}
\def\an#1{{{#1}}}
\def\E{\mathcal{E}}
\newtheorem{assumption}{Assumption}[chapter]
\def\0{{\bf 0}}
\def\sin{\sum_{i=1}^n}
\def\sjn{\sum_{j=1}^n}
\def\E{\mathcal{E}}
\def\Q{\mathcal{Q}}
\def\A{\mathcal{A}}
\def\Ps{P_{\sigma}}
\def\e{{\bf 1}}
\def\ei{{\e^{(i)}}}
\def\ej{{\e^{(j)}}}
\def\ek{{\e^{(k)}}}
\def\B{{\mathcal B}}
\def\Pm{{\mathcal P}}
\newtheorem{definition}[theorem]{Definition}
\def\R{\mathbb{R}}
\newtheorem{corollary}[theorem]{\indent Corollary}
\def\aoe{}
\def\ao#1{{#1}}
\def\rt#1{#1}  
\def\mod#1{#1} 
\long\def\jnt#1{{#1}}
\def\jblue#1{{#1}}
\def\jred#1{{#1}}
\def\jmj#1{{#1}}
\definecolor{orange}{rgb}{1,0.5,0}
\long\def\ora#1{{#1}}
\def\aoc#1{{#1}}
\def\jh#1{{#1}}
\long\def\old#1{}
\def\red#1{{#1}}
\def\blue#1{{#1}}
\def\green#1{{#1}}
\def\blue#1{{#1}}
\def\E{\mathcal{E}}
\def\oM{\overline{M}}
\def\oT{\overline{T}}
\def\Rin{\mbox{Rin}}
\def\Rout{\mbox{Rout}}
\def\bc{\begin{center}}
\def\ec{\end{center}}
\begin{document}
\setcounter{tocdepth}{1}

%
%
%
%
%
%
%
%

\title{Efficient Information Aggregation Strategies for Distributed
Control and Signal Processing}

\author{Alexander Olshevsky}
       \prevdegrees{B.S., Mathematics, Georgia Institute of Technology (2004) \\
                    B.S., Electrical Engineering, Georgia Institute of Technology (2004) \\
                    M.S., Electrical Engineering and Computer Science, Massachusetts Institute of Technology (2006)}
\department{Department of Electrical Engineering and Computer Science}

 \degree{Doctor of Philosophy}

\degreemonth{September}
\degreeyear{2010}
\thesisdate{August 30, 2010}


\supervisor{John N. Tsitsiklis}{Clarence J Lebel Professor of Electrical Engineering}

\chairman{Terry P. Orlando}{Professor of Electrical Engineering and Computer
Science, \\Chair, Committee for Graduate Students}

\maketitle



\cleardoublepage
\setcounter{savepage}{\thepage}
\begin{abstractpage}
%
%
%

This thesis will be concerned with distributed control and coordination of networks
consisting of multiple, potentially mobile, agents. This is motivated mainly by the
emergence of large scale networks characterized by the lack of centralized access to information
and time-varying connectivity. Control and optimization algorithms deployed
in such networks should be completely distributed, relying only on local observations
and information, and robust against unexpected changes in topology such as link failures.

We will describe protocols to solve certain control and signal processing problems in this 
setting. We will demonstrate that a key challenge for such systems is the problem of computing averages in a decentralized way. 
Namely, we will show that a number of distributed control and signal processing problems can be solved straightforwardly
if solutions to the averaging problem are available. 

The rest of the thesis will be concerned with algorithms for the averaging problem and its generalizations. We will
(i) derive the fastest known averaging algorithms in a variety of settings and subject to a variety of 
communication and storage constraints (ii) prove a lower bound identifying a fundamental barrier
for averaging algorithms (iii) propose a new model for distributed function computation which reflects 
the constraints facing many large-scale networks, and nearly characterize the general class of functions which can be computed in this model.
\end{abstractpage}


\cleardoublepage

\section*{Acknowledgments}

I am deeply indebted to my advisor, John Tsitsiklis, for his invaluable guidance and tireless 
efforts in supervising this thesis. I have learned a great deal from our research collaborations. 
I have greatly benefitted from his constantly insightful suggestions and his subtle understanding when a technique
is powerful enough to solve a problem and when it is bound to fail.

I would like to thank my thesis committee members Emilio Frazzoli and  Asuman Ozdaglar for
their careful work in reading this thesis, which was considerably improved as a result of their
comments. 

I would like to thank the many people with whom I've had conversations which have
helped me in the course of my research: Amir Ali Ahmadi, Pierre-Alexandre Bliman, Vincent Blondel, Leonid Gurvits, Julien Hendrickx, Ali Jadbabaie, Angelia Nedic, and Pablo Parrilo. 

I would like to thank my family for the help they have given me
over the years. Most importantly, I want to thank my wife Angela for her constant 
encouragement and support.

This research was sponsored by the National Science Foundation under an NSF Graduate Research 
Fellowship and grant ECCS-0701623.


\pagestyle{plain}
\begin{singlespace}
\tableofcontents
\newpage

\chapter{Introduction}

This thesis is about certain control and signal processing problems over networks with unreliable
communication links. Some motivating scenarios are: 
\begin{enumerate}
\renewcommand{\theenumi}{\alph{enumi}.}
\renewcommand{\labelenumi}{\theenumi}
\item Distributed estimation: a collection of sensors are trying to estimate an unknown parameter from 
observations at each sensor.
\item Distributed state estimation: a collection of sensors are trying to estimate the (constantly evolving) state
of a linear dynamical system from observations of its output at each sensor. 
\item Coverage control: A group of robots wish to position themselves so as to optimally monitor an
 environment of interest.
\item Formation control: Several UAVs or vehicles are attempting to maintain a formation against random disturbances
to their positions.
\item Distributed task assignment: allocate a collection of tasks among agents with individual preferences in a distributed way.
\item Clock synchronization. A collection of clocks are constantly drifting apart, and would like to maintain
a common time as much as this is possible. Various pairs of clocks can measure noisy versions of the time offsets between them.
\end{enumerate} We will use ``nodes" as a common word for sensors, vehicles, UAVs, and so on. A key assumption we will be making
is that the communication links by means of which the nodes exchange messages are unreliable. A variety of simple and standard techniques can be used for any of the above problems if the communication links never fail. By contrast, we will be interested in the case when the links may ``die" and ``come online" unpredictably. We are interested in algorithms for the above problems which
work even in the face of this uncertainty.

It turns out that a key problem for systems of this type is the problem of computing averages, described next. Nodes $1,\ldots,n$ each begin with a real number $x_i$. We are given a discrete sequence of times $t=1,2,3,\ldots,$ and at each time step a communication graph $G(t)=(\{1,\ldots,n\}, E(t))$ is exogenously provided by ``nature" determining which nodes can
communicate: node $i$ can send messages to node $j$ at time $t$ if and only if $(i,j) \in E(t)$. For simplicity, no
 restrictions are placed on the messages nodes can send to each other, and in particular, the nodes
 may broadcast their initial values to each other. The nodes need to compute $(1/n) \sum_{i=1}^n x_i$, subject to as few assumptions as possible about the communication sequence $G(t)$. We will call this the {\em averaging problem,} and we will call any algorithm for it an {\em averaging algorithm}.

The averaging problem is key in the sense that a variety of results are available describing how to use averaging algorithms to solve many other distributed problems with unreliable communication links. In particular, for each
of the above problems (distributed estimation, distributed state estimation, coverage control, formation control,
clock synchronization), averaging algorithms are a cornerstone of the best currently known solutions.

The remainder of this chapter will begin by giving a historical survey of averaging algorithms, followed by a list 
of the applications of averaging, including the problems on the previous page. 

\section{A history of averaging algorithms} The first paper to introduce distributed averaging
algorithms was by DeGroot \cite{DG74}. DeGroot considered a simple model of ``attaining agreement:" $n$ individuals
 are on a team or committee and would like to come to a consensus about the probability distribution of a certain parameter $\theta$. Each individual $i$ begins with a probability distribution $F_i$ which they believe is the
 correct distribution of $\theta$. For simplicity, we assume that $\theta$ takes on a value in some finite set $\Omega$, so that each $F_i$ can be described by $|\Omega|$ numbers.

 The individuals now update their probability distributions as a result of interacting with each other. Letting $F_i(t)$ be the distribution believed by $i$ at time $t$, the agents update as $F_i(t+1) = \sum_{j} a_{ij} F_j(t)$, with
  the initialization $F_i(0)=F_i$. Here, $a_{ij}$ are weights chosen by the individuals. Intuitively, people may give high weights to a subset of the people they interact with, for example if a certain person is believed to be an expert in the subject. On the other hand, some weights may be zero, which corresponds to the possibility that some individual ignore each other's opinions. It is assumed, however, that all the weights $a_{ij}$ are nonnegative, and $a_{i1},\ldots,a_{in}$ add up to $1$ for every $i$. Note that the coefficients $a_{ij}$ are independent of time, corresponding to a ``static" communication pattern: individuals do not change how much they trust the opinions of others.

  DeGroot gave a condition for these dynamics to converge, as well as a formula for the limiting opinion; subject to some natural symmetry conditions, the limiting opinion distribution will equal the average of the initial opinion
  distributions $F_i$. DeGroot's work
  was later extended by Chatterjee and Seneta \cite{CS77} to the case where the weights $a_{ij}$ vary with time. The
  paper \cite{CS77} gave some conditions on the time-varying sequence $a_{ij}(t)$ required for convergence to agreement on a single distribution among the individuals.

  The same problem of finding conditions on $a_{ij}(t)$ necessary for agreement was addressed in the works \cite{T84,TBA86, BT89}, which were motivated by problems in parallel computation. Here, the problem was phrased
  slightly differently: $n$ processors each begin with a number $x_i$ stored in memory, and the processors need to
  agree on a single number within the convex hull $[ \min_i x_i, \max_i x_i]$. This is accomplished by iterating
  as $x_i(t+1)=\sum_{j} a_{ij} x_j(t)$. This problem was a subroutine of several parallel optimization algorithms \cite{T84}. It is easy to see that it is equivalent to the formulation in terms of probability distributions addressed by DeGroot \cite{DG74} and Chatterjee and Seneta \cite{CS77}.

  The works \cite{T84, TBA86, BT89} gave some conditions necessary for the estimates $x_i(t)$ to converge to a common
  value. These were in the same spirit as \cite{CS77}, but were more combinatorial than \cite{CS77} being expressed
  directly in terms of the coefficients $a_{ij}(t)$. These conditions boiled down to a series of requirements suggesting
  that the agents have repeated and nonvanishing influence over each other. For example, the coefficients $a_{ij}(t)$
  should not be allowed to decay to zero, and the graph sequence $G(t)$ containing the edges $(i,j)$ for which $a_{ji}(t)>0$ needs to be ``repeatedly connected.'' 

  Several years later, a similar problem was studied by Cybenko \cite{C89} motivated by load balancing problems. In
  this context, $n$ processors each begin with a certain number of jobs $x_i$. The variable $x_i$ can only be an
  integer, but assuming a large number of jobs in the system, this assumption may be dispensed with. The processors would like to equalize the load. To that end, they pass around jobs: processors with many jobs try to offload
  their jobs on their neighbors, and processors with few job ask for more requests from their neighbors. The
  number of jobs of processor $i$ behave approximately as $x_i(t+1) = x_i(t) + \sum_{j} \alpha_{ij} (x_j(t) - x_i(t))$, which subject to some conditions on the coefficients $\alpha_{ij}$, may be viewed as a special case of the iterations
  considered in \cite{CS77, T84, TBA86, BT89}.

  Cybenko showed that when the neighborhood structure is a hypercube (i.e., we associate with each processor a string
  of $\log n$ bits, and $\alpha_{ij}(t) \neq 0$ whenever processors $i$ and $j$ differ by at most $1$ bit), the
  above processes may converge quite fast: for some natural simple processes, an appropriately defined convergence time is on the order of $\log n$ iterations.

  Several years later, a variation on the above algorithms was studied by Vicsek et al. \cite{V95}. Vicsek et al. simulated the following scenario: $n$ particles were placed randomly on a torus with random initial direction
  and constant velocity. Periodically, each particle would try to align its angle with the angles of all the
  particles within a certain radius. Vicsek et al. reported that the end result was that the particles aligned
  on a single direction.

  The paper \cite{jad} provided a theoretical justification of the results in \cite{V95} by proving the
  convergence of a linearized version of the update model of \cite{V95}. The results of \cite{jad} are 
  very similar to the results in \cite{T84, TBA86}, modulo a number of minor modifications. The main difference
  appers to be that \cite{T84, TBA86} makes certain assumptions on the sequence $G(t)$ that are not made
  in \cite{jad}; these assumptions, however, are never actually used in the proofs of \cite{T84, TBA86}. We
  refer the reader to \cite{BT07} for a discussion. 

  The paper \cite{jad} has created an explosion of interest in averaging algorithms, and the subsequent 
  literature expanded in a number of directions. It is impossible to give a complete account of the literature
  since \cite{jad} in a reasonable amount of space, so we give only a brief overview of several research
  directions. 
  
  \bigskip
  
  \noindent {\bf Convergence in some natural geometric settings.} One interesting direction of research has been
  to analyze the convergence of some plausible geometric processes, for which there is no guarantee that any of
  the sufficient conditions for consensus (e.g. from \cite{TBA86} or \cite{jad}) hold. Notable in this direction
  was \cite{CS07}, which proved convergence of one such process featuring all-to-all communication
  with decaying strength. In a different direction, \cite{C09, C09b} give tight bounds on the convergence of averaging
  dynamics when the communication graph corresponds to nearest neighbors of points in $R^k$. 
  
  \bigskip
  
  \noindent {\bf General conditions for averaging.} A vast generalization of the consensus conditions in \cite{TBA86}
  and \cite{jad} was given in \cite{Moreau}. Using set-valued generalizations of the Lyapunov functions in \cite{TBA86, 
  jad}, it was shown that a large class of possible nonlinear maps lead to consensus. Further investigation of 
  these results was given in \cite{angeli} and \cite{LL10}.
  
  \bigskip
  
  \noindent {\bf Quantized consensus.} The above models assume that nodes can transmit real numbers to each other. 
  It is natural to consider quantized versions of the above schemes. One may then ask about the tradeoffs between
  storage and performance. A number of papers explored various aspects of this tradeoffs. In \cite{KBS06}, a simple
  randomized scheme for achieving approximate averaging was proposed. Further research along the same lines can be
  found in \cite{ZM09} and \cite{FCFZ08}. A dynamic scheme which allows us to approximately compute the average
  as the nodes communicate more and more bits with each other can be found in \cite{CBZ10}. 
  
  We will also consider this issue in this thesis, namely in Chapters \ref{qanalysis} and \ref{ch:constantstorage}, 
  which are based on the papers \cite{NOOT07} and \cite{HOT10}, respectively. In Chapter \ref{qanalysis}, we will give
  a recipe for quantizing any linear averaging scheme to compute the average approximately. In Chapter \ref{ch:constantstorage}, we will consider the problem of computing the averaging approximately with a deterministic
  algorithm, when each node can store only a constant number of bits for each link it maintains. 
  
  \bigskip
  
  \noindent {\bf Averaging with coordinates on geometric random graphs.} Geographic random graphs are common models 
  for sensor networks. It is therefore of interest to try to specialize results for averaging to the case of 
  geometric random graphs. Under the assumption that every node knows its own exact coordinates, an averaging algorithm
  with a lower than expected averaging cost was developed in \cite{DSW08}. Further research in \cite{BDTV07,TR10} reduced
  the energy cost even further. Substantial progress towards removing the assumption that each node knows its
  coordinates was recently made in \cite{OCR10}.
  
  \bigskip
  
  \noindent {\bf Design of fast averaging algorithms on fixed graphs.} It is interesting to consider the fastest 
  averaging algorithm for a given, fixed graph. It is hoped that an answer would give some insight into
  averaging algorithms: what their optimal speed is, how it relates to graph structure, and so on. In \cite{XB04},
  the authors showed how to compute optimal symmetric linear averaging algorithms with semidefinite programming. 
  Some further results for specific graphs were given in \cite{BDSX06,BDPX09}. Optimization over a larger class of
  averaging algorithms was consider in \cite{OT09} and also in \cite{KM09}. 
  
  \bigskip 
  
  \noindent {\bf Analysis of social networks.} We have already mentioned the work of DeGroot \cite{DG74} which was aimed at modeling the
  interactions of individual via consensus-like updates. A number of recent works has taken this line of analysis further by analyzing
  how the combinatorial structure of social networks affects the outcome. In particular, \cite{GJ10} studied how good social networks are at
  aggregating distributed information in terms of various graph-cut related quantities. The recent work \cite{AOP09} quantified the extent to
  which ``forceful'' agents which are not influenced by others interefere with information aggregation.

  \section{Applications of averaging}

  We give an incomplete list of the use of averaging algorithms in various applications.

  \begin{enumerate} 
  \renewcommand{\theenumi}{\alph{enumi}.}
\renewcommand{\labelenumi}{\theenumi}
\item Consider the following distributed estimation problem: a sensor network would like
  to estimate some unknown vector of parameters. At a discrete set of times, some sensors make noise-corrupted measurements of a linear  function of the unknown vector. The sensors would like to combine the measurements that are coming in
  into a maximum likelihood estimate. 
  
  We discuss a simpler version of this problem in the following chapter, and describe some
  averaging-based algorithms. In brief, other known techniques (flooding, fusion along a spanning tree) suffer
  from either high storage requirements or lack of robustness to link failures. The use of averaging-based
  algorithms allows us to avoid these downfalls, as we will explain in Chapter \ref{why}. For some literature
  on this subject, we refer the reader to \cite{XBL05, XBL06} and \cite{CA09}.
  
  \item Distributed state estimation: a collection of sensors are trying to estimate the (constantly evolving) state
of a linear dynamical system. Each sensor is able to periodically make a noise corrupted measurement of the system output.
The sensors would like to cooperate on synthesizing a Kalman filter estimate. 

There are a variety of challenges involving in such a problem, not least of which is the delay involved in a receiving 
at one node the measurements from other nodes which are many hops away. Several ideas have been presented in the literature
for solving this sort of problem based on averaging algorithms. We refer the reader to \cite{olfatikalman1, AR06, olfatikalman2, CCSZ08, DZ09}
  
  \item Coverage control is the problem of optimally positioning a set of robots to monitor
  an area. A typical case involves a polygon-shaped area along with robots which can measure distances to the boundary as well as to each other. Based on these distances, it is desirable to construct controllers which cover the entire area, yet assign as little area as possible to each robot. A common addition to this setup involves
  associating a number $f(x)$ to each point $x$ in the polygon, representing the importance of monitoring
  this point. The robots then optimize a corresponding objective function which weights regions according to
  the importance of the points in them.

  It turned out that averaging algorithms have proven very useful in designing distributed controllers for such systems. We refer the reader to \cite{GCB08} for the connection between distributed controllers for these systems
  and averaging over a certain class of graphs defined by Voronoi diagrams. A similar approach was adopted in \cite{SSS08}. Note, also, the related paper  \cite{LSTB10} and the extension to nonuniform coverage in \cite{LL09}.
  
  \item Formation control is the problem of maintain a set formation, defined by a collection of relative distances, against random
  or adversarial disturbances. Every once in a while pairs of agents manage to measure the relative offset between them. The challenge is for the agents to use these measurements and take actions so tht in the end everyone gets into formation. 
  
  We discuss this problem in greater detail in Chapter \ref{why}, where we explain several formation control ideas originating in
  \cite{olfatisurvey}. Averaging theorems can show the possibility of working formation control in this setting, subject only to
  very intermittent communication.

  \item The task assignment problem consists in distributing a set of tasks among a collection of agents. This arises,   for example, in the case of a group of aircraft or robots who would like to make decisions autonomously without communication with a common base. A typical example is dividing a list of locations to be monitored among a group of aircraft.

  In such cases, various auction based methods are often used to allocate tasks.  Averaging and consensus algorithms provide the means by which these auctions are implemented in a distributed way;  we refer the reader to the papers \cite{CBH09, WYX10} for details.
  
  \item Consider a collection of clocks which are constantly drifting apart. This is a common scenario, because clocks drift randomly
  depending on various factors within their environment (e.g. temperature), and also because clocks have a (nonzero) drift relative to the ``true" time. Maintaining a common time as much as possible is important for a number of estimation problems (for example, direction
  of arrival problems). 
  
  An important problem is to design distributed protocols to keep the clocks synchronized. These try to keep clock drift to a minimum,
  at least between time periods when at outside source can inform each node of the correct time. This problem has a natural similarity
  to averaging, except that one does not care very much getting the average right, but rather agreement on any time will do. Moreover, 
  the constantly evolving times present a further challenge.
  
  A natural approach, explored in some of the recent literature, is to adopt averaging techniques to work in this setting. We refer
  the reader to \cite{SG07, CCSZ07, B07, FK07, XK09}. 
  \end{enumerate}
  
  \section{Main contributions}

This thesis is devoted to the analysis of the convergence time of averaging schemes and to the degradation in
performance as a result  
of quantized communication. What follows is a brief summary of our contributions by chapter. 

Chapters \ref{why} and \ref{basic} are introductory. We begin with Chapter \ref{why} which seeks to motivate
the practical use of averaging algorithms. We compare averaging algorithms to other ways of aggregating information,
such as flooding and leader-election based methods, and discuss the various advantages and disadvantages. Our main
point is that schemes based on distributed averaging posess two unique strenghts: robustness to link failures and
economical storage requirements at each node. 

Next, in Chapter \ref{basic}, we discuss the most elementary known results on the convergence of averaging methods. The rest of this thesis will be spent on improving and refining the basic
results in this chapter. 

In Chapter \ref{chapter:poly} we give an exposition of the first polynomial-time convergence bound on the convergence 
time of averaging algorithms. Previously known bounds, such as those described in Chapter \ref{basic}, took exponentially
many steps in the number of nodes $n$ to converge, in the worst case. In the subsequent Chapter \ref{nsquared}, we give an averaging algorithm whose convergence time scales as $O(n^2)$ steps on nearly arbitrary time-varying graph sequences. This is the
currently best averaging algorithm in terms of convergence time bounds. 

We next wonder if it is possible to design averaging algorithms which improve on this quadratic scaling. In Chapter \ref{ch:optimality}, we prove that it is in fact impossible to beat the $n^2$ time steps bound within a large class of (possibly nonlinear) update schemes. The schemes we consider do not exhaust all possible averaging algorithms, but they do encompass the 
majority of averaging schemes proposed thus far in the literature. 

We then move on to study the effect of quantized communication and storage. Chapter \ref{qanalysis} gives a
recipe for quantizing any linear averaging scheme. The quantization performs averaging while storing and
transmitting only $c \log n$ bits. It is shown that this quantization preserves the convergence time bounds of the
scheme, and moreover allows one to compute the average to any desired accuracy: by picking $c$ large (but not dependent on $n$), one can make the final result be as close to the average as desired. 

In Chapter \ref{ch:constantstorage}, we investigate
whether it is possible to push down the $\log n$ storage down even futher; in particular, we show how to the average may be approximately with a deterministic algorithm in which each node stores only
a constant number of bits per every connection it maintains. An algorithm for fixed graphs is given; the dynamic graph case remains 
an open question. 

Finally, Chapter \ref{ch:function} tackles the more general question: which functions can be computed with a decentralized
algorithm which uses a constant number of bits per link? The chapter assumes a consensus-like termination requirement in which the nodes only have to get the right answer eventually, but are not required to know when they have
done so. The main result is a nearly tight characterization of the functions which can be computed
deterministically in this setting.

\chapter{Why averaging? \label{why}}

Our goal in this chapter is to motivate the study of distributed
averaging algorithms. We will describe two settings in which
averaging turns out to be singularly useful. The first is an
estimation problem in a sensor network setting; we will describe an
averaging-based solution which avoids the pitfalls which plague
alternative schemes. The second is a formation maintenance problem;
we will show how basic theorems on averaging allow us to establish
the satisfactory performance of some formation control schemes.

\section{A motivating example: distributed estimation in sensor networks}



\bc \begin{figure}[h] \bc \label{fig:sensors}
        \epsfig{file=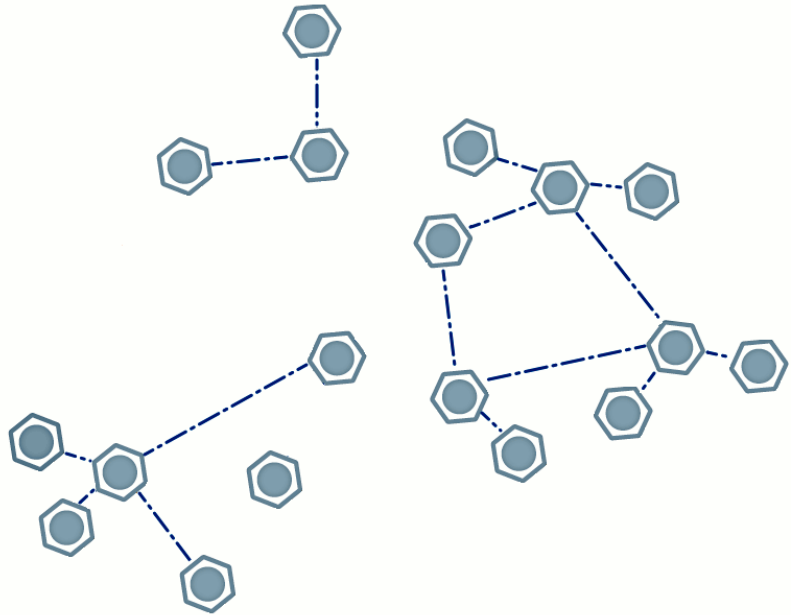,width=11cm} \caption{The set of
online links at some given time. } \ec
  \end{figure} \ec
  
Consider a large collection of sensors, $1, \ldots, n,$ that want to
estimate an unknown parameter $\theta \in R^k$. Some of these
sensors are able to measure a noise corrupted version of $\theta$;
in particular, all nodes $i$ in some subset $S \subset
\{1,\ldots,n\}$ measure \[ x_i = \theta + w_i.\] We will assume, for
simplicity, that that the noises $w_i$ are jointly Gaussian and
independent at different sensors. Moreover, only node $i$ knows the
statistics of its noise $w_i \sim N(0, \sigma_i)$.

It is easy to see that the maximum likelihood estimate is given by
\[ \hat{\theta} =  \frac{\sum_{i \in S} x_i /\sigma_i^2}{\sum_{i \in
S} 1/\sigma_i^2}. \] Note that if $S=\{1,\ldots,n\}$ (i.e. every
node makes a measurement), and all the variances $\sigma_i^2$ are
equal, the maximum likelihood estimate is just the average
$\hat{\theta} = (1/n) \sum_{i=1}^n x_i$.

The sensors would like to compute $\hat{\theta}$ in a distributed way. We do not
assume the existence of a ``fusion center'' to which the sensors can transmit measurements; rather,
the sensors have to compute the answer by exchanging messages with their neighbors and performing computations.

The sensors face an additional problem: there are communication links available through which they can exchange messages, but these links are unreliable.  In particular, links fail and come online in unpredictable ways.  For example, there is no guarantee that any link will come online if the sensors wait long enough. It is possible for a link to be online for some time, and then fail forever. Figure \ref{fig:sensors} shows an example of what may happen: at any
given time, only some pairs of nodes may exchange messages, and the network is effectively split into
disconnected clusters.

More concretely, we will assume a discrete sequence of times
$t=1,2,3, \ldots,$ during which the sensors may exchange messages.
At time $t$, sensor $i$ may send a message to its neighbors in the {\em
undirected} graph $G(t)=(\{1, \ldots, n\}, E(t))$. We will also
assume that the graph $G(t)$ includes all self loops $(i,i)$. The
problem is to devise good algorithms for computing $\hat{\theta}$,
and to identify minimal connectivity assumptions on the sequence
$G(t)$ under which such a computation is possible.

\subsection{Flooding}

We now describe a possible answer. It is very plausible to make an additional assumption that sensors
possess unique identifiers; this is the case in almost any wireless system. The sensors can use these
identifiers to ``flood" the network so that eventually, every sensor knows every single measurement
that has been made.

At time $1$, sensor $i$ sends its own triplet
$({\rm id}, x_i, \sigma_i^2)$ to each of its neighbors. Each sensor stores all the messages it has received.
Moreover, a sensor maintains a ``to broadcast'' queue, and each time it hears a message with an ${\rm id}$ it has not heard
before, it adds it to the tail of the queue. At times $t=2,3,\ldots$, each sensor broadcasts the top message from its queue.

If $G(t)$ is constant with time, and connected, then eventually each
sensor learns all the measurements that have been made. Once that
happens, each sensor has all the information it needs to compute the
maximum likelihood estimate. Moreover, the sensors do not even need
to try to detect whether they have learned everything; each sensor
can simply maintain an estimate of $\hat{\theta}$, and revise that
estimate each time it learns of a new measurement.

If $G(t)$ is not fixed, flooding can still be expected
to work. Indeed, each time a link appears, there is opportunity for a piece of information to be learned.  One can show that subject to only very minimal requirements on connectivity,
every sensor eventually does learn every measurement.

Let us state a theorem to this effect. We will use the notation
$\cup_{t \in X} G(t)$ to mean the graph obtained by forming the
union of the edge sets of $G(t), t \in X$, i.e. \[ \bigcup_{t \in X}
G(t) =  \left( \{1,\ldots,n\}, \bigcup_{t \in X} E(t) \right). \]  A relatively
light connectivity assumption is the following.

\begin{assumption} \label{assumpt:infinitec}(Connectivity) The graph \[ \cup_{s \geq t} G(s) \] is connected for every $t$. \end{assumption}

In words, this assumption says that the graph sequence $G(t)$ has
enough edges for connectivity, and that moreover this remaisn true
after some finite set of graphs is removed from the sequences. 

\bigskip

\begin{theorem} If Assumption \ref{assumpt:infinitec} holds, then under the flooding protocol every
node eventually learns each triplet $({\rm id}, x_i, \sigma_i^2)$.
\end{theorem}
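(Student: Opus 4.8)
The plan is to prove the statement one triplet at a time. I would fix an arbitrary node $i$ together with its triplet $T = ({\rm id}, x_i, \sigma_i^2)$ and let $S(t) \subseteq \{1,\ldots,n\}$ denote the set of nodes that have learned $T$ by the beginning of time $t$. Since each sensor permanently stores every message it receives, $S(t)$ is nondecreasing in $t$, and clearly $i \in S(1)$. Because there are only $n$ triplets in all, it suffices to show that for this fixed $T$ one has $S(t) = \{1,\ldots,n\}$ for all sufficiently large $t$; the claim for every triplet then follows, since a finite intersection of ``eventually'' events is again an ``eventually'' event.

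The core of the argument is a single monotonicity-plus-connectivity step. As $S(t)$ is nondecreasing and contained in the finite set $\{1,\ldots,n\}$, it stabilizes: there are a time $t_0$ and a set $S^\star$ with $S(t) = S^\star$ for all $t \geq t_0$. Suppose, for contradiction, that $S^\star \neq \{1,\ldots,n\}$, so that the cut $(S^\star,\, \{1,\ldots,n\}\setminus S^\star)$ is nonempty on both sides. Applying Assumption~\ref{assumpt:infinitec} with $t = t_0$, the graph $\bigcup_{s \geq t_0} G(s)$ is connected, so it must contain an edge crossing this cut: there exist a time $s \geq t_0$ and an edge $(a,b) \in E(s)$ with $a \in S^\star$ and $b \notin S^\star$. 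At time $s$ the node $a$ already knows $T$, so once it conveys $T$ across the live link to $b$, the node $b$ learns $T$ and joins $S(s+1) = S^\star$, contradicting $b \notin S^\star$. Hence $S^\star = \{1,\ldots,n\}$, as desired.

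The one step that needs care, and which I expect to be the main obstacle, is justifying that $a$ actually conveys $T$ to $b$ at the particular time $s$ at which the crossing edge is live: under the literal protocol a node broadcasts only the single message at the head of its queue at each step rather than everything it knows, so a priori the transmission of $T$ and the appearance of the edge $(a,b)$ need not be synchronized. The clean way to handle this is to read flooding in its intended sense, namely that a node persistently re-offers every triplet it has stored, so that ``node $a$ knows $T$'' is tantamount to ``node $a$ makes $T$ available to all of its current neighbors''; under this reading, the existence of a live crossing edge at some $s \geq t_0$ delivers $T$ across the cut immediately and the contradiction goes through. The queue discipline is then merely a bandwidth-scheduling device governing how quickly, but not whether, a triplet crosses a recurring cut edge, and converting the present asymptotic ``eventually'' statement into a quantitative bound on the learning time is where the genuine additional work would lie.
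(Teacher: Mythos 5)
Your proof is correct and takes essentially the same route as the paper's own sketch: fix a triplet, consider the cut between the set $A$ of nodes that eventually learn it and its complement, and use Assumption \ref{assumpt:infinitec} to produce a crossing edge that forces the triplet across the cut, yielding a contradiction. The synchronization gap you flag (the queue head need not be $T$ at the moment a crossing edge happens to be live) is exactly what the paper buries in the phrase ``one can easily argue that the number of edges between $A$ and $A^c$ is finite''; if anything the paper's formulation is the safer one, since applying the assumption at every $t$ yields infinitely many crossing edges and hence, by pigeonhole, a recurring edge $(a,b)$ --- which is what any argument about the queue discipline actually needs --- whereas your version extracts only a single crossing edge after stabilization and therefore genuinely depends on the ``persistent re-offering'' reading of the protocol that you adopt.
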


\bigskip

\begin{proof} (Sketch). Suppose that some triplet $({\rm id}, x_i, \sigma_i^2)$ is not learned by node $j$. Let
$A$ be the nonempty set of nodes that do learn this triplet; one can easily argue that the number of edges in
the graphs $E(t)$ between $A$ and $A^c$ is finite. But this contradicts Assumption \ref{assumpt:infinitec}.
\end{proof}

It is possible to relax the assumption of this theorem: $\cup_{s
\geq t} G(s)$ actually only needs to be connected for a sufficiently
long but finite time interval.

The problem with flooding, however, lies with its storage
requirements: each sensor needs to store $n$ pieces of information,
i.e., it needs to store a list of id's whose measurements it has
already seen. This means that the 
total amount of storage throughout the network is at least on the
order of $n^2$. 

We count the storage requirements of the numbers $x_i, \sigma_i^2$ as constant. When dealing with estimation problems,
it is convenient to assume that $x_i, \sigma_i^2$ are real numbers, and we will maintain this technical assumption for now.
Nevertheless, in practice, these numbers will be truncated to some fixed number of bits (independent of $n$). Thus
it makes sense to think of transmitting each of the $x_i, \sigma_i^2$ as incurring a fixed cost independent of $n$.

Thus tracing out the dependence on $n$, we have that at least $n^2$ bits must be stored, in addition to a fixed number of bits
at each node to maintain truncated versions of $x_i, \sigma_i^2$ and
the estimate $\hat{\theta}$.  One hopes for the existence of a
scheme whose storage requirements scale more gracefully with the
number of nodes $n$.

\subsection{Leader election based protocols}

The protocol we outline next has considerably nicer storage requirements.
On the other hand, it will require some stringent assumptions on connectivity.
 We describe it next for the case of a fixed communication graph, i.e.,  when the graph
sequence $G(t)$ does not depend on $t$.

First, the sensors elect one of them as a leader. There are various
protocols for doing this. If sensors have unique identifiers they
may pick (in a distributed way) the sensor with the largest or
smallest id as the leader. Even in the absence of identifiers, there
are randomized protocols for leader election (see \cite{AM94}) which
take on the order of the diameter\footnote{The diameter of the graph
$G$, denoted by $d(G)$, is the largest distance between any two
nodes.} of the network time steps, provided that messages of size
$O(\log n)$ bits can be sent at each time step.

Next, the sensors can build a spanning tree with the leader as the
root. For example, each sensor may pick as its parent the node one
hop closer to the root. Finally, the sensors may forward all of
their information (i.e., their $(x_i, \sigma_i^2)$) to the root,
which can compute the answer and forward it back.

Our description of the algorithm is deliberately vague, as the
details do not particularly matter (e.g., which leader election
algorithm is used). We would like to mention, however, that it is
even possible to avoid having the leader learn all the information
$(x_i, \sigma_i^2)$. For example, once a spanning tree is in place,
each node may wait to hear from all of its children, and then
forward to the leader a sufficient statistic for the measurements in
its subtree.

We state the existence of such protocols as a theorem:

\begin{theorem} If $G(t)$ does not depend on time, i.e., $G(t)=G$ for all $t$,
it is possible to compute $\hat{\theta}$ with high probability in $O(d(G))$ time steps.
The nodes need to store and forward messages of size $O(\log n)$ bits in each time step,
as well a constant number of real numbers which are smooth functions of the measurements $x_i, \sigma_i^2$.
\end{theorem}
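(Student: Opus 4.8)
The plan is to exhibit an explicit protocol and then verify its resource usage, since the statement is an existence claim. I would organize the algorithm into three consecutive phases, each costing $O(d(G))$ time: (i) leader election together with the construction of a rooted spanning tree; (ii) an upward sweep in which sufficient statistics are aggregated from the leaves toward the root; and (iii) a downward sweep broadcasting the final answer.

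For phase (i), I would invoke the randomized leader-election protocol of \cite{AM94}, which on a fixed connected graph elects a unique leader in $O(d(G))$ steps with high probability using $O(\log n)$-bit messages; this is the only source of randomness in the scheme and hence the origin of the ``with high probability'' qualifier. Once a leader $r$ is fixed, a breadth-first wave emanating from $r$ lets every node learn its hop-distance to $r$ and designate as its parent one neighbor that is one hop closer; because the graph is fixed, each node also learns which of its neighbors named it as parent, i.e., its set of children. This wave completes in $O(d(G))$ steps and again uses only $O(\log n)$ bits per message (a distance label and an identifier).

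For phase (ii), the key observation is that the estimate $\hat\theta = (\sum_{i\in S} x_i/\sigma_i^2)/(\sum_{i\in S} 1/\sigma_i^2)$ is a ratio of two additive quantities. I would have each node $v$ maintain a pair $(P_v,Q_v)$, where $P_v$ and $Q_v$ are the partial sums of $x_i/\sigma_i^2$ and $1/\sigma_i^2$ over the measuring nodes lying in the subtree rooted at $v$. A leaf sends its pair immediately; an internal node waits until it has received pairs from all of its children, adds its own contribution, and forwards the resulting pair to its parent. The root $r$ thereby obtains the global sums and computes $\hat\theta = P_r/Q_r$. Each transmitted pair is just two real numbers, each a smooth (indeed rational) function of the measurements, matching the claimed message format; since information advances one tree level per step, this sweep finishes within the height of the tree, which is $O(d(G))$. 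Phase (iii) then floods $\hat\theta$ back down the tree in another $O(d(G))$ steps.

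The correctness of the additive decomposition and the per-phase timing I expect to be routine. The main obstacle is the synchronization required in phase (ii): one must argue that each node can reliably detect when it has heard from \emph{all} of its children, and that the aggregation front reaches the root in $O(d(G))$ rather than $O(n)$ steps. The fixed-graph assumption is precisely what makes this clean, since the tree structure is static, child sets are well-defined, and a node's pair becomes final after a number of steps equal to its subtree height. A secondary point to handle carefully is that the overall success probability is governed entirely by the leader-election phase, so both the ``with high probability'' guarantee and the $O(\log n)$ message bound reduce to the cited properties of \cite{AM94}.
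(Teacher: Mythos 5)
Your proposal is correct and follows essentially the same route as the paper: randomized leader election via \cite{AM94} (the source of the ``with high probability'' qualifier), construction of a BFS spanning tree rooted at the leader, aggregation of the sufficient statistic $\bigl(\sum x_i/\sigma_i^2, \sum 1/\sigma_i^2\bigr)$ up the tree, and a final broadcast of $\hat{\theta}$ back down. The paper deliberately leaves these details vague, so your write-up is simply a more careful instantiation of the same argument.
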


Observe that that the theorem allows for the existence of protocols which (only) work with high probability; 
this is due to the necessarily randomized nature of leader election protocols in the absence of identifiers.

The above theorem is nearly optimal for any fixed graph. In other words,
if the communication links are reliable, there is no reason to choose anything but the above algorithm.

On the other hand, if the graph sequence $G(t)$ is changing unpredictably, this sort of approach immediately runs into problems.
Maintaining a spanning tree appears to be impossible if the graph sequence changes dramatically from step to step, and other approaches are needed.

\section{Using distributed averaging}


We now describe a scheme which is both robust to link failures (it only needs Assumption \ref{assumpt:infinitec} to work) and has nice storage
requirements (only requires nodes to maintain a constant number of real numbers which are smooth functions of the
data $x_i, \sigma_i^2$). However, it needs the additional assumption that the graphs $G(t)$ are undirected. This condition is often satisfied in practice, for example if the sensors are connected by
links whenever they are within a certain distance of each other.

First, let us introduce some notation. Let $N_i(t)$ be the set of
neighbors of node $i$ in the graph $G(t)$. Recall that we assume self-arcs are always 
present, i.e., $(i,i) \in E(t)$ for all $i,t$, so that $i \in N_i(t)$ for all $i,t$. Let $d_i(t)$ be the degree
of node $i$ in $G(t)$.

Let us first describe the scheme for the case where $S=\{1,\ldots,n\}$, i.e., every node makes a measurement,
and all $\sigma_i^2$ are the same. In this case, the maximum likelihood estimate is just the
average of the numbers $x_i$: $\hat{\theta}=(1/n) \sum_{i=1}^n x_i$.

The scheme is as follows. Each node sets $x_i(0)=x_i$ and updates as
\begin{equation} \label{basicupdate} x_i(t+1) = \sum_{j \in N_i(t)} a_{ij}(t) x_j(t),\end{equation} where
\begin{eqnarray*} a_{ij}(t) & = & \min \left( \frac{1}{d_i(t)}, \frac{1}{d_j(t)} \right), ~~~\mbox{ for } j \in N_i(t), j \neq i \\
& = & 0, ~~~~~~~~~~~~~~~~~~~~~~~~~~~\mbox{ otherwise}. \\
 a_{ii}(t) & = & 1 - \sum_{j \in N_i(t)} a_{ij}.
\end{eqnarray*} Then: \begin{proposition} \label{convergence} If Assumption \ref{assumpt:infinitec} holds and all the graphs $G(t)$ are undirected, then
\[ \lim_{t \rightarrow \infty} x_i(t) = \frac{1}{n} \sum_{i=1}^n x_i = \hat{\theta}.\]
\end{proposition}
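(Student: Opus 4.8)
The statement splits into two parts: that the quantities $x_i(t)$ converge to a common limit (consensus), and that this common limit is exactly the average $\bar x := \frac1n\sum_{i=1}^n x_i$. The second part is the easy one and I would dispose of it first. Write the update as $x(t+1)=A(t)x(t)$ with $A(t)=[a_{ij}(t)]$. Because the graphs are undirected, $j\in N_i(t)$ iff $i\in N_j(t)$, and the off-diagonal weight $\min(1/d_i(t),1/d_j(t))$ is manifestly symmetric in $i$ and $j$; the diagonal is then chosen to make each row sum to $1$. Hence $A(t)$ is a symmetric stochastic matrix, and a symmetric stochastic matrix is automatically doubly stochastic. Consequently $\mathbf{1}^{T}x(t+1)=\mathbf{1}^{T}A(t)x(t)=\mathbf{1}^{T}x(t)$, so the sum, and hence the average $\bar x$, is invariant in $t$; in particular, once we know that all coordinates converge to a common value $c$, that value must satisfy $c=\frac1n\mathbf{1}^{T}x(t)=\bar x$. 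I would also record two uniform bounds used below: every nonzero off-diagonal entry satisfies $a_{ij}(t)=\min(1/d_i(t),1/d_j(t))\ge 1/n$, and (using that self-loops are present, so $d_i(t)\le n$) each diagonal entry satisfies $a_{ii}(t)\ge 1/n$; set $\eta:=1/n$.

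For the consensus part I would run a quadratic Lyapunov argument. Let $z(t):=x(t)-\bar x\,\mathbf{1}$, so $\mathbf{1}^{T}z(t)=0$ and $z(t+1)=A(t)z(t)$, and set $V(t):=\|z(t)\|_2^2=\sum_i(x_i(t)-\bar x)^2$. Using $\sum_j a_{ij}(t)=1$, Jensen's inequality gives $(\sum_j a_{ij}(t)z_j)^2\le\sum_j a_{ij}(t)z_j^2$ for each $i$, and summing over $i$ while invoking $\sum_i a_{ij}(t)=1$ (double stochasticity) yields $V(t+1)\le V(t)$. Retaining the $k=i$ contribution to the Jensen gap gives the sharper dissipation inequality
\[ V(t)-V(t+1)\ \ge\ \frac{\eta^2}{2}\sum_{(i,j)\in E(t)}\bigl(x_i(t)-x_j(t)\bigr)^2 . \]
Thus $V$ is nonincreasing and bounded below, hence convergent, and summing over all $t$ gives $\sum_{t\ge 1}\sum_{(i,j)\in E(t)}(x_i(t)-x_j(t))^2\le V(1)<\infty$. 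In particular the general term tends to $0$: along every edge active at time $t$ the endpoint discrepancy vanishes, and the one-step motion $\|x(t+1)-x(t)\|_\infty\le n\max_{(i,j)\in E(t)}|x_i(t)-x_j(t)|$ also tends to $0$.

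The remaining and genuinely delicate step is to upgrade ``discrepancies across currently-active edges vanish'' into ``all pairwise discrepancies vanish,'' using Assumption \ref{assumpt:infinitec}. The spread $d(t):=\max_i x_i(t)-\min_i x_i(t)$ is nonincreasing, since each $x_i(t+1)$ is a convex combination of the $x_j(t)$; hence it converges to some $d_\infty\ge 0$, and consensus is precisely the assertion $d_\infty=0$. I would argue by contradiction, assuming $d_\infty>0$: fixing a threshold strictly between the convergent running maximum and minimum and tracking the ``top'' set $T(t)=\{i:x_i(t)>\theta\}$, one observes that, because active-edge discrepancies eventually drop below $d_\infty$, for all large $t$ the set $T(t)$ cannot be joined by an active edge to a node near the running minimum. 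Assumption \ref{assumpt:infinitec} prevents this separation from persisting: since $\cup_{s\ge t}G(s)$ is connected for every $t$, the high and low parts of the configuration are re-coupled by active edges at arbitrarily late times, and the displayed dissipation inequality harvests a decrease of $V$ from each such coupling—decreases that, in aggregate, cannot coexist with the convergence of $V$ unless $d_\infty=0$.

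I expect this final aggregation to be the main obstacle. The subtlety is that Assumption \ref{assumpt:infinitec} places no upper bound on how long one must wait for connectivity, so one cannot simply contract the spread by a fixed factor over each connected time-block: the instructive bad case is two long-lived, internally well-mixed clusters joined only by an occasional single edge, where any one connected block shrinks the spread by an arbitrarily small amount. The correct argument must therefore extract a uniform-in-time lower bound on the decrease associated with each connectivity event and show these events are frequent enough to drive $V$ to its true infimum $0$—which is exactly the role the edge-wise dissipation inequality is engineered to play, the quantification of ``frequent enough'' being the crux.
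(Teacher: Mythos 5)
Your reductions are correct up to the last step: the Metropolis matrices are symmetric stochastic with positive entries bounded below by $\eta=1/n$, the sum $\mathbf{1}^Tx(t)$ is conserved, the dissipation inequality holds, and hence $\sum_{t}\sum_{(i,j)\in E(t)}\bigl(x_i(t)-x_j(t)\bigr)^2<\infty$. But the gap is exactly where you place it, and the contradiction argument you sketch in its place fails as stated, for two reasons. First, Assumption \ref{assumpt:infinitec} yields recurring cross-edges only for a \emph{fixed} partition of the nodes: connectedness of $\cup_{s\geq t}G(s)$ says nothing about edges joining the time-varying set $T(s)=\{i:x_i(s)>\theta\}$ to its complement \emph{at the times when those edges are active}, since membership in $T(s)$ changes with $s$. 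Second, even when such an edge does appear, its endpoint discrepancy can be arbitrarily small (a node barely above $\theta$ averaging with one barely below), so no uniform amount of dissipation is harvested per ``coupling event''; and since $\sum_t D_t<\infty$ does not control $\sum_t\sqrt{D_t}$, individual values can drift across any fixed threshold infinitely often without violating summability.

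The missing device---the one the paper's own proof turns on---is to \emph{freeze} the cut at a well-chosen reference time and use undirectedness to keep its two sides decoupled until the first crossing. Suppose $V(t)\downarrow V_\infty>0$. Then at every time the spread is at least $\sqrt{V_\infty/n}$, so sorting the values at a reference time $T$ and applying pigeonhole to the $n-1$ adjacent gaps yields a partition $(S,S^c)$ with $\min_{i\in S}x_i(T)-\max_{j\in S^c}x_j(T)\geq g$, where $g:=\frac{1}{n}\sqrt{V_\infty/n}$ does not depend on $T$. Choose $T$ so large that the total dissipation after $T$ is less than $\frac{\eta^2}{2}g^2$. By Assumption \ref{assumpt:infinitec} there is a \emph{first} time $t_1\geq T$ at which an edge of $G(t_1)$ crosses this fixed cut; for $T\leq s<t_1$ no edge crosses it, and because the graphs are undirected this means each side updates using only its own members, so $\min_{i\in S}x_i(s)$ is nondecreasing and $\max_{j\in S^c}x_j(s)$ is nonincreasing on $[T,t_1]$. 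Hence the crossing edge at time $t_1$ carries a discrepancy of at least $g$ and forces a dissipation of at least $\frac{\eta^2}{2}g^2$---a contradiction. This patch completes your route. For comparison, the paper proves Proposition \ref{convergence} by reduction to Theorem \ref{thm:convergence}, hence to Theorems \ref{thm:averaging} and \ref{thm:symmetric}, where consensus is obtained from a span-norm argument (growing, by induction, a set of nodes with certified lower bounds $\eta^{kB}$); but the engine there is the same device: wait for the first edge across a fixed cut, using symmetry so that the cut's two sides do not interact until then. Your quadratic-Lyapunov route, once patched, is essentially the qualitative version of the analysis in Chapter \ref{chapter:poly}, which assumes $B$-connectivity precisely in order to turn this argument into a convergence rate.
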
 The above proposition is true because it is a special case of the following theorem: \begin{theorem}  \label{thm:convergence} Consider the iteration \[ x(t+1) = A(t) x(t) \] where: \begin{enumerate} \item $A(t)$ are doubly stochastic\footnote{A
matrix is called doubly stochastic if it is nonnegative and all of its rows and columns add up to $1$.} matrices.
\item If $(i,j) \in G(t)$, then $a_{ij}>0$ and $a_{ji}>0$, and if $(i,j) \notin G(t)$, then $a_{ij}=a_{ji}=0$.
\item There is some $\eta > 0$ such that if $a_{ij}>0$ then $a_{ij}>\eta$.
\item The graph sequence $G(t)$ is undirected.
\item Assumption \ref{assumpt:infinitec} on the graph sequence $G(t)$ holds. \end{enumerate}
Then: \[ \lim_{t \rightarrow \infty} x_i(t) = \frac{1}{n} \sum_{j=1}^n x_j(0),\] for all $i$.
\end{theorem}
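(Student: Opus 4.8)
The plan is to exploit the two structural consequences of the hypotheses separately: double stochasticity to pin down \emph{what} the common limit must be, and row-stochasticity together with the positivity and connectivity assumptions to prove that consensus actually occurs.

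First I would record that the quantity $\frac1n\sum_j x_j(t)$ is conserved. Since each $A(t)$ has all column sums equal to $1$, we have $\mathbf{1}^\top A(t)=\mathbf{1}^\top$, so $\mathbf{1}^\top x(t+1)=\mathbf{1}^\top x(t)$ and hence $\frac1n\sum_j x_j(t)=\frac1n\sum_j x_j(0)=:\bar x$ for every $t$. Writing $y(t)=x(t)-\bar x\,\mathbf{1}$, the relations $A(t)\mathbf{1}=\mathbf{1}$ and $\mathbf{1}^\top y(t)=0$ give $y(t+1)=A(t)y(t)$ with $\mathbf{1}^\top y(t)=0$ for all $t$. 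The theorem is therefore equivalent to the claim $y(t)\to 0$.

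Next I would use $V(t)=\|y(t)\|^2$ as a Lyapunov function. A direct expansion using only double stochasticity yields the exact identity
\[ V(t)-V(t+1)=\tfrac12\sum_{i}\sum_{j,k}a_{ij}(t)\,a_{ik}(t)\,\bigl(y_j(t)-y_k(t)\bigr)^2\ \ge\ 0, \]
so $V$ is nonincreasing and converges. Because self-loops are present (so $a_{ii}(t)\ge\eta$) and every active edge carries weight at least $\eta$, each edge $(j,k)\in E(t)$ contributes a term bounded below by a fixed multiple of $\eta^2\,(y_j(t)-y_k(t))^2$, whence $V(t)-V(t+1)\ge \eta^2\sum_{(j,k)\in E(t)}(y_j(t)-y_k(t))^2$. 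Telescoping and using $V\ge 0$ gives $\sum_{t}\sum_{(j,k)\in E(t)}(y_j(t)-y_k(t))^2<\infty$; in particular differences across currently active edges tend to $0$, and the same bound (via $\|y(t+1)-y(t)\|^2\le 2\sum_{(j,k)\in E(t)}(y_j(t)-y_k(t))^2$) shows $\|y(t+1)-y(t)\|\to 0$. Independently, since each $A(t)$ is nonnegative and row-stochastic, $\max_i x_i(t)$ is nonincreasing and $\min_i x_i(t)$ is nondecreasing, so both converge; it remains only to identify the two limits.

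The remaining, and hardest, step is to upgrade ``differences vanish along active edges'' into genuine global consensus under the \emph{weak} connectivity Assumption~\ref{assumpt:infinitec}. The clean reformulation I would use is that, since there are only finitely many possible edges on $n$ nodes, the graph $G_\infty$ formed by the edges appearing infinitely often equals $\cup_{s\ge t}G(s)$ for all large $t$, and is therefore connected; moreover each of its edges recurs at arbitrarily late times. I would then argue by contradiction: supposing $M^{*}:=\lim\max_i x_i(t)$ exceeds $m^{*}:=\lim\min_i x_i(t)$, fix a small $\epsilon$ and a late time after which $\max_i x_i(t)<M^{*}+\epsilon$ and $\min_i x_i(t)>m^{*}-\epsilon$, and use the single-step estimate that any node having a neighbor of value $\le v$ drops to at most $\eta v+(1-\eta)(M^{*}+\epsilon)$. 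Pushing the low value of a minimizing node outward along a spanning tree of $G_\infty$, one hop per later recurrence of the relevant edge, should force every node's value strictly below $M^{*}$ and contradict $\max_i x_i(t)\ge M^{*}$. The delicate point, and the one I expect to absorb most of the work, is precisely this propagation under merely \emph{eventual}, non-uniform connectivity: information can travel arbitrarily slowly and a node driven low may later be pulled back up, so one must control simultaneity. This is where the facts $\|y(t+1)-y(t)\|\to 0$ and the monotonicity of $\max_i x_i(t)$ and $\min_i x_i(t)$ do the real work, allowing the gap to be closed over one long window rather than in lockstep.
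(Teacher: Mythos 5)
Your first five steps are correct: conservation of the mean via column stochasticity, the exact variance-decrease identity (this is precisely Lemma \ref{vl} of the paper, with $w_{jk}=\sum_i a_{ij}a_{ik}\geq a_{jj}a_{jk}\geq\eta^2$ on active edges), the resulting summability $\sum_t\sum_{(j,k)\in E(t)}(y_j(t)-y_k(t))^2<\infty$, the consequence $\|y(t+1)-y(t)\|\to 0$, and the monotone convergence of $\max_i x_i(t)$ and $\min_i x_i(t)$. The genuine gap is the final step, and it is the decisive one. The propagation mechanism you describe provably fails under Assumption \ref{assumpt:infinitec}: the waiting time until an edge of $G_\infty$ recurs is \emph{unbounded}, and your own single-step estimate shows why this is fatal. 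While a ``low'' node $u$ waits for the edge $(u,v)$ to reappear, each update only guarantees $(M^*+\epsilon)-x_u(s+1)\geq \eta\left[(M^*+\epsilon)-x_u(s)\right]$, so after $k$ steps the lowness of $u$ is only guaranteed to be $\eta^k$ times its original value; since $k$ is unbounded, nothing is left to propagate when the edge finally recurs. The facts you invoke to rescue this ($\|y(t+1)-y(t)\|\to 0$ and the monotonicity of the extremes) do not repair it: they control single-step motion and the two extreme values, but not which node is near-extremal across the arbitrarily long windows in question, and no actual argument is supplied.

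What is missing is a mechanism that makes a bound on a \emph{set} of nodes persist over arbitrarily long intervals. The paper supplies exactly this (proof of Theorem \ref{thm:symmetric} via Lemma \ref{lemma:shrinkage}, then Theorem \ref{thm:averaging}): it works with the span norm rather than a quadratic function, and uses symmetry of the graph to observe that if $S$ is the set of nodes whose values share a lower bound, then until the first time an edge crosses the cut $(S,S^c)$ --- a time that exists by connectivity but may be arbitrarily late --- the nodes of $S$ form convex combinations only among themselves, so the bound persists \emph{undegraded} no matter how long the wait; induction on $|S|$ then contracts the spread. Alternatively, your quadratic framework can be completed with a majorization argument that you did not state: doubly stochastic updates give $x(t+1)\prec x(t)$, so every partial sum of sorted values is nonincreasing and hence every sorted value converges; if two consecutive sorted limits differ by $g>0$, the set of ``high'' nodes is eventually constant (a membership change would force some $|x_i(t+1)-x_i(t)|\geq g/2$, contradicting $\|x(t+1)-x(t)\|\to 0$), and then Assumption \ref{assumpt:infinitec} yields infinitely many times with an edge crossing that cut, each contributing at least $\eta^2g^2/4$ to the decrease of $V$, contradicting $V\geq 0$. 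Either of these ideas closes the argument; as written, your proof is incomplete at its hardest step.
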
 We will prove this theorem in Chapter \ref{basic}. In this form, this theorem is a trivial
modification of the results in \cite{Li-Wang, Cao-Morse-Anderson, Moreau,H-VB, BHOT05} which themselves are
based on the earlier results in \cite{TBA86,jad}.

Accepting the truth of this theorem for now, we can conclude that we have described
a simple way to compute $\hat{\theta}$. Every node just needs to store and update a
single real number $x_i(t)$. Nevertheless, subject to only the weak connectivity Assumption \ref{assumpt:infinitec}, every $x_i$
approaches the correct $\hat{\theta}$.  This scheme thus manages to avoid the downsides that plague flooding and leader election (high storage, lack of
robustness to link failures).

Let us describe next how to use this idea in the general case where
$S$ is a proper subset of $\{1,\ldots,n\}$ and the $\sigma_i^2$ are
not all equal.  Each node sets $x_i(0)=x_i/\sigma_i^2$,
$y_i(0)=1/\sigma_i^2$, and $z_i(0)=x_i(0)/y_i(0)$. If the node did
not make a measurement, it sets $x_i(0) = y_i(0) = 0$, and leaves
$z_i(0)$ undefined. Each node updates as
\begin{eqnarray*} x_i(t+1) & = & \sum_{j \in N_i(t)} a_{ij}(t) x_j(t) \\
y_i(t+1) & = & \sum_{j \in N_i(t)} a_{ij}(t) y_j(t) \\
z_i(t+1) & = &  \frac{x_i(t)}{y_i(t)}
\end{eqnarray*} Observe that and we have:
\begin{proposition} If Assumption \ref{assumpt:infinitec} holds, then
\[ \lim_{t \rightarrow \infty} z_i(t) = \hat{\theta}.\]
\end{proposition}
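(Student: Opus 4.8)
The plan is to reduce the claim to two applications of Theorem~\ref{thm:convergence}, one for the numerator sequence $x_i(t)$ and one for the denominator sequence $y_i(t)$, followed by a limit-of-a-ratio argument. The crucial observation is that both $x(t)$ and $y(t)$ evolve under the \emph{same} linear recursion $v(t+1)=A(t)v(t)$, driven by the identical weight matrices $A(t)=[a_{ij}(t)]$ used in the basic scheme \eqref{basicupdate}; only the initial conditions differ.

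First I would verify that these matrices satisfy the hypotheses of Theorem~\ref{thm:convergence}, which is exactly the verification already implicit in Proposition~\ref{convergence}. The weights $a_{ij}(t)=\min(1/d_i(t),1/d_j(t))$ are symmetric and nonnegative, each row (and, by symmetry, each column) sums to $1$, so $A(t)$ is doubly stochastic; every positive entry is at least $1/n$ since $d_i(t)\le n$, giving the uniform lower bound $\eta=1/n$; the graph sequence is undirected by assumption; and Assumption~\ref{assumpt:infinitec} holds. Thus Theorem~\ref{thm:convergence} applies verbatim to any sequence obeying $v(t+1)=A(t)v(t)$.

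Next I would apply the theorem twice. Applied to the $x$-iteration it gives $\lim_{t\to\infty} x_i(t)=\frac1n\sum_{j=1}^n x_j(0)=\frac1n\sum_{j\in S} x_j/\sigma_j^2$, since $x_j(0)=x_j/\sigma_j^2$ for $j\in S$ and $x_j(0)=0$ otherwise. Applied to the $y$-iteration it gives $\lim_{t\to\infty} y_i(t)=\frac1n\sum_{j=1}^n y_j(0)=\frac1n\sum_{j\in S} 1/\sigma_j^2$. Both limits are independent of $i$, and the common factor $1/n$ appears in each.

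Finally I would pass to the ratio. Assuming $S$ is nonempty, the denominator limit $\frac1n\sum_{j\in S}1/\sigma_j^2$ is strictly positive, so for all sufficiently large $t$ we have $y_i(t)>0$ and $z_i(t+1)=x_i(t)/y_i(t)$ is well defined; by continuity of division away from zero, $\lim_{t\to\infty} z_i(t)=\lim_{t\to\infty} x_i(t)/y_i(t)$, and the factor $1/n$ cancels to leave exactly $\hat{\theta}=\frac{\sum_{j\in S}x_j/\sigma_j^2}{\sum_{j\in S}1/\sigma_j^2}$. The only genuine subtlety, and the one place the argument could break, is the well-definedness of the quotient: the nodes outside $S$ begin with $y_i(0)=0$, so $z_i(t)$ may be undefined for small $t$, but this is irrelevant to the limit precisely because the strictly positive denominator limit forces $y_i(t)$ to be bounded away from $0$ for all large $t$.
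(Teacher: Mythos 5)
Your proposal is correct and follows essentially the same route as the paper: apply Theorem~\ref{thm:convergence} separately to the $x$- and $y$-iterations (both driven by the same doubly stochastic matrices $A(t)$), then take the limit of the ratio so that the $1/n$ factors cancel and yield $\hat{\theta}$. Your added care about well-definedness of the quotient for large $t$ (the denominator limit being strictly positive when $S\neq\emptyset$) is a detail the paper's proof passes over silently, but it does not change the argument.
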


\bigskip

\begin{proof} By Theorem \ref{convergence},
\begin{eqnarray*} \lim_{t \rightarrow \infty} x_i(t) & = & (1/n) \sum_{i \in S} x_i/\sigma_i^2 \\
\lim_{t \rightarrow \infty} y_i(t) & = & (1/n) \sum_{i \in S} 1/\sigma_i^2 \\
\end{eqnarray*} and so
\[ \lim_{t \rightarrow \infty} z_i(t) = \frac{\lim_{t \rightarrow \infty} x_i(t)}{\lim_{t \rightarrow \infty} y_i(t)} =
\frac{\sum_{i \in S} x_i/\sigma_i^2}{\sum_{i \in S} 1/\sigma_i^2} = \hat{\theta}. \]
\end{proof}

The punchline is that even this somewhat more general problem can be
solved by an algorithm that relies on Theorem \ref{convergence}.
This solution has nice storage requirements (nodes store only a
constant number of real numbers which are smooth functions of $y_i,
\sigma_i^2$) and is robust to link failures.

\section{A second motivating example: formation  control}

We now give another application of Theorem \ref{convergence}, this time to a certain formation control
problem. Our exposition is based on \cite{olfatisurvey}.

Suppose that the nodes have real positions $x_i(t)$ in $R^k$; the
initial positions $x_i(0)$ are arbitrary, and the nodes want to move
into a formation characterized by positions $p_1,\ldots,p_n$ in
$R^k$ (formations are defined up to translation). This formation is
uniquely characterized by the offset vectors $r_{ij}=p_i-p_j$. We
assume that at every $t$, various pairs of nodes $i,j$
succeed in measuring the offsets $x_i(t) - x_j(t)$. Let $E(t)$ be
the set of (undirected) edges $(i,j)$ corresponding to the  measurements.
The problem is how to use these intermittent measurements to get
into the desired formation.

Let us assume for simplicity that our $x_i(t)$ lie in $R$ (we will dispense with this assumption shortly). A very natural idea is to perform gradient descent on the function
\[ \sum_{(i,j) \in E(t)} (x_i(t) - x_j(t) - r_{ij})^2. \] This leads to the following control law:
\begin{equation} \label{update} x_i(t+1) = x_i(t) - 2\Delta \sum_{j \in N_i(t)} \left( x_i(t) - x_j(t) \right) ~+~ 2 \Delta \sum_{j \in N_i(t)} r_{ij},\end{equation}
where $\Delta$ is the stepsize. Essentially, every node repeatedly ``looks around'' and moves to a new position
depending on the positions of its neighbors and its desired offset vectors $r_{ij}$.

Defining $b_i(t) = 2  \Delta \sum_{j \in N_i(t)} r_{ij}$, the above equations may be rewritten as
\[ x(t+1) = A(t) x(t) + b(t).\] Now let $z$ be any translate of the given positions $(p_1,\ldots,p_n)$. Then, $z$ satisfies
\[ z = A(t) z + b(t),\] because the gradient at $x_i=z_i$ equals $0$. Subtracting the two
equations we get
\[ x(t+1) - z = A(t) (x(t) -z).\] Observe that if $\Delta \leq 1/(2n)$, then the above matrix is nonnegative, symmetric,
and has rows that add up to $1$. Applying now Theorem \ref{convergence}, we get the following statement:

\begin{proposition} \label{formation} Suppose that the nodes implement the iteration of Eq. (\ref{update}). If:
\begin{enumerate} \item $z'$ is the translate of $p$ whose
average is the same as the average of $x_i(0)$.
\item The communication graph sequence $G(t)$ satisfies Assumption 1 (connectivity).
\item $\Delta \leq 1/(2n)$
\end{enumerate}
Then, \[ \lim_{t \rightarrow \infty} x_i(t) = z_i',\] for all $i$.
\end{proposition}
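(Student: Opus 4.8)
The plan is to reduce the proposition to a direct application of Theorem \ref{thm:convergence} via a change of variables that turns the affine iteration into a pure averaging iteration. Starting from the already-derived identity $x(t+1) - z = A(t)(x(t) - z)$, which holds for every translate $z$ of $p$, I would specialize to $z = z'$, the translate whose average coincides with the average of $x(0)$. Setting $y(t) = x(t) - z'$, the dynamics become $y(t+1) = A(t) y(t)$, which is exactly the homogeneous iteration studied in Theorem \ref{thm:convergence}.

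The next step is to verify that the matrices $A(t)$ satisfy the five hypotheses of Theorem \ref{thm:convergence}. From the control law (\ref{update}), the off-diagonal entries are $a_{ij}(t) = 2\Delta$ for each genuine neighbor $j \in N_i(t)$ with $j \neq i$ and $a_{ij}(t) = 0$ otherwise, while $a_{ii}(t) = 1 - 2\Delta d_i(t)$, where $d_i(t)$ is the number of genuine neighbors of $i$. As already noted in the text, for $\Delta \leq 1/(2n)$ these matrices are symmetric, nonnegative, and row-stochastic; symmetry together with unit row sums immediately gives double stochasticity (condition 1). The positivity pattern of $A(t)$ matches $G(t)$ by construction (condition 2), the graph sequence is undirected by hypothesis (condition 4), and Assumption \ref{assumpt:infinitec} is assumed (condition 5). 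For the uniform lower bound (condition 3), I would observe that each node has at most $n-1$ genuine neighbors, so any nonzero diagonal entry satisfies $a_{ii}(t) = 1 - 2\Delta d_i(t) \geq 1 - 2\Delta(n-1) > 0$, while every nonzero off-diagonal entry equals $2\Delta$; hence one may take $\eta = \min\{2\Delta,\, 1 - 2\Delta(n-1)\} > 0$.

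With the hypotheses in place, Theorem \ref{thm:convergence} yields $\lim_{t \to \infty} y_i(t) = \frac{1}{n}\sum_{j=1}^n y_j(0)$ for every $i$. The final step is to evaluate this limit using the defining property of $z'$: since $y(0) = x(0) - z'$ and the average of $z'$ was chosen equal to the average of $x(0)$, we obtain $\frac{1}{n}\sum_j y_j(0) = \frac{1}{n}\sum_j x_j(0) - \frac{1}{n}\sum_j z_j' = 0$. Therefore $y_i(t) \to 0$, i.e. $x_i(t) \to z_i'$ for all $i$, as claimed.

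I expect the only genuine subtlety to be the verification of condition 3 (the uniform lower bound $\eta$), since the boundary case $\Delta = 1/(2n)$ must be handled so that nonzero diagonal entries do not shrink to zero; bounding the degree by $n-1$ rather than $n$ is precisely what keeps $\eta$ strictly positive. Everything else is bookkeeping, since the structural facts about $A(t)$ and the identity $x(t+1) - z = A(t)(x(t) - z)$ have already been established in the preceding discussion.
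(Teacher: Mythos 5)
Your proof is correct and follows exactly the route the paper takes: specialize the identity $x(t+1)-z = A(t)(x(t)-z)$ to the translate $z'$ with matching average, verify the hypotheses of Theorem \ref{thm:convergence} for the resulting doubly stochastic matrices, and conclude that $y(t)=x(t)-z'$ converges to its (zero) initial average. The paper simply calls this ``a straightforward application of Theorem \ref{convergence}''; your verification of the uniform lower bound $\eta = \min\{2\Delta,\,1-2\Delta(n-1)\}$ is precisely the bookkeeping it leaves implicit.
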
 The proof is a straightforward application of Theorem \ref{convergence}. This theorem
tells us that subject to only minimal conditions on connectivity, the scheme of Eq. (\ref{update}) will
converge to the formation in question.

We remark that we can replace the assumption that the $x_i$
are real numbers with the assumption that the $x_i$ belong to $R^k$. In
this case, we can apply the control law of Eq. (\ref{update}), which
decouples along each component of $x_i(t)$, and apply Proposition
\ref{formation} to each component of $x_i(t)$.

Finally, we note that continuous-time versions of these updates may be
presented; see \cite{olfatisurvey}. 

\section{Concluding remarks}

Our goal in this chapter has been to explain why averaging algorithms are useful. 
We have described averaging-based algorithms for estimation and formation problems
which are robust to link failures and have very light storage requirements at each node. 

Understanding the tradeoff between various types of distributed algorithms is still
very much an open question, and the discussion in this chapter has only scratched the surface
of it. One might additionally wonder how averaging algorithms perform on a variety of other
dimensions: convergence time, energy expenditure, robustness to node failures, performance degradation with noise, 
and so on. 

Most of this thesis will be dedicated to exploring the question of convergence time. 
In the next chapter we will give a basic introduction to averaging algorithms, and in
particular we will furnish a proof of Theorem
\ref{convergence}. In the subsequent chapters, will turn to the question of
designing averaging algorithms with good convergence time guarantees.

\chapter{The basic convergence theorems \label{basic}}

\aoc{Here we begin our analysis of averaging algorithm by proving
some basic convergence results. Our main goal is to prove Theorem
\ref{convergence} from the previous chapter, as well as some natural
variants of it. Almost all of this thesis will be spent refining and
improving the simple results obtained by elementary means in this
chapter. The results we will present may be found in \cite{T84,
TBA86, BT89, jad, Li-Wang,BHOT05}. We also recommend the paper
\cite{Moreau} for a considerable generalization of the results found
here. The material presented here appeared earlier in the paper
\cite{BHOT05} and the M.S. thesis \cite{O04}.}

\section{\aoc{Setup and assumptions}}

\aoc{We consider a set $N=\{1,\ldots,n\}$ of nodes, each starting with a real number stored
in memory. The nodes attempt to compute the average of these numbers by broadcasting
these numbers and repeatedly combining them by forming convex combination. We will first
only be concerned with the convergnece of this process. }




Each node $i$ starts with a scalar value $x_i(0)$. The vector
$x(t)=(x_1(t),\ldots,x_n(t))$ with the values held by the nodes at
time $t$, is updated according to the equation $x(t+1)=A(t)x(t)$,
or
\begin{equation} \label{eq:basicupdate} x_i(t+1)=\sum_{j=1}^n a_{ij}(t)x_j(t),\end{equation}
where $A(t)$ is a nonnegative matrix with entries $a_{ij}(t)$, and where the updates are carried out at some discrete set of times
which we will take, for simplicity, to be the nonnegative integers.
\aoc{We will refer to this scheme as {\em the agreement algorithm.}}

We will assume that the row-sums of $A(t)$ are equal to
1, so that $A(t)$ is a stochastic matrix. In particular,
$x_i(t+1)$ is a weighted average of the values $x_j(t)$ held by
the nodes at time $t$. We are interested in conditions that
guarantee the convergence of each $x_i(t)$ to a constant,
independent of $i$.

Throughout, we assume the following.

\begin{assumption}[non-vanishing weights] \label{assumpt:weights} The matrix $A(t)$ is nonnegative, stochastic, and has
positive diagonal. Moreover, there exists some $\eta>0$ such that if $a_{ij}(t)>0$ then
$a_{ij}(t)>\eta$.
\end{assumption}

Intuitively, whenever $a_{ij}(t)>0$, node $j$ communicates its
current value $x_j(t)$ to node $i$. Each node $i$ updates its own
value by forming a weighted average of its own value and the values
it has just received from other nodes.

The communication pattern at each time step can be described in
terms of a directed graph $G(t)=(N,E(t))$, where $(j,i)\in E(t)$ if
and only if $a_{ij}(t)>0$. Note that $(i,i) \in E(t)$ for all $i,t$
since $A(t)$ has positive diagonal.  A minimal assumption is that
starting at an arbitrary time $t$, and for any $i$, $j$, there is a
sequence of communications through which node $i$ will influence
(directly or indirectly) the value held by node $j$. \aoc{This is
Assumption \ref{assumpt:infinitec} from the previous chapter. }
\smskip We note various special cases of possible interest.

\smskip \noindent {\bf Fixed coefficients:} There is a fixed matrix
$A$, with entries $a_{ij}$ such that, for each $t$, and for each $i
\neq j$, we have $a_{ij}(t)\in \{0\}\cup \{a_{ij}\}$ (depending on
whether there is a communication from $j$ to $i$ at that time). This
is the case presented in \cite{BT89}.

\smskip
\noindent
{\bf Symmetric model:} If $(i,j)\in E(t)$ then $(j,i)\in E(t)$. That
is, whenever $i$ communicates to $j$, there is a simultaneous
communication from $j$ to $i$.

\bigskip

\noindent
{\bf Equal neighbor model:}
Here,
$$a_{ij}(t)=\begin{cases} 1/d_i(t),& \mbox{ if } j\in
N_i(t),\cr
0,& \mbox{ if } j \notin N_i(t),\\ \end{cases}$$
This model is a linear version of a model considered by Vicsek et al.\
\cite{V95}. Note that here the constant $\eta$ of Assumption \ref{assumpt:weights} is
equal to $1/n$.

\smskip

\noindent \aoc{{\bf Metropolis model:}
Here,
$$a_{ij}(t)=\begin{cases} 1/\max(d_i(t), d_j(t)),& \mbox{ if } j\in
N_i(t), i \neq j\cr 0,& \mbox{ if } j \notin N_i(t),\\ \end{cases}$$
and
$$a_{ii}(t) = 1 - \sum_{j=1}^n a_{ij}(t).$$
The Metropolis model is similar to the equal-neighbor model, but has
the advantage of symmetry: $a_{ij}(t)=a_{ji}(t)$. }

\smskip

\smskip \noindent {\bf Pairwise averaging model (\cite{gossip}):} This is the
special case of both the symmetric model and of the equal neighbor model
in which, at each time, there is a set of disjoint pairs of nodes
who communicate with each other. If $i$
communicates with $j$, then $x_i(t+1)=x_j(t+1)=(x_i(t)+x_j(t))/2$.
Note that the sum $x_1(t)+\cdots+x_n(t)$ is conserved; therefore,
if consensus is reached, it has to be on the average of the
initial values of the nodes.

\smskip

\aoc{The assumption below is a strengthening of Assumption \ref{assumpt:infinitec} on connectivity. We will see that it is sometimes
necessary for convergence.}

\begin{assumption}[$B$-connectivity] \label{assumpt:boundedintervals} There exists an integer $B>0$
such that the directed graph \[ (N, E(kB) \cup E((k+1)B) \cup \cdots \cup E((k+1)B-1)) \]
is strongly connected for all integer $k \geq 0$.
\end{assumption}

\section{Convergence results in the absence of delays.}
We say that the agreement algorithm {\it guarantees asymptotic
consensus} if the following
holds: for every $x(0)$, and for every sequence $\{A(t)\}$ allowed by
whatever assumptions have been placed, there exists some $c$ such
that $\lim_{t\to\infty}x_i(t)=c$, for all $i$.

\smskip
\noindent
\begin{theorem} \label{thm:simplestconvergence} Under Assumptions \ref{assumpt:weights} (non-vanishing weights) and
\ref{assumpt:boundedintervals} ($B$-connectivity), the agreement
algorithm guarantees asymptotic consensus.
\end{theorem}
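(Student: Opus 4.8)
The plan is to track the \emph{spread} of the vector $x(t)$, namely $V(t) = \max_i x_i(t) - \min_i x_i(t)$, and to show it decays to zero geometrically along windows of a fixed length $T = nB$. First I would record the elementary monotonicity facts: since each row of $A(t)$ is a probability vector, every $x_i(t+1)$ is a convex combination of the $x_j(t)$, so $M(t) = \max_i x_i(t)$ is nonincreasing, $m(t) = \min_i x_i(t)$ is nondecreasing, and hence $V(t)$ is nonincreasing and bounded below by $0$. Consequently it suffices to exhibit a subsequence of $V$ tending to $0$; monotonicity then forces $V(t) \to 0$, and since $M(t) \downarrow c$ and $m(t) \uparrow c'$ with $c - c' = \lim_t V(t) = 0$, the sandwich $m(t) \le x_i(t) \le M(t)$ yields $x_i(t) \to c$ for every $i$.

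The heart of the argument is a combinatorial lemma on the product matrices $\Phi(s,t) = A(s-1)\cdots A(t)$: for every $t$, the matrix $\Phi(t+nB,\,t)$ has all of its entries bounded below by $\delta := \eta^{nB} > 0$. To prove it I would fix a column index $j$ and follow the ``influence set'' $S_j(\tau)$ of all nodes $i$ for which $[\Phi(\tau,t)]_{ij} > 0$. Because Assumption \ref{assumpt:weights} forces a positive diagonal, once a node enters $S_j$ it never leaves (a node can always ``hold'' its value through its self-loop), so $S_j(\tau)$ is nondecreasing in $\tau$ and starts as $\{j\}$. I would then argue that during each complete block $[kB,(k+1)B-1]$, as long as $S_j \neq N$, the set $S_j$ gains at least one node: by Assumption \ref{assumpt:boundedintervals} the union graph over the block is strongly connected, so there is a directed edge leaving the current set $S_j$; whenever that edge is active at some instant within the block, its tail already lies in $S_j$, so its head is pulled into $S_j$ and remains there. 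Since any window of $nB$ steps contains at least $n-1$ complete blocks and $|S_j|$ grows from $1$ to at most $n$, after the window $S_j = N$; each surviving positive entry is a product of $nB$ factors each $\ge \eta$, giving the bound $\delta$. This propagation step is the main obstacle: the care needed is in handling the \emph{temporal} ordering of edges inside a block (mere strong connectivity of the union does not by itself transmit influence along a directed path), which is precisely what the self-loops resolve by letting influence persist until the relevant edge fires.

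Finally I would convert the lemma into a contraction. Writing $x' = \Phi(t+T,\,t)\,x(t)$ with $T = nB$, and using that every row of $\Phi$ has an entry $\ge \delta$ at the index $q$ achieving $m(t)$ and at the index $p$ achieving $M(t)$, a one-line estimate gives $\max_i x'_i \le \delta\, m(t) + (1-\delta)\, M(t)$ and $\min_i x'_i \ge \delta\, M(t) + (1-\delta)\, m(t)$, hence $V(t+T) \le (1 - 2\delta)\, V(t)$. (Connectivity forces at least two positive entries in the relevant rows, so $\delta \le 1/2$ and $1-2\delta \in [0,1)$; and if this factor is degenerate it only means $V(t+T)=0$.) Iterating over the windows $t = 0, T, 2T, \ldots$, which are all multiples of $B$ and hence block-aligned, shows $V(kT) \le (1-2\delta)^k V(0) \to 0$, and together with the monotonicity from the first paragraph this completes the proof.
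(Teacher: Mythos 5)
Your proposal is correct and follows essentially the same route as the paper's proof: establish that every entry of the product matrix over a block-aligned window of length $nB$ is bounded below by $\eta^{nB}$ (your influence-set induction is the paper's path-counting argument, Steps 1--2, in equivalent language, with the self-loops playing the same role), and then convert this into a geometric contraction of the spread $\max_i x_i - \min_i x_i$. The only differences are cosmetic: you obtain the slightly sharper factor $(1-2\eta^{nB})$ in place of the paper's $(1-\eta^{nB})$, and you spell out the final monotone-sandwich step that the paper's sketch leaves implicit.
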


Theorem \ref{thm:simplestconvergence} may be found in \cite{jad}; a
slightly different version is in \cite{TBA86, T84}. \aoc{We next
give an informal account of its proof.}

\begin{proof}[Sketch of proof] \aoc{The proof has several steps.}

\smallskip

\noindent \aoc{{\bf Step 1:} Let us define the notion of a path in
the time-varying
 graph $G(t)$. A path $p$ from $a$ to $b$ of length $l$ starting at time $t$
 is a sequence of edges $(k_0,k_1), (k_1,k_2), \ldots, (k_{l-1},k_l)$ such that $k_0=a$, $k_l=b$, and
 $(k_0,k_1) \in E(t), (k_1, k_2) \in E(t+1), \ldots,$ and so on.  We will
 use $c(p)$ to denote the product \[ c(p) = \prod_{i=0}^{l-1} a_{k_{i}, k_{i+1}}.\]}

\aoc{Define \[ \Phi(t_1,t_2) = A(t_2-1) A(t_2-2) \cdots A(t_1), \]
and let the $(i,j)$'th entry of this matrix be denoted by
$\phi_{i,j}(t_1,t_2)$. The following fact can be established by
induction:
\[ \phi_{i,j}(t_1,t_2) = \sum_{\mbox{ paths } p \mbox{ from } i \mbox{ to } j \mbox{ of length } t_2 - t_1 \mbox{ starting at time } t_1} c(p).\]
A consequence is that if $\phi_{i,j}(t_1,t_2) > 0$, then Assumption
\ref{assumpt:weights} implies $\phi_{i,j}(t_1,t_2) \geq \eta^{t_2 -
t_1}$.}

\smallskip

\aoc{\noindent {\bf Step 2:} Assumptions \ref{assumpt:weights} and \ref{assumpt:boundedintervals} have the following implication: for any two nodes $i,j$, there is a path of length $nB$ that begins at $i$ and ends at $j$.}

\aoc{This implication may be proven by induction on the following statement: for any node $i$ there are at least $m$ distinct nodes $j$ such that there is a path of length $mB$ from $i$ to $j$. The proof crucially relies on Assumption \ref{assumpt:weights} which implies that all the self loops $(i,i)$ belong to every edge set $E(t)$.}

\smallskip

\aoc{\noindent {\bf Step 3:} Putting Steps 1 and 2 together, we get
that $\Phi(kB, (k+n)B)$ is a matrix whose every entry is bounded
below by $\eta^{nB}$. The final step is to argue that \[ Q_{n}
Q_{n-1} \cdots Q_1 x \] converges to a multiple of the all-ones
vector $\1$ for any sequence of matrices $Q_i$ having this property
and any initial vector $x$. This is true because for any such matrix
$Q_i$, \[ \max_k (Q_i x)_k - \min_k (Q_i x)_k \leq (1-\eta^{nB})
(\max_k x_k - \min_k x_k). \]}\end{proof}

\bigskip

In the absence of $B$-connectivity,  the algorithm does not
guarantee asymptotic consensus, as shown by Example 1 below
(Exercise 3.1, in p.\ 517 of \cite{BT89}). In particular,
convergence to consensus fails even in the special case of the equal
neighbor model. The main idea is that the agreement algorithm can
closely emulate a nonconvergent algorithm that keeps executing the
three instructions $x_1:=x_3$, $x_3:=x_2$, $x_2:=x_1$, one after the
other.

\smskip \noindent {\bf Example 1.} Let $n=3$, and suppose that
$x(0)=(0,0,1)$. Let $\epsilon_1$ be a small positive constant.
Consider the following sequence of events. Node 3 communicates to
node 1; node 1 forms the average of its own value and the received
value. This is repeated $t_1$ times, where $t_1$ is large enough so
that $x_1(t_1)\geq 1-\epsilon_1$. Thus, $x(t_1)\approx (1,0,1)$. We
now let node 2 communicates to node 3, $t_2$ times, where  $t_2$ is
large enough so that $x_3(t_1+t_2)\leq \epsilon_1$. In particular,
$x(t_1+t_2)\approx (1,0,0)$. We now repeat the above two processes,
infinitely many times. During the $k$th repetition, $\epsilon_1$ is
replaced by $\epsilon_k$ (and $t_1,t_2$ get adjusted accordingly).
Furthermore, by permuting the nodes at each repetition, we can
ensure that Assumption \ref{assumpt:infinitec} is satisfied. After
$k$ repetitions, it can be checked that $x(t)$ will be within
$1-\epsilon_1-\cdots -\epsilon_k$ of a unit vector. Thus, if we
choose the $\epsilon_k$ so that $\sum_{k=1}^{\infty}\epsilon_k<1/2$,
asymptotic consensus will not be obtained.

\smskip On the other hand, in the presence of symmetry, the
$B$-connectivity Assumption \ref{assumpt:boundedintervals} is
unnecessary. This result is proved in \cite{Li-Wang} and
\cite{Cao-Morse-Anderson} for the special case of the symmetric
equal neighbor model and in \cite{Moreau,H-VB}, for the more general
symmetric model. A more general result will be established in
Theorem \ref{thm:symmetric} below.

\smskip
\noindent
\begin{theorem} \label{thm:basicsymmetric} Under Assumptions \ref{assumpt:infinitec} and \ref{assumpt:weights}, and for the symmetric
model, the agreement algorithm guarantees asymptotic consensus.
\end{theorem}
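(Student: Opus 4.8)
The plan is to reduce the theorem to the statement that the running maximum and minimum of the node values converge to a common limit, and then to force that conclusion by a propagation argument that uses symmetry in place of the $B$-connectivity of Assumption \ref{assumpt:boundedintervals}. First I would note that, since each $A(t)$ is nonnegative and stochastic, every $x_i(t+1)$ is a convex combination of the values $x_j(t)$; hence $M(t) := \max_i x_i(t)$ is non-increasing and $m(t) := \min_i x_i(t)$ is non-decreasing. Both are bounded, so $M(t) \downarrow M$ and $m(t) \uparrow m$ with $m \le M$. It then suffices to rule out $M > m$. I would assume $d := M - m > 0$ and aim for a contradiction with the monotone convergence of $m(t)$.

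The heart of the argument is a propagation estimate supplied by the symmetric (bidirectional) model. Fix a small $\epsilon > 0$ and a time $T_0$ with $m(t) \ge m - \epsilon$ and $M(t) \le M + \epsilon$ for all $t \ge T_0$. The single elementary bound I would use is: if $x_i(t) \ge \beta$ with $\beta > m$ and $(i,j) \in E(t)$, then because $(j,i) \in E(t)$ as well (symmetry) and $a_{ji}(t) \ge \eta$ (Assumption \ref{assumpt:weights}), the update gives $x_j(t+1) \ge \eta \beta + (1-\eta) m(t) \ge \eta \beta + (1-\eta)(m - \epsilon)$. Thus any neighbor of a high node is pulled up to a level diluted by a factor $\eta$ toward $m$. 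Defining the geometrically spaced thresholds $\ell_k := m + \eta^k d$ (up to an $O(\epsilon)$ correction from the estimate above), each $\ell_k$ stays strictly above $m$ for $k \le n$ once $\epsilon$ is small enough, and the estimate converts ``a node at level $\ell_k$ shares an edge with a new node'' into ``that new node reaches level $\ell_{k+1}$.''

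I would then run an induction on the size of the set of nodes that have been brought to a high level, using Assumption \ref{assumpt:infinitec} to supply the edges needed to enlarge it. At time $T_0$ some node sits at $\ge M = \ell_0$. Given a set $A$ of nodes at level $\ell_k$ with $A \ne N$, tail-connectivity of $\cup_{s \ge t} G(s)$ guarantees an edge from $A$ to $A^c$ at some later time, and the propagation estimate brings a fresh node up to $\ell_{k+1}$, enlarging the high set by one. After at most $n-1$ such expansions every node has been pulled above $\ell_n > m$; since no value ever falls below $m(t) \ge m - \epsilon$ in the interim and the thresholds are chosen to absorb the $\epsilon$-slack accumulated over the finitely many steps, this yields $m(t') \ge \ell_n > m$ at some time $t'$, contradicting $m(t) \uparrow m$. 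Hence $M = m$, and every $x_i(t)$ converges to the common limit.

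The main obstacle, and the reason the bidirectional hypothesis is essential, is that the high set is not monotone in time: a node at level $\ell_k$ can shed value the moment it averages with a low neighbor, so one cannot simply wait for the next cut-crossing edge and assume the frontier has stayed in place. The resolution is exactly the compensation built into symmetry — whenever a high node sheds value across an edge, the low endpoint is simultaneously pulled up by that same edge — so the spreading frontier advances rather than recedes. Making this bookkeeping rigorous with only tail-connectivity, rather than a bounded window $B$, is the delicate part: one must order the cut-crossing events in time and check that the finitely many dilution steps, each costing a factor $\eta$ and a bounded amount of $\epsilon$-slack, complete before the gained levels can decay back below $m$. This is precisely the step that $B$-connectivity would trivialize and that symmetry is designed to replace.
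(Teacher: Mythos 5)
Your proposal is correct, and it is essentially the paper's own argument: the paper proves this theorem as the no-delay special case of Theorem \ref{thm:symmetric}, whose proof (via Lemma \ref{lemma:shrinkage}) runs exactly your induction --- a growing set of nodes held above geometrically decaying thresholds $\eta^{kB}$, kept isolated by the symmetry of the model until the first cut-crossing edge supplied by Assumption \ref{assumpt:infinitec}, at which point the self-weight bound $x_i(t+1)\geq \eta x_i(t)$ keeps the old set at the next threshold while the new endpoint is recruited to it. The only packaging difference is that the paper uses linearity to normalize ($m=0$, $M=1$) and extracts a quantitative per-round contraction factor $1-\eta^{nB}$ which it then iterates, whereas you argue by contradiction against the limits $M$ and $m$ with $\epsilon$-slack bookkeeping; both versions close the argument.
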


\section{Convergence in the presence of delays.}
The model considered so far assumes that messages from one node to
another are immediately delivered. However, in a distributed
environment, and in the presence of communication delays, it is
conceivable that a node will end up averaging its own value with an
{\em outdated} value of another node. A situation of this type
falls within the framework of distributed asynchronous computation
developed in \cite{BT89}.

Communication delays are incorporated into the model as follows: when
node $i$, at time $t$, uses the value $x_j$ from another node, that
value is not necessarily the most recent one, $x_j(t)$, but rather an
outdated one, $x_j(\tau^i_j(t))$, where $0\leq \tau^i_j(t)\leq t$,
and where $t-\tau^i_j(t))$ represents communication and possibly
other types of delay. In particular, $x_i(t)$ is updated according to
the following formula:
\begin{equation}x_i(t+1)=\sum_{j=1}^n a_{ij}(t)
x_j(\tau^i_j(t)).
\label{eq:asyn}\end{equation}
We make the following assumption on the $\tau^i_j(t)$.

\begin{assumption}(Bounded delays) \label{assumpt:delays} (a) If $a_{ij}(t)=0$, then
$\tau^i_j(t)=t$.\\
(b) $\tau^i_i(t)=t$, for all $i$, $t$.\\
(c) There exists some $B>0$ such that
$t-B+1\leq \tau^i_j(t)\leq t$, for all $i$, $j$, $t$.
\end{assumption}

\smskip

Assumption \ref{assumpt:delays}(a) is just a convention: when $a_{ij}(t)=0$, the value
of $\tau^i_j(t)$ has no effect on the update. Assumption \ref{assumpt:delays}(b) is quite natural, since an node generally
has access to its own most recent value. Assumption \ref{assumpt:delays}(c)
requires delays to be bounded by some constant $B$. 

\smskip The next result, from \cite{T84,TBA86}, is a
generalization of Theorem \ref{thm:simplestconvergence}. The idea of the proof is similar to
the one outlined for Theorem \ref{thm:simplestconvergence}, except that we now define
$m(t)=\min_{i} \min_{s=t,t-1,\ldots,t-B+1} x_i(s)$ and
$M(t)=\max_{i} \max_{s=t,t-1,\ldots,t-B+1} x_i(s)$. For convenience, we will adopt the 
definition that $x_i(t)=x_i(0)$ for all negative $t$. Once more, one
shows that the difference $M(t)-m(t)$ decreases by a constant
factor after a bounded amount of time.
\begin{theorem} \label{thm:boundedstuff} Under Assumptions \ref{assumpt:weights}, \ref{assumpt:boundedintervals},
\ref{assumpt:delays} (non-vanishing weights, bounded
intercommunication intervals, and bounded delays), the agreement
algorithm with delays [cf.\ Eq.\ (\ref{eq:asyn})] guarantees
asymptotic consensus.
\end{theorem}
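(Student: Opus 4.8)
The plan is to follow the contraction strategy used for Theorem \ref{thm:simplestconvergence}, but with the window-based quantities $m(t)=\min_{i}\min_{s\in\{t,\ldots,t-B+1\}}x_i(s)$ and $M(t)=\max_{i}\max_{s\in\{t,\ldots,t-B+1\}}x_i(s)$ in place of the instantaneous min and max. The goal is to exhibit an integer $T$ and a constant $\rho\in(0,1)$, depending only on $n$, $B$, $\eta$, such that $M(t+T)-m(t+T)\le\rho\,(M(t)-m(t))$ for every $t$.

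First I would record the elementary monotonicity facts. Since $\tau^i_j(t)\in\{t-B+1,\ldots,t\}$ by Assumption \ref{assumpt:delays}(c), the update (\ref{eq:asyn}) expresses $x_i(t+1)$ as a convex combination of values $x_j(\tau^i_j(t))$ all drawn from the window defining $m(t)$ and $M(t)$. Hence $m(t)\le x_i(t+1)\le M(t)$ for every $i$, and because each new window is obtained from the previous one by dropping the oldest time and inserting these new values, $M(t)$ is nonincreasing and $m(t)$ is nondecreasing. In particular $M(t)-m(t)$ is nonincreasing and nonnegative, so it converges; the work is to show the limit is $0$.

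Second, and this is the technical heart, I would develop the delayed analogue of Steps 1 and 2 of the earlier proof: a propagation-of-influence estimate. Unrolling (\ref{eq:asyn}) writes each $x_i(t+T)$ as a convex combination $\sum c(p)\,x_{\mathrm{src}(p)}(\cdot)$ of values held at times inside the window at $t$, where the sum is over ``delayed paths'' that step backward in time by between $1$ and $B$ at each hop. The two crucial structural inputs are (i) the positive diagonal together with Assumption \ref{assumpt:delays}(b), $\tau^i_i(t)=t$, which lets a value ``wait in place'' — node $i$ always retains weight at least $\eta$ on its own current value, so influence is never lost over a bounded horizon — and (ii) the $B$-connectivity of Assumption \ref{assumpt:boundedintervals}, which forces, over each block of $B$ steps, a communication chain between any ordered pair of nodes. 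Combining these, I would prove by induction (mirroring the induction of Step 2) that there is an integer $T=O(nB)$ such that for every pair of nodes $k,i$ and every starting time $t$ there is a delayed path from $k$ to $i$ over the interval $[t,t+T]$, and, since a path uses at most $T$ hops each weighted at least $\eta$ by Assumption \ref{assumpt:weights}, its coefficient product, and hence the total influence weight, is at least $\eta^{T}$. The delays make the bookkeeping of hop-times more delicate than in the delay-free case — this is where I expect the main difficulty — but the self-loop ``storage'' property is exactly what keeps the induction going.

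Finally I would assemble the contraction. Fix $t$ and let $(i_0,s_0)$ attain $m(t)$ and $(j_0,u_0)$ attain $M(t)$ within the time-$t$ window. For any node $i$ and any $s$ in the time-$(t+T)$ window, the convex-combination representation of $x_i(s)$ places weight at least $\eta^{T}$ on the source value $x_{i_0}(s_0)=m(t)$, with all other sources at most $M(t)$; writing such a combination as $M(t)-w\,(M(t)-m(t))$ with $w\ge\eta^{T}$ gives $x_i(s)\le \eta^{T}m(t)+(1-\eta^{T})M(t)$, hence $M(t+T)\le \eta^{T}m(t)+(1-\eta^{T})M(t)$. Routing instead to $(j_0,u_0)$ gives the symmetric bound $m(t+T)\ge \eta^{T}M(t)+(1-\eta^{T})m(t)$. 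Subtracting,
\[ M(t+T)-m(t+T)\le (1-2\eta^{T})\,\bigl(M(t)-m(t)\bigr), \]
so with $\rho=\max(0,1-2\eta^{T})<1$ (enlarging $T$ if necessary to ensure $\eta^{T}<1/2$) we obtain the desired contraction, valid from any starting time $t$. Iterating over consecutive blocks of length $T$ forces $M(t)-m(t)\to0$; together with the monotone, bounded behavior of $m(t)$ and $M(t)$ this shows both converge to a common limit $c$, and since $m(t)\le x_i(t)\le M(t)$ for all $i$, every $x_i(t)\to c$, which is asymptotic consensus.
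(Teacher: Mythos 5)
Your overall strategy is the one the paper itself sketches (and attributes to \cite{T84,TBA86}): work with the windowed quantities $m(t)$ and $M(t)$, establish their monotonicity, and prove a contraction using the positive self-weights (Assumption \ref{assumpt:delays}(b) together with the positive diagonal of Assumption \ref{assumpt:weights}) as ``storage,'' and $B$-connectivity to propagate influence. Your monotonicity step and your influence-propagation induction are sound in substance. The genuine gap is in the final assembly of the contraction.

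Your inequality $M(t+T)\leq \eta^{T} m(t)+(1-\eta^{T})M(t)$ needs two things simultaneously: (a) every source in the unrolled representation of $x_i(s)$ lies in the window $\{t-B+1,\ldots,t\}$, so that ``all other sources are at most $M(t)$''; and (b) weight at least $\eta^{T}$ lands on the exact pair $(i_0,s_0)$ attaining $m(t)$. With delays these cannot both be enforced. If you stop unrolling each backward path at its first arrival at a time $\leq t$, then (a) holds, but the designated path is truncated at its first entry into the window, and since the delays $\tau^i_j(\cdot)$ are exogenous you cannot arrange for that first entry to be $(i_0,s_0)$: the path may enter at another node, or at node $i_0$ at a time $u>s_0$ where $x_{i_0}(u)$ can be as large as $M(t)$, so (b) fails. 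If instead you let the designated path continue inside the window (via self-loops) until it reaches $(i_0,s_0)$, then every expansion performed at a time $v\leq t$ spawns sibling sources at times as early as $v-B$, which can be as small as $t-2B+1$, i.e., strictly before the window at $t$; because $M(\cdot)$ is nonincreasing, such values are bounded only by $M(t-B)\geq M(t)$, not by $M(t)$, so (a) fails and your displayed bound does not follow. The standard repair is to give up the symmetric windows: bound the contamination by $M(t-B)$ and $m(t-B)$, which yields a contraction for the asymmetric quantity $M(\cdot)-m(\cdot-B)$ over intervals of length $T+B$; note this is exactly the form of Lemma \ref{lemma:shrinkage}, whose conclusion is $M(\tau')-m(\tau'-B)\leq 1-\eta^{nB}$. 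Alternatively (and this is what \cite{T84,TBA86} and the proof of Theorem \ref{thm:symmetric} do), normalize so that $m(\tau-B)=0$, making all values from time $\tau-2B+1$ onward nonnegative; then one never needs the full convex-combination expansion at all, only the one-sided inequalities $x_i(u+1)\geq \eta\, x_i(u)$ and $x_i(t'+1)\geq a_{ij}(t')x_j(\tau^i_j(t'))$, and the $P_k$ induction finishes the argument. Either repair preserves your first two steps; it is only the last paragraph that must be rewritten.
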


Theorem \ref{thm:boundedstuff} assumes bounded
intercommunication intervals and bounded delays. The example that
follows (Example 1.2, in p.\ 485 of \cite{BT89}) shows that
Assumption \ref{assumpt:delays}(d) (bounded delays) cannot be relaxed. This is the
case even for a symmetric model, or the further special case where
$E(t)$ has exactly four arcs $(i,i)$, $(j,j)$, $(i,j)$, and $(j,i)$ at any given time
$t$, and these satisfy  $a_{ij}(t)=a_{ji}(t)=1/2$, as in the
pairwise averaging model.

\smskip\noindent {\bf Example 2.} We have two nodes who initially
hold the values $x_1(0)=0$ and $x_2(0)=1$, respectively.
Let $t_k$ be an increasing sequence of times, with $t_0=0$ and $t_{k+1}-t_k\to\infty$.
If $t_k\leq t<t_{k+1}$,
the nodes update according to
\begin{eqnarray*} x_1(t+1)&=&(x_1(t)+x_2(t_k))/2,\\
x_2(t+1)&=&(x_1(t_k)+x_2(t))/2.
\end{eqnarray*}
We will then have
$x_1(t_1)=1-\epsilon_1$ and $x_2(t_1)=\epsilon_1$, where
$\epsilon_1>0$ can be made arbitrarily small, by choosing $t_1$
large enough. More generally, between time $t_k$ and $t_{k+1}$,  the absolute difference
$|x_1(t)-x_2(t)|$ contracts by a factor of $1-2\epsilon_k$, where the corresponding contraction
factors $1-2\epsilon_k$ approach 1. If the
$\epsilon_k$ are chosen so that $\sum_k \epsilon_k<\infty$, then
$\prod_{k=1}^{\infty}(1-2\epsilon_k)>0$, and the disagreement
$|x_1(t)-x_2(t)|$ does not converge to zero.

\smskip
According to the preceding example, the assumption of bounded delays
cannot be relaxed.
  On the other hand, the assumption of bounded intercommunication
intervals can be relaxed,
  in the presence of symmetry, leading to the following generalization
of Theorem \ref{thm:basicsymmetric}.

\begin{theorem} \label{thm:symmetric} Under Assumptions \ref{assumpt:infinitec}
(connectivity), \ref{assumpt:weights} (non-vanishing weights), and
\ref{assumpt:delays} (bounded delays), and for the symmetric
model, the agreement algorithm with delays [cf.\ Eq.\
(\ref{eq:asyn})] guarantees asymptotic consensus. \end{theorem}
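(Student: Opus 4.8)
The plan is to run the sliding-window min--max contraction argument already used for Theorems \ref{thm:simplestconvergence} and \ref{thm:boundedstuff}, but to supply the crucial contraction step from \emph{symmetry} together with the weak connectivity of Assumption \ref{assumpt:infinitec}, in place of the $B$-connectivity of Assumption \ref{assumpt:boundedintervals}. As in Theorem \ref{thm:boundedstuff}, define $m(t)=\min_i \min_{s=t,\ldots,t-B+1} x_i(s)$ and $M(t)=\max_i \max_{s=t,\ldots,t-B+1} x_i(s)$, with the convention $x_i(s)=x_i(0)$ for $s<0$. Since each update $x_i(t+1)=\sum_j a_{ij}(t)\,x_j(\tau^i_j(t))$ is, by Assumption \ref{assumpt:weights}, a convex combination of values $x_j(s)$ with $s\in[t-B+1,t]$ (by Assumption \ref{assumpt:delays}(c)), and since the diagonal is positive with $\tau^i_i(t)=t$, we get $x_i(t+1)\le M(t)$ and $x_i(t+1)\ge m(t)$, whence $M(t)$ is nonincreasing and $m(t)$ is nondecreasing. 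Both are bounded, so $M(t)\downarrow M^*$ and $m(t)\uparrow m^*$ with $m^*\le M^*$, and it suffices to prove $M^*=m^*$.

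Suppose instead that $g:=M^*-m^*>0$. Fix a margin $\epsilon$, small relative to $\eta g$, and a time $t_1$ beyond which $M(t)<M^*+\epsilon$ and $m(t)>m^*-\epsilon$. Call a node $i$ \emph{high at time $t$} if $x_i(t)>M^*-\epsilon$, and let $H(t)\subseteq N$ collect the nodes that are high somewhere in the window $[t-B+1,t]$; since $M(t)\ge M^*$ for all $t$, the set $H(t)$ is always nonempty. The key elementary estimate is a pull-down inequality: if at some time $\tau'\ge t_1$ a node $k$ assigns positive (hence, by Assumption \ref{assumpt:weights}, at least $\eta$) weight to an outdated value $x_i(\tau^k_i(\tau'))$ that lies below $M^*$ by more than $2\epsilon/\eta$, while all its other inputs are $\le M^*+\epsilon$, then $x_k(\tau'+1)<M^*-\epsilon$, so $k$ is pulled out of the high range. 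Because the low side of the configuration sits near $m^*=M^*-g$, this margin condition is met whenever $k$ averages against a node near the minimum. The essential use of symmetry is here: in the symmetric model, $(i,k)\in E(\tau')$ iff $(k,i)\in E(\tau')$, so ``feeds a value to'' is a symmetric relation, and hence the high set cannot be maintained unless it is \emph{closed} under communication over the window --- every neighbor supplying a value to a high node must itself be high.

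The contradiction then comes from Assumption \ref{assumpt:infinitec}. If at some point $H$ were a nonempty proper subset of $N$, then, because $\cup_{s\ge t} G(s)$ is connected for every $t$, there is a finite future time at which an edge joins $H$ to its complement; by symmetry this edge transmits a low value into a high node, and the pull-down inequality removes that node from the high range. Monotonicity of $M(t)$ locks this drop in, so $H$ strictly loses a member; iterating at most $n$ times either forces $H=N$ (so that even the windowed minimum is high, contradicting $m(t)\to m^*<M^*$) or exhausts $H$, producing a time $t_2$ with $M(t_2)\le M^*-\kappa$ for a fixed $\kappa>0$, contradicting $M(t)\downarrow M^*$. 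Note that symmetry is exactly what rules out the directed counterexample (Example 1): without it, connectivity could be realized by edges pointing only the ``wrong'' way, pulling low nodes up without ever pulling high nodes down.

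I expect the main obstacle to be the quantitative bookkeeping of the margin under the combination of \emph{delays} and \emph{unbounded intercommunication intervals}. Because Assumption \ref{assumpt:boundedintervals} is dropped, the crossing edge need not appear within a uniformly bounded horizon, so one cannot extract a single contraction factor over a fixed number of steps as in Theorem \ref{thm:boundedstuff}; the argument must instead be phrased as an eventual strict decrease of $M(t)$ below its own limit. Simultaneously, the delays mean a high node averages against \emph{outdated} values, so one must verify, using Assumption \ref{assumpt:delays}(c) to confine the relevant values to the window of length $B$ and using the monotonicity of $m(t)$ to guarantee those values are genuinely low, that the margin fed into the pull-down step does not erode; carrying this through cleanly (and tracking the powers of $\eta$ that accumulate, up to $\eta^{nB}$) is the delicate part of the proof.
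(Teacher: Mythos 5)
Your overall architecture --- the sliding-window extrema $m(t)$, $M(t)$, their monotonicity, and a contradiction extracted from symmetry plus Assumption \ref{assumpt:infinitec} via a ``crossing edge'' --- is exactly the paper's. But the engine of your argument, the single-threshold high set $H(t)$ together with the pull-down inequality, has a genuine gap, in fact three related ones. First, a threshold mismatch: a node outside $H(t)$ is only guaranteed to satisfy $x_k(s)\le M^*-\epsilon$ on its window, while your pull-down step requires the incoming value to lie below $M^*-2\epsilon/\eta$; since $\eta<1$ these do not match, so the crossing edge supplied by connectivity need not remove anybody from $H$. In particular your central claim --- ``every neighbor supplying a value to a high node must itself be high'' --- is false: a high node receiving a value in the band $(M^*-2\epsilon/\eta,\,M^*-\epsilon]$ with weight $\eta$ can easily remain above $M^*-\epsilon$. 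Second, even when a node is pulled out of $H$, nothing prevents its high neighbors from pulling it back in later; monotonicity of $M(t)$ constrains the windowed maximum, not membership in $H$, so the claim that ``$H$ strictly loses a member'' and the resulting ``at most $n$ iterations'' count are unjustified. Third, the branch ``$H=N$ implies the windowed minimum is high'' is a non sequitur: $H=N$ only says each node exceeds $M^*-\epsilon$ at \emph{some} time in its window, which is perfectly compatible with $m(t)$ staying near $m^*$.

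The paper's proof of Lemma \ref{lemma:shrinkage} repairs precisely these defects by replacing the fixed threshold with a cascade of degrading ones. After normalizing $m(\tau-B)=0$ and $M(\tau)=1$, it tracks the property $P_k$: at least $k$ nodes have windowed minimum $m_i\ge \eta^{kB}$. Symmetry is used not at the crossing itself but \emph{before} it, to insulate the good set $S$: until the first edge between $S$ and $S^c$ (in either direction) appears, nodes of $S$ average only among themselves and so keep their lower bound. Nonnegativity and the positive diagonal give $x_i(t+1)\ge \eta\, x_i(t)$, so when the crossing occurs the bound spreads to one new node while degrading from $\eta^{kB}$ to $\eta^{(k+1)B}$ --- the degradation is exactly what makes the count monotone and the induction sound, since a bound of the form ``value $\ge$ positive constant'' is never destroyed by later interactions, only eroded geometrically. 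After at most $n$ rounds every node satisfies $m_i\ge\eta^{nB}$, so the span contracts by the factor $1-\eta^{nB}$, and iterating gives consensus. Your closing remark correctly anticipates that powers of $\eta$ up to $\eta^{nB}$ must be tracked, but this is not bookkeeping that can be layered on top of your scheme: the single-threshold set $H$ and the member-counting argument must be abandoned in favor of this ladder induction (equivalently, its mirror image in which upper bounds spread downward from a node near the minimum).
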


\begin{proof} Let
\begin{eqnarray*}
M_i(t)&=&\max\{x_i(t),x_i(t-1),\ldots,x_i(t-B+1)\},\\
M(t)&=&\max_i M_i(t),\\
m_i(t)&=&\min\{x_i(t),x_i(t-1),\ldots,x_i   (t-B+1)\},\\
m(t)&=&\min_i m_i(t).
\end{eqnarray*}  Recall that we are using the convention that $x_i(t)=x_i(0)$ for
all negative $t$. An easy inductive argument,
as in p. 512 of \cite{BT89}, shows that the sequences $m(t)$ and
$M(t)$ are nondecreasing and nonincreasing, respectively. The
convergence proof rests on the following lemma. \smskip \noindent
\begin{lemma} \label{lemma:shrinkage} If $\ao{m(\tau-B)}=0$ and $M(\tau)=1$, then there exists
a time $\tau' \geq \tau$ such \JMH{that $M(\tau')-m(\tau'-B)\leq
1-\eta^{nB}$.} \end{lemma}
%
%
%
Given Lemma 1, the convergence proof is completed as follows.
Using the linearity of the algorithm, there exists a time
\JMH{$\tau_1$ such that $M(\tau_1)-m(\tau_1-B)\leq
(1-\eta^{nB})\ao{(M(B)-m(0))}$.} By applying Lemma 1, with $\tau$
replaced by $\JMH{\tau_1}$, and using induction, we see that for
every $k$ there exists a time $\tau_k$ such that
\JMH{$M(\tau_k)-\ao{m}(\tau_k-B)\leq
(1-\eta^{nB})^k(M(B)-m(0))$}, which converges to zero. This,
together with the monotonicity properties of $m(t)$ and $M(t)$,
implies that $m(t)$ and $M(t)$ converge to a common limit, which
is equivalent to asymptotic consensus. \end{proof}
%
%
%
%

\begin{proof}[Proof of Lemma \ref{lemma:shrinkage}] For $k=1,\ldots,n$, we say that ``Property
$P_k$ holds at time $t$'' if there exist at least
$k$ indices $i$ for which $m_i(t)\geq \eta^{kB}$.

We assume, without loss of generality, \JMH{that $ m(\tau-B)=0$ and}
\ao{$M(\tau)=1$}. Then, $m(t) \geq 0$ for all $t \geq \tau-B$ by the
monotonicity of $m(t)$. Furthermore, there exists some $i$ and some
$t' \in \ao{\{\tau-B+1, \tau-B+2,\ldots,\tau\}}$ such that $x_i(t')=1$. Using the
inequality $x_i(t+1)\geq \eta x_i(t)$, we obtain $m_i(t'+B)\geq
\eta^B$. This shows that there exists a time at which property
$P_1$ holds.

We continue inductively. Suppose that $k<n$ and that Property $P_k$
holds at some time $t$. Let $S$ be a set of cardinality $k$
containing indices $i$ for which $m_i(t)\geq \eta^{kB}$, and let
$S^{\rm c}$ be the complement of $S$.
Let $t'$ be the first time, greater than or equal to $t$, at which
$a_{ij}(t')\neq 0$, for some $j\in S$ and $i\in S^{\rm c}$ (i.e.,
an node $j$ in $S$ gets to influence the value of an node $i$ in
$S^{\rm c}$). Such a time exists
by the connectivity assumption (Assumption \ref{assumpt:infinitec}).

Note that between times $t$ and $t'$, the nodes $\ell$ in the set
$S$ only form convex combinations between the values of the nodes in
the set $S$ (this is a consequence of the symmetry assumption). Since
all of these values are bounded below by $\eta^{kB}$, it follows
that this lower bound remains in effect, and that $m_{\ell}(t')\geq
\eta^{kB}$, for all $\ell \in S$.

For times $s\geq t'$, and for every $\ell\in S$, we have
$x_{\ell}(s+1)\geq \eta x_{\ell}(s)$, which implies that
$x_{\ell}(s)\geq \eta^{kB}\eta^B$, for
$s\in\{t'+1,\ldots,t'+B\}$. Therefore, $m_{\ell}(t'+B)\geq
\eta^{(k+1)B}$, for all $\ell\in S$.

Consider now an node $i\in S^{\rm c}$ for which $a_{ij}(t') \neq 0$. We have
$$x_i(t'+1)\geq a_{ij}(t') x_j(\tau^i_j(t')) \geq \eta m_i(t') \geq
\eta^{kB+1}.$$
Using also the fact $x_i(s+1)\geq \eta x_i(s)$, we obtain that
$m_i(t'+B)\geq \eta^{(k+1)B}$. Therefore, at time $t'+B$, we
have $k+1$ nodes with $m_{\ell}(t'+B)\geq \eta^{(k+1)B}$
(namely, the nodes in $S$, together with node $i$). It follows that
Property $P_{k+1}$ is satisfied at time $t'+B$.

This inductive argument shows that there is a time \JMH{$\tau'$}
at which Property $P_n$ is satisfied. At that time
\JMH{$m_i(\tau')\geq \eta^{nB}$} for all $i$, which implies
that \JMH{$m(\tau')\geq \eta^{nB}$}. On the other hand,
$M(\tau'+B )\leq M(0)=1$, which proves that
\JMH{$M(\tau'+B)-m(\tau')\leq 1-\eta^{nB}$}. \end{proof}

\smskip

\aoc{Now that we have proved Theorem \ref{thm:symmetric}, let us give a variation of it
which will ensure not only convergence but convergence to the average.}

\aoc{\begin{assumption} \label{assumpt:ds}(Double stochasticity) The
matrix $A(t)$ is column-stochastic for all $t$, i.e., \[ \sum_{i=1}^n
a_{ij}(t) = 1,\] for all $j$ and $t$.
\end{assumption}}

\aoc{Note that Assumption \ref{assumpt:weights} ensures that the matrix $A(t)$ is only row-stochastic. The above assumption together
with Assumption \ref{assumpt:weights} ensures
that $A(t)$ is actually doubly stochastic.}

\aoc{\begin{theorem} \label{thm:averaging} Under Assumptions
\ref{assumpt:infinitec} (connectivity), \ref{assumpt:weights}
(non-vanishing weights), and \ref{assumpt:ds} (double
stochasticity), and for the symmetric model, the agreement algorithm
(without delays) satisfies
\[ \lim_{t \rightarrow \infty} x_i(t) = \frac{1}{n} \sum_{i=1}^n x_i(0). \] \end{theorem}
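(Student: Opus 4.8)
The plan is to separate the claim into two parts: first, that the values $x_i(t)$ converge to a \emph{common} limit $c$; and second, that this common limit $c$ is exactly the average of the initial values. The first part requires no new work, since the hypotheses here (connectivity, non-vanishing weights, symmetric model, no delays) are precisely those of Theorem \ref{thm:basicsymmetric}. I would therefore invoke that theorem directly to conclude that there exists a scalar $c$ with $\lim_{t \rightarrow \infty} x_i(t) = c$ for every $i$. All that remains is to identify $c$.

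For the second part, the key observation is that Assumption \ref{assumpt:ds} (double stochasticity) makes the total sum $\sum_{i=1}^n x_i(t)$ an invariant of the dynamics. Indeed, from the update $x_i(t+1) = \sum_{j=1}^n a_{ij}(t) x_j(t)$ I would compute
\[ \sum_{i=1}^n x_i(t+1) = \sum_{j=1}^n \left( \sum_{i=1}^n a_{ij}(t) \right) x_j(t) = \sum_{j=1}^n x_j(t), \]
where the last equality uses the column-stochasticity $\sum_{i=1}^n a_{ij}(t) = 1$. Hence the sum is conserved over time, and equals $\sum_{i=1}^n x_i(0)$ for all $t$.

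Finally I would pass to the limit. Since each $x_i(t) \rightarrow c$, and a finite sum of convergent sequences converges to the sum of the limits, I obtain
\[ \sum_{i=1}^n x_i(0) = \lim_{t \rightarrow \infty} \sum_{i=1}^n x_i(t) = \sum_{i=1}^n c = nc, \]
so that $c = (1/n) \sum_{i=1}^n x_i(0)$, which is the desired conclusion.

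There is no real obstacle here: the substantive content---establishing convergence to a common value in the first place---is inherited wholesale from Theorem \ref{thm:basicsymmetric}, and the only additional ingredient is the conservation of the sum, which is an immediate algebraic consequence of column-stochasticity. The one point worth stating carefully is that interchanging the limit with the summation is legitimate, which it is because the number of nodes $n$ is fixed and finite.
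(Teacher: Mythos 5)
Your proposal is correct and follows essentially the same route as the paper's own proof: invoke the symmetric-model consensus theorem to get convergence to a common value, then use double (column) stochasticity to show $\sum_i x_i(t)$ is conserved and pass to the limit. The only cosmetic differences are that the paper cites the more general delayed version (Theorem \ref{thm:symmetric}) where you cite Theorem \ref{thm:basicsymmetric} directly, and the paper writes the conservation step in matrix form as $\1^T A(t) = \1^T$ rather than with explicit sums; both are the same argument.
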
}

\begin{proof} \aoc{By Theorem \ref{thm:symmetric}, every $x_i(t)$ converges to the same value. Assumption \ref{assumpt:ds} ensures that $\sum_{i=1}^n x_i(t)$ is preserved from iteration to iteration:
\[  \sum_{i=1}^n x_i(t+1) = \1^T x(t+1) = \1^T A(t) x(t) = \1^T x(t) = \sum_{i=1}^n x_i(t), \] where we use the double stochasticity of $A(t)$ to conclude that $\1^T A(t) = \1^T$. This immediately implies that the final limit is the average of the initial values.}
\end{proof}

\aoc{Finally, let us observe that Theorem
\ref{thm:convergence} from the previous chapter is a special case of the theorem we have just proved.}

\aoc{\begin{proof}[Proof of Theorem \ref{convergence}] Observe that the assumptions of Theorem \ref{thm:averaging} are
present among the assumptions of Theorem \ref{convergence}, except for Assumption \ref{assumpt:ds} which needs
to be verified. To argue that the matrices $A(t)$ in Theorem \ref{convergence} are doubly stochastic, we just observe
that they are symmmetric and stochastic.
\end{proof}}

\section{Relaxing symmetry}

The symmetry condition [$(i,j)\in E(t)$ iff $(j,i)\in E(t)$] used in
Theorem \ref{thm:symmetric} is somewhat unnatural in the presence of communication
delays, as it requires perfect synchronization of the update times. A
looser and more natural assumption is the following.

\begin{assumption}[Bounded round-trip times] \label{assumpt:roundtrip}
There exists some $B>0$ such that whenever
$(i,j)\in E(t)$, then there exists some $\tau$ that satisfies
$|t-\tau|<B$ and $(j,i)\in E(\tau)$.
\end{assumption}

\smskip Assumption \ref{assumpt:roundtrip} allows for protocols such
as the following. Node $i$ sends its value to node $j$. Node $j$
responds by sending its own value to node $i$. Both nodes update
their values (taking into account the received messages), within a
bounded time from receiving the other node's value. In a realistic
setting, with unreliable communications, even this loose symmetry
condition may be impossible to enforce with absolute certainty. One
can imagine more complicated protocols based on an exchange of
acknowledgments, but fundamental obstacles remain (see the
discussion of the ``two-army problem'' in pp.\ 32-34 of
\cite{coordinated_attack}). A more realistic model would introduce a
positive probability that some of the updates are never carried out.
(A simple possibility is to assume that each $a_{ij}(t)$, with
$i\neq j$, is changed to a zero, independently, and with a fixed
probability.) The convergence result that follows remains valid in
such a probabilistic setting (with probability 1). Since no
essential new insights are provided, we only sketch a proof for the
deterministic case.

\begin{theorem} \label{thm:relaxedsymmetry}
Under Assumptions \ref{assumpt:infinitec} (connectivity),
\ref{assumpt:weights} (non-vanishing weights), \ref{assumpt:delays}
(delays) and \ref{assumpt:roundtrip} (bounded round-trip times) the
agreement algorithm with delays [cf.\ Eq.\ (\ref{eq:asyn})]
guarantees asymptotic consensus.
\end{theorem}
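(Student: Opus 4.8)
The plan is to reuse essentially the entire apparatus behind Theorem \ref{thm:symmetric}: the windowed extremes $m_i(t), m(t), M_i(t), M(t)$, their monotonicity, and the reduction to the shrinkage estimate of Lemma \ref{lemma:shrinkage}. None of these ingredients used symmetry: the monotonicity of $M(t)$ (nonincreasing) and $m(t)$ (nondecreasing) follows purely from the fact that each $x_i(t+1)$ is a convex combination of values $x_j(\tau^i_j(t))$ drawn from the last $B$ time steps (Assumption \ref{assumpt:delays}), and the passage from Lemma \ref{lemma:shrinkage} to consensus is identical. So I would only need to re-prove Lemma \ref{lemma:shrinkage} under Assumption \ref{assumpt:roundtrip} in place of symmetry. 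Inspecting that proof, symmetry is invoked in exactly one sentence: after fixing a set $S$ of $k$ indices with $m_i(t)\ge \eta^{kB}$ and letting $t'$ be the first time $\ge t$ at which some $j\in S$ influences some $i\in S^{\rm c}$ (which exists by Assumption \ref{assumpt:infinitec}), one argues that throughout $[t,t')$ the nodes of $S$ form convex combinations only among themselves, so their values never drop. The entire task is to recover this protection from bounded round-trip times.

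The key observation I would establish is that, under Assumption \ref{assumpt:roundtrip}, contamination of $S$ (that is, an edge $(i,j)\in E(s)$ with $i\in S^{\rm c}$, $j\in S$, which lets a node of $S^{\rm c}$ lower a value of $S$) can only occur for $s\in[t,t+B)\cup(t'-B,t')$. Indeed, such an edge forces a reverse edge $(j,i)\in E(\tau)$ with $|s-\tau|<B$; but $(j,i)$ is itself an $S$-to-$S^{\rm c}$ influence, so either $\tau\ge t$, whence $\tau\ge t'$ by the minimality of $t'$ and therefore $s>\tau-B\ge t'-B$, or $\tau<t$, whence $s<\tau+B<t+B$. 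Consequently, on the (possibly long) middle interval $[t+B,t'-B]$ the set $S$ is genuinely protected and averages only internally, exactly as in the symmetric case, so no loss is incurred there; the only loss comes from the two endpoint windows of total length at most $2B$, over which the self-loop inequality $x_\ell(s+1)\ge \eta\,x_\ell(s)$ (valid since $a_{\ell\ell}(s)>0$ and $\tau^\ell_\ell(s)=s$ by Assumption \ref{assumpt:delays}) bounds the total multiplicative loss by $\eta^{2B}$.

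Thus every $\ell\in S$ retains $m_\ell\ge \eta^{kB}\cdot\eta^{2B}=\eta^{(k+2)B}$ across $[t,t'+B]$, and the newly captured node $i\in S^{\rm c}$ inherits a comparable bound through the self-loop inequality exactly as in the original argument. Running the induction on $k$ up to $k=n$ then yields, at some time $\tau'$, the bound $m_i(\tau')\ge \eta^{c\,nB}$ for all $i$, for a fixed constant $c$, and hence $M(\tau'+B)-m(\tau')\le 1-\eta^{c\,nB}$. This is precisely Lemma \ref{lemma:shrinkage} with a slightly larger exponent in $\eta$, which is all that the surrounding convergence argument (the iterated contraction of $M(t)-m(t-B)$) requires.

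The main obstacle is exactly the middle of the interval $[t,t')$: without symmetry, nothing a priori prevents a node of $S^{\rm c}$ from repeatedly injecting small values into $S$ over an unbounded horizon and driving $\min_{\ell\in S}x_\ell$ to zero, which would wreck the induction. The round-trip observation above is what confines all such leakage to two windows of length $B$, converting a potentially unbounded loss into the harmless factor $\eta^{2B}$. Everything else is routine bookkeeping, carried out with the same steps as in Theorem \ref{thm:symmetric} but with weaker constants, which is why only a sketch is warranted.
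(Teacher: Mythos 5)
Your proposal is correct, and it is a close cousin of the paper's own argument, but the two organize the use of Assumption \ref{assumpt:roundtrip} differently. The paper's proof outline modifies the stopping time itself: instead of your $t'$ (the first $S\to S^{\rm c}$ influence, exactly as in the proof of Lemma \ref{lemma:shrinkage}), it takes $\tau$ to be the first time after $t$ at which \emph{any} edge crosses the cut, in either direction. Before $\tau$ the set $S$ is then clean by definition, so no confinement lemma is needed, and all the work happens at $\tau$: if the crossing is $S\to S^{\rm c}$, capture proceeds as in the symmetric case; if it is $S^{\rm c}\to S$ (contamination), the round-trip assumption forces a reverse, capturing edge within the next $B-1$ steps, and the self-loop inequality $x_\ell(s+1)\geq \eta\, x_\ell(s)$ carries the values of $S$ and of the contaminating node across that short horizon, yielding the increment from $\eta^{2kB}$ to $\eta^{2(k+1)B}$ per induction stage. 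Your argument instead keeps the symmetric proof's stopping time and proves a confinement lemma: any contamination edge in $[t,t')$ must, via its round-trip partner and the minimality of $t'$, occur within $B$ steps of $t$ or within $B$ steps of $t'$, so the middle of the interval is clean and self-loops absorb an $\eta^{O(B)}$ loss at the two ends. Both routes invoke the round-trip hypothesis for exactly the same purpose — to prevent unbounded leakage from $S^{\rm c}$ into $S$ — and both settle for an exponent growing by $O(B)$ per stage; the paper's reorganization is slightly more economical (only a single cut-crossing event is ever analyzed), while your confinement lemma is slightly more informative, since it localizes all possible contamination over the whole interval. One bookkeeping remark: contamination can also continue during $[t',t'+B]$ while the captured node's value propagates into its window minimum $m_i(t'+B)$, so the per-stage loss is really of order $\eta^{3B}$ rather than $\eta^{2B}$; this is harmless and your hedge of ``a fixed constant $c$'' covers it, but the bound $\eta^{(k+2)B}$ as stated does not quite hold over the full range $[t,t'+B]$.
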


\begin{proof}[Proof outline] A minor change is needed in the
proof of Lemma 1. In particular, we define $P_k$ as the event that
there exist at least $k$ indices $l$ for which $m_l(t) \geq
\eta^{2kB}$. It follows that $P_1$ holds at time $t=2B$.

By induction, let $P_k$ hold at time $t$, and let $S$ be the set
of cardinality $k$ containing indices $l$ for which $m_l(t) \geq
\eta^{2kB}$. Furthermore, let $\tau$ be the first time after time
$t$ that $a_{ij}(\tau) \neq 0$ where exactly one of $i,j$ is in
$S$. Along the same lines as in the proof of Lemma 1, $m_l(\tau) \geq
\eta^{2kB}$ for $l \in S$; since $x_l(t+1) \geq \eta x_l(t)$,
it follows that $m_l(\tau+2B) \geq \eta^{2(k+1)B}$ for each $l
\in S$. By our assumptions, exactly one of $i$,$j$ is in $S^c$. If
$i\in S^c$, then $x_i(\tau+1) \geq
a_{ij}(\tau) x_j (\tau_j^i(\tau)) \geq \eta^{2kB+1}$ and
consequently $x_i(\tau + 2B) \geq \eta^{2B-1} \eta^{2kB+1} =
\eta^{2(k+1)B}$. If  $j\in S^c$, then there must
exist a time $\tau_j \in \{ \tau +1, \tau+2,\ldots,\tau+B-1 \}$
with $a_{ji}(\tau_j) > 0$. It follows that:
\begin{eqnarray*} m_j(\tau+2B) & \geq  & \eta^{\tau+2B -
(\tau_j+1)} x_j(\tau_j+1)
\\
& \geq & \eta^{\tau+2B -
\tau_j-1} \eta x_i(\tau_j)\\
  & \geq & \eta^{\tau+2B -
\tau_j-1} \eta \eta^{\tau_j - \tau} \eta^{2kB} \\
& = & \eta^{2(k+1)B} \end{eqnarray*} Therefore, $P_{k+1}$ holds
at time $\tau + 2B$ and the induction is complete.
\end{proof}

\section{Concluding remarks}

\aoc{In this chapter, we have presented some basic convergence results on the
averaging iterations $x(t+1)=A(t)x(t)$. In particular, we proved Theorem \ref{thm:convergence}
from the previous chapter, as well as several variations of it involving asymmetry and
delays.}

\aoc{There is, however, one troubling feature of the results so far: the
convergence time bounds which follow from our proofs are quite large. We have
shown that after $nB$ steps, an appropriately defined measure of convergence
shrinks by a factor of $1-\eta^{nB}$. Considering that $\eta$ can be as small
as $1/n$ (for example, in the Metropolis model), this means that one must
wait $n^{nB} nB \log (1/\epsilon)$ steps for the same measure of convergence to shrink by $\epsilon$.
It goes without saying that this is an enormous number even for relatively small
$n,B$.}

\aoc{One might hope that the bounds we have derived are lax.
Unfortunately, one can actually construct examples of graph
sequences on which convergence takes time exponential in $n$. An
example may be found in \cite{OT09}; an example with an undirected
graph may be found in an unpublished manuscript by Cao, Spielman,
and Morse.}

\aoc{In the next several chapters, we will be concerned with the possibility of designing
averaging algorithms with better guarantees. A first goal is to replace the exponential scaling with
$n$ by a polynomial one.}

\chapter{Averaging in polynomial time \label{chapter:poly}}
\section{Convergence time \label{convtimesection}}

\aoc{In the previous chapter, we proved Theorem \ref{thm:averaging}, which
states that subject to a few
natural conditions the iteration \[ x(t+1) = A(t) x(t) \] results in
\[ \lim_{t \rightarrow \infty} x(t) = \frac{1}{n} \sum_{i=1}^n x_i(0). \] A special case of this
result is Theorem \ref{thm:convergence} stated earlier. The assumptions include: Assumption \ref{assumpt:infinitec}
ensuring that the graphs $G(t)$ contain
enough links, Assumption \ref{assumpt:weights} on the weights,
and the ``double stochasticity'' Assumption \ref{assumpt:ds}.  Finally, we
also had to assume we were in the ``symmetric model.'' Here our goal will be to
reproduce the result but with better bounds on convergence time, which scale polynomially,
rather than exponentially, in $n$. }

\aoc{For this, we will need to slightly strengthen our connectivity assumptions.
It is obvious that with Assumption \ref{assumpt:infinitec} no effective bounds on
convergence can hold since the sequence $G(t)$ may contain arbitrarily many
empty graphs. Thus we will replace Assumption \ref{assumpt:infinitec} with the
slightly stronger Assumption \ref{assumpt:boundedintervals}. On the positive side, with the
stronger Assumption \ref{assumpt:boundedintervals}, we can dispense with the assumption that
we are in the ``symmetric model.''}

As a convergence measure, we use the ``sample variance'' of a vector
$x \in \R^n$, defined as \[ V(x) = \sum_{i=1}^n (x_i - \bar{x} )^2,
\] where $\bar{x}$ is the average of the entries of $x$: \[ \bar{x} = \frac{1}{n} \sum_{i=1}^n x_i. \]

We are interested in providing an upper bound on the number of
iterations it takes for the ``sample variance'' $V(x(t))$ to
decrease to a small fraction of its initial value $V(x(0))$. \aoc{The main result of
this chapter is the following theorem.}

\aoc{\noindent \begin{theorem} \label{uqboundver1} Let Assumptions
\ref{assumpt:weights} (non-vanishing weights),
\ref{assumpt:boundedintervals} ($B$-connnectivity), and
\ref{assumpt:ds} (double stochasticity) hold. Then there exists an
absolute constant\footnote{\ao{We say $c$ is an absolute constant
when it does not depend on any of the parameters in the problem, in
this case $n,B, \eta,\epsilon$.}} $c$ such that we have
\[V({t})
\leq \epsilon V(0)\qquad \hbox{for all } t\ge {c} (n^2/\eta)B \log
(1/\epsilon).\]
\end{theorem}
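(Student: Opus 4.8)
The plan is to abandon the entrywise approach behind Theorem~\ref{thm:simplestconvergence} (which forces one to wait $nB$ steps for a strictly positive product matrix and thereby loses a factor $\eta^{nB}$, the source of the exponential bound) and instead track the quadratic Lyapunov function $V$ directly. First I would record the two structural facts that make $V$ the right object: double stochasticity (Assumption~\ref{assumpt:ds}) preserves the mean, so $\bar x(t)=\bar x(0)$ and $V(x(t))=\|x(t)-\bar x(0)\mathbf{1}\|^2$, and $V$ is non-increasing. It then suffices to prove a single-window contraction of the form $V((k+1)B)\le\big(1-\tfrac{c\eta}{n^2}\big)V(kB)$, since iterating over $m=\Theta\big(\tfrac{n^2}{\eta}\log(1/\epsilon)\big)$ windows and using monotonicity of $V$ to fill in intermediate times yields exactly $t\ge c(n^2/\eta)B\log(1/\epsilon)$.

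The engine of the contraction is an exact per-step decrease identity. Writing $y_i=(A(t)x(t))_i$ and viewing row $i$ of $A(t)$ as a probability distribution, one checks that $V(x(t))-V(x(t+1))=\sum_{i=1}^n \mathrm{Var}_i(t)$, where $\mathrm{Var}_i(t)=\sum_j a_{ij}(t)\,(x_j(t)-y_i)^2$ is the variance of the neighbors' values under those weights; this already reproves monotonicity. The key point, and the reason the final bound carries $1/\eta$ rather than $1/\eta^2$, is that a \emph{single} factor of $\eta$ suffices per edge: for any neighbor $j$ of $i$ at time $t$ we have $a_{ii}(t),a_{ij}(t)\ge\eta$ (positive diagonal together with the non-vanishing-weights Assumption~\ref{assumpt:weights}), so dropping all other terms and using $(x_i-x_j)^2\le 2(x_i-y_i)^2+2(x_j-y_i)^2$ gives $\mathrm{Var}_i(t)\ge\tfrac{\eta}{2}\,(x_i(t)-x_j(t))^2$. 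Summing over any set of edges with distinct tails that are present at time $t$, $V(t)-V(t+1)\ge\tfrac{\eta}{2}\sum_e (x_{i_e}(t)-x_{j_e}(t))^2$.

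Next I would convert this local decrease into a global one using connectivity. Fix a window $[kB,(k+1)B)$ and let $p,q$ attain the maximum and minimum of $x(\cdot)$ at time $kB$, so $\Delta:=x_p(kB)-x_q(kB)$ satisfies $V(kB)\le n\Delta^2$. By the $B$-connectivity Assumption~\ref{assumpt:boundedintervals}, the union graph $\bigcup_{t\in[kB,(k+1)B)}E(t)$ is strongly connected, hence contains a path from $p$ to $q$ of length at most $n$; telescoping and Cauchy--Schwarz give $\sum_{\text{path}}(x_u(kB)-x_v(kB))^2\ge\Delta^2/n$. Each path edge belongs to $E(t)$ for some $t$ in the window and the path edges have distinct tails, so applying the per-step bound at each edge's appearance time and summing over the window would give $V(kB)-V((k+1)B)\ge\tfrac{\eta}{2n}\Delta^2\ge\tfrac{\eta}{2n^2}V(kB)$, exactly the window contraction sought.

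The one gap is the phrase ``at each edge's appearance time,'' and this is where I expect the real work to lie. The per-step inequality produces $(x_u(t)-x_v(t))^2$ at the time $t$ the edge appears, whereas the telescoping was done with values frozen at time $kB$; within a window the values can drift by as much as $\Theta(B\Delta)$, which is \emph{not} negligible against $\Delta$, so one cannot simply substitute. The way to close this is a bootstrap: if $V(kB)-V((k+1)B)\ge\tfrac{c\eta}{n^2}V(kB)$ we are done, and otherwise the smallness of the decrease controls the drift. Concretely, $\mathrm{Var}_i(t)\ge a_{ii}(t)(x_i(t)-y_i)^2\ge\eta\,(x_i(t)-y_i)^2$ gives $\sum_t\|x(t+1)-x(t)\|^2\le\eta^{-1}\big(V(kB)-V((k+1)B)\big)$, so a small total decrease forces a small $\ell^2$ movement of the coordinates over the window, which validates the frozen-time telescoping up to absolute constants and again delivers the contraction. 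Making this dichotomy quantitative, so that only absolute constants are lost, is the crux; the remainder is the bookkeeping that assembles the per-window factor $(1-\Theta(\eta/n^2))$ into the stated $O\big((n^2/\eta)B\log(1/\epsilon)\big)$ convergence time.
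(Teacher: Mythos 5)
Your setup is sound and parallels the paper's: the per-step identity $V(x(t))-V(x(t+1))=\sum_i \mathrm{Var}_i(t)$ is correct (it is an equivalent form of the paper's Lemma~\ref{vl}, which writes the decrease as $\sum_{i<j}w_{ij}(x_i-x_j)^2$ with $w_{ij}$ the entries of $A^TA$), and your observation that $a_{ii},a_{ij}\ge \eta$ buys a per-edge decrease of $\tfrac{\eta}{2}(x_i-x_j)^2$ with a \emph{single} factor of $\eta$ plays the same role as the paper's cut lemma (Lemma~\ref{db}). The genuine gap is exactly where you flagged it, and your proposed bootstrap does not close it with only absolute constants lost. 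Quantitatively: if the window decrease is $\delta V(kB)$, the movement bound gives $\sum_{t,i}(x_i(t+1)-x_i(t))^2\le \eta^{-1}\delta V(kB)$, so a node's drift over the window is at most $\sqrt{B}\,\bigl(\sum_t(x_i(t+1)-x_i(t))^2\bigr)^{1/2}$ (Cauchy--Schwarz over the $B$ steps, which is tight for steady drift), and the total drift along the path is at most $\sqrt{nB\eta^{-1}\delta V(kB)}\le n\Delta\sqrt{B\delta/\eta}$ using $V(kB)\le n\Delta^2$. Forcing this below $\Delta/4$ requires $\delta\le \eta/(16n^2B)$, so the dichotomy only yields a per-window contraction of $1-\Theta\bigl(\eta/(n^2B)\bigr)$; since each window costs $B$ steps, the resulting convergence time is $O\bigl((n^2/\eta)B^2\log(1/\epsilon)\bigr)$ --- a full factor of $B$ worse than the theorem's $O\bigl((n^2/\eta)B\log(1/\epsilon)\bigr)$. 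The $\sqrt{B}$ in the drift estimate is unavoidable in this scheme, so the loss is structural, not a matter of bookkeeping.

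The paper's proof (Lemma~\ref{vardiff}) avoids drift corrections entirely by a different device: sort the components of $x(kB)$ in nonincreasing order $y_1\ge\cdots\ge y_n$, and for each ``cut'' $d$ consider the \emph{first} time $t_d$ in the window at which an edge crosses between $\{1,\dots,d\}$ and $\{d+1,\dots,n\}$. Before $t_d$ the two sides have not interacted at all, so every node on the high side still holds a convex combination of values that were all $\ge y_d$, and symmetrically for the low side; hence at the crossing time the edge's endpoints are still separated by at least the frozen gap, and in fact $(x_i(t_d)-x_j(t_d))^2\ge \sum_{d'\in F_{ij}(t_d)}(y_{d'}-y_{d'+1})^2$ when one edge is the first crossing of several cuts simultaneously. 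Summing over cuts and invoking the cut lemma gives $V(kB)-V((k+1)B)\ge \tfrac{\eta}{2}\sum_{d}(y_d-y_{d+1})^2$ with no loss in $B$, and the final step (minimizing $\sum_d(y_d-y_{d+1})^2$ over sorted, zero-mean, unit-norm vectors, following \cite{LO81}) produces the $1/n^2$, in place of your max--min path and Cauchy--Schwarz. The idea you are missing is precisely this monotonicity-through-first-crossing argument: the drift cannot hurt because, until a cut is crossed for the first time, the dynamics on its two sides are decoupled and preserve their separation.
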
}

\smskip

\aoc{This is the ``polynomial time averaging'' result alluded to in
the title of this chapter. Our exposition follows the paper
\cite{NOOT07} where this material first appeared. Note that this
bound is exponentially better than the convergence time bound of
$O\left((1/\eta)^{nB} nB \log 1/\epsilon \right)$ which follow
straightforwardly from the arguments of the previous chapter.}

\aoc{We now proceed to the task of proving this theorem.} We
first establish some technical preliminaries that will be key in the
subsequent analysis. In particular, in the next subsection, we
explore several implications of the double stochasticity assumption
on the weight matrix $A(t)$.

\subsection{Preliminaries on doubly stochastic matrices}

We begin by analyzing how the sample variance $V(x)$ changes when
the vector $x$ is multiplied by a doubly stochastic matrix $A$. The
next lemma shows that $V(Ax) \leq V(x)$. Thus, under Assumptions
\ref{assumpt:weights} and \ref{assumpt:ds}, the sample variance $V(x(t))$ is nonincreasing in
$t$, and $V(x(t))$ can be used as a Lyapunov function.

\begin{lemma}
\label{vl} Let $A$ be a doubly stochastic matrix. Then,\footnote{In
the sequel, the notation $\sum_{i<j}$ will be used to denote the
double sum $\sum_{j=1}^n\sum_{i=1}^{j-1}$.} for all $x \in \R^n$,
\[ V(Ax) = V(x) - \sum_{i<j} w_{ij} (x_i - x_j)^2, \] where $w_{ij}$ is
the $(i,j)$-th entry of the matrix $A^T A$.
\end{lemma}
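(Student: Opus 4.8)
The plan is to prove the identity
\[ V(Ax) = V(x) - \sum_{i<j} w_{ij} (x_i - x_j)^2 \]
by a direct algebraic computation, exploiting the double stochasticity of $A$ at two crucial points. First I would note that since $A$ is doubly stochastic, the all-ones vector satisfies $A\1 = \1$ and $\1^T A = \1^T$; the latter guarantees that the average of the entries is preserved, i.e. $\overline{Ax} = \bar{x}$. This lets me work with the centered vector. Writing $\bar{x}$ for the common average, the key reduction is that both $V(x)$ and $V(Ax)$ can be expressed relative to the same mean, so it suffices to analyze the quadratic forms directly.

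The cleanest route is to express the sample variance as a quadratic form. I would write $V(x) = \|x - \bar{x}\1\|^2 = x^T x - n \bar{x}^2$ (using $\sum_i x_i = n\bar{x}$), and similarly $V(Ax) = (Ax)^T(Ax) - n\bar{x}^2 = x^T A^T A x - n\bar{x}^2$, where the second term is unchanged precisely because $\overline{Ax} = \bar{x}$. Subtracting gives
\[ V(x) - V(Ax) = x^T x - x^T A^T A x = x^T (I - A^T A) x. \]
The main task is then to show that $x^T(I - A^T A)x = \sum_{i<j} w_{ij}(x_i - x_j)^2$, where $w_{ij} = (A^T A)_{ij}$.

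The heart of the argument is a standard but important identity for matrices whose rows and columns sum to one. Since $A$ is doubly stochastic, so is $W := A^T A$ (the product of doubly stochastic matrices is doubly stochastic, and $W$ is symmetric). For any symmetric matrix $W$ with row sums equal to $1$, I would verify the identity
\[ x^T(I - W)x = \sum_{i<j} w_{ij}(x_i - x_j)^2. \]
To see this, expand the right-hand side: $\sum_{i<j} w_{ij}(x_i^2 - 2x_i x_j + x_j^2) = \sum_{i \ne j} w_{ij} x_i^2 - \sum_{i \ne j} w_{ij} x_i x_j$, using symmetry $w_{ij} = w_{ji}$ to combine the $x_i^2$ and $x_j^2$ terms. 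Now $\sum_{j \ne i} w_{ij} = 1 - w_{ii}$ by the row-sum property, so $\sum_{i \ne j} w_{ij} x_i^2 = \sum_i (1 - w_{ii}) x_i^2 = x^T x - \sum_i w_{ii} x_i^2$. Combining with the cross terms and recognizing $\sum_{i,j} w_{ij} x_i x_j = x^T W x$ reassembles exactly $x^T x - x^T W x = x^T(I - W)x$.

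The only real subtlety, and the step I would be most careful about, is the bookkeeping between the unordered sum $\sum_{i<j}$ and the ordered sum $\sum_{i \ne j}$, together with the consistent use of symmetry of $W$; a sign error or a factor of two is the natural pitfall here. Everything else is routine, and the two appeals to double stochasticity (preservation of the mean, and $W$ having unit row sums) are what make the identity close cleanly.
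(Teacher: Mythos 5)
Your proof is correct and follows essentially the same route as the paper: both arguments use double stochasticity to get $\overline{Ax}=\bar{x}$, reduce the claim to the quadratic form $x^T(I-A^TA)x$, and then exploit that $A^TA$ is symmetric and stochastic so that $w_{ii}=1-\sum_{j\neq i}w_{ij}$. The only cosmetic differences are that you work with the uncentered identity $V(x)=x^Tx-n\bar{x}^2$ rather than the centered vector $x-\bar{x}\1$, and you verify the key identity $x^T(I-A^TA)x=\sum_{i<j}w_{ij}(x_i-x_j)^2$ by direct expansion where the paper quotes it as the matrix decomposition $A^TA=I-\sum_{i<j}w_{ij}(e_i-e_j)(e_i-e_j)^T$.
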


\begin{proof}
Let ${\bf 1}$ denote the vector in $\R^n$ with all entries equal to
$1$. The double stochasticity of $A$ implies
\[A {\bf 1}= {\bf 1}, \qquad {\bf 1}^T A = {\bf 1}^T. \]
Note that multiplication by a doubly stochastic matrix $A$ preserves
the average of the entries of a vector, i.e., for any $x\in\R^n$,
there holds
\[ \overline{Ax} = \frac{1}{n}\, {\bf 1}^T Ax
=\frac{1}{n}\, {\bf 1}^T x = \bar{x}. \] We now write the
quadratic form $V(x)-V(Ax)$ explicitly, as follows:
\begin{eqnarray} V(x) - V(Ax) & = &  (x - \bar{x} {\bf 1})^T (x-\bar{x}{\bf 1}) -
(Ax-\overline{Ax}{\bf 1})^T (Ax - \overline{Ax}{\bf 1}) \nonumber \\
& = &  (x - \bar{x} {\bf 1})^T (x-\bar{x}{\bf 1}) - (Ax-\bar{x}A{\bf
1})^T (Ax - \bar{x} A {\bf 1}) \nonumber \\ & = &  (x - \bar{x} {\bf
1})^T (I-A^T A) (x-\bar{x}{\bf 1}). \label{vdec} \end{eqnarray}

Let $w_{ij}$ be the $(i,j)$-th entry of $A^T A$.  
Note that $A^T A$ is symmetric and stochastic, so that
$w_{ij}=w_{ji}$ and $w_{ii} = 1 - \sum_{j \neq i} w_{ij}$. Then, it
can be verified that
\begin{equation} A^T A  = I - \sum_{i<j} w_{ij} (e_i - e_j) (e_i -
e_j)^T, \label{asquareform}
\end{equation} where $e_i$ is a unit vector with the $i$-th entry equal to 1,
and all other entries equal to 0 \ao{(see also \cite{XBK07} where a
similar decomposition was used)}.

By combining Eqs.\ (\ref{vdec}) and (\ref{asquareform}), we obtain
\begin{eqnarray*}
V(x) - V(Ax) & =&  (x - \bar{x} {\bf 1})^T
\Big(\sum_{i<j} w_{ij} (e_i - e_j) (e_i - e_j)^T\Big) (x-\bar{x}{\bf 1}) \nonumber \\
& = &  \sum_{i<j} w_{ij} (x_i - x_j)^2. 
\end{eqnarray*}
\end{proof} Note that the entries $w_{ij}(t)$ of $A(t)^T A(t)$ are nonnegative,
because the weight matrix $A(t)$ has nonnegative entries. In view of this,
Lemma \ref{vl} implies that
\[V(x(t+1))\le V(x(t))\qquad \hbox{for all }t.\]
Moreover, the amount of variance decrease is given by
\[V(x(t)) - V(x(t+1)) = \sum_{i<j} w_{ij}(t) (x_i(t) - x_j(t))^2.\]
We will use this result to provide a lower bound on the amount of
decrease of the sample variance $V(x(t))$ in between iterations.

Since every positive entry of $A(t)$ is at least $\eta$, it follows
that every positive entry of $A(t)^T A(t)$ is at least $\eta^2$.
Therefore, it is immediate that
\[ \hbox{if }\quad w_{ij}(t) > 0, \quad\hbox{then }\quad w_{ij}{(t)} \geq \eta^2. \]
In our next lemma, we establish a stronger lower bound. In
particular, we find it useful to focus not on an individual
$w_{ij}$, but rather on all $w_{ij}$ associated with edges $(i,j)$
{that} cross {a particular} cut in the graph $(N,\E(A^TA))$. For
such groups of $w_{ij}$, we prove a lower bound which is linear in
$\eta$, as seen in the following.

\begin{lemma} \label{db}
Let $A$ be a \alexo{row-}stochastic matrix with positive diagonal
\an{entries},
and assume that \an{the smallest positive entry in $A$} is at least $\eta$.
Also, let $(S^{-},S^{+})$ be a partition of the set
$N=\{1,\ldots,n\}$ into two disjoint sets. If
\[ \sum_{i \in S^{-}, ~j \in S^{+}} w_{ij} > 0, \] then
\[ \sum_{i
\in S^{-}, ~j \in S^{+}} w_{ij} \geq \frac{\eta}{2}. \]
\end{lemma}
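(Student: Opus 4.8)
The plan is to expand each $w_{ij}$ through its definition $w_{ij} = (A^T A)_{ij} = \sum_{k=1}^n a_{ki} a_{kj}$ and then reorganize the cut sum as a sum over the rows $k$ of $A$. Interchanging the order of summation gives
\[ \sum_{i \in S^-,\, j \in S^+} w_{ij} \;=\; \sum_{k=1}^n \Big( \sum_{i \in S^-} a_{ki} \Big)\Big( \sum_{j \in S^+} a_{kj} \Big) \;=\; \sum_{k=1}^n p_k q_k, \]
where I write $p_k = \sum_{i \in S^-} a_{ki}$ and $q_k = \sum_{j \in S^+} a_{kj}$ for the total weight that row $k$ places on the two sides of the cut. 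Every term $p_k q_k$ is nonnegative, so it suffices to exhibit a single row whose contribution is at least $\eta/2$.

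The key structural fact is that row-stochasticity constrains the pair $(p_k,q_k)$: since $S^-$ and $S^+$ partition $N$, I have $p_k + q_k = \sum_{\ell=1}^n a_{k\ell} = 1$ for every $k$. Because the cut sum is assumed positive and all its summands are nonnegative, at least one row $k_0$ satisfies $p_{k_0} q_{k_0} > 0$, meaning row $k_0$ places positive weight on both sides. Each of $p_{k_0}$ and $q_{k_0}$ is then a sum containing at least one positive entry of $A$, and since every positive entry is at least $\eta$, I obtain $p_{k_0} \geq \eta$ and $q_{k_0} \geq \eta$.

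The final step converts this into the claimed linear-in-$\eta$ bound. From $p_{k_0} + q_{k_0} = 1$ it follows that $\max(p_{k_0}, q_{k_0}) \geq 1/2$; multiplying this larger factor by the smaller one, which is at least $\eta$, yields $p_{k_0} q_{k_0} \geq \eta/2$. Hence
\[ \sum_{i \in S^-,\, j \in S^+} w_{ij} \;=\; \sum_{k=1}^n p_k q_k \;\geq\; p_{k_0} q_{k_0} \;\geq\; \frac{\eta}{2}, \]
which is the desired inequality.

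I expect the only real obstacle to be spotting the correct regrouping. The naive entrywise estimate $w_{ij} \geq \eta^2$ is far too weak and only produces a quadratic dependence on $\eta$; the gain comes from collecting all the cut entries into the single per-row quantity $\sum_k p_k q_k$ and then exploiting $p_k + q_k = 1$, which forces one of the two factors up to $1/2$ and thereby upgrades $\eta^2$ to the linear bound $\eta/2$. I note in passing that the positive-diagonal hypothesis, while available, is not actually used in this argument.
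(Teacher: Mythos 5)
Your proof is correct, and it is organized differently from the paper's. The paper first extracts a single crossing edge $(i^*,j^*)$ with $i^*\in S^-$, $j^*\in S^+$ from the hypothesis, then defines the row-$j^*$ masses $C_{j^*}^- = \sum_{i\in S^-}a_{j^*i}$ and $C_{j^*}^+=\sum_{i\in S^+}a_{j^*i}$, and argues by cases on which of the two is at least $1/2$: in the case $C_{j^*}^-\geq 1/2$ it pairs each $a_{j^*i}$ with the \emph{diagonal} entry $a_{j^*j^*}\geq\eta$, and in the case $C_{j^*}^+\geq 1/2$ it pairs each $a_{j^*j}$ with the crossing entry $a_{j^*i^*}\geq\eta$; in both cases only a subfamily of the cut terms $w_{ij}$ is bounded below. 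You instead regroup the \emph{entire} cut sum by rows, $\sum_{i\in S^-,\,j\in S^+}w_{ij}=\sum_k p_k q_k$ with $p_k+q_k=1$, select any row with $p_{k_0}q_{k_0}>0$, and observe that both factors are then at least $\eta$ while the larger is at least $1/2$. The underlying mechanism is the same in both proofs --- a single row of $A$ carries enough weight, and row-stochasticity upgrades the naive $\eta^2$ bound to $\eta/2$ --- but your formulation makes the argument uniform, eliminates the case analysis and the choice of a crossing edge, and shows that the positive-diagonal hypothesis is not needed for this lemma (it is the paper's case (a) that consumes it, via the factor $a_{j^*j^*}$). That observation is a genuine, if small, strengthening: the diagonal assumption is still needed elsewhere in the chapter (e.g., to verify that the hypothesis $\sum_{i\in S^-,\,j\in S^+}w_{ij}>0$ holds when an edge crosses the cut), but the lemma itself holds for any row-stochastic matrix whose positive entries are at least $\eta$.
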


\begin{proof}
Let $\sum_{i \in S^{-}, ~j \in S^{+}} w_{ij} >0$. From the
definition of the weights $w_{ij}$, we have $w_{ij}=\sum_k
a_{ki}a_{kj}$, which shows that there exist $i\in S^-$, $j\in S^+$,
and some $k$ such that $a_{ki}>0$ and $a_{kj}>0$. For either case
where $k$ belongs to $S^-$ or $S^+$, we see that there exists an
edge in the set $\E(A)$ that crosses the cut $(S^{-},S^{+})$. Let
$(i^*,j^*)$ be such an edge. Without loss of generality, we assume
that $i^* \in S^{-}$ and $j^* \in S^{+}$.


We define
\begin{eqnarray*} C_{j^*}^{+} & = & \sum_{i \in S^{+}} a_{j^*i}, \\
C_{j^*}^{-} & =&  \sum_{i \in S^{-}} a_{j^*i}. \end{eqnarray*} See
Figure \ref{cdef}(a) for an illustration.
Since $A$ is a \an{row}-stochastic matrix, we have
\[
C_{j^*}^{-} + C_{j^*}^{+}  =  1, 
\]
implying that at least one of the following is true:
\begin{eqnarray*}
\mbox{ Case (a): } & & C_{j^*}^{-} \geq \frac{1}{2}, \\
\mbox{ Case (b): } & & C_{j^*}^{+} \geq \frac{1}{2}.
\end{eqnarray*}
We consider these {two cases} separately. In both cases,
we focus on a subset of the edges and we use the fact
that the
elements $w_{ij}$ correspond to paths of length $2$, with one step
in $\E(A)$ and another in $\E(A^T)$.

\begin{center}
\begin{figure}
\hspace{1cm}
{\includegraphics[width=4cm]{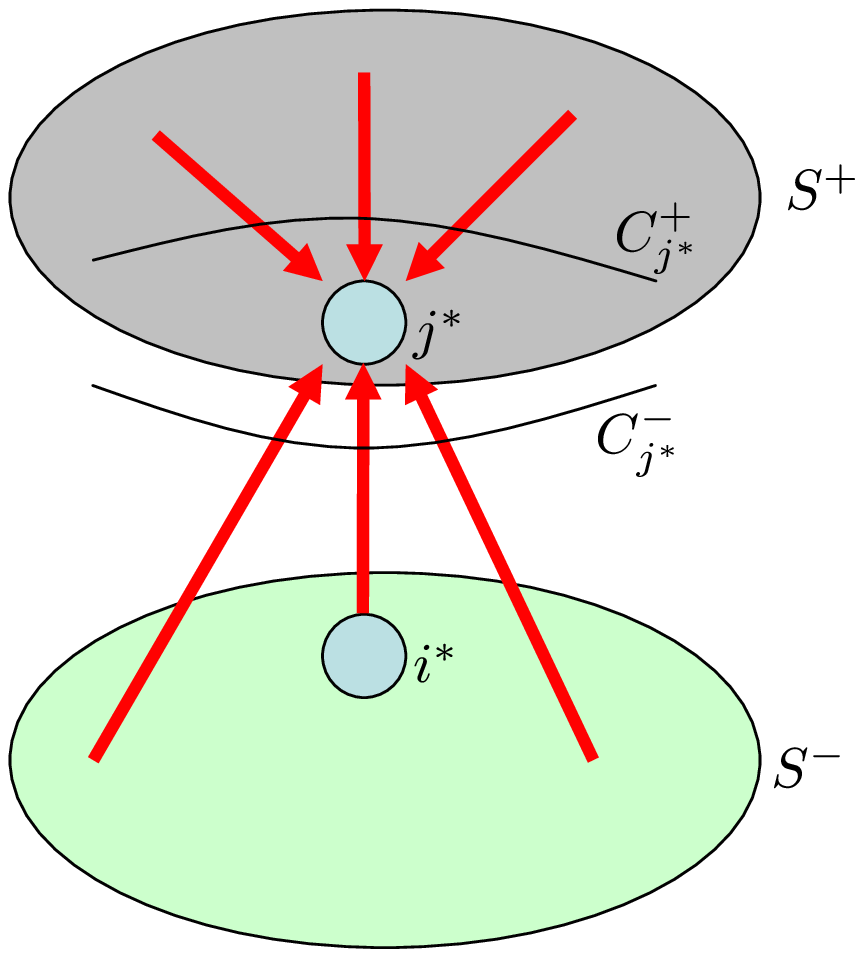} \quad
\includegraphics[width=4cm]{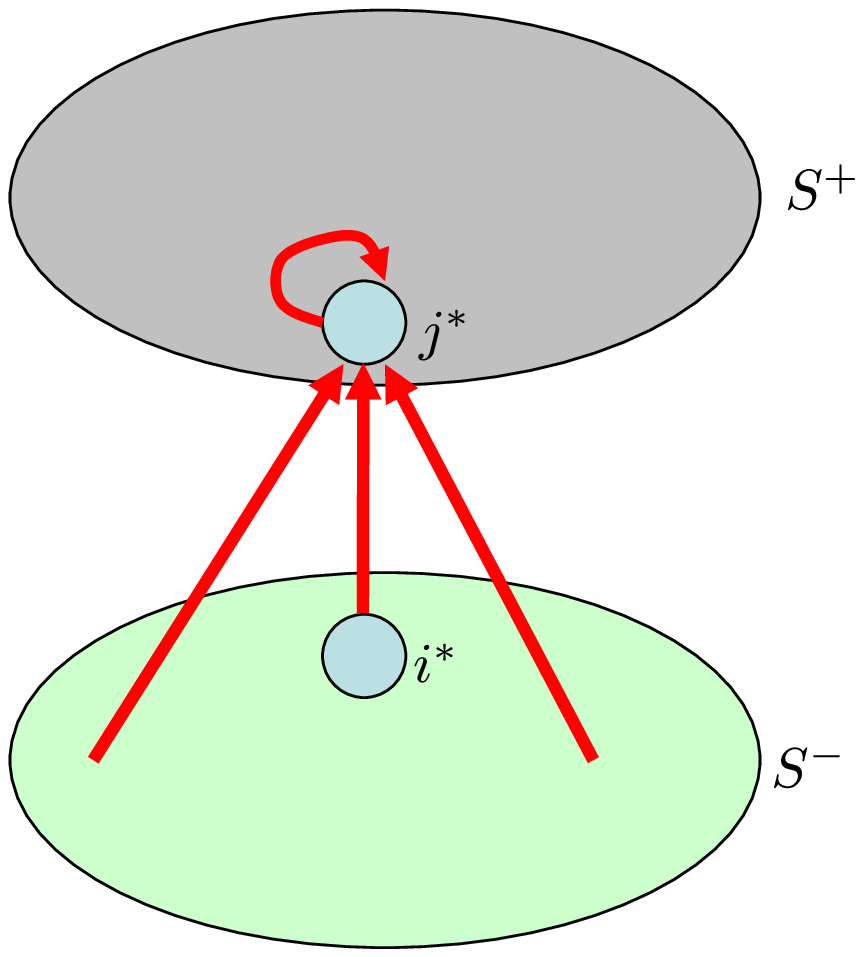}
\quad
\includegraphics[width=4cm]{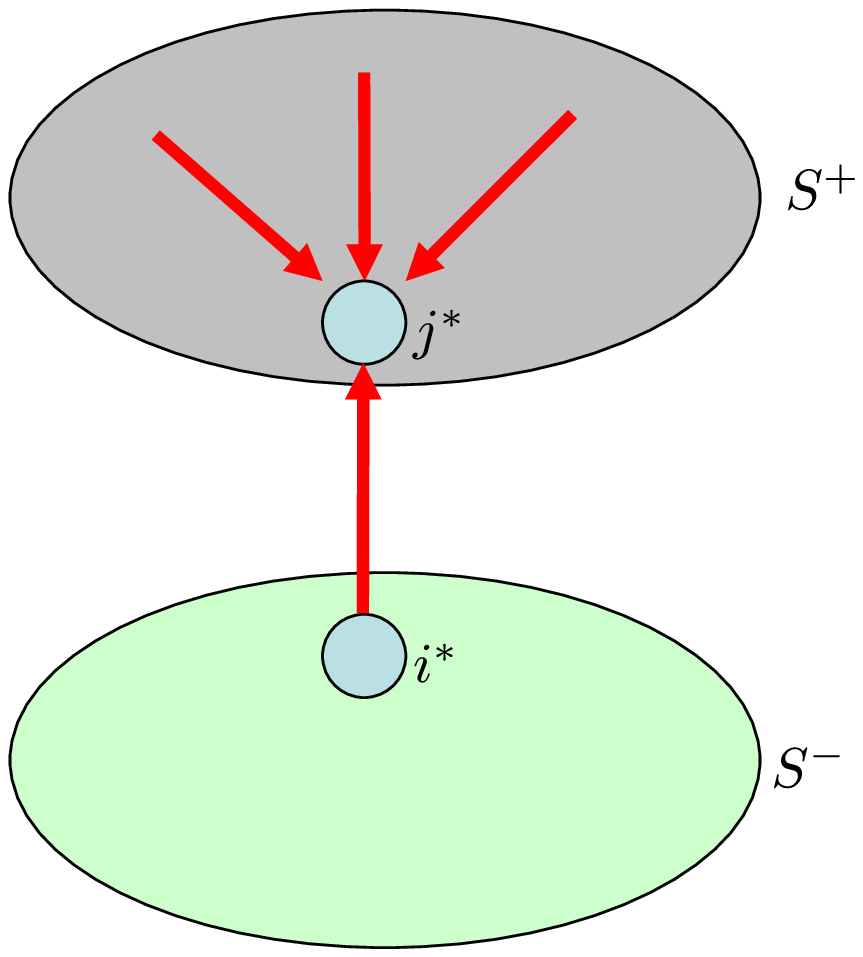}
} \caption{\label{cdef}\small (a) Intuitively,  $C_{j^*}^{+}$
measures how much weight $j^*$ assigns to nodes in $S^{+}$ (including
itself), and $C_{j^*}^{-}$ measures how much weight $j^*$ assigns to
nodes in $S^{-}$. Note that the {edge} $(j^*,j^*)$ is also present,
but not shown.
(b) For the case {where} $C_{j^*}^{-} \geq 1/2$, we
only focus on
two-hop paths between $j^*$ and elements $i \in S^{-}$
obtained by taking $(i,j^*)$ as the first step and the self-edge
$(j^*,j^*)$ as the second step.
(c) For the case where
$C_{j^*}^{+} \geq 1/2$, we
only focus on two-hop paths between $i^*$ and elements $j\in S^{+}$
obtained by taking $(i^*,j^*)$ as the first step in
$\E(A)$ and $(j^*,j)$ as the second step in $\E(A^T)$.}
\end{figure}
\end{center}

\noindent {\it Case (a):} $C_{j^*}^{-} \geq 1/2.$\\
We focus on those $w_{ij}$ with $i\in S^-$ and $j=j^*$. Indeed,
since all $w_{ij}$ are nonnegative, we have
\begin{equation} \label{wsubset} \sum_{i \in S^{-}, ~j \in S^{+}}
w_{ij} \geq \sum_{i \in S^{-}} w_{ij^*}.
\end{equation}
For each element in the sum on the right-hand {side}, we have
\[ w_{ij^*} = \sum_{k=1}^n a_{ki}\ a_{kj^*}
\geq a_{j^*i}\ a_{j^* j^*} \geq a_{j^*i}\ \eta, \] where the
inequalities follow from the facts that $A$ has
nonnegative entries, its diagonal entries are positive, and
its positive entries are at least $\eta$. Consequently,
\begin{equation} \sum_{i \in S^{-}} w_{ij^*}
\geq \eta\ \sum_{i \in S^{-}} a_{j^*i}
= \eta\, C_{j^*}^{-}. \label{wlowerbound}
\end{equation}
Combining Eqs.\ (\ref{wsubset}) and (\ref{wlowerbound}), and
recalling {the assumption} $C_{j^*}^{-} \geq 1/2$, the {result}
follows. {An} illustration of {this argument} can be found in Figure
\ref{cdef}{(b)}.

\noindent {\it Case (b):} $C_{j^*}^{+} \geq 1/2.$\\
We focus on those $w_{ij}$ with $i=i^*$ and $j \in S^{+}$.
We have
\begin{equation} \label{wsubset2}
\sum_{i \in S_{-}, ~j \in S^{+}} w_{ij} \geq \sum_{j \in
S^{+}} w_{i^* j},
\end{equation}
since all $w_{ij}$ are nonnegative. For each element in the sum on
the right-hand side, we have
\[ w_{i^* j} = \sum_{k=1}^n a_{ki^*}\ a_{k j}
\geq a_{j^* i^*}\ a_{j^* j} \geq \eta\, a_{j^* j}, \]
where the inequalities follow {because all entries of $A$ are
nonnegative, and because the choice $(i^*, j^*) \in \E(A)$ implies
that $a_{j^* i^*} \geq{\eta}$.} Consequently,
\begin{equation}
\sum_{j \in S^{+}} w_{i^* j} \geq \eta \sum_{j \in
S^{+}} a_{j^*j} = \eta\, C_{j^*}^{+}. \label{wlowerbound2}
\end{equation} {Combining Eqs.\ (\ref{wsubset2}) and}
(\ref{wlowerbound2}), and recalling {the assumption} $C_{j^*}^+ \geq
1/2$, the {result} follows. An illustration of this argument can
be found in Figure \ref{cdef}{(c)}.
\end{proof}

\subsection{A bound on convergence time}\label{convergtime}

With the preliminaries on doubly stochastic matrices in place, we
{can now proceed to derive} bounds on the decrease of $V(x(t))$ in
between iterations. We will first somewhat relax our connectivity
assumptions. In particular, we consider the following relaxation of
Assumption \ref{assumpt:boundedintervals}.\vspace{5pt}

\begin{assumption}[Relaxed connectivity] \label{weakconnect} Given an integer $t\ge 0$,
suppose that the components of $x(tB)$ have been reordered so that
they are in nonincreasing order. We assume that for every
$d\in\{1,\ldots,n-1\}$, we either have $x_d(tB)=x_{d+1}(tB)$, or
there exist some time $t\in\{tB,\ldots,(t+1)B-1\}$ and some
$i\in\{1,\ldots,d\}$, $j\in\{d+1,\ldots,n\}$ such that $(i,j)$ or
$(j,i)$ belongs to $\E(A(t))$.
\end{assumption}

\begin{lemma} \label{as2implas3}
Assumption \ref{assumpt:boundedintervals}
implies Assumption \ref{weakconnect}, {with the same value of $B$.}
\end{lemma}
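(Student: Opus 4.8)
The plan is to reduce the claim to a single elementary fact from graph theory: if a directed graph on the vertex set $N$ is strongly connected, then for every nonempty proper subset $S \subsetneq N$ there is at least one edge leaving $S$, i.e. an edge $(i,j)$ with $i \in S$ and $j \in N \setminus S$. Indeed, picking any $u \in S$ and $v \in N \setminus S$, a directed path from $u$ to $v$ (which exists by strong connectivity) must at some step pass from a vertex of $S$ to a vertex of $N \setminus S$, and that step is the desired edge. Assumption \ref{weakconnect} asks for exactly such a crossing edge (in either direction) for each of the cuts induced by the sorted coordinates, so once this observation is in hand the lemma is immediate.

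Concretely, I would proceed as follows. First, fix an arbitrary block index $k \ge 0$ and sort the entries of $x(kB)$ into nonincreasing order, relabeling the nodes accordingly; the argument is then carried out with respect to this relabeling. Second, fix $d \in \{1,\ldots,n-1\}$ and set $S = \{1,\ldots,d\}$, a nonempty proper subset of $N$. Third, invoke Assumption \ref{assumpt:boundedintervals}: the union graph
\[ \left(N, \ \bigcup_{s=kB}^{(k+1)B-1} E(s)\right) \]
is strongly connected. By the elementary fact above it contains an edge crossing the cut $(S, N \setminus S)$; that is, there exist a time $s \in \{kB,\ldots,(k+1)B-1\}$ and nodes $i \in \{1,\ldots,d\}$, $j \in \{d+1,\ldots,n\}$ with $(i,j)$ an edge of $E(s)$. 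Since the edges of $E(s)$ are precisely the pairs corresponding to the positive entries of $A(s)$, this says the cut is crossed by an edge of $\E(A(s))$ in one orientation or the other, which is exactly the second of the two alternatives offered in Assumption \ref{weakconnect}. As we may always take this alternative (whether or not $x_d(kB) = x_{d+1}(kB)$), and as $d$ and $k$ were arbitrary, Assumption \ref{weakconnect} follows, using the same block length $B$.

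I do not expect a genuine obstacle here; the entire content is the one graph-theoretic observation in the first paragraph. The only points needing care are bookkeeping: keeping the reordering of the nodes straight, and reconciling the edge-direction conventions between $E(\cdot)$ and $\E(A(\cdot))$. The latter is in fact a non-issue, precisely because Assumption \ref{weakconnect} accepts a crossing edge in \emph{either} direction, $(i,j)$ or $(j,i)$; strong connectivity of the union graph guarantees edges crossing the cut in both directions, so whichever orientation convention is in force, the required condition is met.
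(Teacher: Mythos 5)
Your proof is correct and takes essentially the same approach as the paper: both arguments rest on the single fact that strong connectivity of the union graph $\left(N, \bigcup_{s=kB}^{(k+1)B-1} E(s)\right)$ forces an edge across every cut $\{1,\ldots,d\}$ versus $\{d+1,\ldots,n\}$, which is exactly what Assumption \ref{weakconnect} demands. The only cosmetic difference is that you argue directly, while the paper phrases it as a contradiction (if some cut with $x_d(tB)\neq x_{d+1}(tB)$ had no crossing edge over the whole $B$-block, the union graph would be disconnected, violating Assumption \ref{assumpt:boundedintervals}).
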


\begin{proof} If Assumption \ref{weakconnect} does not hold,
then \ao{there must exist an index $d$
[for which $x_d(tB) \neq x_{d+1}(tB)$ holds] such that there are no
edges between nodes $1,2,\ldots,d$ and nodes $d+1,\ldots,n$ during
times $t=tB,\ldots,(t+1)B-1$. But this implies that the graph}
\[ \Big(N,\E({A}(tB)) \bigcup \E(A(tB+1)){\bigcup} \cdots \bigcup
\E(A((t+1)B-1))\Big) \]
is disconnected, which violates Assumption 2.
\end{proof}

For our convergence time results, we will use the weaker Assumption
\ref{weakconnect}, rather than the stronger Assumption \ref{assumpt:boundedintervals}. Later on,
in Chapter \ref{nsquared}, we will exploit the sufficiency of
Assumption \ref{weakconnect} to {design} a 
decentralized algorithm for selecting the weights $a_{ij}(t)$, which
satisfies Assumption \ref{weakconnect}, but not  Assumption \ref{assumpt:boundedintervals}.

We now proceed to bound the decrease of our Lyapunov function
$V(x(t))$ during the interval $[tB, (t+1) B-1]$. In
what follows, we denote by $V(t)$ the sample variance $V(x(t))$ at
time $t$.

\begin{lemma}
\label{vardiff} Let Assumptions \ref{assumpt:weights} (non-vanishing
weights), \ref{assumpt:ds} (double stochasticity) and
\ref{weakconnect} (relaxed connectivity) hold. Let $\{x(t)\}$ be
generated by the update rule (\ref{eq:basicupdate}). Suppose that
the components $x_i(tB)$ of the vector $x(tB)$ have been ordered
from largest to smallest, with ties broken arbitrarily. Then, 
\[
V(tB) - V((t+1)B) \geq
\frac{\eta}{2} \sum_{i=1}^{n-1} (x_{i}(tB) - x_{i+1}(tB))^2.
\]
\end{lemma}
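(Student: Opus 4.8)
The plan is to telescope the total variance decrease across the window and, using Lemma~\ref{vl}, to charge each squared gap $(x_d(tB)-x_{d+1}(tB))^2$ to the single step at which the corresponding cut is first broken. First I would relabel the nodes by the fixed permutation that sorts the values at time $tB$, so that $x_1(tB)\ge\cdots\ge x_n(tB)$; this changes neither the dynamics nor $V$. Writing $\Delta_d=x_d(tB)-x_{d+1}(tB)\ge 0$ and invoking Lemma~\ref{vl} together with the nonnegativity of the weights $w_{ij}(s)$, the decrease telescopes as
\[ V(tB)-V((t+1)B)=\sum_{s=tB}^{(t+1)B-1}\Big(V(x(s))-V(x(s+1))\Big)=\sum_{s=tB}^{(t+1)B-1}\sum_{i<j}w_{ij}(s)\,(x_i(s)-x_j(s))^2, \]
and every summand is nonnegative.

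For each time $s$ in the window, let $D(s)$ be the set of indices $d$ whose cut $(\{1,\dots,d\},\{d+1,\dots,n\})$ is crossed for the \emph{first} time (within the window) at step $s$. The heart of the argument is the single-step bound
\[ V(x(s))-V(x(s+1))\ \ge\ \frac{\eta}{2}\sum_{d\in D(s)}\Delta_d^2. \]
To prove it, write $D(s)=\{d_1<\cdots<d_m\}$ and let $B_0,\dots,B_m$ be the corresponding blocks of consecutive indices. Since each $d_k$ is uncrossed throughout $\{tB,\dots,s-1\}$, no edge (in either direction) joins distinct blocks during that interval, so by row-stochasticity each node's update is a convex combination of values inside its own block; hence the minimum and maximum over a block are monotone. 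This gives, at time $s$, the separations $x_i(s)\ge x_{d_k}(tB)$ for $i\le d_k$ and $x_j(s)\le x_{d_k+1}(tB)$ for $j>d_k$. Telescoping these block bounds and using that a square of a sum of nonnegatives dominates the sum of squares yields the key pointwise inequality $(x_i(s)-x_j(s))^2\ge\sum_{k:\,i\le d_k<j}\Delta_{d_k}^2$ for every pair $i<j$.

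Plugging this into the single-step identity and exchanging the order of summation rewrites the decrease as $\sum_{k}\Delta_{d_k}^2\big(\sum_{i\le d_k<j}w_{ij}(s)\big)$; since cut $d_k$ is crossed at time $s$, the associated cut-sum of $w$-entries is positive, so Lemma~\ref{db} bounds each inner sum below by $\eta/2$, establishing the single-step bound. Finally I would sum over $s$: because every cut with $\Delta_d>0$ is crossed somewhere in the window by Assumption~\ref{weakconnect} and thus has a well-defined first-crossing time, the families $D(s)$ partition all the contributing cuts, and the bound accumulates to $\frac{\eta}{2}\sum_{d=1}^{n-1}\Delta_d^2$, as claimed.

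I expect the main obstacle to be exactly the double counting that arises when a single step breaks several cuts at once (one long edge can cross many cuts simultaneously): a naive cut-by-cut application of Lemma~\ref{db} would reuse the same weights $w_{ij}(s)$ for several cuts and overcount. The refined pointwise inequality $(x_i(s)-x_j(s))^2\ge\sum_{k:\,i\le d_k<j}\Delta_{d_k}^2$, which after swapping the summation order distributes exactly \emph{one} factor of $\eta/2$ to each cut, is what resolves this and is the step I would treat most carefully. A secondary point requiring attention is the block-independence on $\{tB,\dots,s-1\}$, where choosing $s$ to be the \emph{first} crossing time (rather than an arbitrary one) is essential.
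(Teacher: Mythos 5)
Your proposal is correct and follows essentially the same route as the paper's own proof: telescoping $V$ via Lemma~\ref{vl}, classifying cuts by their first crossing time $D(s)$, establishing the pointwise inequality $(x_i(s)-x_j(s))^2\geq\sum_{d}(y_d-y_{d+1})^2$ over first-crossed cuts between $i$ and $j$, exchanging the order of summation, and applying Lemma~\ref{db} once per cut. Your block decomposition is just a mild repackaging of the paper's observation that each uncrossed cut keeps $\{1,\ldots,d\}$ and $\{d+1,\ldots,n\}$ evolving as closed convex-combination systems, and the double-counting concern you flag is resolved exactly as in the paper.
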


\begin{proof} By Lemma \ref{vl}, we have for all $t$,
\begin{equation}
V(t)-V(t+1) = \sum_{i<j} w_{ij}(t)
(x_i(t)-x_j(t))^2, \label{firstvdec0}
\end{equation}
where $w_{ij}(t)$ is the $(i,j)$-th entry of $A(t)^T A(t)$. Summing
up the variance differences $V(t)-V(t+1)$ over different values of $t$, we obtain
\begin{equation}
V(tB) - V((t+1)B)
= \sum_{k=tB}^{(t+1)B-1} \sum_{i<j}
w_{ij}(k) (x_i(k)-x_j(k))^2. \label{firstvdec}
\end{equation}

\ao{We next introduce some notation.}

\begin{itemize}
\item[(a)]
For all $d\in\{1,\ldots,n-1\}$, let $t_d$ be the first time larger than
or equal to $tB$
(if it exists) at which there is a communication between two nodes
belonging to the two sets $\{1,\ldots,d\}$ and $\{d+1,\ldots,n\}$, to
be referred to as a communication across the cut $d$.

\item[(b)]
For all $t\in \{tB,\ldots,(t+1)B-1\}$, let $D(t)=\{d\mid
t_d=t\}$, i.e., $D(t)$ consists of ``cuts" $d\in \{1,\ldots,n-1\}$
such that time $t$ is the first communication time larger than or equal
to $tB$ between nodes in the sets $\{1,\ldots,d\}$ and
$\{d+1,\ldots,n\}$. Because of Assumption \ref{weakconnect}, the
union of the sets $D(t)$ includes all indices $1,\ldots,n-1$, except
possibly for indices for which $x_d(tB)=x_{d+1}(tB)$.

\item[(c)]
For all $d\in \{1,\ldots,n-1\}$, let $C_d=\{(i,j) \alexo{,~(j,i)} 
 \mid i\leq d, \ d+1\leq j\}$.

\item[(d)]
For all $t\in \{tB,\ldots,(t+1)B-1\}$, let $F_{ij}(t) =\{d\in D(t)\
|\ (i,j) \alexo{\mbox{ or } (j,i)} \in C_d\}$, i.e., $F_{ij}(t)$
consists of all cuts $d$ such that the edge $(i,j)$ \alexo{ or
$(j,i)$} at time $t$ is the first communication across the cut at a
time larger than or equal to $tB$.

\item[(e)] To simplify notation, let $y_i=x_i(tB)$. By assumption, we have
$y_1\geq\cdots\geq y_n$.
\end{itemize}

We make two observations, as follows:
\begin{itemize}
\item[(1)]
Suppose that $d\in D(t)$. Then, for some $(i,j)\in C_d$, we have
either $a_{ij}(t)>0$ or $a_{ji}(t)>0$.
\alexo{Because $A(t)$ is
nonnegative with positive diagonal \an{entries, we have}
\[ w_{ij}(t) = \sum_{k=1}^n
a_{ki} a_{kj} \geq a_{ii}(t) a_{ij}(t) + a_{ji}(t) a_{jj}(t) > 0,\]
and} by Lemma \ref{db}, we obtain
\begin{equation}
\sum_{(i,j)\in C_d} w_{ij}(t)\geq \frac{\eta}{2}. \label{eq:w}
\end{equation}
\item[(2)]
Fix some $(i,j)$, with $i<j$, and time $t'\in
\{tB,\ldots,(t+1)B-1\}$, and suppose that $F_{ij}(t')$ is nonempty.
Let $F_{ij}\ao{(t')}=\{d_1,\ldots,d_k\}$, where the $d_j$ are
arranged in increasing order. Since $d_1\in F_{ij}\ao{(t')}$, we have
$d_1\in D(t)$ and therefore $t_{d_1}=t'$. By the definition of
$t_{d_1}$, this implies that there has been no communication between
a node in $\{1,\ldots,d_1\}$ and a node in $\{d_1+1,\ldots,n\}$
during the time interval $[tB,t'-1]$. It follows that $x_i(t') \geq
y_{d_1}$. By a symmetrical argument, we also have
\begin{equation}
x_j(t')\leq y_{d_k+1}.\label{cutbound}
\end{equation}
These relations imply that
$$x_i(t')-x_j(t') \geq y_{d_1}-y_{d_k+1}  \ao{\geq } \sum_{d\in F_{ij}\ao{(t')}} (y_d-y_{d+1}),$$
 Since the components of $y$ are sorted in nonincreasing order, we
have $y_d-y_{d+1}\geq 0$, for every $d\in F_{ij}\ao{(t')}$. For any
nonnegative numbers $z_i$, we have
$$(z_1+\cdots+z_k)^2\geq z_1^2+\cdots+z_k^2,$$
which implies that 
\begin{equation}
(x_i(t')-x_j(t'))^2 \geq \sum_{d\in F_{ij}\ao{(t')}} (y_d-y_{d+1})^2.
\label{eq:dx} \end{equation} \end{itemize} We now use these two observations to provide a lower bound on the
expression on the right-hand side of Eq.\ (\ref{firstvdec0}) at time
$t'$.
We use Eq.\ (\ref{eq:dx}) and then Eq.\ (\ref{eq:w}),
to obtain
\begin{eqnarray*}
\sum_{i<j} w_{ij}(t')(x_i(t')-x_j(t'))^2&
\geq& \sum_{i<j} w_{ij}(t') \sum_{d\in F_{ij}\ao{(t')}} (y_d-y_{d+1})^2\\
&=&\sum_{d\in D(t')} \sum_{(i,j)\in C_d} w_{ij}(t') (y_d-y_{d+1})^2\\
&\geq& \frac{\eta}{2} \sum_{d\in D(t')}(y_d-y_{d+1})^2.
\end{eqnarray*}
We now sum both sides of the above inequality for different values of $t$, and
use Eq.\ (\ref{firstvdec}), to obtain
\begin{eqnarray*}
V(tB)-V((t+1)B)&=& \sum_{k=tB}^{(t+1)B-1} \sum_{i<j} w_{ij}(k)(x_i(k)-x_j(k))^2\\
&\geq&
\frac{\eta}{2} \sum_{k=tB}^{(t+1)B-1}\sum_{d\in D(k)}(y_d-y_{d+1})^2\\
&=&\frac{\eta}{2}\sum_{d=1}^{n-1} (y_d-y_{d+1})^2,
\end{eqnarray*}
where the last inequality follows from the fact that the union of
the sets $D(k)$ is only missing those $d$ for which $y_d=y_{d+1}$.
\end{proof}

\vskip 1pc We next establish a bound on {the} variance decrease that
plays {a} key role in our convergence analysis.

\begin{lemma} \label{lboundvar} Let Assumptions
\ref{assumpt:weights} (non-vanishing weights),  \ref{assumpt:ds}
(double stochasticity), and \ref{weakconnect} (connectivity
relaxation) hold, and suppose that $V(tB)>0$. Then,
\[\frac{V(tB)-V((t+1)B)}{V(tB)} \geq \frac{\eta}{2 n^2}\qquad
\hbox{for all }t.\]
\end{lemma}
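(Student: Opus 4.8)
The plan is to combine Lemma~\ref{vardiff} with a single elementary inequality comparing the sample variance of a sorted vector to the sum of the squares of its consecutive gaps. Lemma~\ref{vardiff} already does the heavy lifting: after ordering the components of $x(tB)$ from largest to smallest, it gives
\[
V(tB) - V((t+1)B) \;\ge\; \frac{\eta}{2} \sum_{i=1}^{n-1} \bigl(x_i(tB) - x_{i+1}(tB)\bigr)^2 .
\]
So all that remains is to show that the right-hand sum is at least $V(tB)/n^2$; dividing through by $V(tB)>0$ then yields the claimed bound $\eta/(2n^2)$.

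To establish that remaining inequality, I would write $y_i = x_i(tB)$, so that $y_1 \ge y_2 \ge \cdots \ge y_n$, and set $g_k = y_k - y_{k+1} \ge 0$ for $k=1,\ldots,n-1$. First, since every $y_i$ and the mean $\bar y$ lie in the interval $[y_n, y_1]$, each deviation satisfies $|y_i - \bar y| \le y_1 - y_n$, so that
\[
V(tB) = \sum_{i=1}^n (y_i - \bar y)^2 \;\le\; n\,(y_1 - y_n)^2 .
\]
Next, telescoping gives $y_1 - y_n = \sum_{k=1}^{n-1} g_k$, and by Cauchy--Schwarz $(y_1 - y_n)^2 \le (n-1)\sum_{k=1}^{n-1} g_k^2 \le n \sum_{k=1}^{n-1} g_k^2$. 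Chaining these two bounds produces $V(tB) \le n^2 \sum_{k=1}^{n-1} g_k^2$, that is, $\sum_{i=1}^{n-1}(y_i - y_{i+1})^2 \ge V(tB)/n^2$, which is exactly what is needed.

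The conclusion is then immediate: substituting this into the bound from Lemma~\ref{vardiff} gives $V(tB) - V((t+1)B) \ge \tfrac{\eta}{2}\cdot V(tB)/n^2$, and dividing by $V(tB)$ finishes the proof. There is no genuine obstacle once Lemma~\ref{vardiff} is available; the only point requiring care is arranging the chain of elementary bounds so that the constants line up. In particular, the factor lost in the Cauchy--Schwarz step (bounding $n-1$ by $n$) together with the factor $n$ from the range-versus-variance estimate produces exactly the $n^2$ in the denominator, matching the target rate $\eta/(2n^2)$; a cruder estimate would still give a polynomial rate, only with a worse constant.
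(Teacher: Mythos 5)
Your proof is correct, and it reaches the paper's exact constant $\eta/(2n^2)$ by a cleaner route in the second half of the argument. Both you and the paper start identically, from Lemma \ref{vardiff}, and both then need the elementary fact that, for a vector sorted in nonincreasing order, the sum of squared consecutive gaps is at least $V/n^2$. The paper proves this fact using a technique it attributes to \cite{LO81}: it first normalizes (shift so that $\bar{x}=0$, scale so that $\sum_i x_i^2 = 1$), argues that some coordinate has magnitude at least $1/\sqrt{n}$, disposes of a sign issue by possibly replacing $x$ with $-x$, concludes that $x_1 - x_n \geq 1/\sqrt{n}$, and finally solves the symmetric convex program $\min \sum_i z_i^2$ subject to $z_i \geq 0$ and $\sum_i z_i \geq 1/\sqrt{n}$, whose optimal value $1/n^2$ is attained at the symmetric point $z_i = n^{-3/2}$. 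You bypass both the normalization and the optimization: the range bound $V(tB) \leq n\,(y_1-y_n)^2$ (valid because every $y_i$ and $\bar y$ lie in $[y_n,y_1]$), combined with telescoping and Cauchy--Schwarz, $(y_1-y_n)^2 \leq (n-1)\sum_k g_k^2$, gives $V(tB) \leq n^2 \sum_k g_k^2$ directly. The two arguments are morally the same---the paper's constraint $\sum_i z_i \geq 1/\sqrt{n}$ is essentially your range bound after normalization, and its symmetric optimization plays the role of Cauchy--Schwarz---but yours is shorter, avoids the WLOG bookkeeping, and in fact retains the marginally sharper factor $n(n-1)$ before you round it up to $n^2$.
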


\begin{proof}
{Without loss of generality, we assume that the components of
$x(tB)$ have been sorted in nonincreasing order.} By Lemma
\ref{vardiff}, we have
\[ V(tB) -
V((t+1)B) \geq \frac{\eta}{2}\, \sum_{i=1}^{n-1} (x_{i}(tB) -
x_{i+1}(tB))^2.\]
This implies that
\[ \frac{V(tB)-V((t+1)B)}{V(tB)}\ge\frac{\eta}{2} \,
\frac{\sum_{i=1}^{n-1} (x_{i}(tB) - x_{i+1}(tB))^2}{\sum_{i=1}^n
(x_i(kB)-\bar{x}(kB))^2}. \] Observe that the right-hand side does
not change when we add a constant to every $x_i(tB)$. We can
therefore assume, without loss of generality, that $\bar{x}(tB)=0$,
{so that}
\[ \frac{V(tB)-V((t+1)B)}{V(tB)} \geq \frac{\eta}{2}\,
\min_{{x_1\ge x_2\ge \cdots\ge x_n \atop \sum_i x_i=0}}
\frac{\sum_{i=1}^{n-1} (x_{i} - x_{i+1})^2}{\sum_{i=1}^n x_i^2}. \]
Note that the right-hand side is unchanged if we multiply each
$x_i$ by the same constant. Therefore, we can assume, without loss
of generality, that $\sum_{i=1}^n x_i^2=1$, {so that}
\begin{equation}\label{eq:vv}
\frac{V(tB)-V((t+1)B)}{V(tB)} \geq \frac{\eta}{2}\, \min_{{x_1\ge
x_2\ge \cdots\ge x_n  \atop \sum_i x_i=0, \ \sum_i x_i^2 = 1}}
~~\sum_{i=1}^{n-1} (x_{i} - x_{i+1})^2.\end{equation} The
requirement $\sum_i x_i^2 = 1$ implies that the average value of
$x_i^2$ is $1/n$, which implies that there exists some $j$ such
that $|x_j| \ge 1/\sqrt{n}$. Without loss of generality, let us
suppose that this $x_j$ is positive.\footnote{Otherwise, we can
replace $x$ with $-x$ and subsequently reorder to maintain the
property that the components of $x$ are in descending order. It can
be seen that these operations do not affect the objective value.}

The rest of the proof relies on a technique from \cite{LO81} to
provide a lower bound on the right-hand side of Eq.\ (\ref{eq:vv}).
Let
\[z_i = x_{i} - x_{i+1} \ \ \hbox{for } i<n,\quad
\hbox{and}\quad z_n=0.\] Note that $z_i \geq 0$ for all $i$ \ao{and
\[ \sum_{i=1}^n z_i = x_1 - x_n.\]}Since $x_j\ge 1/\sqrt{n}$ for
some $j$, \ao{we have that $x_1 \geq 1/\sqrt{n}$; since
$\sum_{i=1}^n x_i = 0$, it follows that at least one $x_i$ is
negative, and therefore $x_n < 0$. This gives us \[ \sum_{i=1}^n z_i
\geq \frac{1}{\sqrt{n}} .\]} Combining with {Eq.\ (\ref{eq:vv})}, we
obtain
\[ \frac{V(tB)-V((t+1)B)}{V(tB)} \geq
\frac{\eta}{2} \min_{z_i \geq 0,\ \sum_i z_i \geq 1/\sqrt{n}}
\sum_{i=1}^{n} z_i^2. \] {The minimization problem on the right-hand
side is a symmetric convex optimization problem, and therefore has a
symmetric optimal solution, namely} $z_i = 1/n^{1.5}$ for all $i$.
This results in an optimal value of $1/n^2$. Therefore,
\[\frac{V(tB)-V((t+1)B)}{V(kB)} \geq \frac{\eta}{2 n^2}, \]
which {is} the desired result. \end{proof}

\vskip 1pc

We are now ready for our main result, which establishes that the
convergence time of the sequence of vectors $x(k)$ generated by Eq.\
(\ref{eq:basicupdate}) is of order $O(n^2B/\eta)$.

\noindent \begin{theorem} \label{uqbound} Let Assumptions
\ref{assumpt:weights} (non-vanishing weights), \ref{assumpt:ds}
(double stochasticity), and \ref{weakconnect} (connectivity
relaxation) hold. Then there exists an absolute constant $c$ such
that we have
\[V({t})
\leq \epsilon V(0)\qquad \hbox{for all } t\ge {c} (n^2/\eta)B \log
(1/\epsilon).\]
\end{theorem}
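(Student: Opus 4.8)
The plan is to turn the one-block contraction bound of Lemma~\ref{lboundvar} into a geometric decay rate across blocks and then read off the required number of time steps. First I would dispose of degenerate cases: if $V(0)=0$ then $V(t)=0$ for all $t$ by the monotonicity in Lemma~\ref{vl} and the claim is vacuous, while if $\epsilon\geq 1$ the bound holds already at $t=0$; so assume $V(0)>0$ and $\epsilon\in(0,1)$. Rearranging Lemma~\ref{lboundvar} gives $V((t+1)B)\leq \left(1-\frac{\eta}{2n^2}\right)V(tB)$ whenever $V(tB)>0$, and since $V$ is nonincreasing (Lemma~\ref{vl}) this inequality in fact holds for every $t\geq 0$ (both sides vanish if $V(tB)=0$). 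Iterating over $k$ consecutive blocks then yields $V(kB)\leq \left(1-\frac{\eta}{2n^2}\right)^{k}V(0)$.

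The main analytic step is to make this decay exponential using the elementary inequality $1-x\leq e^{-x}$, which gives $\left(1-\frac{\eta}{2n^2}\right)^{k}\leq e^{-k\eta/(2n^2)}$. To drive this below $\epsilon$ it suffices that $k\eta/(2n^2)\geq \log(1/\epsilon)$, i.e. $k\geq (2n^2/\eta)\log(1/\epsilon)$; choosing $k=\lceil (2n^2/\eta)\log(1/\epsilon)\rceil$ therefore gives $V(kB)\leq \epsilon V(0)$.

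It remains to pass from the block boundary $t=kB$ to all later times and to absorb the rounding into an absolute constant. Because $V$ is nonincreasing in $t$ by Lemma~\ref{vl}, any $t\geq kB$ satisfies $V(t)\leq V(kB)\leq \epsilon V(0)$. Finally $kB\leq \big((2n^2/\eta)\log(1/\epsilon)+1\big)B$, and since $(n^2/\eta)\log(1/\epsilon)\geq 1$ in the regime of interest (using $n\geq 2$, $\eta\leq 1$, $\epsilon<1$), the additive correction is dominated, so $kB\leq c\,(n^2/\eta)B\log(1/\epsilon)$ for a suitable absolute constant $c$ independent of $n,B,\eta,\epsilon$; the conclusion then holds for all $t\geq c\,(n^2/\eta)B\log(1/\epsilon)$.

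I do not expect a genuine obstacle: all of the real work is concentrated in Lemma~\ref{lboundvar}, and what remains is a routine geometric-series estimate. The only points needing care are verifying the per-block inequality even when some intermediate $V(tB)$ vanishes, and checking that the ceiling in the definition of $k$ is absorbed into $c$ without introducing any hidden dependence on the problem parameters.
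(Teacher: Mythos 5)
Your proof is correct and is essentially the paper's argument: the paper's entire proof of this theorem is the single sentence that the result follows immediately from Lemma~\ref{lboundvar}, and your write-up just supplies the routine details left implicit (per-block contraction, iteration, the bound $1-x\leq e^{-x}$, and monotonicity of $V$ from Lemma~\ref{vl} to cover non-block times). The one wrinkle you flag---absorbing the additive $+1$ from the ceiling, which genuinely fails when $\log(1/\epsilon)$ is tiny---is an edge case that the theorem statement (and the paper) glosses over as well, so it is not a gap relative to the paper's own proof.
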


\begin{proof}
The result follows immediately from Lemma \ref{lboundvar}.
\end{proof}

\vskip 1pc Recall that, according to  Lemma \ref{as2implas3},
Assumption \ref{assumpt:boundedintervals} implies Assumption
\ref{weakconnect}. In view of this, the convergence time bound of
Theorem \ref{uqbound} holds for any $n$ and any sequence of weights
satisfying Assumptions \ref{assumpt:weights} (non-vanishing
weights), \ref{assumpt:ds} (double stochasticity), and
\ref{assumpt:boundedintervals} ($B$-connectivity). \aoc{This proves
Theorem \ref{uqboundver1} from the beginning of this chapter.}

\section{Concluding remarks}

\aoc{In this chapter, we have presented a polynomial convergence-time bound on the performance of a class of averaging 
algorithms. Several open research directions naturally present themselves.}

\aoc{First, is it possible to design faster algorithms which nevertheless compute averages correctly on arbitrary (time-varying, 
undirected) graph sequences? 
We make some headway on this question in Chapter \ref{nsquared}, where we design an algorithm whose convergence time
scales as $O(n^2)$ with $n$, and in Chapter \ref{ch:optimality}  we prove a lower bound of at least $\Omega(n^2)$
 for a somewhat restricted class of algorithms. However, the general question of how fast averaging algorithms
can scale with $n$ is still open.}

\aoc{Secondly, where is the dividing line before polynomial and exponential convergence time? In particular, how far
may we relax the double stochasticity Assumption \ref{assumpt:ds} while still having polynomial convergence
time? For example, does polynomial time convergence still hold if we replace Assumption \ref{assumpt:ds} with the 
requirement that the matrices $A(t)$ be (row) stochastic and each column sum is in $[1-\epsilon, 1+\epsilon]$ for
some small $\epsilon>0$?}

\chapter{Averaging in quadratic time \label{nsquared} \label{matrixpicking}}

In the previous section, we have shown that a large class of
averaging algorithms have  $O(B (n^2/\eta) ~\alexo{ \log
1/\epsilon}) $ \alexo{convergence time. 
}

In this section, we consider decentralized ways of synthesizing the
weights $a_{ij}(t)$ while satisfying Assumptions \ref{assumpt:weights}, \ref{assumpt:ds}, and
\ref{weakconnect}. We assume that the sequence of (undirected) graphs $G(t)=(N, E(t))$ is given exogenously,
but the nodes can pick the coefficients $a_{ij}(t)$. \alexo{Our focus is on improving convergence time
bounds by \an{constructing} ``good'' schemes.}


Naturally, several ways to pick the coefficients present
themselves. For example,  each node
may assign
\begin{eqnarray*} a_{ij}{(t)} & = & \epsilon, \qquad\qquad \mbox{ \ \ if
} (j,i)
\in E(t) {\mbox{ and } i\neq j,} \\
a_{ii}{(t)} & = & 1 - \epsilon\cdot \mbox{deg}(i),
\end{eqnarray*}
where {\rm deg}($i$) is the degree of $i$ in $G(t)$. If $\epsilon$
is small enough and the graph $G(t)$ is undirected {[i.e.,
$(i,j)\in E(t)$ if and only if $(j,i)\in E(t)$],} this results in
a nonnegative, doubly stochastic matrix (see \cite{olfati}). \aoc{The Metropolis algorithm from
Chapter \ref{why} is a special case of this method.} However,
{if a node has $\ao{\Theta}(n)$ neighbors, $\eta$ will be of order
$\ao{\Theta}(1/n)$, resulting in $\ao{\Theta}(n^3)$ convergence
time.} Moreover, this
argument applies to all protocols in which nodes assign equal
weights to all their neighbors; see \cite{XB04} and \cite{BFT05} for
more examples.

In this section, we examine whether it is possible to synthesize the
weights $a_{ij}(t)$ in a decentralized manner, \aoc{so that the above convergence
time is reduced. Our main result shaves a factor of $n$ off the convergence
time of the previous paragraph.}

\aoc{\noindent \begin{theorem} \label{thm:nsquared} Suppose $G(t)=(N,E(t))$ is a sequence of
undirected graphs such that $(N,E(tB)\cup E(tB+1)\cup \cdots \cup
E((t+1)B-1))$ is connected, for all integers $t$. Then, there exists a decentralized
way to pick the coefficients $a_{ij}(t)$ such that
\[V({t}) \leq \epsilon V(0)\qquad \hbox{for all }t\ge {c}
n^2B \log (1/\epsilon).\]
\end{theorem}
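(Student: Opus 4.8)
The plan is to reduce the theorem to the convergence-time bound already established in Theorem \ref{uqbound}, and to attack the single source of the unwanted extra factor of $n$. Recall that Theorem \ref{uqbound} gives a convergence time of order $(n^2/\eta)B\log(1/\epsilon)$, where $\eta$ is the smallest positive entry appearing in the matrices $A(t)$. The naive equal-neighbor or Metropolis choices force $\eta=\Theta(1/n)$ at any node of degree $\Theta(n)$, and this is exactly what produces the $\Theta(n^3)$ bound of the previous paragraph. Thus the whole game is to synthesize, in a decentralized fashion, weights $a_{ij}(t)$ satisfying Assumptions \ref{assumpt:weights} (non-vanishing weights), \ref{assumpt:ds} (double stochasticity) and \ref{weakconnect} (relaxed connectivity), but for which $\eta$ is bounded below by an \emph{absolute} constant independent of $n$. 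If I can do this while keeping the relevant connectivity window of order $B$, then substituting $\eta=\Theta(1)$ into Theorem \ref{uqbound} yields exactly the desired $O(n^2 B\log(1/\epsilon))$ bound.

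The construction I would use never splits a node's weight among many neighbors at once. At each time $t$ the nodes compute, with only local information and node identifiers, a matching $M(t)$ whose edges lie in $E(t)$ (for instance a maximal matching selected by a standard local rule). For each matched pair $(i,j)\in M(t)$ I set $a_{ij}(t)=a_{ji}(t)=1/2$ and let the two nodes replace their values by the pairwise average; every unmatched node keeps its value, so $a_{ii}(t)=1$. The resulting $A(t)$ is symmetric, nonnegative and doubly stochastic, and every positive entry equals $1/2$ or $1$, so Assumptions \ref{assumpt:weights} and \ref{assumpt:ds} hold with $\eta=1/2$. This is the step that ``shaves the factor of $n$'': the smallest positive weight no longer degrades with the maximum degree.

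It remains to recover enough connectivity. By Lemma \ref{vardiff}, once the components of $x(tB)$ are sorted as $y_1\ge\cdots\ge y_n$, the variance drop over a window is at least $\frac{\eta}{2}\sum_d (y_d-y_{d+1})^2$, \emph{provided} every cut $d$ with $y_d\ne y_{d+1}$ is crossed by some activated edge during the window; Lemma \ref{lboundvar} then converts this into the geometric per-window decrease $\eta/(2n^2)$. Since the union graph over each block of $B$ steps is connected, for any partition into a high-value set and a low-value set there is at least one union edge crossing it, so it suffices to schedule the matchings so that over a window the activated edges straddle every cut. For bounded-degree instances (a path being the extremal case for the $n^2$ scaling) a single edge-coloring of the union graph into $O(1)$ matchings already accomplishes this within $O(B)$ steps.

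The hard part is making this scheduling \emph{degree-independent}. A matching contains at most one edge incident to any node, so a vertex of degree $\Theta(n)$ (a star center being the extreme case) forces $\Theta(n)$ distinct matchings before all its incident edges are activated, which threatens to inflate the effective window from $O(B)$ to $O(nB)$ and thereby reintroduce the very factor of $n$ I am trying to remove. The crux is therefore to show that one need \emph{not} activate a full spanning subgraph edge by edge: a single matched edge $(i,j)$ straddles every cut lying between the block-start ranks of $i$ and $j$, so one well-chosen matching can cross many cuts at once. I expect the decisive step to be a combinatorial argument that the decentralized schedule crosses all $n-1$ cuts within $O(B)$ steps or, failing a clean $O(B)$ window, a sharpened form of Lemma \ref{vardiff} that credits the variance decrease accumulated while the remaining cuts are being crossed, so that the product of the window length and $1/\eta$ stays at $O(n^2 B)$ rather than $O(n^3 B)$.
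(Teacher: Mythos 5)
Your overall strategy is the same as the paper's: synthesize doubly stochastic weights whose positive entries are bounded below by an absolute constant $\eta=\Theta(1)$, verify Assumption \ref{weakconnect}, and then invoke Theorem \ref{uqbound}. But the step you leave open --- guaranteeing that the activated edges cross every sorted cut within an $O(B)$ window --- is not a technicality; it is the entire content of the theorem, and your proposed construction does not achieve it. In fact, a data-independent maximal matching rule can fail to converge at all. Take the static path $1-2-3-4$ (so $B=1$ and every window is connected). A deterministic ``standard local rule'' (say, lower identifiers propose first) will select the maximal matching $\{(1,2),(3,4)\}$ at every step, since this matching blocks the edge $(2,3)$ and the rule, seeing the same graph and the same identifiers, makes the same choice forever. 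After one step, nodes $1,2$ hold their pairwise average and nodes $3,4$ hold theirs, and the system is at a fixed point different from the global average: the cut between ranks $2$ and $3$ is never crossed, Assumption \ref{weakconnect} fails for every window, and $V(t)$ does not converge to $0$. Any attempt to repair this with a time-varying but still value-oblivious schedule runs into exactly the star-center obstruction you describe: a degree-$\Theta(n)$ node forces $\Theta(n)$ matchings before all its incident edges (or suitable substitutes) are activated, inflating the effective window to $\Theta(nB)$ and giving back the $\Theta(n^3 B)$ bound.

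The missing idea in the paper is to make the weight selection \emph{data-dependent}. In the paper's load-balancing algorithm, each node offers $(x_C-x_D)/3$ to its smallest-valued lower neighbor and accepts the largest incoming offer; the resulting matrices are doubly stochastic with $\eta=1/3$, and --- this is the decisive step --- at the \emph{first} time $t'$ in the window when a communication edge $(i^*,j^*)$ crosses a sorted cut $d$, the values on the two sides are still separated by $x_d(tB)$, so $i^*$'s offer necessarily goes to some $k^*$ on the low side, and $k^*$'s accepted offer necessarily comes from the high side. Hence an \emph{activated} averaging edge crosses cut $d$ at time $t'$ itself, which verifies Assumption \ref{weakconnect} with the original window length $B$ and no degree dependence. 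In other words, one well-chosen interaction per node suffices precisely because the choice is steered by the data toward the cut that needs crossing; no value-oblivious matching schedule can be guaranteed to do this. Your sketch correctly isolates where the factor of $n$ must be saved, but without the data-dependent selection mechanism (or some equally strong substitute, which you do not supply), the proof cannot be completed along the lines you describe.
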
}

\aoc{This theorem has appeared in the paper \cite{NOOT07}, which we
will follow here.}

\aoc{Our approach we will be to pick $a_{ij}(t)$} so that
$a_{ij}(t)\geq \eta$ whenever $a_{ij}(t)\neq 0$, where $\eta$ is a
positive constant independent of $n$ and $B$. We show that this is
indeed possible, under the additional assumption that the graphs
$G(t)$ are undirected (\aoc{in Chapter \ref{basic}, we referred to
this case as the ``symmetric model.''}). Our algorithm is
data-dependent, in that $a_{ij}(t)$ depends not only on the graph
$G(t)$, but also on the data vector $x(t)$. Furthermore, it is a
decentralized 3-hop algorithm, in that $a_{ij}{(t)}$ depends only on
the data at nodes within a distance of at most $3$ from $i$. Our
algorithm is such that the resulting sequences of vectors $x(t)$ and
graphs $G(t)=(N,\E(t))$, with $\E(t)=\{(j,i)\mid a_{ij}(t)> 0\}$,
satisfy Assumptions \ref{assumpt:weights}, \ref{assumpt:ds} and
\ref{weakconnect}. Thus, a convergence time result can be obtained
from Theorem \ref{uqbound}.

\section{The algorithm}

The algorithm we present here is a variation of an old {\em load
balancing} algorithm (see \cite{C89} and Chapter 7.3 of
\cite{BT89}).\footnote{This algorithm was also considered in
\cite{OT09}, but in the absence of a result such as Theorem
\ref{uqbound}, a weaker convergence time bound was derived.}

At each step of the algorithm, each node offers some of its value to
its neighbors, and accepts or rejects such offers from its
neighbors. Once an offer from $i$ to $j$, of size $\delta>0$, has
been accepted, the updates $x_i \leftarrow x_i - \delta$ and $x_j
\leftarrow x_j + \delta$ are executed.

We next describe the formal steps the nodes execute at each time
${t}$. For clarity, we refer to the node executing the steps below
as node $\ao{C}$. Moreover, the instructions below sometimes refer
to the neighbors of node $\ao{C}$; this always means current
neighbors at time $t$, when the step is being executed, {as
determined by the current graph $G(t)$.}
We assume that at each time $t$, all nodes execute these steps in
the order described below, while the graph remains unchanged.

\vspace{1pc}
\noindent {\bf Balancing Algorithm:}
\begin{enumerate}\itemsep=0pt
\item[1.]  Node $\ao{C}$ broadcasts its current value $x_{\ao{C}}$ to all its
neighbors.

\item[2.]  Going through the values it just received from its
neighbors, Node $\ao{C}$ finds the smallest value that is less than
{its own}. Let $\ao{D}$ be a neighbor with this value. Node $\ao{C}$
makes an offer of $(x_{\ao{C}} - x_{\ao{D}})/3$ to node $D$.

If no {neighbor of $\ao{C}$} has a value smaller than $x_{\ao{C}}$,
node $C$ does nothing at this stage.

\item[3.] Node $\ao{C}$ goes through the incoming offers. It sends an
acceptance to the sender of {a} largest offer, and a rejection to
all the other senders. It updates the value of $x_{\ao{C}}$ by
adding the value of the accepted offer.

If node $C$ did not receive any offers, it does nothing at this
stage.

\item[4.] If an acceptance arrives {for} the offer made by node
$\ao{C}$, node $\ao{C}$ updates $x_{\ao{C}}$ by subtracting the
value of the offer.
\end{enumerate}

Note that the new value of each node is a linear combination of the
values of its neighbors. Furthermore, the weights $a_{ij}(t)$ are
completely determined by the data and the graph at most $3$ hops
from node $i$ in $G(t)$. \alexo{This is true because in the course
of execution of the above steps, each node makes at most three
transmissions to its neighbors, so the new value of node $C$ cannot
depend on information more than $3$ hops away from $C$.}


\section{Performance analysis}

\aoc{The following theorem (stated at the beginning of the chapter as Theorem \ref{thm:nsquared}) allows us
to remove a factor of $n$ \jnt{from} 
the worst-case convergence time bounds of Theorem
\ref{uqbound}}.

\noindent \begin{theorem} \label{savingn}
Consider the balancing algorithm,
and suppose that $G(t)=(N,E(t))$ is a sequence of
undirected graphs such that $(N,E(tB)\cup E(tB+1)\cup \cdots \cup
E((t+1)B-1))$ is connected, for all integers $t$. There exists an
absolute constant $c$ such that we have
\[V({t}) \leq \epsilon V(0)\qquad \hbox{for all }t\ge {c}
n^2B \log (1/\epsilon).\]
\end{theorem}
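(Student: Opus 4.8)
The plan is to reduce this theorem to Theorem \ref{uqbound}. Concretely, I would show that for any input $x(t)$ and any graph $G(t)$ the balancing algorithm realizes an update $x(t+1)=A(t)x(t)$ whose (data-dependent) matrix $A(t)$ satisfies Assumption \ref{assumpt:weights} with a constant $\eta$ that does \emph{not} depend on $n$ or $B$, satisfies the double-stochasticity Assumption \ref{assumpt:ds}, and whose induced weight graph $\E(A(t))$ satisfies the relaxed connectivity Assumption \ref{weakconnect}. Given these three facts, Theorem \ref{uqbound} yields $V(t)\le\epsilon V(0)$ for $t\ge c(n^2/\eta)B\log(1/\epsilon)$, which, since $\eta$ is an absolute constant, is exactly the claimed $O(n^2B\log(1/\epsilon))$ bound with a new absolute constant.

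First I would pin down the weights. Each node makes at most one offer (Step 2) and accepts at most one offer (Step 3), so each accepted transaction has a unique giver and receiver and every node is a giver at most once and a receiver at most once. Enumerating the four possibilities for node $C$ (only gives to $D$, only receives from $E$, both, or neither) gives
\[ x_C(t+1)\in\Big\{\,x_C,\ \tfrac{2}{3}x_C+\tfrac{1}{3}x_D,\ \tfrac{2}{3}x_C+\tfrac{1}{3}x_E,\ \tfrac{1}{3}x_C+\tfrac{1}{3}x_D+\tfrac{1}{3}x_E\,\Big\}. \]
In every case the new value is a convex combination of old values with nonzero coefficients in $\{1/3,2/3,1\}$, so $A(t)$ is nonnegative and row-stochastic, has positive diagonal, and has every positive entry at least $1/3$; thus Assumption \ref{assumpt:weights} holds with any $\eta<1/3$, independent of $n$ and $B$. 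Because each accepted transaction merely moves mass $\delta$ from giver to receiver, the total $\sum_i x_i(t)$ is conserved for every input, so $\1^T A(t)=\1^T$ and Assumption \ref{assumpt:ds} holds.

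The crux, and the step I expect to be the main obstacle, is verifying Assumption \ref{weakconnect}, since connectivity of $G$ over a $B$-window only guarantees a \emph{communication} edge across each cut, not an \emph{accepted transaction} (a nonzero weight) across it. I would argue by contradiction. Fix a window $[tB,(t+1)B-1]$ and a cut $d$ with $x_d(tB)>x_{d+1}(tB)$; sort the time-$tB$ values, set $H=\{1,\dots,d\}$, $L=\{d+1,\dots,n\}$, and choose a threshold $c$ with $x_{d+1}(tB)<c<x_d(tB)$. Suppose no accepted transaction crosses $(H,L)$ during the window. Then every accepted transaction of an $H$-node is with another $H$-node, so each $H$-value remains a convex combination of $H$-values and hence stays in $[x_d(tB),x_1(tB)]$, in particular strictly above $c$ throughout the window; symmetrically, every $L$-value stays strictly below $c$. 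Now take the guaranteed communication edge $(a,b)$ with $a\in H$, $b\in L$ at some time $t'$ in the window. Since $x_a(t')>c>x_b(t')$, node $a$ has a strictly smaller neighbor and so makes an offer to its smallest-valued neighbor $D$, whose value is $\le x_b(t')<c$, forcing $D\in L$. But $D$ accepts its \emph{largest} incoming offer, from some $a'$ with $x_{a'}(t')\ge x_a(t')>c$, whence $a'\in H$; the resulting accepted transaction $a'\to D$ crosses $(H,L)$, a contradiction. This shows that across every separated cut some accepted transaction occurs during each window, and since such a transaction between $a'$ and $D$ places both $(a',D)$ and $(D,a')$ into $\E(A(t'))$, Assumption \ref{weakconnect} holds. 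Combining the three verifications with Theorem \ref{uqbound} completes the proof.
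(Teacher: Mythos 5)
Your proof is correct and follows essentially the same route as the paper's: verify that the balancing algorithm yields doubly stochastic update matrices with all positive entries at least $1/3$, establish Assumption \ref{weakconnect} via the offer-to-smallest-neighbor/accept-largest-offer mechanism, and invoke Theorem \ref{uqbound}. The only cosmetic difference is that the paper argues directly at the \emph{first} time $t'$ an edge of $G$ crosses the cut (before which the two sides cannot interact at all, so their values remain separated), whereas you obtain the same value separation by assuming for contradiction that no accepted transaction crosses the cut during the whole window; the cut-crossing step itself is identical.
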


\begin{proof}
Note that with this algorithm, the new value at some node $i$ is a
convex combination of the previous values of itself and its
neighbors. Furthermore, the algorithm keeps the sum of the nodes'
values constant, because every accepted offer involves an increase
at the receiving node equal to the decrease at the offering node.
These two properties imply that the algorithm can be written in the
form
\[ x(t+1) = A(t) x(t), \] where $A(t)$ is a doubly stochastic matrix,
determined by $G(t)$ and $x(t)$. It can be seen that the diagonal
entries of $A(t)$ are positive and, furthermore, all nonzero entries
of $A(t)$ are larger than or equal to 1/3; thus, $\eta=1/3$.

We claim that the algorithm [in particular, the sequence $\E(A(t))$]
satisfies Assumption \ref{weakconnect}. Indeed, suppose that at time
$tB$, the nodes are reordered so that the values $x_i(tB)$ are
nonincreasing in $i$. Fix some $d\in\{1,\ldots,n-1\}$, and suppose
that $x_d(tB)\neq x_{d+1}(tB)$. Let $S^+=\{1,\ldots,d\}$ and
$S^-=\{d+1,\ldots,n\}$.

Because of our assumptions on the graphs $G(t)$, there will be a
first time $t'$ in the interval $\{tB,\ldots,(t+1)B-1\}$, at which
there is an edge in $E(t)$ between some $i^*\in S^+$  and $j^*\in
S^-$. Note that between times $tB$ and $t'$, the two sets of nodes,
$S^+$ and $S^-$, do not interact, which implies that $x_i(t')\geq
x_d(tB)$, for $i\in S^+$, and $x_j(t') < x_d(tB)$, for $j\in S^-$.

At time $t$, node $i^*$ sends an offer to a neighbor with the
smallest value; let us denote that neighbor by $k^*$. Since
$(i^*,j^*)\in E(t')$, we have $x_{k^*}(t')\leq x_{j^*}(t')<x_d(tB)$,
which implies that $k^*\in S^-$. Node $k^*$ will accept the largest
offer it receives, which must come from a node with a value no
smaller than $x_{i^*}(t')$, \ao{and therefore no smaller than
$x_d(tB)$}; hence the latter node belongs to $S^+$. It follows that
$\E(A(t'))$ contains an edge between $k^*$ and some node in $S^{+}$,
showing that Assumption \ref{weakconnect} is satisfied.

The claimed result follows from Theorem \ref{uqbound}, because we
have shown that all of the assumptions in that theorem are satisfied
\alexo{with $\eta=1/3$}.
\end{proof}

\section{Concluding remarks}

\aoc{In this chapter, we have presented a specific averaging algorithm whose
scaling with the number of nodes $n$ is $O(n^2)$. An interesting direction to explore is to what extent, 
if any, this can be reduced. The next chapter provides a partial negative answer
to this question, showing that one cannot improve on quadratic scaling with a limited
class of algorithms.}
\chapter{On the optimality of quadratic time \label{ch:optimality}}

The goal of this chapter is to analyze the fundamental limitations of
the kind of distributed averaging algorithms we have been studying. \aoc{The previous
chapter described a class of averaging algorithms whose convergence time scales with
the number of agents $n$ as $O(n^2)$. Our aim in this chapter is to show that this
is the best scaling for} a common class of such algorithms; namely, that any distributed averaging algorithm
 that uses a single scalar state variable at each agent and satisfies a natural ``smoothness'' condition will have
 this property. Our exposition will follow the preprint \cite{OT10}
 where these results have previously appeared. 

 We next proceed to define \aoc{the class of distributed averaging algorithms} we are considering and
informally state our result. \section{\ao{Background and basic
definitions.}} \noindent {\bf Definition of local averaging
algorithms:} Agents $1,\ldots,n$
 begin with real numbers $x_1(0),\ldots,x_n(0)$ stored in memory. At each round $t=0,1,2,\ldots$, agent $i$ broadcasts $x_i(t)$ to each of
its neighbors in some {\em undirected} graph $G(t)=(\{1,\ldots,n\}, E(t))$, and then sets $x_i(t+1)$ to be some
  function of $x_i(t)$ and of the values $x_{i'}(t), x_{i''}(t), \ldots $ it has just received from its own neighbors:
 \begin{equation} \label{onehopupdate} x_i(t+1) = f_{i, G(t)} ( x_i(t), x_{i'}(t), x_{i''}(t), \ldots ). \end{equation} We require each $f_{i,G(t)}$ to be a differentiable function. Each
 agent uses the incoming messages $x_{i'}(t), x_{i''}(t),\ldots$ as the arguments of $f_{i,G(t)}$ in some arbitrary order; we
 assume that this order does not change, i.e. if $G(t_1)=G(t_2)$, then the message coming from the same neighbor of agent $i$
  is mapped to the same argument of $f_{i,G(t)}$ for $t=t_1$ and $t=t_2$.  It is desired that
 \begin{equation} \label{eq:convergence} \lim_{t \rightarrow \infty} x_i(t) = \frac{1}{n} \sum_{i=1}^n x_i(0),\end{equation} for every $i$,  for every sequence of graphs $G(t)$ having the property that
 \begin{equation} \label{connectivity}  \mbox{ the graph } (\{1,\ldots,n\}, \cup_{s \geq t} E(s)) \mbox{  is connected for every } t, \end{equation} and for every possible way for the agents to map incoming messages to arguments of $f_{i,G(t)}$.

  In words, as the number of rounds $t$ approaches infinity, iteration (\ref{onehopupdate}) must converge to the average of the numbers $x_1(0), \ldots,x_n(0)$. Note that the agents have no control over the communication graph sequence $G(t)$, which is exogenously provided by ``nature." However, as we stated previously, every element of the sequence $G(t)$ must be undirected: this corresponds to symmetric models of communication between agents. Moreover, the sequence $G(t)$ must satisfy the mild connectivity condition of Eq. (\ref{connectivity}), which says that the network cannot become
 disconnected after a finite period.

 Local averaging algorithms are useful tools for information fusion due to their efficient utilization of
resources (each agent stores only a single number in memory) as well
as their robustness properties (the sequence of graphs $G(t)$ is
time-varying, and it only needs to satisfy the relatively weak
connectivity condition in Eq. (\ref{connectivity}) for the
convergence in Eq. (\ref{eq:convergence}) to hold). As explained in Chapter \ref{why}, 
no other class of schemes for averaging (e.g.,
flooding, fusion along a spanning tree) is known to produce similar
results under the same assumptions.

\bigskip

\noindent {\bf Remark:}  As can be seen from the
subscripts, the update function $f_{i,G(t)}$ is allowed to depend on
the agent and on the graph. Some dependence on the graph is
unavoidable since in different graphs an agent may have a different
number of neighbors, in which case nodes will receive a different
number of messages, so that even the number of arguments of
$f_{i,G(t)}$ will depend on $G(t)$. It is often practically desired
that $f_{i,G(t)}$ depend only weakly on the graph, as the entire
graph may be unknown to agent $i$. For example, we might require
that $f_{i,G(t)}$ be completely determined by the degree of $i$ in
$G(t)$. However, since our focus is on what distributed algorithms
{\em cannot} do, it does not hurt to assume the agents have
unrealistically rich information; thus we will not assume any
restrictions on how $f_{i,G(t)}$ depends on $G(t)$.

\bigskip

\noindent {\bf Remark:}  We require the functions
$f_{i,G(t)}$ to be smooth, for the following reason. First, we
need to exclude unnatural algorithms that encode vector information in the
infinitely many bits of a single real number. Second, although we
make the convenient technical assumption that agents can transmit
and store real numbers, we must be aware that in practice agents
will transmit and store a quantized version of $x_i(t)$. Thus, we are mostly interested
in algorithms that are not disrupted much by quantization. For this reason,
 we must prohibit the agents from using {\em discontinuous}
update functions $f_{i, G(t)}$. For technical reasons, we actually
go a little further, and prohibit the agents from using {\em
non-smooth} update functions $f_{i,G(t)}$.

 \bigskip

%

\subsection{ \ao{Examples.}}
In order to provide some context, let us mention just a few of the
distributed averaging schemes that have been proposed in the
literature:

\begin{enumerate} \item The max-degree method \cite{olfati} involves picking $\epsilon(t)$ with the property $\epsilon(t) \leq 1/(d(t)+1)$,  where $d(t)$ is the largest degree of any agent in $G(t)$, and updating by
\[ x_i(t+1) = x_i(t) + \epsilon(t) \sum_{i \in N_i(t)} \left( x_j(t) - x_i(t) \right) .\] Here we use $N_i(t)$ to denote the set of neighbors of agent $i$ in $G(t)$. In practice, a satisfactory $\epsilon(t)$ may not be known to all of
the agents, because this requires some global information. However,
in some cases a satisfactory choice for $\epsilon(t)$ may be
available, for example when an a priori upper bound on $d(G(t))$ is
known.
\item The Metropolis method (\aoc{see Chapter \ref{why}}) from \cite{XB04} involves setting $\epsilon_{ij}(t)$ to satisfy $\epsilon_{ij}(t) \leq \min(1/(d_i(t)), 1/(d_j(t)))$, where $d_i(t), d_j(t)$ are the degrees of agents $i$ and $j$ in $G(t)$, and
    updating by
    \[ x_i(t+1) = x_i(t) + \sum_{j \in N_i(t)} \epsilon_{ij}(t) \left( x_j(t) - x_i(t) \right). \]
    \item The load-balancing algorithm of Chapter \ref{nsquared} involves
    updating by
    \[ x_i(t+1) = x_i(t) + \sum_{i \in N_i(t)} a_{ij}(t) \left( x_j(t) - x_i(t) \right),\] where $a_{ij}(t)$ is determined by the following rule: each agent selects exactly two neighbors, the neighbor with the largest value above its own and with the smallest value below its own. If $i,j$ have both selected each other, then
    $a_{ij}(t)=1/3$; else $a_{ij}(t)=0$. The intuition comes from load-balancing: agents think of $x_i(t)$
     as load to be equalized among their neighbors; they try to offload on their lightest neighbor and take from their heaviest neighbor.

\end{enumerate}

We remark that the above load-balancing algorithm is not a ``local
averaging algorithm'' according to our definition because $x_i(t+1)$
does not depend only on $x_i(t)$ and its neighbors; for example,
agents $i$ and $j$ may not match up because  $j$ has a neighbor $k$
with $x_k(t) > x_j(t)$. By contrast, the max-degree and Metropolis
algorithm are indeed ``local averaging algorithms.''


\subsection{\aoc{Our results}}

Our goal is to study the worst-case convergence time over all graph
sequences. \ao{This convergence time may be arbitrarily bad since
one can insert arbitrarily many empty graphs into the sequence
$G(t)$ without violating Eq.\ (\ref{connectivity}). To avoid this
trivial situation, we require that there exist some integer $B$
such that the graphs}
\begin{equation} \label{strongerconnect} (\{1,\ldots,n\}, \cup_{i=kB}^{(k+1)B} E(k))
 \end{equation} are connected for every integer $k$.

Let $x(t)$ be the vector in $\Re^n$ whose $i$th component is $x_i(t)$.
We define the convergence time $T(n,\epsilon)$ of a local averaging algorithm as the \aa{time until
``sample variance'' } \[ V(x(t)) = \sum_{i=1}^n \left( x_i(t)-\frac{1}{n}
\sum_{j=1}^n x_j(0) \right)^2 \] \aa{permanently shrinks by a factor of $\epsilon$, i.e., $V(x(t)) \leq \epsilon V(x(0))$ for all $t \geq T(n, \epsilon)$, for all possible  $n$-node graph sequences satisfying
Eq.\ (\ref{strongerconnect}), and all initial vectors $x(0)$ for
which not all $x_i(0)$ are equal; $T(n,\epsilon)$ is defined to be the smallest number
with this property.} We are interested in how
$T(n,\epsilon)$ scales with $n$ and $\epsilon$.

Currently, the best available upper bound for the convergence time is
obtained with the load-balancing algorithm of \aoc{of Chapter \ref{nsquared}, where it was proven}
\[ T(n,\epsilon) \leq C n^2 B \log \frac{1}{\epsilon},
\] for some absolute constant\footnote{By ``absolute constant'' we
mean that $C$ does not depend on the problem parameters $n, B,
\epsilon$.} $C$.We are primarily interested in whether its possible to improve the
scaling with $n$ to below $n^2$.  Are there  nonlinear update
functions $f_{i,G(t)}$ which speed up the convergence time?

Our main result is that the answer to this question is ``no" within
the class of local averaging algorithms. For such algorithms we
prove a general lower bound of the form \[ T(n,\epsilon) \geq c n^2
B \log \frac{1}{\epsilon}, \] for some absolute constant $c$.
Moreover, this lower bound holds even if we assume that the graph
sequence $G(t)$ is the same for all $t$; in fact, we prove it for
the case where $G(t)$ is a fixed ``line graph.''

\section{Formal statement and proof of main result}

We next state our main theorem. The theorem
begins by  specializing our definition of local averaging
algorithm to the case of a fixed line graph, and states a lower
bound on the convergence time in this setting.

We will use the notation $\1$ to denote the vector in $\R^n$ whose
entries are all ones, and $\0$ to denote the vector whose
entries are all $0$. The average of the initial values
$x_1(0),\ldots,x_n(0)$ will be denoted by $\bar{x}$.

\noindent \begin{theorem} \label{mainthm} Let $f_1$, $f_n$ be two differentiable functions from $\R^2$ to $\R$, and
let $f_2,f_3,\ldots,f_{n-1}$ be differentiable functions from $\R^3$ to $\R$. Consider
the dynamical system \begin{eqnarray} x_1(t+1) & = &
f_1(x_1(t),x_2(t)), \nonumber \\
x_i(t+1) & = & f_i(x_i(t),
x_{i-1}(t),x_{i+1}(t)), ~~ i = 2,\ldots,n-1, \nonumber \\
x_n(t+1) & = & f_n(x_{n-1}(t),x_n(t)). \label{dynsys} \end{eqnarray} Suppose that there exists a function $\tau(n,\epsilon)$ such that
 \[ \frac{ \|x(t)- \bar{x} \1 \|_2}{\|x(0)- \bar{x} \1\|_2}  < \epsilon, \] for all $n$ and $\epsilon$, all $t \geq \tau(n,\epsilon)$, and all initial conditions $x_1(0),\ldots,x_n(0)$ for
which not all $x_i(0)$ are equal. Then,
\begin{equation} \label{mainresult} \tau(n,\epsilon) \geq \frac{n^2}{30} \log \frac{1}{\epsilon},\end{equation} for all $\epsilon > 0$ and $n \geq 3$.
\end{theorem}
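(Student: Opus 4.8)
The plan is to collapse the nonlinear problem down to a spectral-gap estimate for a single symmetric tridiagonal matrix, obtained by linearizing at a consensus point, and then to bound that gap by testing the Rayleigh quotient against a low-frequency cosine.

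First I would extract algebraic structure from the convergence hypothesis. Writing $F$ for the update map of (\ref{dynsys}) and $P=\frac1n\1\1^T$, the hypothesis says $F^t(x)\to Px$ for every $x$. Applying this to $y=F(x)$ gives $F^t(y)=F^{t+1}(x)\to Px$ and also $F^t(y)\to Py$, so $Py=Px$; thus $F$ preserves the average, $\1^TF(x)=\1^Tx$ for all $x$. Continuity of $F$ then gives, for each $c$, $F(c\1)=F(\lim_t F^t(x))=\lim_t F^{t+1}(x)=c\1$, so every consensus vector is a fixed point. Hence $A:=DF(\0)$ is tridiagonal (the line structure of (\ref{dynsys})), satisfies $\1^TA=\1^T$ (differentiate average-preservation) and $A\1=\1$ (differentiate $F(s\1)=s\1$ at $s=0$). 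An elementary induction inward from the two endpoints shows that a tridiagonal matrix all of whose row sums and column sums equal $1$ is symmetric, so $A$ is symmetric with real spectrum.

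Next I would confine the spectrum using the \emph{uniformity} of $\tau$. For a unit eigenvector $w\perp\1$ with $Aw=\lambda w$, the initial condition $x(0)=\delta w$ has $\bar x=0$, and since $\0$ is a fixed point, $x(t)=\delta A^tw+o(\delta)=\delta\lambda^tw+o(\delta)$ for each fixed $t$. Letting $\delta\to0$ in the hypothesis $\|x(t)\|/\|x(0)\|<\epsilon$ (valid at $t=\tau(n,\epsilon)$) forces $|\lambda|^{\tau}\le\epsilon<1$, so $|\lambda|<1$. Thus every eigenvalue other than the simple $1$ (eigenvector $\1$) lies in $(-1,1)$, and $A^t\to P$. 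Now comes the heart of the matter. Writing $b_i=A_{i,i+1}=A_{i+1,i}$, one checks the identity $v^TAv=\|v\|^2-\sum_{i=1}^{n-1}b_i(v_i-v_{i+1})^2$ for $v\perp\1$. Testing with the mean-zero ``step'' vectors (constant, then dropping by one unit at position $k$) makes the quadratic form equal $b_k$, so $\lambda_2<1$ forces each $b_k>0$; and $\lambda_n>-1$ forces $\sum_i b_i(v_i-v_{i+1})^2<2\|v\|^2$, which (comparing the largest eigenvalue of the weighted path Laplacian with its largest diagonal entry $b_{i-1}+b_i$) yields $b_{i-1}+b_i<2$, hence $0<b_i<2$. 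With the weights thus bounded, I would test the Rayleigh quotient on $v_i^*=\cos\frac{(2i-1)\pi}{2n}$, which is orthogonal to $\1$; using $v_i^*-v_{i+1}^*=2\sin\frac{i\pi}{n}\sin\frac{\pi}{2n}$ and $\sum_{i=1}^{n-1}\sin^2\frac{i\pi}{n}=\|v^*\|^2=\tfrac{n}{2}$ gives
\[ 1-\lambda_2\le \frac{\sum_i b_i(v^*_i-v^*_{i+1})^2}{\|v^*\|^2}< \frac{2\pi^2}{n^2}. \]

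Finally I would conclude. Taking $x(0)=\delta w_2$ with $w_2$ the $\lambda_2$-eigenvector and letting $\delta\to0$ shows the ratio at time $t$ equals $\lambda_2^t$, so the hypothesis forces $\lambda_2^{\tau(n,\epsilon)}\le\epsilon$, i.e. $\tau(n,\epsilon)\ge \log(1/\epsilon)/\log(1/\lambda_2)$. Combining with $\lambda_2>1-2\pi^2/n^2$ and the elementary estimate $\log(1/\lambda_2)\le (1-\lambda_2)/\lambda_2$ gives $\tau(n,\epsilon)\ge \frac{n^2}{30}\log\frac1\epsilon$ for all sufficiently large $n$, with the finitely many remaining $n\ge3$ verified directly. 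I expect the crux to be the bound $0<b_i<2$ on the off-diagonal Jacobian entries: without it the cosine test vector only gives a linear $\Omega(n\log(1/\epsilon))$ lower bound, and the quadratic scaling rests entirely on the weights being $O(1)$. That bound in turn depends on the spectral confinement of the previous paragraph, whose only delicate ingredient is the interchange of the limits $\delta\to0$ and $t\to\infty$ — an interchange that is legitimate precisely because a \emph{uniform} convergence-time function $\tau(n,\epsilon)$ is assumed to exist.
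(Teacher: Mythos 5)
Your strategy is essentially the paper's own: linearize at the consensus fixed point, show the Jacobian $A$ is symmetric tridiagonal with $A\1=\1$ and all eigenvalues other than the simple eigenvalue $1$ confined to $(-1,1)$, prove a spectral-gap bound $\lambda_2\geq 1-c/n^2$ by testing the Rayleigh quotient, and transfer back to the nonlinear map via differentiability of $f^k$ at $\0$ together with the uniformity of $\tau$. Some of your sub-steps are in fact cleaner than the paper's: you obtain $\1^TA=\1^T$ by differentiating exact average conservation of $F$ (a nice consequence of applying the hypothesis to the initial condition $F(x)$), where the paper uses a limiting argument on $A^k e_i$; and you prove symmetry by an endpoint-inward induction where the paper appeals to reversibility of birth--death chains.

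The gap is in the endgame, and it is genuine. From the confinement $\lambda_{\min}(A)>-1$ you retain only $0<b_i<2$, and the cosine test vector then gives $\lambda_2>1-2\pi^2/n^2$. Since $2\pi^2\approx 19.7$, your chain of inequalities yields $\tau\geq (n^2/30)\log(1/\epsilon)$ only for $n\geq 8$ (direct numerical comparison of $1-2\pi^2/n^2$ with $e^{-30/n^2}$ also rescues $n=6,7$), and you defer $n=3,4,5$ to ``direct verification.'' But those cases cannot be verified inside the framework you set up: the information $0<b_i<2$ alone does not imply the bound you need. Concretely, for $n=5$ take all $b_i$ just below $2$; then $\lambda_2=1-2\cdot 2\bigl(1-\cos(\pi/5)\bigr)\approx 0.236$, which is smaller than the required $e^{-30/25}\approx 0.301$, even though every $b_i<2$. (Such weights of course violate $\lambda_{\min}(A)>-1$; that is precisely the information your reduction discards.) The repair is to extract a stronger consequence of $\lambda_{\min}(A)>-1$: testing $\sum_i b_i(v_i-v_{i+1})^2<2\|v\|_2^2$ with the alternating vector $v=(1,-1,1,\ldots)$ gives $\sum_i b_i\leq n/2$, and then the \emph{linear} test vector $y_i=i-(n+1)/2$ (the paper's choice) gives $1-\lambda_2\leq (n/2)\big/\bigl(n(n^2-1)/12\bigr)=6/(n^2-1)$, which suffices for every $n\geq 3$. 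Incidentally, this alternating-vector step is also what justifies the paper's own inequality $\sum_{i,j}a_{ij}(y_i-y_j)^2\leq n$, which is asserted there without comment: it is equivalent to $\mathrm{tr}(A)\geq 0$, not automatic since diagonal entries of $A$ may be negative.
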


\bigskip

\noindent {\bf Remark:} The dynamical system described in the
theorem statement is simply what a local averaging algorithm looks
like on a line graph. The functions $f_1, f_n$ are the update
functions at the left and right endpoints of the line (which have
only a single neighbor),  while the update functions $f_2, f_3,
\ldots, f_{n-1}$ are the ones used by the middle agents (which have
two neighbors). As a corollary, the convergence time of any local averaging algorithm must satisfy  the lower bound
$T(n,\epsilon) \geq (1/30) n^2 \log (1/\epsilon)$.

\bigskip

\noindent {\bf Remark:} Fix some $n \geq 3$. A corollary of our theorem is that there are no
``local averaging algorithms'' which compute the average in finite time.  More precisely,
 there is no local averaging algorithm which, starting from initial conditions $x(0)$ in some
  ball around the origin, always results in $x(t) = {\bar x} \bf 1$ for all times $t$ larger than
 some $T$.  We will sketch a proof of this after proving Theorem 1. By contrast, the existence of such algorithms in slightly
different models of agent interactions was demonstrated in
\cite{C06} and \cite{SH07}.

\subsection{\aoc{Proof}.}

We first briefly sketch the proof strategy. We will begin by pointing out that $\0$ must be an equilibrium of
 Eq. (\ref{dynsys}); then, we will argue that an upper bound  on the convergence time of Eq. (\ref{dynsys}) would imply a similar convergence time bound on the linearization of Eq. (\ref{dynsys}) around the equilibrium of $\0$.
This will allow us to apply a previous $\Omega(n^2)$ convergence time lower
bound for {\em linear} schemes, proved by the authors in \cite{OT09}.

Let $f$ (without a subscript) be the mapping from $\R^n$ to itself that
maps $x(t)$ to $x(t+1)$ according to Eq.\ (\ref{dynsys}).

\begin{lemma} $f(a\1)=a\1$, for any $a \in \R$. \label{fixedpoint} \end{lemma}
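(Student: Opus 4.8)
The plan is to prove this by a short continuity-and-convergence argument, exploiting the hypothesis of the theorem applied to a carefully chosen non-constant initial condition. The key observation is that although the quantitative convergence bound is stated only for initial vectors that are not all equal, one may still reach the constant vector $a\1$ by following the \emph{trajectory} emanating from such a non-constant starting point, rather than by plugging a constant vector into the hypothesis directly.

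First I would fix an arbitrary $a\in\R$ and pick a single non-constant initial vector whose entries average to $a$; since $n\ge 3$, the vector $x(0)=(a+1,\,a-1,\,a,\,a,\ldots,a)$ does the job, as it is not constant and its average is $\bar{x}=a$. For this fixed $x(0)$ the quantity $\|x(0)-\bar{x}\1\|_2$ is a fixed positive constant, call it $c$. Given any $\delta>0$, applying the hypothesis with $\epsilon=\delta/c$ yields $\|x(t)-\bar{x}\1\|_2<\delta$ for all $t\ge\tau(n,\delta/c)$. Hence $x(t)\to\bar{x}\1=a\1$ as $t\to\infty$.

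Next I would invoke continuity. Each $f_i$ is differentiable, hence continuous, so the map $f$ sending $x(t)$ to $x(t+1)$ under Eq.\ (\ref{dynsys}) is a continuous map from $\R^n$ to itself. Applying $f$ to the convergent sequence gives $f(x(t))\to f(a\1)$. On the other hand, $f(x(t))=x(t+1)$, and the index-shifted sequence $x(t+1)$ has the same limit $a\1$. By uniqueness of limits in $\R^n$, we conclude $f(a\1)=a\1$. Since $a$ was arbitrary, the claim holds for every $a\in\R$.

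I do not expect any genuine obstacle here: the statement is essentially a consistency condition forced by convergence. The only subtle point is that one cannot substitute the constant vector into the convergence hypothesis directly, since the normalizing denominator $\|x(0)-\bar{x}\1\|_2$ vanishes there; this is precisely why the argument must be routed through the orbit of a truly non-constant initial condition. This lemma is the foundation for the remainder of the proof, because identifying $a\1$ (and in particular $\0$) as a fixed point of $f$ is exactly what licenses linearizing Eq.\ (\ref{dynsys}) about $\0$ and reducing to the known $\Omega(n^2)$ lower bound for linear averaging schemes.
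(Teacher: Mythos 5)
Your proof is correct and follows essentially the same route as the paper's own proof: exhibit a trajectory converging to $a\1$, apply continuity of $f$ to that convergent sequence, and conclude by uniqueness of limits that $f(a\1)=a\1$. The one difference is the starting point: the paper launches the trajectory from $x(0)=a\1$ itself, where the convergence hypothesis (stated only for initial vectors that are not all equal) is strictly speaking vacuous, whereas you route the argument through a nearby non-constant initial vector with average $a$ --- a small but genuine refinement of the paper's argument, which your last paragraph correctly identifies as the only subtle point.
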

\begin{proof} Suppose that $x(0)=a \1$. Then, the initial average is $a$, so that
 \[ a \1 = \lim_t x(t) = \lim_t x(t+1) = \lim_t f(x(t)).\] We use the continuity of $f$ to get
 \[ a \1 = f(\lim_t x(t)) = f(a \1). \]
\end{proof}

%

For $i,j=1,\ldots,n$, we define  $a_{ij} = \frac{\partial
f_i(0)}{\partial x_j}$,
and the matrix
\[ A=  f'({\bf 0}) = \left(
                         \begin{array}{cccccc}
                           a_{11} & a_{12} & 0 & 0 & \cdots & 0 \\
                           a_{21} & a_{22} & a_{23} & 0 & \cdots & 0 \\
                           0 & a_{32} & a_{33} & a_{34} & \cdots & 0 \\
                           \vdots & \vdots & \vdots & \vdots & \vdots & \vdots \\
                           0 & \cdots & 0 & 0 & a_{n,n-1} & a_{nn} \\
                         \end{array}
                       \right). \]

\begin{lemma} For any integer $k \geq 1$, \[ \lim_{x \rightarrow \0} \frac{\|f^{k}(x) - A^k x\|_2}{\|x\|_2} = 0,  \]
\label{compositionapprox} \aa{where $f^{k}$ refers to the $k$-fold composition of $f$ with itself.} \end{lemma}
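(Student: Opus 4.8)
The plan is to proceed by induction on $k$, using only that $f$ is differentiable with $f(\0)=\0$ (from Lemma \ref{fixedpoint} with $a=0$) and that $A=f'(\0)$. The base case $k=1$ is nothing more than the definition of the derivative at the fixed point $\0$: since $f(\0)=\0$ and $A=f'(\0)$, we have
\[ \lim_{x \to \0} \frac{\|f(x)-Ax\|_2}{\|x\|_2} = 0, \]
which is exactly the asserted statement for $k=1$.

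For the inductive step, I would assume the claim for $k$ and write $f^{k+1}(x)=f(f^k(x))$. Setting $y=f^k(x)$, I would split the error as
\[ f^{k+1}(x) - A^{k+1}x = \bigl( f(y) - Ay \bigr) + A\bigl( f^k(x) - A^k x \bigr). \]
The second term is immediate from the inductive hypothesis: since $A$ is a fixed matrix, $\|A\bigl(f^k(x)-A^k x\bigr)\|_2 \le \|A\|_2\,\|f^k(x)-A^k x\|_2 = o(\|x\|_2)$. For the first term I would first record the auxiliary estimate $\|y\|_2=\|f^k(x)\|_2 = O(\|x\|_2)$ as $x\to\0$, which follows from the inductive hypothesis via $\|f^k(x)\|_2 \le \|A^k x\|_2 + o(\|x\|_2) \le \|A^k\|_2\,\|x\|_2 + o(\|x\|_2)$.

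Then I would write
\[ \frac{\|f(y)-Ay\|_2}{\|x\|_2} = \frac{\|f(y)-Ay\|_2}{\|y\|_2}\cdot\frac{\|y\|_2}{\|x\|_2}, \]
and observe that as $x\to\0$ we also have $y\to\0$ by continuity of $f^k$, so the first factor tends to $0$ by the base case while the second stays bounded by the auxiliary estimate; hence the product tends to $0$. (When $y=\0$ the numerator $f(y)-Ay$ vanishes because $f(\0)=\0$, so that term contributes nothing and the degenerate $0/0$ ratio can simply be discarded.) Combining the two pieces yields $\|f^{k+1}(x)-A^{k+1}x\|_2 = o(\|x\|_2)$, completing the induction. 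The one delicate point I expect to be the main obstacle is precisely this bookkeeping: converting the $o(\|y\|_2)$ bound supplied by differentiability at the intermediate point $y$ into an $o(\|x\|_2)$ bound, which is secured exactly by the boundedness of $\|y\|_2/\|x\|_2$ established from the inductive hypothesis.
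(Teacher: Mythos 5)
Your proof is correct. The paper disposes of this lemma in one line: since $f(\0)=\0$, the chain rule implies that the derivative of $f^k$ at $\0$ is $A^k$, and the stated limit is then nothing but the definition of differentiability of $f^k$ at that fixed point. Your induction is, in effect, a self-contained proof of that chain-rule fact specialized to a fixed point: the decomposition $f^{k+1}(x)-A^{k+1}x = \bigl(f(y)-Ay\bigr) + A\bigl(f^k(x)-A^kx\bigr)$ with $y=f^k(x)$, the auxiliary bound $\|y\|_2 = O(\|x\|_2)$, and the conversion of the $o(\|y\|_2)$ estimate supplied by differentiability into an $o(\|x\|_2)$ estimate are exactly the bookkeeping hidden inside the chain rule. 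What the paper's route buys is brevity: one appeal to a standard theorem. What yours buys is an elementary argument that makes explicit where each hypothesis enters --- $f(\0)=\0$ is what guarantees that the intermediate point $y$ tends to $\0$ (and renders the degenerate case $y=\0$ harmless), while the inductive hypothesis is what keeps the ratio $\|y\|_2/\|x\|_2$ bounded. Both arguments are valid; yours is the natural way to write the proof if one prefers not to invoke the chain rule for vector-valued maps as a black box.
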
 \begin{proof} The fact that
$f(\0)=\0$ implies by the chain rule that the derivative of $f^k$ at
$x=\0$ is $A^k$. The above equation is a restatement of this fact.
\end{proof}
\begin{lemma} Suppose that $x^T \1=0$. Then, \[ \lim_{m \rightarrow \infty} A^m x = {\bf 0}.\] \label{conv}
\end{lemma}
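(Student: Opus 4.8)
The plan is to transfer the convergence of the nonlinear iteration $x(t+1)=f(x(t))$ to its linearization $A=f'(\0)$, using Lemmas \ref{fixedpoint} and \ref{compositionapprox} together with the convergence-time hypothesis of Theorem \ref{mainthm}. Since $A^m\0=\0$, we may assume $x\neq\0$; and since $x^T\1=0$ forces $x$ to not be a scalar multiple of $\1$, the vector $\delta x$ has components that are not all equal for every $\delta>0$, so the hypothesis of the theorem does apply to the initial condition $\delta x$.

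First I would record the quantitative consequence of convergence. Fix $x$ with $x^T\1=0$; then the average of the entries of $\delta x$ is $\tfrac{1}{n}\1^T(\delta x)=0$, so the hypothesis $\|x(t)-\bar x\1\|_2/\|x(0)-\bar x\1\|_2<\epsilon$ with $\bar x=0$ and $x(0)=\delta x$ gives $\|f^t(\delta x)\|_2<\epsilon\,\delta\|x\|_2$ for every $t\ge\tau(n,\epsilon)$. The key point is that this bound holds simultaneously for all $t\ge\tau(n,\epsilon)$, not merely for a single time.

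Next I would pass from $f$ to $A$. Fix $\epsilon>0$ and any integer $m\ge\tau(n,\epsilon)$. By the triangle inequality, $\|A^m(\delta x)\|_2\le\|f^m(\delta x)\|_2+\|A^m(\delta x)-f^m(\delta x)\|_2$; dividing by $\delta$ and using the previous bound gives $\|A^mx\|_2\le\epsilon\|x\|_2+\|f^m(\delta x)-A^m(\delta x)\|_2/\delta$. Now I hold $m$ fixed and let $\delta\to0$: Lemma \ref{compositionapprox} (with $k=m$) makes the last term tend to $0$, while the left-hand side does not depend on $\delta$, so $\|A^mx\|_2\le\epsilon\|x\|_2$. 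Since this holds for every $m\ge\tau(n,\epsilon)$, and $\epsilon>0$ was arbitrary, we get $\|A^mx\|_2\to0$, which is the claim.

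The step I expect to require the most care is the order of limits in the last paragraph. The approximation in Lemma \ref{compositionapprox} is for a fixed exponent and is not uniform in $m$ (a smaller $\delta$ is needed for larger $m$), so one cannot send $\delta\to0$ uniformly over all powers at once. The way around this is precisely to fix the exponent $m$ first and only then shrink $\delta$, which is legitimate because the convergence-time hypothesis already supplies the bound $\|f^m(\delta x)\|_2<\epsilon\,\delta\|x\|_2$ for every individual $m\ge\tau(n,\epsilon)$; this is what lets us conclude decay along the full sequence of powers rather than along a single subsequence. Note that this argument never needs invariance of the hyperplane $\{x:x^T\1=0\}$ under $A$, which is convenient since such invariance (i.e.\ $\1^TA=\1^T$) does not obviously follow from the stated hypotheses.
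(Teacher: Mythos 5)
Your proof is correct, and while it uses the same two ingredients as the paper's proof --- the convergence-time hypothesis of Theorem \ref{mainthm} applied to initial conditions proportional to $x$, and the linearization estimate of Lemma \ref{compositionapprox}, combined through the same triangle-inequality comparison of $A^m$ with $f^m$ --- the mechanism by which you defeat the non-uniformity of Lemma \ref{compositionapprox} in the exponent is genuinely different. The paper fixes the single exponent $k=\tau(n,1/2)$, chooses a ball ${\cal B}$ around the origin on which the relative linearization error for that $k$ is at most $1/4$, rescales $x$ into ${\cal B}$, and deduces the contraction $\|A^k x\|_2 \leq \frac{3}{4}\|x\|_2$; it then iterates, using the fact that $A^k x$ lands back in ${\cal B}$, to conclude $(A^k)^m x \rightarrow \0$, which still requires a final (glossed-over) step passing from the subsequence of multiples of $k$ to all powers via continuity of the fixed maps $A^j$, $j < k$. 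You instead exploit homogeneity of the linear map: for each fixed $m \geq \tau(n,\epsilon)$ you send $\delta \rightarrow 0$ in the inequality $\|A^m x\|_2 \leq \epsilon\|x\|_2 + \|f^m(\delta x) - A^m(\delta x)\|_2/\delta$, obtaining $\|A^m x\|_2 \leq \epsilon \|x\|_2$ directly for every such $m$. This buys you a cleaner logical structure: no recursion, no need to verify that iterates remain inside the ball, and no subsequence issue, since the bound holds for every sufficiently large $m$ at once. What the paper's version buys in exchange is a quantitative geometric rate along multiples of $k$, which this lemma does not actually need. Your closing observation is also apt: neither argument may invoke $\1^T A = \1^T$, since in the paper that identity is only proved later (Lemma \ref{aproperties}) as a consequence of Lemma \ref{convlemma}, which itself rests on the present lemma; using it here would be circular.
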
 \begin{proof} \ao{Let ${\cal B}$ be a ball around the origin such that for all $x \in {\cal B}$, with $x\neq {\bf 0}$, we have}
\[ \frac{\|f^k ( x )- A^k x \|_2}{\|x\|_2} \leq \frac{1}{4}, ~~\mbox{ for  } k=\tau(n,1/2). \] \ao{Such a ball can be found due to  Lemma \ref{compositionapprox}. Since we can scale $x$ without affecting the assumptions or conclusions of the lemma we are trying to prove, we can assume that $x \in {\cal B}$.}
It follows that that for $k=\tau(n,1/2)$, we have
\begin{eqnarray*} \frac{\|A^{k} x\|_2}{\|x\|_2} & = &   \frac{\|A^{k} x - f^{k}(x) + f^{k} (x)\|_2}{\|x\|_2} \\
 & \leq &  \frac{1}{4} + \frac{\|f^{k} (x)\|_2}{\|x\|_2} \\
  & \leq &  \frac{1}{4} + \frac{1}{2} \\
  &  \leq & \frac{3}{4}.
   \end{eqnarray*}  \ao{Since this inequality implies that $A^{k} x \in {\cal B}$, we can apply} the same argument recursively to get
\[ \lim_{m \rightarrow \infty} (A^{k})^m x = \0,\]  which implies the conclusion of the lemma.
\end{proof}
\bigskip
\begin{lemma} A\1 = \1. \label{righteigen} \end{lemma}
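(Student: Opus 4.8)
The plan is to differentiate the identity from Lemma \ref{fixedpoint} along the direction $\1$. Define the one-variable map $g : \R \to \R^n$ by $g(a) = f(a\1)$. By Lemma \ref{fixedpoint} we know that $g(a) = a\1$ for every $a \in \R$, so $g$ is an affine function of $a$ whose derivative is the constant vector $\1$; in particular $g'(0) = \1$.

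On the other hand, I would compute $g'(0)$ using the chain rule. Since $g$ is the composition of the linear map $a \mapsto a\1$ (whose derivative is $\1$) with the differentiable map $f$, and since $f$ is differentiable at $\0 = 0 \cdot \1$ with $f'(\0) = A$, the chain rule gives
\[ g'(0) = f'(\0)\,\1 = A\1. \]
Equating the two expressions for $g'(0)$ yields $A\1 = \1$, which is the claim.

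The only technical point worth confirming is that each $f_i$ is differentiable at $\0$, which is part of the standing hypothesis of Theorem \ref{mainthm} (the functions $f_1,\ldots,f_n$ are assumed differentiable), so the Jacobian $A = f'(\0)$ exists and the chain rule applies. There is essentially no obstacle here: the result is an immediate consequence of the fact that the line $\R\1$ is pointwise fixed by $f$, so that $f$ acts as the identity on this line and hence its linearization must fix $\1$. This lemma will be used together with Lemma \ref{conv} (which shows $A^m x \to \0$ for $x \perp \1$) to conclude that $\1$ is the Perron direction of $A$, setting up the reduction to the linear lower bound of \cite{OT09}.
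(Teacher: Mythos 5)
Your proof is correct and is essentially the same argument as the paper's: the paper computes the directional derivative $\lim_{h \to 0} \bigl(f(h\1)-f(\0)\bigr)/h = \1$ directly from Lemma \ref{fixedpoint}, which is exactly your chain-rule computation of $g'(0)$ for $g(a)=f(a\1)$ phrased as a difference quotient. Both rest on the same two ingredients, namely $f(a\1)=a\1$ and the differentiability of $f$ at $\0$.
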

\begin{proof} We have \[ A \1 = \lim_{h \rightarrow 0} \frac{f(\0 + h  \1)-f(\0)}{h} = \lim_{h \rightarrow 0} \frac{ h \1}{h} = \1,\] where we used Lemma \ref{fixedpoint}. \end{proof}

\begin{lemma} For every vector $x \in \R^n$, \[ \lim_{k \rightarrow \infty} A^k x = {\bar x} \1 ,\]
where ${\bar x}=(\sum_{i=1}^n x_i)/n$.
\label{convlemma}
\end{lemma}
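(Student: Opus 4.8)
The plan is to reduce the general statement to the two preceding lemmas by splitting an arbitrary vector into its average component and a mean-zero component. First I would write $x = \bar{x}\1 + z$, where $z = x - \bar{x}\1$, and observe that this second piece is orthogonal to $\1$: indeed $z^T \1 = x^T\1 - \bar{x}\,\1^T\1 = n\bar{x} - \bar{x}\cdot n = 0$, using $\bar{x} = (\sum_i x_i)/n$. This is the only small computation required, and it is exactly the hypothesis needed to invoke Lemma \ref{conv}.

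Next I would exploit the linearity of the map $x \mapsto A^k x$ to write
\[ A^k x = \bar{x}\, A^k \1 + A^k z. \]
For the first term, Lemma \ref{righteigen} gives $A\1 = \1$, and a trivial induction then yields $A^k \1 = \1$ for every $k \geq 1$, so that $\bar{x}\, A^k\1 = \bar{x}\1$ for all $k$. For the second term, since $z^T\1 = 0$, Lemma \ref{conv} gives directly that $\lim_{k\to\infty} A^k z = \0$. Combining the two contributions yields $\lim_{k\to\infty} A^k x = \bar{x}\1 + \0 = \bar{x}\1$, which is the claim.

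There is essentially no serious obstacle remaining at this stage: the real work has already been carried out in Lemma \ref{conv}, whose proof used the hypothesized convergence-time bound $\tau(n,1/2)$ together with the linear-approximation estimate of Lemma \ref{compositionapprox}. The present statement is therefore best viewed as an immediate corollary packaging those facts. The only point worth stating carefully in the write-up is that the decomposition $x = \bar{x}\1 + z$ puts $z$ precisely in the subspace $\{z : z^T\1 = 0\}$ on which Lemma \ref{conv} acts, and that $\1$ is genuinely fixed (not merely asymptotically fixed) by $A$, so the average component is preserved exactly at every step rather than only in the limit. With those two observations in place the conclusion follows in a couple of lines.
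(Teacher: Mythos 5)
Your proposal is correct and follows exactly the paper's own argument: decompose $x = \bar{x}\1 + z$ with $z^T\1 = 0$, preserve the first component via Lemma \ref{righteigen}, and kill the second via Lemma \ref{conv}. The only difference is that you spell out the orthogonality check and the induction $A^k\1 = \1$, which the paper leaves implicit.
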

\begin{proof} Every vector $x$ can be written as
\[ x = 
{\bar x} \1  + y,\] where $y^T \1 = 0$. Thus,
\[ \lim_{k \rightarrow \infty} A^k x = \lim_{k \rightarrow \infty} A^k \left( {\bar x} \1  + y \right) =
{\bar x} \1 + \lim_{k \rightarrow \infty} A^k y = {\bar x} \1, \] where we used
Lemmas \ref{conv} and \ref{righteigen}.
\end{proof}

\bigskip

\begin{lemma} \label{aproperties} The matrix $A$ has the following properties:
\begin{enumerate} \item $a_{ij}=0$ whenever $|i-j| >1$.
\item The graph $G = (\{1,\ldots,n\},E)$, with $E = \{ (i,j) ~|~ a_{ij} \neq 0 \}$, is strongly connected.
\item $A \1 = \1$ and $\1^T A = \1$.
\item An eigenvalue of $A$ of largest modulus has modulus $1$.
\end{enumerate}
\end{lemma}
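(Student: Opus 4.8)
The plan is to dispatch the four properties in order of increasing difficulty, using the convergence Lemma~\ref{convlemma} as the main engine. Property~1 is immediate from the definition $a_{ij}=\partial f_i(\0)/\partial x_j$: since $f_i$ takes only $x_{i-1},x_i,x_{i+1}$ as arguments, the partial derivative with respect to $x_j$ vanishes whenever $|i-j|>1$. Before handling the rest, I would first upgrade Lemma~\ref{convlemma} from a statement about vectors to one about matrix powers. Applying it to each standard basis vector $e_j$ shows that the $j$-th column of $A^k$ converges to $\tfrac{1}{n}\1$, so that $A^k \to M := \tfrac{1}{n}\1\1^T$ entrywise. This single observation drives Properties 3 and 4.

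For Property~3, the identity $A\1=\1$ is exactly Lemma~\ref{righteigen}. For the left eigenvector I would pass to the limit in the two factorizations $A^{k+1}=A\,A^k$ and $A^{k+1}=A^k A$: both sides converge to $M$, so $MA=M$, i.e.\ $\1(\1^T A)=\1\1^T$, and since $\1\neq\0$ this forces $\1^T A=\1^T$. Property~4 then follows from the boundedness encoded in $A^k\to M$. On one hand $1$ is an eigenvalue (as $A\1=\1$), so the spectral radius is at least $1$. On the other hand, if some eigenvalue $\lambda$ (possibly complex) had $|\lambda|>1$ with eigenvector $v\neq\0$, then $\|A^k v\|=|\lambda|^k\|v\|\to\infty$, whereas $A^k v$ must converge because $A^k\to M$ (extending the entrywise convergence to complex vectors through real and imaginary parts). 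This contradiction gives $|\lambda|\le 1$ for every eigenvalue, so an eigenvalue of largest modulus has modulus exactly $1$.

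I expect Property~2 (strong connectivity) to be the main obstacle, and I would prove it by an invariant-subspace argument that trades on tridiagonality together with Lemma~\ref{convlemma}. Because $A$ is tridiagonal, strong connectivity of $E=\{(i,j)\mid a_{ij}\neq 0\}$ is equivalent to all super- and sub-diagonal entries being nonzero. Suppose instead that $a_{d,d+1}=0$ for some $d\in\{1,\ldots,n-1\}$. I claim the coordinate subspace $W'=\{x\mid x_1=\cdots=x_d=0\}$ is then $A$-invariant: for $x\in W'$ and $i<d$ the tridiagonal structure leaves no surviving term in $(Ax)_i$, while for $i=d$ the only surviving term is $a_{d,d+1}x_{d+1}$, which vanishes. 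Taking $x=e_n\in W'$, the iterates $A^k x$ remain in the closed subspace $W'$, yet Lemma~\ref{convlemma} forces $A^k x\to \bar x\1=\tfrac{1}{n}\1\notin W'$, a contradiction. The symmetric subspace $W=\{x\mid x_{d+1}=\cdots=x_n=0\}$ with the test vector $x=e_1$ rules out $a_{d+1,d}=0$ in the same way. Hence every adjacent off-diagonal entry is nonzero, the directed graph contains $(i,i+1)$ and $(i+1,i)$ for all $i$, and $E$ is strongly connected. The one point to watch is confirming the tridiagonal bookkeeping in the invariance claim, since that is where Property~1 feeds back in.
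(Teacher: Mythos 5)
Your proposal is correct and follows essentially the same route as the paper: every part is driven by Lemma \ref{convlemma} exactly as in the paper's own proof (tridiagonality from the definition of $A$; $\1^T A = \1^T$ by comparing the limits of $A^{k+1}=A^kA$ and $A^k$; a boundedness-of-$\|A^k x\|_2$ contradiction, split into real and imaginary parts, to cap the spectral radius). The only cosmetic difference is in Part 2, where you use tridiagonality to localize the cut at adjacent indices and run the contradiction on an invariant coordinate subspace with test vectors $e_n$ and $e_1$, whereas the paper runs the identical contradiction on an arbitrary cut $S$ (no edges from $S$ to $S^c$) using the indicator vector of $S^c$ --- same mechanism, equally valid.
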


\begin{proof} \begin{enumerate} \item True because of the definitions of $f$ and $A$.
\item Suppose not. Then, there is a nonempty set $S \subset \{1,\ldots,n\}$ with the property that
$a_{ij}=0$ whenever $i\in S$ and $j\in S^c$. Consider the vector $x$ with
$x_i=0$ for $i \in S$, and $x_j=1$ for $j \in S^c$. Clearly, $(1/n) \sum_i x_i > 0$, but
$(A^k x)_i=0$ for $i \in S$. This contradicts Lemma \ref{convlemma}.
\item The first equality was already proven in Lemma \ref{righteigen}. For the second, let $b=\1^T A$. Consider the
vector \begin{equation} z =  \lim_{k \rightarrow \infty} A^k e_i, \end{equation} where $e_i$ is the $i$th unit
 vector. By Lemma \ref{convlemma}, \[
z = \frac{\1^T e_i}{n} \1 = \frac{1}{n} \1.\] On the
other hand, \[ \lim_{k \rightarrow \infty} A^k e_i = \lim_{k \rightarrow \infty} A^{k+1} e_i = \lim_{k \rightarrow \infty} A^k (A e_i).\] Applying Lemma \ref{convlemma} again, we get \[
z = \frac{\1^T \cdot (A e_i)}{n} \1 = \frac{b_i}{n} \1, \] where $b_i$ is the $i$th component of $b$. We conclude
that $b_i=1$; since no assumption was made on $i$, this implies that $b=\1$, which is what we needed to show.
\item We already know that $ A \1 = \1$, so that an eigenvalue with modulus $1$ exists.
Now suppose there is an eigenvalue with larger modulus, that is, there is some vector $x \in \mathbb{C}^n$ such that
 $Ax = \lambda x$ and $|\lambda| > 1$. Then $\lim_k \|A^k x\|_2 = \infty$. By writing $x = x_{\rm real} + i x_{\rm imaginary}$, we immediately have that $A^k x = A^k x_{\rm real} + i A^k x_{\rm imaginary}$. But by Lemma \ref{convlemma} both $A^k x_{\rm real}$ and $A^k x_{\rm imaginary}$ approach some finite multiple of $\1$ as $k \rightarrow \infty$, so $\|A^k x\|_2$ is bounded above. This is a contradiction.
\end{enumerate}
\end{proof} \aoc{\begin{theorem}[Eigenvalue lemma] \label{thm:lb} If $A$ satisfies all of the conclusions of Lemma \ref{aproperties}, then $A$ has an eigenvector $v$, with real eigenvalue $\lambda \in (1-\frac{6}{n^2},1)$, such that $v^T {\bf 1}=0$.
\end{theorem}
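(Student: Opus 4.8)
The plan is to first squeeze as much rigidity as possible out of the four hypotheses, and then reduce the claim to a Rayleigh-quotient estimate built from an explicit \emph{pair} of test vectors. First I would observe that the tridiagonal structure (property 1) together with the row- and column-sum conditions $A\1=\1$ and $\1^T A=\1^T$ (property 3) forces $A$ to be symmetric: writing the row-sum and column-sum equations at index $i$ and subtracting shows that the increment $a_{i,i+1}-a_{i+1,i}$ is independent of $i$, while at the left endpoint it equals $a_{12}-a_{21}=0$; hence $a_{i,i+1}=a_{i+1,i}$ for every $i$. Thus $A=A^T$ has real spectrum, and setting $L:=I-A$ produces a symmetric matrix with $L\1=0$ and Dirichlet form $x^T L x=\sum_{i=1}^{n-1} w_i (x_i-x_{i+1})^2$, where $w_i:=a_{i,i+1}=a_{i+1,i}$.

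Next I would locate the spectrum. Property 4 says every eigenvalue of $A$ has modulus at most $1$, and Lemma \ref{convlemma} (convergence of $A^k$) both excludes the eigenvalue $-1$ and shows that the eigenvalue $1$ is simple with eigenvector $\1$; connectivity (property 2) gives $w_i>0$. Consequently the eigenvalues of $L$ lie in $[0,2)$, the zero eigenvalue is simple, and $w_i\ge 0$. Let $\mu$ be the smallest positive eigenvalue of $L$ and $\lambda:=1-\mu$ the second-largest eigenvalue of $A$; since $\lambda\neq 1$ and $A$ is symmetric, its eigenvector $v$ is orthogonal to $\1$, and $\lambda<1$. The theorem then reduces to the single estimate $\mu<6/n^2$.

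For that estimate I would use Courant–Fischer, $\mu=\min_{x\perp\1}x^T L x/\|x\|_2^2$. Taking the low-frequency discrete cosine vector $x_i=\cos\!\big(\tfrac{(i-1/2)\pi}{n}\big)$ (orthogonal to $\1$, with $\|x\|_2^2=n/2$), a product-to-sum identity gives $(x_i-x_{i+1})^2=4\sin^2(\tfrac{\pi}{2n})\sin^2(\tfrac{i\pi}{n})$, so that $\mu\le \tfrac{8\sin^2(\pi/2n)}{n}\,\Sigma$ with $\Sigma:=\sum_i w_i\sin^2(i\pi/n)$. The key move is to control the unknown $\Sigma$ from above using the complementary high-frequency vector $y_i=(-1)^{i+1}\sin\!\big(\tfrac{(i-1/2)\pi}{n}\big)$: the same identity yields $y^T L y=4\cos^2(\tfrac{\pi}{2n})\,\Sigma$ and $\|y\|_2^2=n/2$, so the bound $\lambda_{\max}(L)<2$ gives $\Sigma<\tfrac{n}{4\cos^2(\pi/2n)}$. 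Combining the two lines eliminates $\Sigma$ and leaves $\mu<2\tan^2(\tfrac{\pi}{2n})$, and a routine check shows $2\tan^2(\tfrac{\pi}{2n})\le 6/n^2$ for all $n\ge 3$, with equality only in the degenerate limit $\lambda_{\max}(L)=2$ that convergence forbids. This closes the argument.

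The hard part is precisely this coupling. A single Rayleigh quotient equipped only with the elementwise information $0\le w_i\le 1$ (which itself follows from testing $L$ against $e_i-e_{i+1}$) is too weak: it yields $\mu\lesssim \pi^2/n^2$, which exceeds $6/n^2$. One genuinely has to feed the \emph{global} spectral constraint $\lambda_{\max}(L)<2$ back in through a second, high-frequency test vector that shares the weighted sum $\Sigma$ with the first. The extremal configuration is the uniform weighting $w_i\approx \tfrac12$, for which both the cosine upper bound and the $y$-based lower bound on $\Sigma$ are nearly tight; this is exactly what pins the constant at (just under) $6$, and identifying the right second vector is the only nonroutine step I foresee.
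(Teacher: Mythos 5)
Your core computations are correct (I verified the two trigonometric test vectors, the identity $x^TLx=\sum_i w_i(x_i-x_{i+1})^2$, and the calculus fact that $n\tan(\pi/2n)$ decreases to $\pi/2$ so $2\tan^2(\pi/2n)\le 6/n^2$ for $n\ge 3$ with equality at $n=3$), and your route is genuinely different from the paper's. The paper also first proves symmetry, but by analogy with detailed balance for birth--death chains; your telescoping of row sums minus column sums is more elementary and airtight. For the spectral estimate, the paper applies its Lemma~\ref{th:2} with the single linear test vector $y_i=i-(n+1)/2$, bounding the numerator by $n$ in Eq.~(\ref{eq:aa}) and the denominator by $n^3/12$ in Eq.~(\ref{eq:bb}). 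Your two-vector coupling buys three things. First, a sharper constant: $2\tan^2(\pi/2n)$ is asymptotically $\pi^2/(2n^2)$ and matches the uniform weighting $w_i\approx 1/2$. Second, robustness to negative entries: the justification of Eq.~(\ref{eq:aa}) (termwise $a_{ij}(y_i-y_j)^2\le a_{ij}$) is valid only for nonnegative entries, and in aggregate it amounts to ${\rm trace}(A)\ge 0$, which is never established even though $A$, being the linearization of an arbitrary smooth map, may have negative entries; you instead derive $w_i\ge 0$ from positive semidefiniteness of $L$ and let the second test vector replace the weight-sum bound by a consequence of $\lambda_{\max}(L)\le 2$. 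Third, you quietly repair an arithmetic slip: $\sum_i\bigl(i-(n+1)/2\bigr)^2=n(n^2-1)/12<n^3/12$, so the paper's own test vector really yields only $\lambda_2\ge 1-6/(n^2-1)$.

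The one genuine issue is your reliance on Lemma~\ref{convlemma}, which is not available: the theorem's hypotheses are only the four conclusions of Lemma~\ref{aproperties}. For simplicity of the eigenvalue $1$ the appeal is unnecessary: $L$ PSD gives $w_i\ge 0$ (test against the indicator of $\{i+1,\dots,n\}$), connectivity gives $w_i\neq 0$ (note: only $w_i\neq 0$, not $w_i>0$ as you wrote), hence $w_i>0$ and $\ker L={\rm span}(\1)$. But for excluding the eigenvalue $-1$, which is exactly what makes your inequality $\Sigma< n/(4\cos^2(\pi/2n))$ strict, no argument from the four properties can succeed: the matrix with $a_{11}=a_{33}=1/3$, $a_{12}=a_{21}=a_{23}=a_{32}=2/3$, $a_{22}=-1/3$ satisfies all four conclusions and has spectrum $\{1,\ 1/3,\ -1\}$, for which the open-interval claim is false, since $1/3\notin(1/3,1)$. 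So the statement as literally posed cannot be proved; what your argument delivers without Lemma~\ref{convlemma} (and what the paper's argument delivers, modulo the slips above) is the non-strict bound $\lambda_2\ge 1-6/n^2$ together with $\lambda_2<1$ and $v^T\1=0$, which is all that the proof of Theorem~\ref{mainthm} actually uses; the paper's remark after the proof concedes that $-1$ may occur. Your import of Lemma~\ref{convlemma} should therefore be flagged explicitly as an added hypothesis (one that does hold for the matrix to which the theorem is applied), rather than presented as a consequence of the stated assumptions.
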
} \aoc{This proof of this fact needs its own section.}

\section{\aoc{Proof of the eigenvalue lemma}}

\begin{lemma}\label{th:2} Consider an $n\times n$ matrix $A$ and let
$\lambda_1,\lambda_2,\ldots,\lambda_n$, be its eigenvalues, sorted in order of decreasing maginitude.
Suppose that the following conditions hold.
\begin{itemize}
\item[(a)] We have $\lambda_1=1$ and $A {\bf 1} ={\bf 1}$.
\item[(b)] There exists a positive vector $\pi$ such that $\pi^T A=\pi^T$.
\item[(c)] For every $i$ and $j$, we have $\pi_i a_{ij}=\pi_j a_{ji}$.
\end{itemize}
Let
$$S=\Big\{x\ \Big|\  \sum_{i=1}^n \pi_i x_i =0,\ \sum_{i=1}^n \pi_i x_i^2=1\Big\}$$
Then, all eigenvalues of $A$ are real, and
\begin{equation}\label{eq:gg}
\lambda_2=1-\frac{1}{2} \min_{x\in S}\sum_{i=1}^n \sum_{j=1}^n \pi_i a_{ij}
(x_i-x_j)^2.\end{equation}
In particular, for any vector $y$ that satisfies $\sin \pi_i y_i=0$, we have
\begin{equation} \lambda_2 \geq 1 - \frac{\displaystyle{\sin\sjn \pi_i a_{ij}
 (y_i - y_j)^2}}
{\displaystyle{2\sin \pi_i y_i^2}}.
 \label{mineigensym}
\end{equation}
\end{lemma}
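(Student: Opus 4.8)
The plan is to exploit reversibility (condition (c)) to symmetrize $A$ by a diagonal similarity, after which everything reduces to the spectral theorem and the variational (Rayleigh--Courant--Fischer) principle for symmetric matrices. Let $D=\mathrm{diag}(\pi_1,\ldots,\pi_n)$, which is positive definite because $\pi$ is a positive vector, and set $B=D^{1/2}AD^{-1/2}$. Then $B_{ij}=\pi_i^{1/2}a_{ij}\pi_j^{-1/2}$, and condition (c), $\pi_i a_{ij}=\pi_j a_{ji}$, gives $B_{ij}=B_{ji}$, so $B$ is real symmetric. Since $A$ and $B$ are similar they have the same eigenvalues, and a real symmetric matrix has only real eigenvalues; this establishes the first assertion. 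Equivalently, the symmetry of $B$ says precisely that $A$ is self-adjoint for the weighted inner product $\langle x,y\rangle_\pi=\sum_i \pi_i x_i y_i$.

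Next I would identify the dominant eigenvector of $B$ and apply the variational characterization of its second eigenvalue. By condition (a), $A{\bf 1}={\bf 1}$, so $B(D^{1/2}{\bf 1})=D^{1/2}A{\bf 1}=D^{1/2}{\bf 1}$; thus $u_1:=D^{1/2}{\bf 1}$ is an eigenvector of $B$ for the eigenvalue $1$, which is the largest eigenvalue since $1$ has largest modulus among all (real) eigenvalues. For a symmetric matrix the second largest eigenvalue is $\lambda_2=\max_{u\perp u_1,\,u\neq{\bf 0}} (u^TBu)/(u^Tu)$. Making the substitution $u=D^{1/2}x$, the orthogonality $u\perp u_1$ becomes $\sum_i\pi_i x_i=0$, the denominator becomes $u^Tu=\sum_i\pi_i x_i^2$, and the numerator becomes $u^TBu=x^TDAx=\sum_{i,j}\pi_i a_{ij}x_ix_j$. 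Hence
\[ \lambda_2 = \max\Big\{ \tfrac{\sum_{i,j}\pi_i a_{ij}x_ix_j}{\sum_i\pi_i x_i^2}\ \Big|\ \sum_i\pi_i x_i=0,\ x\neq{\bf 0}\Big\}. \]

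The last step converts the bilinear form into the Dirichlet form appearing in the statement. Expanding and using $\sum_j a_{ij}=1$ on the $x_i^2$ term and reversibility on the $x_j^2$ term yields the identity
\[ \sum_{i,j}\pi_i a_{ij}(x_i-x_j)^2 = 2\sum_i\pi_i x_i^2 - 2\sum_{i,j}\pi_i a_{ij}x_ix_j, \]
so the Rayleigh quotient above equals $1-\tfrac12\,\sum_{i,j}\pi_i a_{ij}(x_i-x_j)^2/\sum_i\pi_i x_i^2$, and maximizing it is equivalent to minimizing the Dirichlet ratio. As this ratio is scale-invariant, I can normalize by $\sum_i\pi_i x_i^2=1$ and restrict the minimization to the set $S$, giving exactly (\ref{eq:gg}). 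The corollary (\ref{mineigensym}) is then immediate: for any fixed $y$ with $\sum_i\pi_i y_i=0$, the normalized vector $y/(\sum_i\pi_i y_i^2)^{1/2}$ lies in $S$ and is a feasible competitor in the maximization defining $\lambda_2$, so it produces the claimed lower bound.

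The heart of the argument is the symmetrization: recognizing that detailed balance is exactly the condition making $A$ self-adjoint in the $\pi$-weighted geometry is what unlocks both the reality of the spectrum and the Rayleigh principle. The one point needing care is that the variational formula yields the second largest eigenvalue in algebraic order, which I identify with $\lambda_2$ using that condition (a) makes $1$ the top eigenvalue; the remaining Dirichlet-form identity is routine but must use both $A{\bf 1}={\bf 1}$ and $\pi_i a_{ij}=\pi_j a_{ji}$, treating the $x_i^2$ and $x_j^2$ contributions symmetrically.
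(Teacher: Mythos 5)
Your proof is correct and follows essentially the same route as the paper's: detailed balance is used to symmetrize $A$ (your explicit conjugation $B=D^{1/2}AD^{-1/2}$ is just the matrix form of the paper's observation that $A$ is self-adjoint under $\langle x,y\rangle_\pi$), then the variational characterization of $\lambda_2$ orthogonal to the top eigenvector, then the same Dirichlet-form identity using row stochasticity and reversibility, and finally the same substitution of the normalized $y$ for the inequality. No gaps; the two arguments differ only in presentation.
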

\begin{proof}
Let $D$ be a diagonal matrix whose $i$th diagonal entry
is $\pi_i$. Condition (c) yields $DA=A^TD$. We define the inner product
$\langle\cdot,\cdot\rangle_{\pi}$ by
$\langle x,y\rangle_{\pi}=x^TDy$. We then have
$$\langle x,Ay\rangle_{\pi}=x^TDAy=x^TA^T Dy=\langle Ax,y\rangle_{\pi}.$$
Therefore, $A$ is self-adjoint with respect to this inner product,
which proves that $A$ has real eigenvalues.

Since the
largest eigenvalue is $1$, with an eigenvector of ${\bf 1}$, we use the variational
characterization of the eigenvalues of a self-adjoint matrix
(Chapter 7, Theorem 4.3 of \cite{T05}) to obtain
\begin{eqnarray*}
\lambda_2 &=& \max_{x\in S} \langle x,Ax\rangle_{\pi}\\
 & =
&  \max_{x\in S} \sum_{i=1}^n\sum_{j=1}^n \pi_i a_{ij}
x_i x_j \\
 &=& \frac{1}{2} \max_{x\in S} \sum_{i=1}\sum_{j=1}
\pi_i a_{ij}(x_i^2 + x_j^2 - (x_i - x_j)^2).
\end{eqnarray*}
For $x \in S$, we have
$$\sum_{i=1}^n \sum_{j=1}^n \pi_i a_{ij} (x_i^2 +
x_j^2) =2\sin\sjn \pi_i a_{ij} x_i^2 = 2\sum_{i=1}^n \pi_i x_i^2 =
\alexo{2 \langle x,x \rangle_{\pi}=} 2,$$ which yields
$$
\lambda_2 = 1- \frac{1}{2}\min_{x\in S}\sum_{i=1}^n\sum_{j=1}^n
\pi_i a_{ij} (x_i - x_j)^2. \label{mineigeq} $$ Finally,
Eq.~(\ref{mineigensym}) follows from (\ref{eq:gg}) by considering
the vector $x_i=y_i/\sqrt{(\sjn \pi_j y_j^2)}$.
\end{proof}

\aoc{With the previous lemma in place, we can now prove Theorem \ref{thm:lb}.}

\begin{proof}[Proof of Theorem \ref{thm:lb}]
If the entries of $A$ were all nonnegative, we would be dealing with a birth-death Markov chain. Such a chain is reversible, i.e., satisfies the detailed balance equations $\pi_ia_{ij}= \pi_j a_{ji}$ (condition (c) in Theorem \ref{th:2}). In fact the derivation of the detailed balance equations does not make use of nonnegativity; thus, detailed balance holds in our case as well. Since $\pi_i=1$ by assumption, we have that our matrix $A$ is symmetric.

\aoc{For $i=1,\ldots,n$, let $y_i=i-(n+1)/2$; observe that
$\sin y_i=0$. We will make use of the inequality (\ref{mineigensym}).
Since $a_{ij}=0$ whenever $|i-j|>1$, we have
\begin{equation}\sin\sjn \pi_i a_{ij} (y_i-y_j)^2 \leq \sin\sjn a_{ij} =n.\label{eq:aa}\end{equation}
Furthermore,
\begin{equation}\label{eq:bb}
\sin \pi_i y_i^2
= \sin (i-\frac{n+1}{2})^2
 \geq
  \frac{n^3}{12}.\end{equation} The last inequality follows
from the well known fact ${\rm var}(X)= (n^2-1)/12$ for a discrete uniform
random variable $X$.
Using the inequality (\ref{mineigensym}) and Eqs.\ (\ref{eq:aa})-(\ref{eq:bb}), we obtain
the desired bound on $\rho$.}\end{proof}

\bigskip

\alexo{\noindent {\bf Remark:} Note that if the matrix $A$ is as in
the previous theorem, it is possible for the iteration
$x(t+1)=Ax(t)$ not to converge at all. Indeed, nothing in the
argument precludes the possibility that the smallest eigenvalue is
$-1$, for example. In such a case, the lower bounds of the theorem ---
derived based on bounding the second largest eigenvalue --- still hold
as the convergence rate and time are infinite.}

\smallskip

\noindent \aoc{ {\bf Remark:}  We could have saved ourselves a few
lines by appealing to the results of \cite{BDSX06} once we showed
$A$ is symemtric.  }

\bigskip

\section{Proof of the main theorem }

\aoc{We are now in a position to finally prove the main result of this chapter.}

\noindent {\bf Proof of Theorem \ref{mainthm}}:  Let $v$ be an eigenvector of $A$ with the properties in  part 5 of Lemma \ref{aproperties}. Fix a positive integer $k$. Let $\epsilon>0$ and pick $x \neq \0$ to be a small enough multiple of $v$ so that \[ \frac{\|f^k(x)-A^k(x)\|_2}{\|x\|_2} \leq \epsilon. \] This is possible by Lemma \ref{compositionapprox}.
Then, we have \[ \frac{\|f^k(x)\|_2}{\|x\|_2} \geq \frac{\|A^k x\|_2}{\|x\|_2} - \epsilon \geq  \left( 1-\frac{6}{n^2} \right) ^k - \epsilon. \] Using the orthogonality property $x^T {\bf 1} =0$, we have ${\bar x}=0$. Since we placed no restriction on $\epsilon$, this implies that
\[ \inf_{x \neq 0} \frac{\|f^k(x)-{\bar x}{\bf 1}\|_2}{\|x -{\bar x}{\bf 1}\|_2}=
\inf_{x \neq 0} \frac{\|f^k(x)\|_2}{\|x\|_2} \geq \left( 1 - \frac{6}{n^2}\right)^k \] Plugging $k=\tau(n,\epsilon)$ into this equation, we see that
\[ \left(1- \frac{6}{n^2}\right)^{\tau(n,\epsilon)} \leq \epsilon.\] Since $n \geq 3$, we have $1-6/n^2 \in (0,1)$, and
\[ \tau(n,\epsilon) \geq \frac{1}{\log (1-6/n^2)} \log \epsilon.\] Now using the bound $\log(1-\alpha) \geq 5(\alpha-1)$ for $\alpha \in [0,2/3)$, we get
\[ \tau(n,\epsilon) \geq \frac{n^2}{30} \log \frac{1}{\epsilon}.\] \noindent \qed
\bigskip

\noindent {\bf Remark:} \ao{We now sketch the proof of
 the claim we made earlier that a local averaging algorithm cannot average in finite time. \ao{Fix $n \geq 3$.} Suppose that for any $x(0)$ in some ball ${\cal B}$ around the origin, a local averaging algorithm results in $x(t) = {\bar x} \bf 1$ for all $t \geq T$.}

\ao{The proof of Theorem 1 shows that given any $k, \epsilon >0$, one can pick a vector $v(\epsilon)$ so that if $x(0)=v(\epsilon)$ then \ao{ $V(x(k))/V(x(0)) \geq (1-6/n^2)^k - \epsilon$. Moreover, the vectors $v(\epsilon)$ can be chosen
to be arbitrarily small. One simply picks $k = T$ and $\epsilon < (1-6/n^2)^k$ to get that $x(T)$ is not a multiple of $\1$; and furthermore, picking $v(\epsilon)$ small enough in norm to be in ${\cal B}$ results in a contradiction. } }

\bigskip

\noindent {\bf Remark:} Theorem \ref{mainthm} gives a lower bound on how long we must wait for the $2$-norm $\|x(t)-\bar{x} \1\|_2$ to
shrink by a factor of $\epsilon$. What if we replace the $2$-norm with other norms, for example with the $\infty$-norm? Since
 ${\cal B}_{\infty}(\0, r/\sqrt{n}) \subset {\cal B}_{2}(\0,r) \subset {\cal B}_{\infty}(\0, r)$, it follows that if the $\infty$-norm shrinks by a factor of $\epsilon$, then the $2$-norm must shrink by at least $\sqrt{n} \epsilon$. Since $\epsilon$ only enters the lower bound of Theorem \ref{mainthm} logarithmically, the  answer only changes by a factor of $\log n$ in passing to the $\infty$-norm. A similar argument shows that, modulo some logarithmic factors, it makes no difference which $p$-norm is used.

\section{Concluding remarks}

We have proved a lower bound on the convergence time of local averaging algorithms which scales
 quadratically in the number of agents. This lower bound holds even if all the communication graphs are equal to a
 fixed line graph.  Our work points to a number of open questions.

\begin{enumerate} \item Is it possible to loosen the definition of local averaging algorithms to encompass a wider class of
algorithms? In particular, is it possible to weaken the requirement that each $f_{i,G(t)}$ be smooth, perhaps only
to the requirement that it be piecewise-smooth or continuous, and
still obtain a $\Omega(n^2)$ lower bound?
\item Does the worst-case convergence time change if we introduce some memory and allow
$x_i(t+1)$ to depend on the last $k$ sets of messages received by
agent $i$? Alternatively, there is the broader question of how much is
there to be gained if every agent is allowed to keep track of extra
variables. Some positive results in this direction were obtained
in \cite{JSS09}.
\item What if each node maintains a small
number of update functions, and is allowed to choose which of them to apply? Our lower
bound does not apply to such schemes, so it is an open question whether its possible
to design practical algorithms along these lines with worst-case convergence
time scaling better than $n^2$.
\end{enumerate}

\aoc{In general, it would be nice to understand the relationship between the structure
of classes of averaging algorithms (e.g., how much memory they use, whether the updates are linear)
and the best achievable performance.}

\chapter{Quantized averaging \label{qanalysis}}

In this chapter, we consider a quantized version of the update rule in Eq.
(\ref{eq:basicupdate}). This model is a good approximation for a network of
nodes communicating through {finite bandwidth channels, so that} at
each time instant, only a finite number of bits can be transmitted.
We incorporate this constraint in our algorithm by assuming that
each node, upon receiving the values of its neighbors, computes the
convex combination $\sum_{j=1}^{{n}} a_{ij}(k) x_j(k)$ and quantizes
it. This update rule also captures {a} constraint that each node
can only store quantized values.

Unfortunately, {under Assumptions \ref{assumpt:weights}, \ref{assumpt:boundedintervals}, and \ref{assumpt:ds}, if the output of Eq.\ (\ref{eq:basicupdate}) is rounded
to the nearest integer, the sequence $x(k)$} is not guaranteed to
converge to consensus; see \cite{KBS06} for an example. We therefore choose a
quantization rule that rounds
the values down, 
according to
\begin{equation} \label{quantupdate}
x_i(k+1) = \left\lfloor \sum_{j=1}^{{n}} a_{ij}(k) x_j(k)
\right\rfloor,
\end{equation}
where $\lfloor \cdot \rfloor$ represents rounding {\it down} to the
nearest multiple of $1/Q$, and where $Q$ is some positive
integer.

We adopt the natural assumption that the initial values are
{already} quantized. \begin{assumption}
For all $i$, $x_i(0)$ is a multiple of
$1/Q$.\label{quantizedinitials}
\end{assumption} \aoc{We will next demonstrate that starting from multiples of $1/Q$, arbitrarily accurate quantized
computation of the average is possible provided the number of bits used to quantize is
at least on the order of $\log n$.  Moreover, the time scaling with $n$ required to do this is still on the order of $n^2$,
as in the previous chapters. Thus
the above Eq. (\ref{quantupdate}), despite its simplicity, turns out to have excellent performance in the quantized
setting.}

\aoc{Our exposition in this chapter will follow the paper
\cite{NOOT07}, where the results described here have previously
appeared.}

\section{A quantization level dependent bound}

For convenience we define \[ U = \max_i x_i(0), \qquad L = \min_i
x_i(0). \] We use $K$ to denote the total number of {relevant}
quantization levels, i.e., \[ K = (U-L)Q, \] which is an integer by
Assumption \ref{quantizedinitials}.

We first present a {convergence time bound that depends on the
quantization level $Q$.}

\begin{proposition} \label{simpleq}
Let Assumptions \ref{assumpt:weights} (non-vanishing weights),
\ref{assumpt:ds} (double stochasticity),
\ref{assumpt:boundedintervals} ($B$-connectivity), and
\ref{quantizedinitials} (quantized initial values) hold. Let
$\{{x(k)}\}$ be generated by the update rule (\ref{quantupdate}). If
$k \geq nBK$, {then} all {components} of $x(k)$ are equal.
\end{proposition}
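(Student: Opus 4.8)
The plan is to track the \emph{maximum} value rather than the minimum, because rounding \emph{down} makes the set of nodes sitting at the current maximum behave monotonically. First I would record two elementary invariants. An easy induction shows that every $x_i(k)$ stays a multiple of $1/Q$, since the floor operation preserves membership in the grid $(1/Q)\mathbb{Z}$. Writing $M(k)=\max_i x_i(k)$ and $m(k)=\min_i x_i(k)$, the fact that each $\sum_j a_{ij}(k)x_j(k)$ is a convex combination of the $x_j(k)$ (Assumption \ref{assumpt:weights} makes $A(k)$ row-stochastic) gives $\sum_j a_{ij}(k)x_j(k)\in[m(k),M(k)]$; since $M(k)$ and $m(k)$ are themselves grid points, rounding down yields $m(k+1)\ge m(k)$ and $M(k+1)\le M(k)$. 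Thus $M$ is nonincreasing and remains in $[L,U]$.

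The crucial structural observation concerns the ``high set'' $T(k)=\{\,i : x_i(k)=M(k)\,\}$ during an interval in which $M$ does not drop. A node $i$ can satisfy $x_i(k+1)=M(k)$ only if $\sum_j a_{ij}(k)x_j(k)=M(k)$ (the average is $\le M(k)$, and rounding a number strictly below the grid point $M(k)$ lands at most at $M(k)-1/Q$), which forces every $j$ with $a_{ij}(k)>0$ to lie in $T(k)$. Because $a_{ii}(k)>0$ (positive diagonal, Assumption \ref{assumpt:weights}), this includes $i$ itself, so $T(k+1)\subseteq T(k)$: while the maximum is held fixed, the high set can only shrink. This monotonicity is exactly what fails for the minimum under round-down, so recognizing the maximum as the right object to follow is the conceptual heart of the argument.

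Next I would invoke $B$-connectivity to force a strict shrink in every block. Suppose $T$ were constant over a whole block $[tB,(t+1)B)$ with $\emptyset\neq T\neq N$. Then every node of $T$ had all its in-neighbors inside $T$ at each step of the block, i.e.\ the block-union graph contains no edge entering $T$ from $T^{c}$; this contradicts the strong connectivity guaranteed by Assumption \ref{assumpt:boundedintervals}, since then $T$ could not be reached from $T^{c}$. Hence some crossing edge $(j,i)$ with $j\in T^{c}$ and $i\in T$ occurs at a time $k^{*}$ in the block, and the term $a_{ij}(k^{*})x_j(k^{*})$ pulls the average of node $i$ strictly below $M(k^{*})$, so $i$ leaves $T$ at $k^{*}+1$. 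Therefore, within each block, either $T$ strictly loses an element or $T=N$ (which means consensus at the value $M$). Consequently, once the maximum equals a given level, it must strictly decrease within at most $n-1$ blocks unless consensus has already occurred (a non-final level has at most $n-1$ nodes at the maximum, and the high set loses at least one per block until it empties, which is precisely when $M$ drops).

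It remains to count. The maximum starts at $U$, decreases by at least $1/Q$ at each drop, and stays $\ge m(k)\ge L$; hence it can drop at most $(U-L)Q=K$ times. Each drop costs at most $n-1$ blocks, i.e.\ at most $(n-1)B$ steps. Summing, consensus is reached within $(n-1)BK<nBK$ steps, so $x(k)$ has all components equal for every $k\ge nBK$, as claimed. I expect the main obstacle to be the per-block strict-shrink step: getting the edge directions right in ``no edge enters $T$ from $T^{c}$'' and verifying that a single crossing edge, together with the positive weights and the grid structure, genuinely ejects a node from $T$ (rather than merely lowering its average within the same grid cell). I would also note that double stochasticity (Assumption \ref{assumpt:ds}) is not actually needed for this finite-time \emph{consensus} statement---it is what pins the common limit near the true average---so the proof uses only row-stochasticity, positive diagonals, $B$-connectivity, and the quantized initialization.
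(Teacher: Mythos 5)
Your proof is correct and follows essentially the same route as the paper's: both track the set of nodes sitting at the current maximum, use $B$-connectivity to force at least one such node to absorb a strictly smaller (grid-separated) value per block and hence drop by at least $1/Q$, and conclude that the maximum falls a level every at most $nB$ steps, giving the $nBK$ bound after at most $K$ levels. Your write-up is simply a more detailed version of the paper's sketch (including the correct side remark that double stochasticity is never used), so there is nothing to fix.
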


\begin{proof} Consider the nodes {whose initial
value is $U$.} There are at most $n$ of them. {As long as} not all
entries of ${x(k)}$ are equal, then every $B$ iterations, at least
one {node} must use a value strictly less than $U$ {in} an update;
\an{such a} node will have its value decreased to $U-1/Q$ or less. It
follows that after $nB$ iterations, the largest node {value will be}
at most $U-1/Q$. Repeating this argument, we {see} that at most
$nBK$ iterations are possible before {all the nodes have} the same
value.
\end{proof}

Although the above bound gives informative results for small $K$, it
becomes weaker as {$Q$ (and, therefore, $K$)} increases.
On the other hand, as $Q$ approaches infinity,
the quantized system approaches the unquantized system; the availability
of convergence time bounds for the unquantized system suggests that
similar bounds should be possible for the quantized one. Indeed,  in
the next section, \ao{ we adopt a notion of convergence time
parallel to our notion of convergence time for the unquantized
algorithm; as a result, we} obtain a bound on the convergence time
{which is} independent of the total number of quantization levels.

\section{A quantization level independent bound}

We adopt a slightly different measure of convergence for the
analysis of the quantized consensus algorithm. For any $x\in\R^n$,
we define $m(x)=\min_i x_i$ and
\[\underline{V}(x)=\sum_{i=1}^n (x_i - m(x))^2.\]
We will also use the simpler notation $m(k)$ and $\underline{V}(k)$
to denote $m(x(k))$ and $\underline{V}(x(k))$, respectively, where
it is more convenient to do so. The function $\underline{V}$ will be
our Lyapunov function for the analysis of the quantized consensus
algorithm. The reason for not using our earlier Lyapunov function,
$V$, is that for the quantized algorithm, $V$ is not guaranteed to
be monotonically nonincreasing in time. On the other hand, we have
that $V(x)\leq \underline{V}(x) \leq \alexo{4} \ao{n}V(x)$ for
any\footnote{\alexo{The first inequality follows \jnt{because}
$\sum_{i} (x_i - z)^2$ is minimized when $z$ is the mean of the
vector $x$; to establish the second inequality, observe that it
suffices to consider the case when the mean of $x$ is $0$ and
$V(x)=1$. In that case,  the largest distance between $m$ and any
$x_i$ is $2$ by the triangle inequality, so $\underline{V}(x) \leq
4n$.}}  $x \in \R^n$. As a consequence, any convergence time bounds
expressed in terms of $\underline{V}$ translate to essentially the
same bounds expressed in terms of $V$, \ao{up to a logarithmic
factor}.

Before proceeding, we {record an elementary fact which will allow us
to} relate the variance decrease $V(x)-V(y)$ to the decrease,
$\underline{V}(x)- \underline{V}(y)$, of our new Lyapunov function.
The proof involves {simple algebra,} and is therefore omitted.

\begin{lemma} \label{sumlemma} Let $u_1,\ldots,u_{n}$ and
$w_1,\ldots,w_n$ be real numbers satisfying \[ \sum_{i=1}^n u_i =
\sum_{i=1}^n w_i.
\] {Then, the expression}
\[ f(z) = \sum_{i=1}^n (u_i - z)^2 - \sum_{i=1}^n (w_i - z)^2 \]
is {a constant,}
independent of the scalar $z$.
\end{lemma}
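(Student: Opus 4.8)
The plan is to simply expand both sums of squares and track the dependence on $z$. The key observation is that a quadratic in $z$ of the form $\sum_{i=1}^n (a_i - z)^2$ contributes a $z^2$ term with coefficient $n$ (independent of the $a_i$), a linear term $-2z\sum_{i=1}^n a_i$, and a constant $\sum_{i=1}^n a_i^2$. When we take the difference $f(z)$, the two $z^2$ contributions are identical and cancel, and the linear-in-$z$ contributions combine into a single term whose coefficient is proportional to $\sum_{i=1}^n u_i - \sum_{i=1}^n w_i$.

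Concretely, first I would write
\[ f(z) = \left( \sum_{i=1}^n u_i^2 - 2z \sum_{i=1}^n u_i + n z^2 \right) - \left( \sum_{i=1}^n w_i^2 - 2z \sum_{i=1}^n w_i + n z^2 \right). \]
The $n z^2$ terms cancel immediately, leaving
\[ f(z) = \sum_{i=1}^n u_i^2 - \sum_{i=1}^n w_i^2 - 2z \left( \sum_{i=1}^n u_i - \sum_{i=1}^n w_i \right). \]
Then I would invoke the hypothesis $\sum_{i=1}^n u_i = \sum_{i=1}^n w_i$, which forces the parenthesized factor in the linear term to vanish, so that
\[ f(z) = \sum_{i=1}^n u_i^2 - \sum_{i=1}^n w_i^2, \]
an expression with no remaining dependence on $z$. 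This establishes that $f$ is constant.

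There is essentially no obstacle here: the result is a one-line consequence of the binomial expansion, and the only thing to be careful about is verifying that the $z^2$ terms cancel (which they do because each sum has exactly $n$ terms) and that the equal-sum hypothesis is exactly what kills the linear term. Indeed this is presumably why the paper states the proof is ``simple algebra'' and omits it; the argument above is complete as written.
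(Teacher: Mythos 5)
Your proof is correct and is exactly the ``simple algebra'' argument the paper alludes to when it omits the proof: expand the squares, note the $nz^2$ terms cancel, and use the equal-sum hypothesis to eliminate the linear term in $z$. Nothing is missing.
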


Our next lemma places a bound on {the decrease of} the Lyapunov
function $\underline{V}(t)$ between times $kB$ and $(k+1)B-1$.

\noindent \begin{lemma} \label{quantdiff}
Let Assumptions
\ref{assumpt:weights}, \ref{assumpt:ds}, \ref{weakconnect}, and \ref{quantizedinitials} hold.
Let $\{x{(k)}\}$ be generated by the update rule
(\ref{quantupdate}). Suppose that the components $x_i(kB)$ of
the vector $x(kB)$ have been ordered from largest to smallest,
with ties broken arbitrarily. Then, we have
\[ \underline{V}(kB) - \underline{V}((k+1)B)
\geq \frac{\eta}{2}\, \sum_{i=1}^{n-1} (x_{i}(kB)
- x_{i+1}(kB))^2.\]
\end{lemma}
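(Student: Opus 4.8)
The plan is to follow the proof of Lemma \ref{vardiff} essentially verbatim, once we establish the quantized analogue of its per-step starting point, Eq.~(\ref{firstvdec0}). That is, I would first reduce the statement to the single-step inequality
\[
\underline{V}(k) - \underline{V}(k+1) \ \geq\ \sum_{i<j} w_{ij}(k)\,(x_i(k)-x_j(k))^2,
\]
where $w_{ij}(k)$ is the $(i,j)$ entry of $A(k)^T A(k)$. Given this, summing over $k$ in the interval $\{kB,\ldots,(k+1)B-1\}$ and invoking the cut/connectivity argument of Lemma \ref{vardiff} (which uses only a lower bound, never the equality) yields the claim with no further changes. The essential difficulty is that, unlike the unquantized case, this per-step inequality is \emph{not} a direct consequence of Lemma \ref{vl}: the rounding step $y\mapsto\lfloor y\rfloor$ can by itself \emph{increase} $\underline V$ (e.g.\ rounding $(1,\tfrac12)$ down to $(1,0)$ raises $\underline V$ from $\tfrac14$ to $1$), so one cannot argue that quantization merely helps.

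The key step, and the one I expect to be the main obstacle, is proving the displayed per-step inequality. My plan is to exploit the discreteness through a shift by the current minimum. Let $m=m(k)=\min_i x_i(k)$; by Assumption \ref{quantizedinitials} and induction on the floor update (\ref{quantupdate}), every $x_i(k)$, and hence $m$, is a multiple of $1/Q$. Set $u_i = x_i(k)-m\geq 0$, so that $\min_i u_i = 0$ and $\underline V(k)=\sum_i u_i^2$. Because $A(k)\1=\1$ (row-stochasticity) and $m$ is a multiple of $1/Q$, the constant $m$ factors out of the floor:
\[
x_i(k+1) = \Big\lfloor (A(k)x(k))_i \Big\rfloor = m + \big\lfloor (A(k)u)_i \big\rfloor =: m + v_i ,
\]
with $v_i\geq 0$ since $A(k)u\geq 0$. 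Then $m(k+1)=m+\min_j v_j$, so $\underline V(k+1)=\sum_i (v_i-\min_j v_j)^2 \leq \sum_i v_i^2$, where the bound holds term by term because $0\leq \min_j v_j \leq v_i$. Finally $0\leq v_i = \lfloor (A(k)u)_i\rfloor \leq (A(k)u)_i$ gives $\sum_i v_i^2 \leq \|A(k)u\|_2^2$, whence
\[
\underline V(k)-\underline V(k+1) \ \geq\ \|u\|_2^2 - \|A(k)u\|_2^2 \ =\ u^T\big(I-A(k)^TA(k)\big)u .
\]
The proof then concludes by substituting the purely algebraic identity (\ref{asquareform}), which is valid for every vector and not only mean-centered ones, to obtain $u^T(I-A(k)^TA(k))u = \sum_{i<j} w_{ij}(k)(u_i-u_j)^2 = \sum_{i<j} w_{ij}(k)(x_i(k)-x_j(k))^2$, establishing the per-step bound.

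With the per-step inequality in hand, I would reuse the remainder of the proof of Lemma \ref{vardiff} word for word: introduce the cut times $t_d$, the sets $D(t)$, $C_d$ and $F_{ij}(t)$, apply $\sum_{(i,j)\in C_d} w_{ij}(t)\geq \eta/2$ (Lemma \ref{db}) and the sorted-values bound $(x_i(t')-x_j(t'))^2 \geq \sum_{d\in F_{ij}(t')}(y_d-y_{d+1})^2$, and sum over the block to reach $\underline V(kB)-\underline V((k+1)B)\geq \tfrac{\eta}{2}\sum_{d=1}^{n-1}(y_d-y_{d+1})^2$ with $y_d=x_d(kB)$. The only genuinely new content is the per-step reduction above; the shift-by-the-minimum device, which converts the awkward rounding into the harmless inequalities $\underline V(k+1)\leq\sum_i v_i^2$ and $v_i\leq (A(k)u)_i$, is precisely where the quantization is tamed.
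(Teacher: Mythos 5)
Your proof is correct and is essentially the paper's own argument written in shifted coordinates: the paper also splits the update into the linear map $y(k)=A(k)x(k)$ followed by componentwise rounding, uses the same two inequalities (rounding down can only shrink distances to the \emph{old} minimum $m(k)$, and the minimum is nondecreasing), and reaches the same per-step bound, except that it stays in unshifted coordinates and obtains your identity $\|u\|_2^2-\|A(k)u\|_2^2=\sum_{i<j}w_{ij}(k)(x_i(k)-x_j(k))^2$ via Lemma \ref{sumlemma} (evaluated at $z=\bar{x}(k)$ and $z=m(k)$) together with Lemma \ref{vl}, before reusing the cut argument of Lemma \ref{vardiff} exactly as you propose. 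The only slight overstatement is ``word for word'': as the paper itself flags, the intra-block monotonicity step $x_i(t')\geq y_{d_1}$ in Lemma \ref{vardiff} must be re-justified for the quantized dynamics, which works precisely because of the fact you established---all iterates are multiples of $1/Q$---so that rounding down a convex combination of values that are at least $y_{d_1}$ cannot drop below $y_{d_1}$.
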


\begin{proof} For all $k$, we view Eq.\ (\ref{quantupdate}) as the
composition of two operators: \[ y(k) = A(k) x(k), \] where $A(k)$
is {a} doubly stochastic matrix, and \[ x(k+1) = \lfloor y(k)
\rfloor,
\] where the quantization $\lfloor \cdot \rfloor$ is {carried out}
componentwise.

We apply Lemma \ref{sumlemma} with the identification
$u_i=x_{{i}}(k)$, $w_i=y_{{i}}(k)$. Since multiplication by a doubly
stochastic matrix preserves the mean, the condition $\sum_i u_i =
\sum_i w_i$ is satisfied. By considering two different choices for
the scalar $z$, namely, $z_1=\bar{x}(k)=\bar{y}(k)$ and $z_2=m(k)$, we obtain
\begin{equation} \label{firststep} V(x(k)) - V(y(k)) =
\underline{V}(\ao{x(k)}) - \sum_{i=1}^n (y_i(k)-m(k))^2.
\end{equation}
Note that $x_i(k+1)-m(k)\leq y_i(k)-m(k)$. Therefore,
\begin{equation} \label{secondstep} \underline{V}(\ao{x(k)}) -
\sum_{i=1}^n (y_i(k)-m(k))^2 \leq \underline{V}(\ao{x(k)}) -
\sum_{i=1}^n (x_i(k+1)-m(k))^2.
\end{equation}
Furthermore, note that \alexo{since $x_i(k+1) \geq m(k+1) \geq m(k)$
for all $i$, we have that } $x_i(k+1)-m(k+1)\leq x_i(k+1)-m(k)$.
Therefore,
\begin{equation} \label{thirdstep} \underline{V}(\ao{x(k)}) -
\sum_{i=1}^n (x_i(k+1)-m(k))^2 \leq \underline{V}(\ao{x(k)}) -
\underline{V}(\ao{x(k+1)}).
\end{equation}
By combining
Eqs.\ (\ref{firststep}), (\ref{secondstep}), and (\ref{thirdstep}),
we obtain
\[ V(x(t)) - V(y(t)) \leq \underline{V}(\ao{x(t)}) -
\underline{V}(\ao{x(t+1)})\qquad \hbox{for all }t. \]
Summing the
preceding relations over $t=kB,\ldots, (k+1)B-1$, we further obtain
\[
\sum_{t=kB}^{(k+1)B-1} \Big(V(x(t))-V(y(t))\Big) \leq
\underline{V}(\ao{x(kB)}) - \underline{V}(\ao{x((k+1)B))}.
\]

To complete the proof, we provide a lower bound on the expression
\[
\sum_{t=kB}^{(k+1)B-1} \Big(V(x(t)) - V(y(t))\Big). \] Since
$y(t)=A(t)x(t)$ for all $t$, it follows from Lemma \ref{vl} that for
any $t$, \[ V(x(t))-V(y(t)) = \sum_{i<j} w_{ij}(t)
(x_i(t)-x_j(t))^2,\] where $w_{ij}(t)$ is the $(i,j)$-th entry of
$A(t)^T A(t)$. Using this relation and following the same line of
analysis used in the proof of Lemma \ref{vardiff} [where the
relation $\alexo{x_i(t) \geq y_{d_1}}$
holds in view of the assumption that $x_i(kB)$ is a multiple of
$1/Q$ for all $k\ge 0$, cf.\ Assumption \ref{quantizedinitials}] ,
we obtain the desired result.
\end{proof}

\vspace{1pc} The next theorem contains our main result on the
convergence time of the quantized algorithm.

\noindent \begin{theorem} \label{qbound} Let Assumptions
\ref{assumpt:weights} (non-vanishing weights), \ref{assumpt:ds}
(double stochasticity), \ref{weakconnect} (connectivity relaxation),
and \ref{quantizedinitials} hold. Let $\{x{(k)}\}$ be generated by
the update rule (\ref{quantupdate}). Then, there exists an absolute
constant $c$ such that we have
$$\underline{V}(k) \leq \epsilon \underline{V}(0)\qquad \hbox{for all }
k\ge c\, (n^2/\eta) B
\log(1/\epsilon).$$

\end{theorem}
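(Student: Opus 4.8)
The plan is to mirror the proof of Theorem \ref{uqbound} from the unquantized setting, replacing the Lyapunov function $V$ by $\underline{V}$ throughout. Lemma \ref{quantdiff} already supplies the exact analogue of Lemma \ref{vardiff}: after one block of $B$ steps, the drop $\underline{V}(kB) - \underline{V}((k+1)B)$ is bounded below by $\frac{\eta}{2}\sum_{i=1}^{n-1}(x_i(kB) - x_{i+1}(kB))^2$, with the components sorted in nonincreasing order. So what remains is to prove the $\underline{V}$-analogue of Lemma \ref{lboundvar}, namely that whenever $\underline{V}(kB) > 0$,
\[ \frac{\underline{V}(kB) - \underline{V}((k+1)B)}{\underline{V}(kB)} \geq \frac{\eta}{2n^2}, \]
and then to iterate this block estimate and interpolate across the steps within each block.

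First I would record that $\underline{V}$ is nonincreasing at every single step. This already falls out of the chain of inequalities in the proof of Lemma \ref{quantdiff}: combining the displayed relations there gives $V(x(t)) - V(y(t)) \leq \underline{V}(x(t)) - \underline{V}(x(t+1))$, and since $y(t) = A(t)x(t)$ with $A(t)$ doubly stochastic, Lemma \ref{vl} gives $V(x(t)) - V(y(t)) \geq 0$; hence $\underline{V}(x(t+1)) \leq \underline{V}(x(t))$ for all $t$. This monotonicity is what lets me pass from the block estimate at times that are multiples of $B$ to a bound valid at every large $t$.

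Next I would establish the ratio bound. Starting from Lemma \ref{quantdiff}, it suffices to prove the purely algebraic inequality
\[ \sum_{i=1}^{n-1}(x_i - x_{i+1})^2 \geq \frac{1}{n^2}\,\underline{V}(x) \]
for every vector $x$ whose entries are in nonincreasing order. The argument is the one from \cite{LO81} used in Lemma \ref{lboundvar}, but with the \emph{minimum} $m(x) = x_n$ playing the role previously played by the mean $\bar{x}$. Concretely, since $\underline{V}(x)$ is unchanged by subtracting $x_n$ from every coordinate and by positive scaling, I may assume $x_n = 0$ and $\underline{V}(x) = \sum_{i=1}^{n-1} x_i^2 = 1$; then some $x_j^2 \geq 1/(n-1)$, and as the entries are sorted this forces $x_1 \geq 1/\sqrt{n}$. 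Setting $z_i = x_i - x_{i+1} \geq 0$ gives $\sum_{i=1}^{n-1} z_i = x_1 - x_n = x_1 \geq 1/\sqrt{n}$, and minimizing $\sum_i z_i^2$ subject to $z_i \geq 0$, $\sum_i z_i \geq 1/\sqrt{n}$ yields the symmetric optimum $z_i = 1/((n-1)\sqrt{n})$ with value $1/((n-1)n) \geq 1/n^2$. Combining this with Lemma \ref{quantdiff} gives the desired per-block ratio bound.

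Finally, iterating the ratio bound gives $\underline{V}(kB) \leq (1 - \eta/(2n^2))^k \underline{V}(0)$, and by the monotonicity established above the same upper bound controls $\underline{V}(t)$ for every $t \geq kB$. Requiring $(1-\eta/(2n^2))^k \leq \epsilon$ and using $-\log(1-u) \geq u$ shows that $k \geq (2n^2/\eta)\log(1/\epsilon)$ suffices, i.e.\ $t = kB \geq (2n^2/\eta)B\log(1/\epsilon)$, which establishes the claim for a suitable absolute constant $c$ (the interpolation across partial blocks being absorbed into $c$). The one step requiring genuine care — and the main obstacle — is the algebraic inequality above: because $\underline{V}(x) \geq V(x)$ in general, one cannot simply quote Lemma \ref{lboundvar}, and the optimization must be redone with the minimum as the reference point. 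The saving grace is that the relevant telescoping sum $\sum_i z_i$ still equals $x_1 - x_n$ and is still bounded below by $1/\sqrt{n}$, so no extra factor of $n$ is lost and the quadratic scaling is preserved.
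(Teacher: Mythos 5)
Your proposal is correct and follows essentially the same route as the paper: invoke Lemma \ref{quantdiff} for the per-block decrease, reduce the ratio bound to the optimization $\min \sum_i z_i^2$ over $z_i \geq 0$, $\sum_i z_i \geq 1/\sqrt{n}$ with the minimum $x_n$ (rather than the mean) as reference point, and iterate. The paper's own proof compresses this into the remark "we simply repeat the steps of Lemma \ref{lboundvar}" and leaves the per-step monotonicity of $\underline{V}$ and the final geometric iteration implicit; your write-up just makes those routine steps explicit.
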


\begin{proof}
Let us assume that $\underline{V}(kB)>0$. From Lemma
\ref{quantdiff}, we have
\[ \underline{V}(kB)- \underline{V}((k+1)B) \geq \frac{\eta}{2}
\sum_{i=1}^{n-1} (x_{i}(kB) - x_{i+1}(kB))^2,  \] where {the
components} $x_i(kB)$ are ordered from largest to smallest. Since
$\underline V(kB) = \sum_{i=1}^n (x_i(kB) - x_n(kB))^2$, we have \[
\frac{\underline{V}(kB)-\underline{V}((k+1)B)}{\underline{V}(kB)}
\geq \frac{\eta}{2} \frac{\sum_{i=1}^{n-1} (x_{i}(kB) -
x_{i+1}(kB))^2}{\sum_{i=1}^n (x_i(kB)-x_n(kB))^2}.
\] {Let} $y_i = x_i(kB)-x_n(kB)$. Clearly, $y_i \geq 0$ for all $i$,
and $y_n=0$. Moreover, the monotonicity of $x_i(kB)$ implies the
monotonicity of $y_i$: \[ y_1 \geq y_2 \geq \cdots \geq y_n=0.
\] Thus,
\[ \frac{\underline{V}(kB)-\underline{V}((k+1)B)}{\underline{V}(kB)}
\geq \frac{\eta}{2} \min_{{y_1 \geq y_2\ge \cdots\ge y_n\atop
y_n=0}}\frac{\sum_{i=1}^{n-1} (y_i - y_{i+1})^2}{\sum_{i=1}^n
y_i^2}.
\] \alexo{Next, we simply repeat the steps of Lemma \ref{lboundvar}. We can
assume without loss of generality that $\sum_{i=1}^n y_i^2=1$.
\an{Define $z_i=y_{i}-y_{i+1}$ for $i=1,\ldots,n-1$ and $z_n=0$. We}
have that $z_i$ are all nonnegative
and $\sum_{i} z_i = y_1 - y_n \geq 1/\sqrt{n}$.} \an{Therefore,}
\[ \frac{\eta}{2} \min_{{y_1 \geq y_2\geq \cdots \geq y_n\atop
\sum_i y_i^2=1}} \sum_{i=1}^{n-1} (y_i - y_{i+1})^2 \geq
\frac{\eta}{2} \min_{z_i \geq 0, \sum_i z_i \geq 1/\sqrt{n}}
\sum_{i=1}^n z_i^2.
\]
The minimization problem on the right-hand side has an optimal value of at
least $1/n^2$, and the desired result follows.
\end{proof}

\section{Extensions and modifications}

In this section, we  comment briefly on some corollaries
of Theorem \ref{qbound}.

First, we note that the results of Section \ref{matrixpicking}
immediately carry over to the quantized case. Indeed, in Section
\ref{matrixpicking}, we showed how to pick the weights $a_{ij}(k)$
in a decentralized manner, based  only on local information, so that
Assumptions 1 and \ref{weakconnect} are satisfied, with $\eta \geq
1/3$. When using a quantized version of the load-balancing algorithm of Chapter \ref{nsquared},
we once again \alexo{ manage to remove the factor of $1/\eta$ from
our upper bound.}

\begin{proposition} \label{savingnq}
For the quantized version of the load-balancing algorithm of Chapter \ref{nsquared},
and under the same assumptions as in Theorem \ref{savingn}, if
$k\geq c\, n^2B \log (1/\epsilon))$, then $\underline{V}(k ) \leq
\epsilon \underline{V}(0)$, where $c$ is an absolute
constant.\end{proposition}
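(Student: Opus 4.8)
The plan is to recognize the quantized load-balancing scheme as a particular instance of the quantized iteration (\ref{quantupdate}) and then invoke Theorem \ref{qbound} with $\eta = 1/3$. Concretely, at each time $k$ the balancing algorithm of Chapter \ref{nsquared}, applied to the current values $x(k)$ and the current graph $G(k)$, determines a matrix $A(k)$; the quantized version then performs $x(k+1) = \lfloor A(k) x(k) \rfloor$, where $\lfloor \cdot \rfloor$ rounds each component down to the nearest multiple of $1/Q$. Exactly as in the proof of Theorem \ref{savingn}, each such $A(k)$ is doubly stochastic, has positive diagonal entries, and has all of its nonzero entries bounded below by $1/3$; hence Assumptions \ref{assumpt:weights} (with $\eta = 1/3$) and \ref{assumpt:ds} hold for the matrix sequence generated along the quantized trajectory. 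Assumption \ref{quantizedinitials} is assumed, and since rounding down maps $\frac{1}{Q}\mathbb{Z}$ into itself, every $x_i(k)$ remains a multiple of $1/Q$ for all $k$.

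The crux is to re-verify Assumption \ref{weakconnect} for the sequence $\E(A(k))$ produced along the \emph{quantized} trajectory, since this is the one step where the data-dependence of the matrices interacts with the quantization. I would follow the argument in the proof of Theorem \ref{savingn} line by line, checking only that the monotonicity bounds it uses survive rounding. Fix $t$, sort the components of $x(tB)$ in nonincreasing order, fix a cut $d$ with $x_d(tB) \neq x_{d+1}(tB)$, and set $S^+ = \{1,\ldots,d\}$ and $S^- = \{d+1,\ldots,n\}$. Before the first time $t'$ at which $G$ has an edge across the cut, the nodes in $S^+$ form quantized convex combinations only among themselves: the underlying convex combination is bounded below by $x_d(tB)$, and because $x_d(tB)$ is itself a multiple of $1/Q$, rounding down to the nearest multiple of $1/Q$ keeps the result $\geq x_d(tB)$. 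Symmetrically, rounding only decreases values, so the nodes in $S^-$ remain $\leq x_{d+1}(tB) < x_d(tB)$. These are precisely the two inequalities $x_i(t') \geq x_d(tB)$ for $i \in S^+$ and $x_j(t') < x_d(tB)$ for $j \in S^-$ on which the rest of the proof of Theorem \ref{savingn} rests; the offer-and-acceptance bookkeeping of the balancing algorithm then produces at time $t'$ an edge of $\E(A(t'))$ between $S^+$ and $S^-$, establishing Assumption \ref{weakconnect}.

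With Assumptions \ref{assumpt:weights}, \ref{assumpt:ds}, \ref{weakconnect}, and \ref{quantizedinitials} all verified for the quantized trajectory with $\eta = 1/3$, Theorem \ref{qbound} applies and yields $\underline{V}(k) \leq \epsilon \underline{V}(0)$ whenever $k \geq c'(n^2/\eta) B \log(1/\epsilon) = 3 c' n^2 B \log(1/\epsilon)$; renaming $3c'$ as $c$ gives the claimed bound. The only genuinely new work relative to Chapter \ref{nsquared} is the connectivity check of the second paragraph, and I expect that to be the sole obstacle, since quantization is exactly the operation that could in principle destroy the ``non-interaction across the cut implies separation of values'' property. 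The rescue is the observation that all iterates lie in $\frac{1}{Q}\mathbb{Z}$, so that $x_d(tB)$ is a fixed point of rounding down and the lower bound on the $S^+$ values is preserved. This is the same mechanism already used in the proof of Lemma \ref{quantdiff}, so no additional machinery is required.
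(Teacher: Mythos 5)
Your proof is correct and takes essentially the same route as the paper: the paper likewise deduces the proposition by observing that the quantized balancing algorithm still satisfies Assumptions \ref{assumpt:weights}, \ref{assumpt:ds}, \ref{weakconnect}, and \ref{quantizedinitials} with $\eta = 1/3$, and then invoking Theorem \ref{qbound}. Your second paragraph simply makes explicit a check the paper leaves implicit (acknowledged only in the bracketed remark inside the proof of Lemma \ref{quantdiff}), namely that the cut-separation inequalities behind Assumption \ref{weakconnect} survive rounding because all iterates remain multiples of $1/Q$.
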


Second, we note that Theorem \ref{qbound} {can be used} to obtain a
bound on the time until {the values of all nodes are equal. Indeed,
we observe that in the presence of quantization, once the condition
$\underline{V}(k) <1/Q^2$ is satisfied, all components of $x(k)$
must be equal.}

\begin{proposition} \label{qboundequal}
Consider the quantized algorithm (\ref{quantupdate}), and assume
that Assumptions \ref{assumpt:weights} (non-vanishing weights),
\ref{assumpt:ds} (double stochasticity), \ref{weakconnect}
(connectivity relaxation), and \ref{quantizedinitials} (quantized
initial values) hold. If $k\geq c(n^2/\eta) B \big[\log Q + \log
\underline{V}(0)\big]$, then all components of $x(k)$ are equal,
where $c$ is an absolute constant.
\end{proposition}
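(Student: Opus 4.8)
The plan is to obtain this as a direct corollary of Theorem~\ref{qbound}, using the fact that rounding to multiples of $1/Q$ upgrades the approximate-consensus conclusion of that theorem into an \emph{exact}-consensus conclusion. First I would record the elementary quantization-gap observation flagged in the remark preceding the proposition. The update rule (\ref{quantupdate}) rounds down to the nearest multiple of $1/Q$, and the initial values are multiples of $1/Q$ by Assumption~\ref{quantizedinitials}; hence by a trivial induction (the rounding output is always on the grid, whatever its input) every $x_i(k)$ lies in $\tfrac1Q\mathbb{Z}$ for all $k\geq 0$. Consequently, if the components of $x(k)$ are \emph{not} all equal, some $x_i(k)$ exceeds $m(k)=\min_j x_j(k)$, and since both are multiples of $1/Q$ their difference is at least $1/Q$, so $\underline{V}(k)=\sum_i (x_i(k)-m(k))^2 \geq 1/Q^2$. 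Taking the contrapositive, $\underline{V}(k) < 1/Q^2$ forces exact consensus at time $k$. I would also note that consensus is absorbing: if all $x_i(k)$ equal a common multiple $v$ of $1/Q$, then row-stochasticity gives $\sum_j a_{ij}(k)x_j(k)=v$ and $\lfloor v\rfloor=v$, so the values stay equal thereafter.

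Next I would invoke Theorem~\ref{qbound}, which applies verbatim since the proposition assumes exactly the same hypotheses (Assumptions~\ref{assumpt:weights}, \ref{assumpt:ds}, \ref{weakconnect}, \ref{quantizedinitials}). That theorem gives an absolute constant $c$ with $\underline{V}(k)\leq \epsilon\,\underline{V}(0)$ for every $k \geq c(n^2/\eta)B\log(1/\epsilon)$. The idea is simply to choose $\epsilon$ small enough that $\epsilon\,\underline{V}(0) < 1/Q^2$. If $\underline{V}(0)=0$ the claim is trivial, so assume $\underline{V}(0)>0$ and take $\epsilon = 1/(2Q^2\underline{V}(0))$; then $\underline{V}(k)\leq \tfrac12 Q^{-2} < 1/Q^2$ past the threshold, and the quantization-gap observation yields exact consensus there. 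Substituting this $\epsilon$ gives $\log(1/\epsilon) = \log 2 + 2\log Q + \log \underline{V}(0)$, so consensus holds once $k \geq c(n^2/\eta)B\bigl(\log 2 + 2\log Q + \log \underline{V}(0)\bigr)$, which is of the advertised order $c'(n^2/\eta)B[\log Q + \log \underline{V}(0)]$ after renaming the absolute constant.

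There is essentially no hard step: the statement is a repackaging of Theorem~\ref{qbound}, and the only points requiring (minor) care are the induction keeping the iterates on the grid $\tfrac1Q\mathbb{Z}$ and the bookkeeping of logarithmic factors when substituting $\epsilon$. One clean way to see that $\log Q + \log \underline{V}(0)$ is the natural scale is to observe that $Q^2\underline{V}(0)=\sum_i\bigl(Q(x_i(0)-m(0))\bigr)^2$ is a sum of squares of nonnegative integers, hence a positive integer (at least $1$) whenever the initial values are not all equal; this is exactly the discrete quantity whose logarithm governs the time to hit the $1/Q^2$ threshold.
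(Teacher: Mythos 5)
Your proposal is correct and follows exactly the route the paper intends: the paper justifies this proposition only by the remark that $\underline{V}(k)<1/Q^2$ forces all components of $x(k)$ to be equal (since iterates stay on the grid of multiples of $1/Q$), combined with Theorem~\ref{qbound} applied with $\epsilon$ chosen so that $\epsilon\,\underline{V}(0)<1/Q^2$. Your write-up merely fills in the details the paper leaves implicit (the grid induction, the absorbing nature of consensus, and the observation that $Q^2\underline{V}(0)$ is a positive integer, which makes the logarithmic bookkeeping legitimate), so it matches the paper's argument.
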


\section{Quantization error}

Despite favorable convergence properties of our quantized
{averaging} algorithm (\ref{quantupdate}), the update rule does not
preserve the average of the values at each iteration. Therefore, the
{common limit of the sequences $x_i(k)$, denoted by $x_f$,}
need not be equal to the exact average of the initial values. We
next provide an upper bound on the error between {$x_f$} and the
initial average, as a function of the number of quantization levels.

\begin{proposition} \label{qerror}
There is an absolute constant $c$ such that
for the common limit $x_f$ of the
values $x_i(k)$ generated by the
quantized algorithm (\ref{quantupdate}), we have
\[ \left|x_f - \frac{1}{n} \sum_{i=1}^n x_i(0)\right| \le {c\over Q}
\ {n^2\over\eta}\, B\log (Qn(U-L)).\]
\end{proposition}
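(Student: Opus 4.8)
The plan is to track the total sum $S(k) = \sum_{i=1}^n x_i(k)$ and to bound how far it can drift from its initial value $S(0) = n\bar{x}(0)$ before the algorithm reaches consensus. Two ingredients drive the argument: a per-step bound on the mass lost to rounding down, and the consensus-time bound already furnished by Proposition \ref{qboundequal}.

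First I would show that $S(k)$ is nonincreasing with a controlled per-step decrease. Writing $y_i(k) = \sum_{j=1}^n a_{ij}(k) x_j(k)$, double stochasticity (Assumption \ref{assumpt:ds}) gives $\sum_i y_i(k) = \sum_j x_j(k) = S(k)$, while the downward rounding in Eq.\ (\ref{quantupdate}) yields $0 \le y_i(k) - x_i(k+1) < 1/Q$ for each $i$. Summing over $i$,
\[ 0 \le S(k) - S(k+1) = \sum_{i=1}^n \bigl( y_i(k) - x_i(k+1) \bigr) < \frac{n}{Q}. \]
Thus at each step the total value is conserved up to an additive loss of at most $n/Q$.

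Next I would use Proposition \ref{qboundequal} to cut off the accumulation of these losses. Let $T = c (n^2/\eta) B \bigl[ \log Q + \log \underline{V}(0) \bigr]$ be the consensus time guaranteed there; for $k \ge T$ all components of $x(k)$ are equal, and since equal multiples of $1/Q$ are fixed points of the quantized update (a row-stochastic combination of a common multiple of $1/Q$ is that same multiple, so rounding does nothing), $S(k)$ is constant for $k \ge T$ and equals $n x_f$. Consequently
\[ \bigl| n\bar{x}(0) - n x_f \bigr| = S(0) - S(T) = \sum_{k=0}^{T-1} \bigl( S(k) - S(k+1) \bigr) \le T \cdot \frac{n}{Q}, \]
and dividing by $n$ gives $\bigl| \bar{x}(0) - x_f \bigr| \le T/Q = (c/Q)(n^2/\eta) B [\log Q + \log \underline{V}(0)]$.

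Finally I would consolidate the logarithmic factor into the stated form. Since $\underline{V}(0) = \sum_i (x_i(0) - m(0))^2 \le n (U-L)^2$, we have $\log \underline{V}(0) \le \log n + 2\log(U-L)$, so $\log Q + \log \underline{V}(0) \le \log Q + \log n + 2\log(U-L)$. The main (and only delicate) point is this last step: a short case check, treating $U-L \ge 1$ and $U-L < 1$ separately and using that $K = Q(U-L) \ge 1$ forces $Qn(U-L) \ge 1$, shows that this quantity is at most $2\log(Qn(U-L))$, which is nonnegative. One must also dispose of the degenerate case $U = L$, where the error is trivially zero. Absorbing the resulting constant into $c$ then yields the claimed bound $\bigl| x_f - \tfrac{1}{n}\sum_i x_i(0) \bigr| \le \tfrac{c}{Q}\,\tfrac{n^2}{\eta}\,B\log(Qn(U-L))$; everything outside that logarithm consolidation is routine.
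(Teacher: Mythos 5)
Your proof is correct and follows essentially the same route as the paper's: the paper also combines the consensus-time bound of Proposition \ref{qboundequal} with the observations that $\underline{V}(x(0)) \le n(U-L)^2$ and that the average decreases by at most $1/Q$ per iteration. Your write-up simply makes explicit the details the paper leaves implicit (the per-step mass-loss bound via double stochasticity, the fixed-point property after consensus, and the consolidation of the logarithms into $\log(Qn(U-L))$).
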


\begin{proof} By \alexo{Proposition} \ref{qboundequal}, after $O\Big((n^2/\eta) B \log (Q
\underline{V}(x(0)))\Big)$ iterations, all nodes will have the same
value. Since $\underline{V}(x(0)))\le n(U-L)^2$ and the average
decreases by at most $1/Q$ at each iteration, the result follows.
\end{proof}

\vspace{1pc}


Let us assume that the parameters 
$B$, $\eta$, and $U-L$ are fixed. \alexo{Proposition} \ref{qerror}
implies that as $n$ increases, the number of bits {used for each
communication,} which is proportional to $\log Q$, needs to grow
only as $O(\log n)$ to make the error negligible. Furthermore, this
is true even if the {parameters} $B$, ${1}/{\eta}$, and $U-L$ grow
polynomially in $n$.

\begin{center}
\begin{figure}
\hspace{2cm}
\includegraphics*[width=12cm]{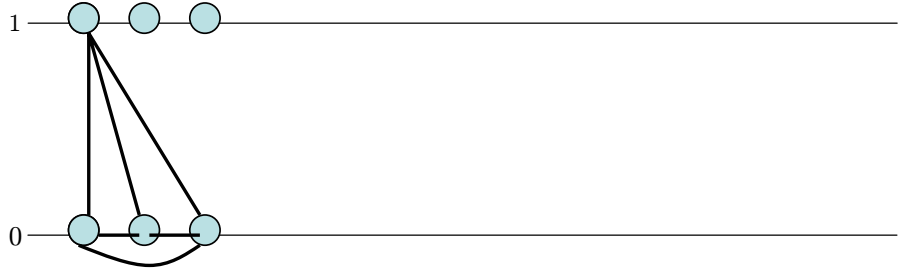}
\caption{\label{ic} Initial configuration. Each node takes the
average value of its neighbors. }
\end{figure} \end{center} {For a converse, it can} be seen that ${\Omega}(\log n)$ bits are
needed. Indeed, consider  $n$ nodes, with $n/2$ {nodes initialized
at} $0$, and $n/2$ {nodes initialized at} $1$. {Suppose} that ${Q} <
n/2$; we connect the nodes by {forming} a complete subgraph over all
the nodes with value $0$ and exactly {one} node with value $1$; see
Figure \ref{ic} for an example with $n=6$. Then, each node {forms
the} average {of} its neighbors. This brings one of the nodes with
{an initial value of} $1$ down to $0$, without raising the value of
any {other} nodes. We can repeat this {process,} to bring all of the
nodes with {an initial value of} $1$ down to $0$. Since the true
average is $1/2$, the final result is $1/2$ away from the true
average. {Note now that $Q$ can} grow linearly with $n$, and still
satisfy the inequality ${Q}<n/2$. {Thus,} the number of bits can
grow as $\Omega(\log n)$, and yet, independent of $n$, the error
remains $1/2$. 
\section{Concluding remarks}

\aoc{The high-level summary of this chapter is that $c \log n$ bits suffice 
for quantized averaging. The answers obtained will not be exact, but the error can be made arbitrarily small by
picking $c$ large, and the favorable convergence times from the earlier chapters retained.}

\aoc{An interesting direction is whether its possible to reduce the number of bits
even further, to a constant number per each link maintained by a node (at least one bit
per link is necessary merely to store incoming messages). A positive answer in the case of fixed graphs is provided
by the following chapter. On dynamic graphs, the number of bits required by
deterministic averaging algorithms is still not understood.}
\chapter{Averaging with a constant number of bits per link \label{ch:constantstorage}}

\aoc{The previous chapter analyzed the performance of a quantized update rule. The ``punchline'' was that
if the number of bits involved in the quantization is on the order of $\log n$,
then arbitrarily accurate averaging is possible.}

\aoc{This chapter analyzes what happens if we try to push down the
number of bits stored at each node to a constant. \aoc{Our
exposition in this chapter will follow the preprint \cite{HOT10},
where the results described here have previously appeared.}}

\aoc{We will assume that at any time step, nodes can exchange binary messages
with their neighbors in an interconnection graph which is undirected
and unchanging with time. Naturally, a node needs to store at least one
bit for each of the links just to store the message arriving on that link.
We will allow the nodes to maintain a constant number of bits for each of their links. Thus in
a constant-degree network, this translates to a constant number of bits at each node.}

\aoc{Supposing that the nodes begins with numbers $x_i \in \{0,1,\ldots,K\}$, we will say that a function
of the initial values is {\em computable with constant storage} if it computable subject to the
restrictions in the pevious paragraph. The exact average is not computable with constant storage because
just storing the average takes $\Omega (\log n)$ bits. Thus we will turn to the related, but weaker, question of deciding
whether the $0$'s or $1$'s are in the majority.}

\aoc{In fact, we will study a more general problem called ``interval averaging,'' which asks to return
the set among $ \{ 0 \}, (0,1), \{1\}, (1,2), \ldots, (K-1,K), \{K\}$
within which the average of the initial numbers lies. If interval averging is possible, then majority computation
with binary initial conditions is possible as well. Indeed, the argument for this is simple: to do majority computation, every node beginning with $x_i=1$ instead sets $x_i=2$ and runs interval averaging with $K=2$. Depending on whether the average is in $\{0,\}(0,1), \{1\}, (1,2), \{2\}$ each node knows which initial condition (if any) had
the majority. }

\aoc{The interval averaging problem has trivial solutions with randomized algorithms (see Chapter \ref{why}).
These algorithms ``centralize'' the problem by electing a leader and streaming information
towards the leader. Decentralized randomized algorithms are also possible \cite{KBS06, FCFZ08}.
In this chapter, we address the question of whether it is possible to solve this problem
with deterministic algorithms.}

\aoc{The main result of this chapter is the following theorem.}

\begin{theorem} \label{thm:average_computation} Interval averaging is computable with constant storage. Moreover, there exists
a (constant storage) algorithm for it under which every node has the correct answer after
$O(n^2 K^2 \log K)$ rounds of communication.
\end{theorem}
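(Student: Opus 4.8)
The plan is to represent each node's value as a number of indivisible \emph{chips} and to solve interval averaging by driving the network to a \emph{globally balanced} configuration using only conservative, local chip movements, followed by a short detection wave. Each node $i$ stores its current value $x_i(t)\in\{0,1,\ldots,K\}$, costing $O(\log K)=O(1)$ bits, together with a constant number of auxiliary bits on each incident link. Chips are transferred only between neighbors, so $\sum_i x_i(t)$ is invariant and $\bar{x}$ is preserved; if a chip is moved only from a larger value to a smaller one, then every $x_i(t)$ stays inside $[\min_j x_j(0),\max_j x_j(0)]\subseteq\{0,\ldots,K\}$, so the storage budget is respected throughout. The target is a state in which every value lies in $\{\lfloor\bar{x}\rfloor,\lceil\bar{x}\rceil\}$. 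The key observation is that such a balanced state makes the answer a \emph{local} object: if all values are equal the interval is the singleton $\{\bar{x}\}$, and if both $\lfloor\bar{x}\rfloor$ and $\lfloor\bar{x}\rfloor+1$ occur then $\bar{x}\in(\lfloor\bar{x}\rfloor,\lfloor\bar{x}\rfloor+1)$.

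Once global balance is reached, it remains only for each node to learn which integer values are present somewhere in the network. This I would do with a standard min/max propagation wave: each node repeatedly overwrites a stored pair $(\underline{m}_i,\overline{m}_i)$ by the coordinatewise minimum and maximum of its own value and those of its neighbors. This converges to $(\min_j x_j,\max_j x_j)$ in $O(\mathrm{diam})=O(n)$ rounds and uses only $O(\log K)$ bits. In the balanced state $\overline{m}_i-\underline{m}_i\le 1$, so the pair encodes exactly the correct interval and every node outputs it. Correctness of this phase is immediate given global balance, and (as a reduction) it also yields the majority application mentioned in the text.

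The hard part is reaching global balance with anonymous, constant memory. Naive greedy balancing does \emph{not} suffice: moving a chip across an edge whose endpoints differ by exactly $1$ leaves the Lyapunov function $\sum_i x_i^2$ (equivalently the sample variance $V$ of Chapter \ref{chapter:poly}, cf.\ Lemma \ref{vl}) unchanged, so the process can stall in a locally balanced but globally unbalanced ``gradient'' state, such as the values $0,1,2,\ldots,n-1$ along a line, or cycle indefinitely, with no node able to detect the imbalance locally. Since the nodes are anonymous and carry only a constant number of bits, they cannot break these ties with identifiers or with a stored global coordinate. My resolution is to superimpose, on top of the difference-$\ge 2$ greedy moves (which strictly decrease $\sum_i x_i^2$), a routing mechanism that carries a single chip from a node currently holding the global maximum value to one holding the global minimum value whenever these differ by at least $2$; each such transfer strictly decreases $\sum_i x_i^2$, and when no such pair remains the configuration is globally balanced. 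Implementing this routing without storing a path, and having the chip provably reach a deficient node, is the principal obstacle; I expect to handle it with a deterministic traversal (e.g.\ rotor-router pointers, one per link, which fit the per-link budget) driven by the min/max wave that marks the current sources and sinks.

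For the convergence time I would argue via the potential $\sum_i x_i^2$, which starts at $O(nK^2)$ and is strictly decreased by each successful routing. The contraction estimates underlying Chapter \ref{chapter:poly} (Lemma \ref{vl} and the $O(n^2)$ spatial mixing factor behind Theorem \ref{uqbound}, here with $\eta$ an absolute constant since all transfers are unit chips) bound the number of rounds needed per unit of progress by $O(n^2)$. Combining the $O(n^2)$ mixing factor with the $O(K^2)$ of potential to be dissipated and an $O(\log K)$ factor for successively halving the residual spread down to the threshold that guarantees balance yields the claimed $O(n^2K^2\log K)$ bound; the exact bookkeeping of these three factors is the routine part of the argument. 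Since every phase uses only a constant number of bits per link, this establishes that interval averaging is computable with constant storage within the stated time.
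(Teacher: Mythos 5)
The overall skeleton of your proposal matches the paper's: integer values as indivisible pebbles, conservative transfers, the quadratic potential $\sum_i (u_i-\bar{x})^2$, transfers only between nodes whose values differ by at least $2$, and a final min/max detection step; you also correctly identify the gradient-stall obstruction (values $0,1,\ldots,n-1$ on a line) that defeats purely local greedy balancing. The central gap is your sequencing of phases. In this model no node can ever detect that global balance has been reached --- the convergence requirement explicitly excludes termination awareness, and with anonymity and $O(1)$ memory per link a node cannot count rounds up to $n$ or elect a coordinator --- so your ``standard min/max propagation wave'' cannot be deferred until after balancing; it must run concurrently with the chip movements, i.e., on \emph{time-varying} inputs. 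Run concurrently, the monotone overwrite rule is simply incorrect: $\overline{m}_i$ latches forever onto stale maxima (on a line with initial values $(2,0,0)$ the estimate $\overline{m}_i=2$ survives even after the configuration becomes $(1,1,0)$), so nodes report a wrong interval. This is exactly the difficulty on which the paper spends most of its effort: the maximum-tracking algorithm of Section \ref{sec:max_tracking} (Theorem \ref{thm:maxtracking}), in which restart messages travel at full speed while estimates are forwarded at half speed, so that invalidated estimates are provably purged (the validity/invalidity-graph analysis). Your proposal contains no substitute for this mechanism, and without it both your final detection phase and the identification of ``sources and sinks'' for your routing layer collapse.

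The routing layer is the second gap, and you acknowledge it is unresolved. The paper routes a \emph{request} (carrying the originator's value $r$) from a deficient node along the max-tracking pointers $P_i(t)$ toward a maximal node, stores the return path in per-link pointers with a blocked/free discipline that prevents deadlock, and sends back $w=\lfloor (u_j-r)/2\rfloor$ pebbles along that path; the bookkeeping with pebbles in transit ($\hat{u}_i$) guarantees each acceptance decreases the potential by at least $2$ (Lemma \ref{prop:finite_request_acceptances}). Your rotor-router alternative would have to (i) define its sinks via the broken concurrent min/max wave, (ii) handle interactions of multiple wandering chips with $O(1)$ memory, and (iii) meet the time bound: rotor-router cover time is $\Theta(mD)$, up to $\Theta(n^3)$ per chip on general graphs, which already exceeds the per-transfer budget, whereas the paper's request round-trip completes in $O(n)$ steps (Lemma \ref{lem:request_terminate}), and some request is accepted within $O(n)$ steps of any time before termination (Lemma \ref{lem:bound_interoperation_time}). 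Finally, your accounting of the bound misattributes the $\log K$ factor: in the paper it is a transmission slowdown (each algorithm step exchanges values in $\{0,\ldots,K\}$ over binary links), not a ``successive halving'' count; the correct tally is $O(nK^2)$ acceptances, times $O(n)$ rounds per acceptance, times the $O(\log K)$ slowdown.
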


\aoc{The remainder of this chapter will be devoted to the proof of this fact. Let us briefly summarize the
main idea behind the computation of interval averages. Imagine the integer \jnt{input} value $x_i$
\green{as represented by a number of $x_i$} pebbles \jblue{at node
$i$.} \jnt{The} algorithm attempts to exchange pebbles between
nodes with unequal numbers so that the overall distribution
\green{becomes} more even. Eventually, either all nodes will have
the same number of pebbles, or some will have a certain number and
others just one more. The nodes will try to detect which of these two possibilites have occured, and will estimate
the interval average accordingly.}

\aoc{The remainder of this chapter is structured as follows. First, we discuss the problem of
tracking, in a distributed way, the maximum of time-varying values at each node. This is a seemingly unrelated problem, but
it will be useful in the proof of Theorem \ref{thm:average_computation}. After describing a solution to this problem, we give
a formal algorithm which implements the pebble matching idea above. This algorithm will use
the maximum tracking algorithm we developed as a subroutine to match up nodes with few pebbles to
nodes with a large number of pebbles.}

\section{Related literature}

A number of papers explored various tradeoffs associated with quantization of consensus schemes. We will not attempt
a survey of the entire literature, but only mention papers that are most closely relevant to the algorithms 
presented here. The paper \cite{KBS06} proposed randomized {\it
gossip-type} quantized averaging algorithms under the assumption
that each agent value is an integer. They showed that these
algorithms preserve the average of the values at each iteration and
converge to approximate consensus. They also provided bounds on the
convergence time of these algorithms for specific static topologies
(fully connected and linear networks). We refer the reader also to the
later papers \cite{ZM09} and \cite{FCFZ08}. A dynamic scheme which allows us to approximately compute the average   as the nodes communicate more and more bits with each other can be found in \cite{CBZ10}. In the recent work
\cite{CFFTZ07}, Carli {\it et al.} proposed a distributed algorithm
that uses quantized values and preserves the average at each
iteration. They showed favorable convergence properties using
simulations on some static topologies, and provided performance
bounds for the limit points of the generated iterates. 

\section{Computing and tracking maximal values\label{sec:max_tracking}}
\rt{We now describe an algorithm that tracks the maximum (over all
nodes) of time-varying inputs at each node. It will be used as a
subroutine \aoc{later}}. \jnt{The basic idea is simple: every node keeps track of the largest value
it has heard so far, and forwards this ``intermediate result'' to
its neighbors. However, when an input value changes, the existing
intermediate results \aoc{need to be} invalidated, and this is done by sending
``restart'' messages. A complication arises because invalidated
intermediate results might keep circulating in the network, always one
step ahead of the restart messages. We deal with this difficulty by
``slowing down'' the intermediate results, so that they travel at
half the speed of the restart messages.} \jblue{In this manner,
restart messages are guaranteed to eventually catch up with and
remove invalidated intermediate results.}

\jnt{We start by giving the specifications of the algorithm.}
Suppose that each node $i$ has \green{a time-varying input} $u_i(t)$
stored in memory at time $t$, \green{belonging to a finite} set of
numbers ${\cal U}$. \green{We assume that, for each $i$, the
sequence $u_i(t)$} must eventually stop changing, i.e., \green{that
there exists} some $T'$ \green{such that}
\[ u_i(t) = u_i(T'), ~~~~~~~~\mbox{ for all  } \jnt{i \mbox{ and }}t \geq T'.\]
(However, \jnt{node $i$} need not \jnt{ever be} aware that $u_i(t)$
has reached its final value.) Our goal is to develop a distributed
algorithm \green{whose output eventually settles on the value
$\max_i u_i(T')$. More precisely, each} node $i$ is to maintain a
number $M_i(t)$ which must satisfy the following \jblue{condition:}
\red{for every \jnt{network}} \green{and any allowed sequences
$u_i(t)$,} there exists some $T''$ with
\[ M_i(t) = \max_{\jnt{j}=1,\ldots,n} u_j(t), ~~~~~\mbox{  for all  }
\jnt{i \mbox{ and }}t \geq T''. \]

Moreover, \jnt{each} node $i$ must also maintain a pointer $P_i(t)$
to a neighbor or to itself. We will use the notation
$P^2_i(t)=P_{P_i(t)}(t)$, $P^3_i(t)=P_{P_i^2(t)}(t)$, etc. \green{We
require} the following \green{additional} property, for all $t$
larger than $T''$: for each node $i$ there exists a node $\green{j}$
and a power $K$ such that for all $k \geq K$ we have $P^k_i\aoc{(t)} =  j$ and $M_i(t) = u_j(t)$. In  words, by successively following
the pointers $P_i(t)$, one \green{can} arrive at a node with a
maximal value.

 \jmj{We} next describe the algorithm. \jnt{We will use the term {\em slot $t$} to refer, loosely
speaking, to the interval between times $t$ and $t+1$. More
precisely, during slot $t$} each node processes the messages that
have arrived at time $t$ and computes the state at time $t+1$ as
well as the messages it will send at time $t+1$.

The variables $M_i(t)$ and $P_i(t)$ are a complete description of
the state of node $i$ at time $t$. Our algorithm has only two types
of messages \jnt{that a node} can send to \jnt{its} neighbors. Both
are broadcasts, in the sense that the node sends them to every
neighbor:
\begin{enumerate}
\item[1.]
``Restart!'' \item[2.] ``My \jnt{estimate of the maximum} is $y$,''
where $y$ is some number in \jnt{${\cal U}$} \jnt{chosen} by the
node.
\end{enumerate}

\newpage

\begin{figure}[h] \hspace{0cm}
 \epsfig{file=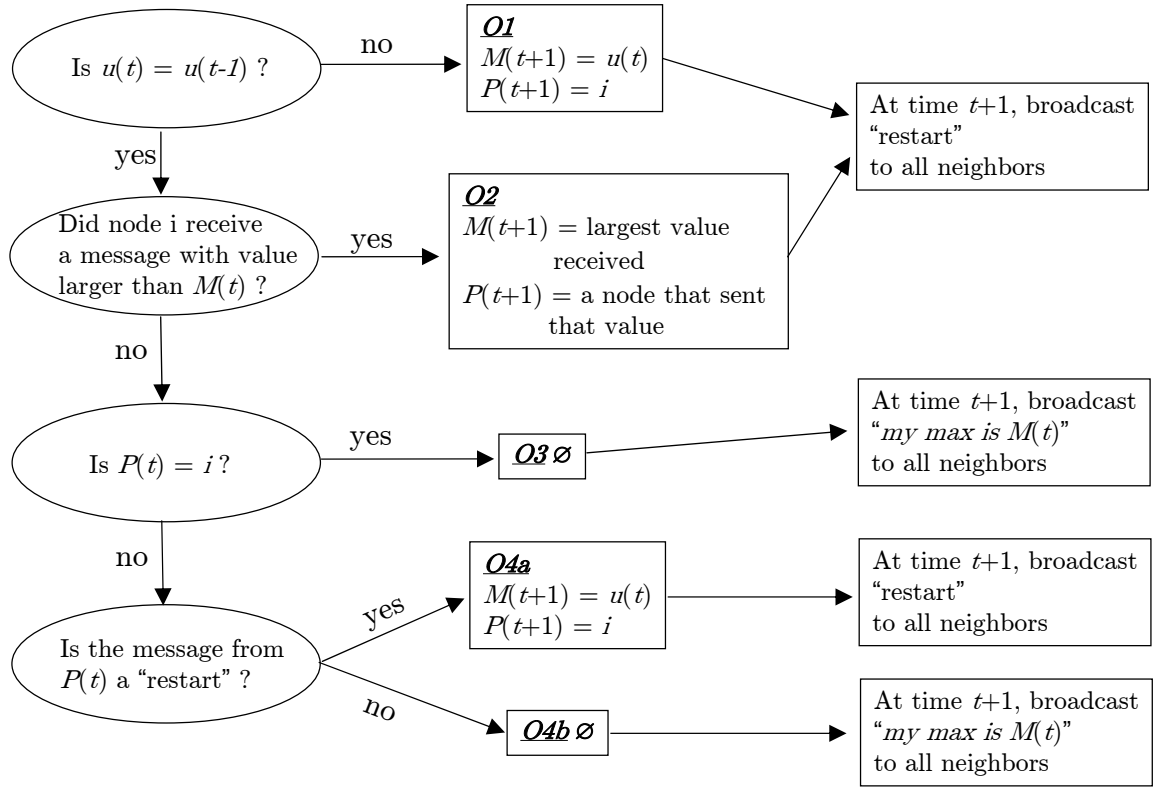, scale=0.65}
\caption{\jnt{Flowchart of the procedure used by node $i$ during
slot $t$ in the maximum tracking algorithm. The subscript $i$ is
omitted, but $u(t)$, $M(t)$, and $P(t)$ should be understood as
$u_i(t)$, $M_i(t)$, and $P_i(t)$. In those cases where an updated
value of $M$ or $P$ is not indicated, it is assumed that
$M(t+1)=M(t)$ and $P(t+1)=P(t)$. \jblue{The symbol $\emptyset$ is
used to indicate no action. \jred{Note that the various actions
indicated are taken during slot $t$, but the messages determined by
these actions are sent (and instantaneously received) at time
$t+1$.}
 }}\jmj{Finally, observe that every node sends an \aoc{identical}  message to all its
neighbors at every time $t>0$.} \aoc{We note that the apparent non-determinism in instruction
O2 can be removed by picking a node with, say, the smallest port label.} }\label{fig:maxtracking}
\end{figure}

\newpage

Initially, each node sets $M_i(0)=\rt{u}_i(0)$ and $P_i(0)=i$. At
time $t=0,1,\ldots$, nodes exchange messages, which \jnt{then}
determine their state at time $t+1$, i.e., the pair  $M_i(t+1),
P_i(t+1)$, \jnt{as well as the messages to be sent at time $t+1$.}
The procedure node $i$ uses to do this is \jnt{described} in
Figure~\ref{fig:maxtracking}. One can verify that \jnt{a memory size
of $C\log |{\cal U}|+C\log d(i)$ at each node $i$ suffices,
\jblue{where $C$ is an absolute constant.} (This is because $M_i$
and $P_i$ can take one of $|{\cal U}|$ and $d(i)$ possible values,
respectively.)}

\aoc{Note that while we assumed that agents can only transmit binary messages
to each other, the above algorithm requires transmissions of the values
in $\{0,\ldots,K\}$. This means that each step of the above algorithm
takes $\log K$ real-time steps to implement.}

\aoe{The result that follows asserts the correctness of the
algorithm. The idea of the proof is quite simple. Nodes maintain
estimates $M_i(t)$ which track the largest among all the $u_i(t)$ in
the graph; these estimates are ``slowly" forwarded by the nodes to
their neighbors, with many artificial delays along the way. Should
some value $u_j(t)$ change, restart messages traveling without
artificial delays  are forwarded to any node which thought $j$ had a
maximal value, causing those nodes to start over. The possibility of
cycling between restarts and forwards is avoided because restarts
travel faster. Eventually, the variables $u_i(t)$ stop changing,
and the algorithm settles on the correct answer.}

\begin{theorem}[Correctness of \jnt{the} maximum tracking algorithm]\label{thm:maxtracking}
\jnt{Suppose that the $u_i(t)$ stop changing after some finite time.
Then, for every network,} there is a time after which \jnt{the
variables $P_i(t)$ and $M_i(t)$ stop changing and satisfy
$M_i(t) = \max_i \rt{u}_i(t)$; furthermore, after that time, and for
every $i$, the node $j=P_i^n(t)$ satisfies
$M_i(t)=u_j(t)$.}\end{theorem}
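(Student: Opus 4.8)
The plan is to prove the correctness of the maximum tracking algorithm by analyzing the interaction between the two message types. The crucial design feature, as the informal account explains, is that intermediate estimates propagate at half the speed of restart messages, so restarts always eventually catch up with stale estimates. I would structure the argument in three phases, corresponding to the structure of the problem: first a monotonicity/invariant phase, then a ``catch-up'' argument showing restarts eliminate invalid data, and finally a convergence phase after the inputs freeze.

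**First I would** establish the basic invariants of the algorithm. The key claim is that at all times, $M_i(t)$ is the value $u_j(T_j)$ of \emph{some} node $j$ at some past time $T_j \le t$, and the pointer chain $P_i(t), P_i^2(t), \ldots$ traces a path through the network that terminates at such a node $j$ (or, before it is invalidated, at a node that held this value). I would show that the ``estimate'' messages can only carry values that genuinely originated as some $u_j$, so $M_i(t)$ is never a fabricated quantity; in particular $M_i(t) \le \max_j u_j(t)$ cannot be violated in a way that persists, because any estimate exceeding the true current maximum must rest on a stale value, and staleness triggers a restart. This requires carefully reading off the flowchart transitions (instructions O1, O2, and the restart branch) to confirm that a node adopts a neighbor's estimate only when it is larger than its own, and resets $M_i$ and $P_i$ upon receiving a restart aimed at it.

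**The hard part will be** the catch-up argument: showing that an invalidated estimate cannot circulate forever. The delicate point is the race between a restart message (traveling at full speed, one hop per time step) and a stale estimate (traveling at half speed due to the artificial delays). I would formalize ``half speed'' by tracking, for any stale estimate currently at distance $d$ along some propagation path from the node $j$ whose value changed, the number of hops the corresponding restart has traveled; an inductive/potential argument should show the gap between the restart front and the stale-estimate front is nonincreasing and strictly decreases over each two-step window, so after finitely many steps the restart overtakes the estimate and annihilates it. Because the inputs stop changing after $T'$, only finitely many restart waves are ever generated, and each is cleared in finite time; hence after some $T''$ no restart messages remain in transit and no stale estimates survive.

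**Finally I would** conclude the convergence phase. After $T'$ the inputs are fixed at $u_j(T')$, and after the last restart wave clears (time $T''$), the estimate messages propagate the true maximum $M^\star = \max_j u_j(T')$ outward without any further invalidation. Since the graph is connected and estimates advance at least one hop every two steps, within at most $2n$ additional steps every node has $M_i(t) = M^\star$ and a stabilized pointer chain; following $P_i(t)$ for at most $n$ steps reaches a maximizing node $j$ with $M_i(t) = u_j(t)$, giving the claimed property for $j = P_i^n(t)$. The monotone-stabilization of $M_i$ and $P_i$ follows because, with no restarts and fixed inputs, a node's estimate can only increase and is bounded by $M^\star$, so it settles; once $M_i$ is fixed at $M^\star$ for all $i$, the pointers have no reason to change either.
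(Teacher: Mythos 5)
Your plan captures the same intuition the paper itself states informally (restarts travel at unit speed, estimates at half speed, so restarts catch up), and your phases 1 and 3 match the paper's invariants and endgame. But the hard middle phase, as you describe it, has genuine gaps. First, your ``front vs.\ front'' bookkeeping is not well-defined without a structural fact you never establish: a stale estimate does not travel along a single propagation path --- it spreads as a growing \emph{tree} in the pointer structure (every node that executes O2 becomes a new child of the node it adopted from), while the chasing restarts propagate from the root of that tree toward its leaves. To even define ``the distance of the stale-estimate front from the restart front'' you need to know that the pointer graph (with edges cut by in-transit restart messages) is acyclic, hence a forest, and that the estimate $M_i$ is constant along each tree; these are exactly the paper's Lemmas \ref{lem:acyclic} and \ref{lem:edge->mi=mj}, and once you have them your ``gap'' becomes the depth of the invalid tree --- which is precisely the potential the paper uses (Lemmas \ref{lemma:addition}, \ref{lemma:roots}, \ref{lemma:depth_decrease}). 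Second, your step ``the inputs stop changing after $T'$, hence only finitely many restart waves are ever generated, each cleared in finite time'' is circular: restart messages are generated not only by input changes (O1) but also by every estimate adoption (O2) and every restart forwarding (O4a) --- see the paper's Lemma \ref{lem:speed1/2}(a) --- and bounding the number of such events is tantamount to proving the stabilization you are trying to establish. The paper never counts waves; it shows a monotone potential (forest depth) decreases by one every two steps, which needs no finiteness assumption.

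Third, your argument treats all stale estimates uniformly, but the two-step spread / unit-speed chop lemmas only hold for the \emph{currently maximal} estimate value: a node holding a maximal valid estimate cannot be perturbed by a larger incoming message (the paper's Lemma \ref{lem:valid+max-->final}), whereas trees carrying smaller stale values can be broken apart arbitrarily by larger estimates arriving, so the clean chase picture fails for them. This is why the paper's proof of the theorem purges invalid estimates in decreasing order of value --- applying Lemma \ref{prop:no_unreliable} ``repeatedly,'' starting from the largest estimate present after $T'$ and inducting downward --- before running the final propagation argument on the set $Z(t)$ of nodes holding $\max_i u_i(T')$. Your plan would need this ordering (or some substitute for it) to go through; as written, the clearing argument for a non-maximal stale value cannot be completed.
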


\def\aoe{}
\def\aon#1{{#1}}

\aoc{We now turn to the proof of Therem \ref{thm:maxtracking}.}

\aon{In the following, we will occasionally} use the following convenient shorthand:
we will say that a statement $S(t)$ {\em holds eventually} if
there exists some $T$ so that \jnt{$S(t)$ is} true for all $t \geq
T$.

The analysis is made easier by introducing the time-varying directed
graph $G(t) = (\{1,\ldots,n\}, E(t))$ where $(i,j) \in E(t)$ if
\jnt{$i\neq j$, } $P_i(t)=j$, and there is no ``restart" message
sent \jnt{by $j$ (and therefore received by $i$) during \aoe{time}
$t$.} \aon{We will abuse notation by writing $(i,j) \in G(t)$ to mean that
the edge $(i,j)$ belongs to the set of edges $E(t)$ at time $n$.}

\begin{lemma} \label{newedge} \jnt{Suppose that $(i,j) \notin G(t-1)$ and $(i,j) \in G(t)$.}
Then, $i$ executes O2 during slot $t-1$.
\end{lemma}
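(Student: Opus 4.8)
The plan is to unpack the two hypotheses directly from the definition of the graph $G(t)$ and then run a short case analysis, where the only delicate point is ruling out the possibility that the edge $(i,j)$ reappears at time $t$ without the pointer $P_i$ ever actually changing. First I would record what the hypotheses give us. From $(i,j)\in G(t)$ we have $i\neq j$, $P_i(t)=j$, and that $j$ sends no restart at time $t$; note in particular that $P_i(t)=j$ forces $j$ to be a neighbor of $i$. Since $i\neq j$ is a fixed property of the pair, the failure of $(i,j)\in G(t-1)$ must stem from one of the two remaining defining conditions of an edge: either (a) $P_i(t-1)\neq j$, or (b) $P_i(t-1)=j$ but $j$ sends a restart message at time $t-1$.

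In case (a) the pointer of $i$ changes from a value other than $j$ at time $t-1$ to the value $j$ at time $t$; equivalently, during slot $t-1$ node $i$ assigns $P_i := j$, with $j$ a neighbor distinct from $i$. Reading the flowchart of Figure~\ref{fig:maxtracking}, the only branch in which a node sets its pointer to a neighbor (as opposed to resetting it to itself) is O2, the instruction executed when a node adopts and forwards a neighbor's estimate. Hence $i$ executes O2 during slot $t-1$, which is exactly the conclusion sought.

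The remaining work, and the heart of the argument, is to rule out case (b). Here $i$ points to $j$ at time $t-1$, and because $P_i(t-1)=j$ with $j\neq i$, the node $j$ is a neighbor of $i$, so the restart that $j$ broadcasts at time $t-1$ is received and processed by $i$ during slot $t-1$. I would invoke two features of the flowchart: a node that receives a restart from its current pointer resets its pointer to itself (and rebroadcasts a restart); and the only way a node can finish slot $t-1$ still pointing at a neighbor is by executing O2 on some received estimate, in which case the pointer is set to the neighbor that sent that estimate. Since each node sends a single identical message to all neighbors at each time, and $j$ sent a restart at time $t-1$, node $j$ sent no estimate then, so any pointer produced by O2 during slot $t-1$ points to a neighbor other than $j$. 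In either subcase $P_i(t)\neq j$, contradicting $(i,j)\in G(t)$. Thus case (b) is impossible, we must be in case (a), and the lemma follows.

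The main obstacle I anticipate is precisely this last step: an edge can be absent from $G(t-1)$ for the ``wrong'' reason, namely $j$ restarting while $i$ keeps pointing at it, rather than because the pointer changed, and one must show this configuration is incompatible with $(i,j)\in G(t)$. Making this rigorous requires carefully combining the single-message-per-slot convention with the reset-on-restart behavior encoded in Figure~\ref{fig:maxtracking}; everything else is routine translation between the edge definition and the update rule.
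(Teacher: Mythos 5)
Your proposal is correct and follows essentially the same route as the paper's proof: a case split on whether $P_i(t-1)=j$, combined with the observation that O2 is the only instruction in Figure \ref{fig:maxtracking} that can set a node's pointer to a neighbor. The only difference is in the subcase $P_i(t-1)=j$, where you derive an outright contradiction (using that $j$'s message at time $t-1$ was a restart rather than an estimate, so O2 could not re-point $i$ at $j$), thereby establishing the claim the paper relegates to a footnote (``in fact, it can be shown that this case never occurs''), whereas the paper simply notes that if O2 is executed in that subcase the conclusion of the lemma holds anyway.
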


\begin{proof}
\jnt{Suppose that $(i,j) \notin G(t-1)$ and $(i,j) \in G(t)$.} If
$P_i(t-1)=j$, \jnt{the definition of $G(t-1)$ implies}  that $j$
sent a restart message to $i$ at time $t-1$. \jmj{Moreover, $i$
cannot execute O3 during the time slot $t-1$ as \aon{this} would
require $P_i(t-1) = i$.}\jmj{Therefore, during this time slot,}
\jnt{node $i$ either executes O2 (and we are
done\footnote{\jred{In fact, it can be shown that this case never
occurs.}}) or it executes one of O1 and O4a. For both of the
latter two cases, we will have} $P_i(t)=i$, so that $(i,j)$ will
not be in $G(t)$, \jnt{a contradiction.} Thus, it must be that
$P_i(t-1) \neq j$. We now observe that the only place in Figure
\ref{fig:maxtracking} \jnt{that can change $P_i(t-1) \aon{\neq j}$
to} $P_i(t)=j$ is O2.
\end{proof}

\begin{lemma}\label{lem:speed1/2}
In \jnt{each of the following three} cases, node $i$ has no incoming
edges in \aon{either graph $G(t)$ or $G(t+1)$}:\\
(a) Node $i$ executes O1, O2, or O4a during time slot $t-1$;\\
(b) $M_i(t) \not = M_i(t-1)$;\\
(c) For some $j$, $(i,j) \in G(t)$ but $(i,j) \notin G(t-1).$
\end{lemma}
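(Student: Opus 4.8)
The plan is to collapse all three cases into one statement: \emph{in each case, node $i$ broadcasts a ``Restart!'' message at time $t$}. Granting this, the conclusion is short. First, by the very definition of $G(t)$, an edge $(k,i)$ can lie in $G(t)$ only if $i$ sends no restart at time $t$; so a restart broadcast at time $t$ immediately removes every incoming edge of $i$ in $G(t)$. Second, for $G(t+1)$ I would argue by contradiction: suppose $(k,i)\in G(t+1)$. Since $i$ has no incoming edges in $G(t)$, we have $(k,i)\notin G(t)$, so Lemma~\ref{newedge} (with its generic time set to $t+1$ and its generic edge set to $(k,i)$) forces node $k$ to execute O2 during slot $t$. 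But an O2 step redirects $P_k$ to a neighbor from which $k$ has just received an estimate message, so $P_k(t+1)=i$ requires that $i$ sent a ``My estimate is $y$'' message at time $t$. Because each node broadcasts a \emph{single} message to all neighbors, and $i$ broadcast a restart at time $t$, it sent no estimate message — the desired contradiction. Hence $i$ has no incoming edges in $G(t+1)$ either.

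It then remains to verify the reduction, namely that each of (a), (b), (c) forces $i$ to send a restart at time $t$. Case (a) is immediate by inspection of the flowchart in Figure~\ref{fig:maxtracking}: each of O1, O2, O4a is defined to produce a restart broadcast at the following time step. This is precisely the ``slowing down'' device — a node that has just adopted a new estimate via O2 must emit a restart before it is permitted to forward that estimate, which is what makes intermediate results travel at half the speed of restarts. Case (c) reduces directly to case (a): the hypothesis $(i,j)\in G(t)$, $(i,j)\notin G(t-1)$ is exactly the hypothesis of Lemma~\ref{newedge}, whose conclusion is that $i$ executes O2 during slot $t-1$, hence sends a restart at time $t$. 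Finally, case (b) follows once one reads off from the flowchart that the only operations changing the stored estimate from one slot to the next are O1, O2, O4a; thus $M_i(t)\neq M_i(t-1)$ again places us in case (a).

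The main obstacle — indeed the only part needing genuine care — is the bookkeeping linking the three abstractly stated cases to concrete flowchart transitions: confirming that O1, O2, O4a each emit a restart at the next time, that a pointer is redirected to a new neighbor \emph{only} through O2, and that O2 fires \emph{only} upon receipt of an estimate (never a restart) from that neighbor. These are all checkable by direct inspection of Figure~\ref{fig:maxtracking}, but they are the load-bearing facts; once they are in hand, the graph-theoretic argument above is purely formal, relying solely on Lemma~\ref{newedge} and the defining property of $G(t)$.
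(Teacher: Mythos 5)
Your proposal is correct and follows essentially the same route as the paper's proof: case (a) is handled by observing that O1, O2, O4a each trigger a restart broadcast at time $t$, which removes incoming edges in $G(t)$ by definition and in $G(t+1)$ via Lemma~\ref{newedge} together with the fact that O2 can only be triggered by an estimate message (not a restart); cases (b) and (c) then reduce to (a) exactly as in the paper. The only difference is organizational — you state the unified "restart at time $t$" claim up front and verify the reductions afterward, whereas the paper proves (a) in full and then cites it — but the mathematical content is identical.
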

\begin{proof} \ao{(a) If $i$ executes O1, O2, or O4a during slot $t-1$, then it sends
a restart \jblue{message} to each of its neighbors at time $t$.
Then, \jblue{for any neighbor $j$} of $i$, \jnt{the definition of
$G(t)$ implies that}  $(j,i)$ is not in  $G(t)$. Moreover, by Lemma
\ref{newedge}, in order for $(j,i)$ to be in $G(t+1)$, \jblue{node
$j$ must execute} $O2$ during slot $t$. But the execution of O2
during slot $t$ cannot result in the addition of the edge $(j,i)$
\jblue{at time $t+1$,} \jnt{because} \jmj{the message broadcast by
$i$ \aon{at time $t$} to its neighbors was a restart.} So, $(j,i) \notin G(t+1)$. }

\ao{(b)} \jnt{If} $M_i(t) \not = M_i(t-1)$, \jnt{then} $i$ executes
O1, O2, or O4a during slot $t-1$, so the \jnt{claim} follows from
part (a).

\ao{(c)} By Lemma \ref{newedge}, \jnt{it must be the case that
\jblue{node} $i$ executes} O2 during slot $t-1$, and part (a)
implies the \jnt{result}.
\end{proof}

\begin{lemma}\label{lem:acyclic}
The graph $G(t)$ \ao{is acyclic}, for \jnt{all} $t$.
\end{lemma}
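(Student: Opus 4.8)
The plan is to argue by induction on $t$, exploiting the fact that each node has out-degree at most one in $G(t)$: node $i$ carries a single pointer $P_i(t)$, so its only possible outgoing edge is $(i,P_i(t))$, and this edge is present only when $P_i(t)\neq i$ and no restart arrives from $P_i(t)$. Consequently any cycle in $G(t)$ is a simple directed cycle of length at least $2$ in which every node has both an incoming and an outgoing edge.

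For the base case $t=0$, the initialization $P_i(0)=i$ means that no edge $(i,j)$ with $i\neq j$ is present, so $G(0)$ has no edges and is trivially acyclic. For the inductive step I would assume $G(t-1)$ is acyclic and suppose, for contradiction, that $G(t)$ contains a cycle $C:\ i_0\to i_1\to\cdots\to i_{k-1}\to i_0$, with indices taken modulo $k$. I would then split into two cases according to whether $C$ uses a ``freshly created'' edge.

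If every edge of $C$ already belonged to $G(t-1)$, then $C$ is a cycle of $G(t-1)$, contradicting the inductive hypothesis. Otherwise some edge $(i_m,i_{m+1})$ lies in $G(t)$ but not in $G(t-1)$. This is exactly the hypothesis of Lemma \ref{lem:speed1/2}(c), which then guarantees that node $i_m$ has no incoming edges in $G(t)$. But $i_m$ is a node of the cycle $C$, so the edge $(i_{m-1},i_m)$ belongs to $G(t)$, i.e., $i_m$ does have an incoming edge. This contradiction completes the induction.

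The only real content is recognizing that Lemma \ref{lem:speed1/2}(c) supplies precisely the obstruction needed: whenever an edge is created at time $t$, its tail must be ``clean'' of incoming edges, which is incompatible with sitting on a cycle. Everything else is bookkeeping, so the main (and quite mild) obstacle is to confirm that a cycle in this out-degree-at-most-$1$ graph must contain either an edge inherited entirely from $G(t-1)$ or a newly created edge; the two-case split above handles this directly, and no finer analysis of the flowchart operations O1--O4a is required beyond what Lemma \ref{lem:speed1/2} already encapsulates.
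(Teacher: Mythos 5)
Your proof is correct and follows essentially the same route as the paper's: the paper argues via a minimal counterexample (the first time $t$ at which a cycle appears must involve a newly added edge, whose tail then has no incoming edges by Lemma \ref{lem:speed1/2}(c), a contradiction), which is just your induction restated. Your version is slightly more explicit about the base case and the two-case split, but the key idea and the invocation of Lemma \ref{lem:speed1/2}(c) are identical.
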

\begin{proof}
The initial graph $G(0)$ does not contain a cycle. \ao{ Let $t$ be
the first time a cycle \jblue{is present}, and let $(i,j)$ be an
edge \jnt{in} \jblue{a} cycle that \jnt{is added} at time $t$,
\jnt{i.e., $(i,j)$ belongs to $G(t)$ but not $G(t-1)$.} Lemma
\ref{lem:speed1/2}\jmj{(c)} implies that $i$ has no incoming edges
\jnt{in $G(t)$,} so $(i,j)$ cannot be an edge of the cycle---a
contradiction. }
\end{proof}

\jblue{Note that every node has out-degree at most one, because
$P_i(t)$ is a single-valued variable. Thus, the acyclic graph $G(t)$
must be a forest, specifically,} \jnt{a collection of disjoint
trees, with all arcs of a tree directed so that they point towards a
root of the tree (i.e., a node with zero out-degree).} The next
lemma establishes that $M_i$ is constant on any path of $G(t)$.

\begin{lemma}\label{lem:edge->mi=mj}
If \jnt{$(i,j)\in G(t)$,} then  $M_i(t) = M_j(t)$.
\end{lemma}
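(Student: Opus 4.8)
The plan is to prove the statement by induction on $t$, reducing the whole argument to a single uniform claim: for every edge $(i,j)\in G(t)$ with $i\neq j$ (so that $P_i(t)=j$), one has $M_i(t)=M_j(t-1)$. Granting this claim, the lemma follows immediately, because $(i,j)\in G(t)$ exhibits $j$ as a node having an incoming edge in $G(t)$, so the contrapositive of Lemma \ref{lem:speed1/2}(b) forces $M_j(t)=M_j(t-1)$, whence $M_i(t)=M_j(t-1)=M_j(t)$. The base case $t=0$ is vacuous, since $P_i(0)=i$ for every $i$ and therefore $G(0)$ contains no edge between distinct nodes.

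For the inductive step I would establish the uniform claim $M_i(t)=M_j(t-1)$ by splitting on whether node $i$ executes O2 during slot $t-1$. If it does, then since $P_i(t)=j$ the estimate that $i$ accepts must have been the one broadcast by $j$; by the timing convention of the algorithm the value broadcast by $j$ at time $t-1$ is exactly $M_j(t-1)$, and O2 sets $M_i(t)$ equal to this accepted value, giving $M_i(t)=M_j(t-1)$ directly. If $i$ does not execute O2, then the contrapositive of Lemma \ref{newedge} shows that the edge already satisfies $(i,j)\in G(t-1)$; in particular $P_i(t-1)=j\neq i$, so $i$ is not a root and does not execute O3, and it cannot execute O1 or O4a either, since (as in the proof of Lemma \ref{newedge}) those would force $P_i(t)=i\neq j$. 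Hence during slot $t-1$ node $i$ leaves both $M_i$ and $P_i$ unchanged, so $M_i(t)=M_i(t-1)$, and the induction hypothesis applied to $(i,j)\in G(t-1)$ yields $M_i(t-1)=M_j(t-1)$; combining gives $M_i(t)=M_j(t-1)$ again.

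The main obstacle is the careful bookkeeping of the flowchart operations together with the message-timing convention needed to pin down $M_i(t)=M_j(t-1)$: one has to use that O1 and O4a reset the pointer to the node itself, that O3 only fires at a root, and that the quantity broadcast by $j$ at time $t-1$ is its state $M_j(t-1)$ rather than a stale value. All three facts are either stated or implicit in the description of Figure \ref{fig:maxtracking} and in the proof of Lemma \ref{newedge}, so the real work is organizing them so that \emph{every} edge, whether newly created or persistent, is shown to carry the relation $M_i(t)=M_j(t-1)$. Once that uniform relation is in hand, Lemma \ref{lem:speed1/2}(b) closes the argument with essentially no further computation.
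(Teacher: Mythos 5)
Your proof is correct, and while it uses the same raw ingredients as the paper's proof --- Lemma \ref{newedge}, case (b) of Lemma \ref{lem:speed1/2}, and the flowchart facts that O1/O4a reset the pointer to the node itself, that O2 sets $M_i$ to the sender's transmitted value $M_j(t-1)$ and points to that sender, and that the remaining operations leave $M_i$ and $P_i$ untouched --- it decomposes the argument differently. The paper fixes the creation time $t'$ of the edge (a time with $(i,j)\in G(t')$, $(i,j)\notin G(t'-1)$), establishes $M_i(t')=M_j(t'-1)=M_j(t')$ there, and then runs a persistence argument: as long as the edge survives, $M_j$ is frozen by Lemma \ref{lem:speed1/2}(b), and $M_i$ is frozen because any box that changes $M_i$ either resets $P_i$ to $i$ or accepts a strictly larger value, which the paper must separately argue cannot come from $j$ while the edge persists (an increase in $M_j$ would have destroyed the edge). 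Your forward induction with the strengthened invariant $M_i(t)=M_j(t-1)$ buys a simplification exactly at that point: you never need to show $M_i$ stays frozen along the life of the edge, because the case where $i$ re-executes O2 accepting a message from $j$ is absorbed into the same computation $M_i(t)=M_j(t-1)$, after which Lemma \ref{lem:speed1/2}(b) upgrades $M_j(t-1)$ to $M_j(t)$ uniformly, whether the edge is new or old. The price is the slightly more delicate bookkeeping of a simultaneous induction (the invariant at time $t$ invokes the lemma at time $t-1$, and the lemma at time $t$ invokes the invariant at time $t$), which you set up correctly, including the vacuous base case coming from $P_i(0)=i$, so that $G(0)$ has no edges.
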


\begin{proof} \ao{Let $t'$ be \jnt{a time when $(i,j)$ is added to the graph, or more precisely,
a time such that $(i,j)\in G(t')$ but $(i,j)\notin G(t'-1)$.} First,
we argue that the statement we want to prove holds at time $t'$.
Indeed, Lemma \ref{newedge} implies that during slot $t'-1$, node
$i$ executed $O2$, so that $M_i(t') = M_j(t'-1)$. Moreover,
$M_j(t'-1)=M_j(t')$, \jnt{because otherwise case (b) in} Lemma
\ref{lem:speed1/2} would imply that} $j$ has no incoming edges at
time \jnt{$t'$, contradicting our assumption that $(i,j) \in
G(t')$.}

\ao{Next, we argue that \jnt{the property $M_i(t)=M_j(t)$ continues
to hold,} \jblue{starting from time $t'$ and for} as long as $(i,j)
\in G(t)$. Indeed, \jblue{as long as} $(i,j) \in G(t)$, then
$M_j(t)$ \jblue{remains} unchanged, \jnt{by case (b) of} Lemma
\ref{lem:speed1/2}. To argue that $M_i(t)$ \jblue{also remains}
unchanged, we simply observe that in Figure \ref{fig:maxtracking}, every box
which leads to a change in $M_i$ also \jnt{sets $P_i$ either to $i$
or to the sender of a message with value strictly larger than}
$M_i$; this latter \jblue{message} cannot come from $j$ because as
we just argued, increases in $M_j$ lead to \jnt{removal} of the edge
$(i,j)$ from $G(t)$. So, changes in $M_i$ are also accompanied by
\jnt{removal} of the edge $(i,j)$ from $G(t)$.}
\end{proof}

\jnt{For the purposes of the next lemma, we use the convention
$u_i(-1)=u_i(0)$.}

\begin{lemma}\label{lem:m_geq_y}
If $P_i(t) = i$, then $M_i(t) = \rt{u}_i(t-1)$; \jnt{if $P_i(t) \neq
i$, then} $M_i(t) > \rt{u}_i(t\ao{-}1)$.
\end{lemma}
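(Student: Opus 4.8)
The plan is to prove the two claims simultaneously by induction on $t$, in each step inspecting which box of the flowchart in Figure \ref{fig:maxtracking} node $i$ executes during slot $t$ and reading off the resulting values of $M_i(t+1)$ and $P_i(t+1)$. For the base case $t=0$ the initialization gives $P_i(0)=i$ and $M_i(0)=u_i(0)$, and the convention $u_i(-1)=u_i(0)$ then yields $M_i(0)=u_i(-1)$, so the first alternative holds and the second is vacuous. The inductive hypothesis is that the statement holds at time $t$; I then verify it at time $t+1$, noting throughout that $u_i((t+1)-1)=u_i(t)$.

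For the case $P_i(t+1)=i$, I would split according to whether the box executed during slot $t$ actually changes the state of $i$. If the box resets $i$ to point at itself and rewrites its estimate, then one reads from Figure \ref{fig:maxtracking} that it sets $M_i(t+1)=u_i(t)$, which is exactly the required equality. If instead $i$ leaves both $M_i$ and $P_i$ unchanged, then necessarily $P_i(t)=i$, so the induction hypothesis gives $M_i(t)=u_i(t-1)$; here the point to extract from the flowchart is that a self-pointing node can remain idle only when its own input did not change, i.e. $u_i(t)=u_i(t-1)$, and therefore $M_i(t+1)=M_i(t)=u_i(t-1)=u_i(t)$.

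For the case $P_i(t+1)\neq i$, the node ends the slot pointing at a neighbor, and I again distinguish two boxes. In the adopting box O2, node $i$ sets $M_i(t+1)$ equal to the value $y$ of a message it has just received and sets $P_i(t+1)$ to the sender; the guard of this box requires $y$ to strictly exceed $i$'s own current value $u_i(t)$, so $M_i(t+1)=y>u_i(t)$, giving the strict inequality. In the idle forwarding box, $P_i(t+1)=P_i(t)\neq i$ and $M_i(t+1)=M_i(t)$; the key is that the flowchart permits $i$ to keep a non-self pointer only while its estimate still strictly dominates its own input, i.e. $u_i(t)<M_i(t)$, whence $M_i(t+1)=M_i(t)>u_i(t)$.

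I expect the main obstacle to lie precisely in the two idle cases, since these are the only places where the input $u_i$ may have changed between $t-1$ and $t$ while the state of $i$ did not; both rest on a careful reading of the guards in Figure \ref{fig:maxtracking}, namely that a self-pointing node refreshes $M_i$ to its own value whenever that value changes (forcing $u_i(t)=u_i(t-1)$ under idleness) and that a node abandons a non-self pointer as soon as its own value catches up to its estimate (forcing $u_i(t)<M_i(t)$ under idleness). The clean dichotomy exploited in the non-idle cases is underwritten by the fact established in the proof of Lemma \ref{lem:edge->mi=mj}, that every box changing $M_i$ either sets $P_i$ to $i$ or to the sender of a strictly larger message, which is what guarantees that the two endpoints $P_i(t+1)=i$ and $P_i(t+1)\neq i$ correspond exactly to the equality and the strict inequality.
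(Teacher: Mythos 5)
Your induction skeleton, base case, and the self-pointing cases match the paper's own proof, which also proceeds by induction on $t$ with a case split over the boxes of Figure \ref{fig:maxtracking}. The gap is in how you close the two cases with $P_i(t+1)\neq i$: in both you appeal to guards that the algorithm does not have. For the adopting box O2, the guard compares the incoming value $y$ to the node's current \emph{estimate} $M_i(t)$, not to its input $u_i(t)$; the paper's proof makes this explicit by writing $M_i(t+1) > M_i(t) \geq u_i(t-1) = u_i(t)$, where the first inequality is the O2 entry condition, the middle inequality is the induction hypothesis, and the last equality holds because a change in $u_i$ would have triggered O1 instead of O2. For the idle forwarding case, there is no box that forces a node to ``abandon a non-self pointer as soon as its own value catches up to its estimate'': the implication $P_i(t)\neq i \Rightarrow M_i(t) > u_i(t-1)$ is precisely the content of the lemma being proved, an invariant maintained by induction, not a per-step guard. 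The correct argument there is again: idleness implies $u_i(t)=u_i(t-1)$ (otherwise O1 fires, for any node, not only self-pointing ones), and then the inductive hypothesis $M_i(t) > u_i(t-1)$ carries over to $M_i(t+1)=M_i(t) > u_i(t)$.

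The tell-tale symptom is that in your write-up the strict-inequality half of the induction hypothesis is never used: both strict-inequality cases are discharged by the claimed guards alone, and were such guards actually present, the lemma would be nearly a tautology. The repair is small and uses ingredients you already have: extend your observation that ``a node can remain idle only when its input did not change'' from self-pointing nodes to all nodes (this is exactly what O1 enforces), and then in both non-self cases chain the actual O2 guard, or the unchanged estimate, through the induction hypothesis, as the paper does.
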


\begin{proof}
We prove this result by induction. \jnt{Because of} the convention
$\rt{u}_i(-1) = \rt{u}_i(0)$, \jblue{and the initialization
$P_i(0)=i$, $M_i(0)=u_i(0)$,} the result trivially holds at time
$t=0$. Suppose now that the result holds at time $t$.
\ao{\jnt{During time slot $t$, we have three possibilities}
\jblue{for node $i$:}
\begin{itemize}
\item[(i)] Node $i$ executes O1 or O4a. In this case,
$M_i(t+1)=\rt{u}_i(t), P_i(t+1)=i$, so the result holds at time
$t+1$. \item[(ii)] Node $i$ executes O2. In this case $P_i(t) \neq
i$ and $M_i(t+1) > M_i(t) \geq \rt{u}_i(t-1)=\rt{u}_i(t)$. \jnt{The
first inequality follows from the condition for entering step O2.
The second follows from the induction hypothesis. The} last equality
follows because if $\rt{u}_i(t) \neq \rt{u}_i(t-1)$, node $i$ would
have executed O1 rather than O2. So, once again, the result holds at
time $t+1$. \item[(iii)] Node $i$ executes O3 or O4b.  The result
holds at time $t+1$ because neither $\rt{u}_i$ nor $M_i$ changes.
\end{itemize}}
\end{proof}

In the sequel, we will use $T'$ to refer to a time after which all
the $\rt{u}_i$ are constant. The following lemma shows that, after
$T'$, the largest estimate does not increase.

\begin{lemma}\label{lem:nondecreasing}\aoe{
Suppose that at some time $t' > T'$ we have $\widehat{M}> \max_i
M_i(t')$. Then \begin{equation} \label{eq:nondecreasing}
\widehat{M}> \max_i M_i(t) \end{equation} for all $t \geq t'$.}
\end{lemma}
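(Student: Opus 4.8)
The plan is to establish the cleaner, slightly stronger statement that once the inputs have frozen, the running maximum of the estimates never increases: $\max_i M_i(t+1) \le \max_i M_i(t)$ for every $t > T'$. Granting this, the lemma follows by a one-line induction on $t \ge t'$. The base case $t = t'$ is exactly the hypothesis $\widehat{M} > \max_i M_i(t')$, and each inductive step preserves the strict inequality, since $\max_i M_i(t+1) \le \max_i M_i(t) < \widehat{M}$. Because $t' > T'$, every index $t \ge t'$ satisfies $t > T'$, so the monotonicity property applies at each step of the induction.

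To prove the monotonicity step, I would fix a node $i$ and a time $t > T'$ and bound $M_i(t+1)$ by $\max_k M_k(t)$, arguing by cases according to which of the operations O1--O4 node $i$ executes during slot $t$ (these cases are exhaustive, as in the proof of Lemma \ref{lem:m_geq_y}). If $i$ executes O3 or O4b, then $M_i(t+1) = M_i(t)$ and the bound is immediate. If $i$ executes O1 or O4a, then $M_i(t+1) = u_i(t)$; since $t > T'$ we have $u_i(t) = u_i(t-1)$, and Lemma \ref{lem:m_geq_y} gives $M_i(t) \ge u_i(t-1) = u_i(t)$, whence $M_i(t+1) \le M_i(t) \le \max_k M_k(t)$. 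Finally, if $i$ executes O2, then --- exactly as in the argument used in the proof of Lemma \ref{lem:edge->mi=mj}, which invokes Lemma \ref{newedge} --- the value adopted is that of the chosen neighbor $j$, so that $M_i(t+1) = M_j(t) \le \max_k M_k(t)$. Taking the maximum over $i$ yields $\max_i M_i(t+1) \le \max_k M_k(t)$, completing the step.

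The only point requiring care --- and the (modest) main obstacle --- is to confirm that O1, O2, and O4a are the sole operations capable of raising $M_i$, and that none of them can push $M_i$ above the current global maximum: O1 and O4a merely reset $M_i$ to the node's own, now frozen, input, which by Lemma \ref{lem:m_geq_y} cannot exceed the node's own previous estimate, while O2 only copies a neighbor's current estimate. The hypothesis $t > T'$ is used in an essential way precisely in the O1/O4a case, to rewrite $u_i(t-1)$ as $u_i(t)$; without it a node could reset to a strictly larger, freshly changed input and thereby raise the global maximum, so the conclusion genuinely relies on the inputs having stabilized.
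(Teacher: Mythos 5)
Your proposal is correct and follows essentially the same route as the paper's proof: the same case analysis over the operations (O2 copies a neighbor's current estimate; O1/O4a reset $M_i$ to the frozen input $u_i(t)=u_i(t-1)$, which Lemma \ref{lem:m_geq_y} bounds by $M_i(t)$; O3/O4b leave $M_i$ unchanged), using the same two ingredients, namely Lemma \ref{lem:m_geq_y} and the stabilization of the inputs after $T'$. The only difference is cosmetic: you first isolate the monotonicity statement $\max_i M_i(t+1)\leq \max_i M_i(t)$ and then deduce the lemma, whereas the paper carries the strict bound $\widehat{M} > \max_i M_i(t)$ directly through the induction.
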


\begin{proof}
{ \aoe We prove Eq. (\ref{eq:nondecreasing}) by induction. By
assumption it holds at time $t=t'$. Suppose now Eq.
(\ref{eq:nondecreasing}) holds at time $t$; we will show \aon{that} it holds at
time $t+1$.}

Consider a node $i$. If it executes O2 during the slot $t$, it sets
$M_i(t+1)$ to the value contained in a message sent at time $t$ by
some node $j$. It follows from the rules of our algorithm that the
value in this message is $M_j(t)$ and therefore, $M_i(t+1) = M_j(t)
< \widehat{M}$.

Any operation other than O2 that modifies $M_i$ sets $M_i(t+1) =
u_i(t)$, {\aoe and since $u_i(t)$ does not change after time $T'$,
we have $M_i(t+1) = u_i(t-1)$. By Lemma \ref{lem:m_geq_y},
 $M_i(t) \geq u_i(t-1)$, so that $M_i(t+1) \leq
M_i(t)$. We conclude that $M_i(t+1) < \widehat{M}$ holds for this
case as well.}
\end{proof}

We now introduce some terminology used to specify whether the
estimate $M_i(t)$ held by a node has been invalidated or not.
Formally, we say that node $i$ has a \emph{\jnt{valid} estimate}
\jnt{at time $t$} if by following the path in $G(t)$ \jnt{that
starts} at $i$, \jnt{we}  eventually arrive at a node $r$ with
$P_r(t) = r$ and \jblue{$M_i(t)=u_r(t-1)$.} In any other case, we
say that a node has an \emph{\jnt{invalid} estimate} at time $t$.

\smallskip

{\bf Remark:} Because of the acyclicity property, a path
in $G(t)$, starting from a node $i$, eventually leads to a node $r$
with out-degree 0; it follows from Lemma \ref{lem:edge->mi=mj} that
$M_r(t) = M_i(t)$. Moreover, Lemma \ref{lem:m_geq_y} implies that if
$P_r(t) = r$, then \jnt{$M_i(t) =M_r(t)=\rt{u}_r(\jblue{t-1})$,} so
that the estimate is \jnt{valid.} Therefore, if $i$ has an
\jnt{invalid} estimate, the corresponding node $r$ must have $P_r(t)
\not = r$; \jnt{on the other hand,} since $r$ has out-degree 0 in
$G(t)$, \jnt{the definition of $G(t)$} implies that there is a
``restart" message from $P_r(t)$ to $r$ sent at time $t$.

\smallskip

{\aoe The following lemma gives some conditions which allow us to
conclude that a given node has reached a \aon{final} state.}

\begin{lemma}\label{lem:valid+max-->final} {\aoe
Fix some $t'>T'$ and let $M^*$ be the largest estimate at time $t'$,
i.e., $M^*=\max_i M_i(t')$. If $M_i(t')
= M^*$, and this estimate is valid, then  for all $t \geq t'$: \\
\noindent (a) $M_i(t) = M^*$, $P_i(t) = P_i(t')$, and node $i$ has a valid estimate at time $t$. \\
(b) Node $i$ executes either O3 or O4b at time $t$.}
\end{lemma}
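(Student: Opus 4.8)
The plan is to prove a statement slightly stronger than (a)--(b), maintained simultaneously over all ``valid--maximal'' nodes, by a single induction on $t$. Write $M^\ast=\max_j M_j(t')$ and let $A$ be the set of nodes that at time $t'$ have $M_j(t')=M^\ast$ \emph{and} a valid estimate; note that $i\in A$, that $A$ contains the relevant roots, and that $A$ is closed under the parent map $P_\cdot(t')$ (a subpath of a valid path is valid, and by Lemma \ref{lem:edge->mi=mj} every node on such a path carries estimate $M^\ast$). First I would record the global consequence of Lemma \ref{lem:nondecreasing}: since $t'>T'$, for every $t\ge t'$ no estimate ever exceeds $M^\ast$, i.e.\ $\max_j M_j(t)\le M^\ast$.

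The invariant to push forward is $Q(t)$, for $t\ge t'$: \textbf{(1)} every $j\in A$ satisfies $M_j(t)=M^\ast$, $P_j(t)=P_j(t')$, and has a valid estimate at time $t$; and \textbf{(2)} during slot $t$ every $j\in A$ executes O3 or O4b, hence sends an estimate message and never a restart. Claims (a) and (b) are then $Q(t)$ specialized to $j=i$. For the behavioral half of the base case $Q(t')$ I would argue that no $j\in A$ can execute O1, O2, or O4a during slot $t'$: O1 is excluded because $u_j$ is constant for $t\ge T'$; O2 is excluded because $j$ receives only estimates $M_k(t')\le M^\ast=M_j(t')$, so no neighbor offers a strictly larger value. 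The crux is O4a, which would require $j$ to receive a restart from $P_j(t')$ at time $t'$; but the valid path means $(j,P_j(t'))\in G(t')$, and by the definition of $G(t')$ an edge is present only when its head sent no restart at that time, so O4a cannot fire. Hence each $j\in A$ executes O3 or O4b, giving \textbf{(2)} at $t'$.

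The state half then propagates one step at a time. Since by $Q(t)$\textbf{(2)} no node of $A$ sends a restart at time $t+1$, every edge of the frozen tree survives into $G(t+1)$; each root $r$ keeps $M_r=u_r=M^\ast$ (using $u_r(t'-1)=u_r(t')=M^\ast$ from Lemma \ref{lem:m_geq_y} and constancy of the inputs after $T'$), so every $j\in A$ is still valid with $M_j(t+1)=M^\ast$ and $P_j(t+1)=P_j(t')$, which is $Q(t+1)$\textbf{(1)}. The inductive step for \textbf{(2)} repeats the base argument, now with the O4a--exclusion made self-sustaining: by $Q(t)$\textbf{(2)} the parent $P_j(t')$ executed O3 or O4b during slot $t$, hence sent no restart at time $t+1$, so $j$ again cannot execute O4a; O1 and O2 are excluded exactly as before, using the global bound $M_k(t+1)\le M^\ast$ from Lemma \ref{lem:nondecreasing}.

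I expect the main obstacle to be bookkeeping the message--timing semantics of the flowchart correctly: making precise that a restart received by $j$ at a given time is exactly a restart emitted by $P_j$ at that time, and that O3 and O4b emit an estimate rather than a restart (the converse direction of Lemma \ref{lem:speed1/2}(a)). Once this is pinned down, the argument is an interlocking induction in which \textbf{(2)} at time $t$ supplies precisely the absence-of-parent-restart needed for \textbf{(2)} at time $t+1$, while the base case is unlocked entirely by the observation that an edge of $G(t')$ certifies the absence of a parent restart at time $t'$.
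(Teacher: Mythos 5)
Your proof is correct, and at its core it is the same interlocking induction as the paper's: the state claim (a) at time $t$ together with the behavioral claim (b) at time $t$ (node executes O3 or O4b, hence emits no restart) yields (a) and (b) at time $t+1$. The genuine difference is organizational: you strengthen the induction hypothesis to run simultaneously over the whole set $A$ of valid-maximal nodes, after first checking that $A$ is closed under the parent map. This buys you three things that the paper's per-node formulation leaves implicit or incomplete. First, the paper's induction step invokes the induction hypothesis for the parent $j = P_i(t)$ (``by the induction hypothesis, during the slot $t-1$, $j$ has thus executed O3 or O4b''), even though the lemma as stated fixes the single node $i$; your set-valued invariant, together with the closure observation via Lemma \ref{lem:edge->mi=mj}, makes this step legitimate by construction. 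Second, you explicitly exclude O4a in the base case by noting that validity places the edge $(j, P_j(t'))$ in $G(t')$, which by the definition of $G$ certifies that no restart arrived from the parent; the paper's base case rules out only O1 and O2 and leaves O4a unaddressed. Third, you invoke Lemma \ref{lem:nondecreasing} to obtain $\max_k M_k(t) \leq M^*$ for all $t \geq t'$, which is what actually excludes O2 at times after $t'$; the paper's O2 argument appeals directly to the definition of $M^*$ and is only stated at $t = t'$. In short, your write-up is a more careful and self-contained rendering of the argument the paper intends, rather than a different argument; the price is a slightly heavier invariant, and the payoff is that every operation of the flowchart is ruled in or out explicitly at every time step.
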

\begin{proof} {\aoe We will prove this result by induction on $t$.
Fix some node $i$. By assumption, part (a) holds at time $t=t'$.
To show part (b) at time $t=t'$, we first argue that $i$ does not
execute O2 during the time slot $t$. Indeed, this would require
$i$ to have received a message with an estimate strictly larger
than $M^*$, sent by some node $j$ who executed O3 or O4b during
the slot $t-1$. In either case, $M^* < M_j(t-1) = M_j(t)$,
contradicting the definition of $M^*$. \aon{Because of the
definition of $T'$, $u_i(t)=u_i(t-1)$ for $t>T'$, so that $i$ does
not execute O1.} This concludes the proof of the base case. }

{\aoe Next, we suppose that our result holds at time $t$, and we will
argue that it holds at time $t+1$. If } $P_i(t) = i$, then $i$
executes O3 {\aoe during slot $t$}, so that $M_i(t+1) = M_i(t)$ and
$P_i(t+1) = P_i(t)$, {\aoe completes the induction step for this case. }

{\aoe It remains to consider the case where $P_i(t) = j \neq i$. }
It follows from the definition of a valid estimate that $(i,j) \in
E(t)$. \aon{Using the definition of $E(t)$, we conclude that}
there is no restart message sent from $j$ to $i$ at time $t$.
{\aoe By the induction hypothesis,}
during the slot $t-1$, $j$ has thus executed O3 or O4b, so that
$M_j(t-1) = M_j(t)$; {\aoe in fact, Lemma \ref{lem:edge->mi=mj}
gives that $M_j(t)=M_i(t)=M^*$}. Thus during slot $t$, $i$ reads a
message from $j=P_i(t)$ with the estimate $M^*$, {\aoe and
executes O4b, consequently leaving its $M_i$ or $P_i$ unchanged.}

{\aoe We finally argue that node $i$'s estimate remains valid.
This is the case because since we can apply the arguments of the
previous paragraph to every node $j$ on the path from $i$ to a node
with out-degree $0$; we obtain that all of these nodes both (i)
keep $P_j(t+1)=P_j(t)$ and (ii) execute O3 or O4b, and consequently
do not send out any restart messages. }
\end{proof}

\smallskip

\aoc{Recall (see the comments following Lemma \ref{lem:acyclic})
that $G(t)$ consists of a  collection of disjoint in-trees (trees
in which all edges are oriented  towards a root node).
Furthermore, by Lemma \ref{lem:edge->mi=mj}, the value of $M_i(t)$
is constant on each  of these trees. Finally, all nodes on a
particular tree have either a valid or invalid estimate (the
estimate being valid if and only if $P_r(t)=r$ and
$M_r(t)=u_r(t-1)$ at the root node $r$ of the tree.) For any $z\in
{\cal U}$, we let $G_z(t)$ be the subgraph of $G(t)$ consisting of
those trees at which all nodes have $M_i(t)=z$ and for which the
estimate $z$ on that tree  is invalid. We refer to $G_z(t)$ as the
\emph{invalidity graph} of $z$  at time $t$. In the sequel we will
say that $i$ is in $G_z(t)$, and abuse notation  by writing $i\in
G_z(t)$, to mean that $i$ belongs to the set of nodes  of
$G_z(t)$. The lemmas that follow aim at showing} that the
invalidity graph of the largest estimate eventually becomes empty.
\jred{Loosely speaking,} \jmj{the first \aoc{lemma}} asserts that
\jred{after the $u_i(t)$ have stopped changing,} it takes
essentially two time steps for a maximal estimate $M^*$ to
propagate to a neighboring node.

\begin{lemma} \label{lemma:addition}
\mod{\jnt{Fix some time $t>T'$.} Let $M^*$ be the largest estimate
at \jnt{that time,}  i.e., $M^* = \max_{\jnt{i}} M_i(t)$.
Suppose that $i$ is in $G_{M^*}(t+2)$ but not in $G_{M^*}(t)$. Then
$P_i(t+2)\in G_{M^*}(t)$.}
\end{lemma}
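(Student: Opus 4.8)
The plan is to track how node $i$'s estimate $M_i$ climbs to $M^*$ over the interval $[t,t+2]$ and to show that the source of this climb is exactly the node $P_i(t+2)$, which must \emph{already} have carried the invalid estimate $M^*$ at time $t$. The engine of the argument is the ``half-speed'' mechanism: adopting a larger estimate via instruction O2 simultaneously triggers a restart broadcast (Lemma \ref{lem:speed1/2}(a)), so a node can only \emph{export} the value $M^*$ one hop after it has stopped restarting, i.e.\ only after it has settled into O3 or O4b. This coupling is what produces the two-step delay asserted in the statement.

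First I would dispose of the preliminaries. Using the monotonicity of the largest estimate (Lemma \ref{lem:nondecreasing}) together with the finiteness of $\mathcal U$, no estimate ever exceeds $M^*$ after time $t$, so $M^*$ is a ceiling for all $M_j(s)$, $s\ge t$. Next, since $i\in G_{M^*}(t+2)$ we have $M_i(t+2)=M^*$ with an \emph{invalid} estimate, and I would rule out $M_i(t)=M^*$: the invalid case would place $i$ in $G_{M^*}(t)$, contrary to hypothesis, while the valid case would, by Lemma \ref{lem:valid+max-->final}, keep $i$ valid for all later times, contradicting invalidity at $t+2$. Hence $M_i(t)<M^*=M_i(t+2)$, so $M_i$ makes its final jump to $M^*$ either during slot $t$ (reaching $M^*$ at $t+1$) or during slot $t+1$ (reaching $M^*$ at $t+2$).

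In either case I would argue the jump to $M^*$ is executed by O2 rather than O1 or O4a: the latter set $M_i$ to $u_i$ and $P_i$ to $i$, turning $i$ into a valid root holding $M^*$, which again stays valid by Lemma \ref{lem:valid+max-->final} and contradicts invalidity at $t+2$. Setting $j:=P_i(t+2)$, I would identify it as the node from whose estimate message $i$ adopted $M^*$. In the slot-$t$ case one reads $M_j(t)=M^*$ directly from the adopted message value (here one also checks that $P_i$ is unchanged from $t+1$ to $t+2$, since O2 is impossible at $t+1$ as $M^*$ is already maximal, and O1/O4a are excluded as above). In the slot-$t+1$ case, the message $i$ reads at time $t+1$ must be an estimate rather than a restart, so $j$ executed O3 or O4b during slot $t$ and therefore $M_j(t)=M_j(t+1)=M^*$ as well.

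It remains to show $j$ is invalid at time $t$, which with $M_j(t)=M^*$ yields $j\in G_{M^*}(t)$. I would do this by contradiction: if $j$ held a \emph{valid} maximal estimate at $t$, then Lemma \ref{lem:valid+max-->final} guarantees that from $t$ onward $j$ keeps $M_j=M^*$, keeps its pointer, and always executes O3 or O4b --- in particular during slot $t+1$, so $j$ sends no restart to $i$ at time $t+2$ and $(i,j)\in G(t+2)$. Following the path from $i$ through $j$, and propagating the common value $M^*$ along it via Lemma \ref{lem:edge->mi=mj}, would then reach $j$'s valid root and make $i$'s own estimate valid at $t+2$, a contradiction. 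The main obstacle, and the real content of the lemma, is precisely this interplay between O2 and the restart broadcast: it is what forbids $j$ from having \emph{just} acquired $M^*$ during slot $t$ in the slot-$t+1$ case and forces $M_j(t)=M^*$ outright, thereby converting the algorithm's deliberate slowing-down of estimates into the clean statement $P_i(t+2)\in G_{M^*}(t)$.
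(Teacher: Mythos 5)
Your proposal is correct and follows essentially the same route as the paper's proof: rule out $M_i(t)=M^*$ via Lemma \ref{lem:valid+max-->final}, identify the climb to $M^*$ with an execution of O2 (excluding O1/O4a because they would create a persistently valid root), split into the slot-$t$ and slot-$t+1$ cases, deduce $M_j(t)=M^*$ from the adopted message together with the absence of a restart, and finally derive $j$'s invalidity at time $t$ by contradiction using Lemma \ref{lem:valid+max-->final} and the definition of validity. The only differences are cosmetic — you make explicit a couple of steps the paper leaves terse (the ceiling on estimates via Lemma \ref{lem:nondecreasing}, and the verification that $(i,j)\in G(t+2)$ in the final contradiction) — so there is nothing substantive to add.
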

\begin{proof}
The fact that $i\not\in G_{M^*}(t)$ implies that either \aoc{(i)} $M_i(t) \neq
M^*$ or \aoc{(ii)} $M_i(t) = M^*$ and $i$ has a valid estimate at time $t$. In
the latter case, it follows from Lemma
\ref{lem:valid+max-->final} that $i$ also has a valid estimate at
time $t+2$, contradicting the assumption $i\in G_{M^*}(t+2)$. Therefore, \aoc{we can and
will assume that} $M_i(t) <
M^*$. Since $t>T'$, no node ever executes O1. The difference between
$M_i(t)$ and $M_i(t+2)= M^*$ can only result from the execution of
O2 or O4a by $i$ during time slot $t$ or $t+1$.

Node $i$ cannot have executed O4a during slot $t+1$, \aoc{because this would
result in} $P_i(t+2) = i$, and $i$ would have a
valid estimate at time $t+2$, contradicting \aoc{the assumption} $i\in G_M^*(t+2)$.
Similarly, if $i$ executes O4a during slot $t$ it sets $P_i(t+1)
= i$. Unless it executes O2 \aoc{during slot} $t+1$, we have \aoc{again} $P_i(t+2) = i$ contradicting \aoc{
the assumption} $i\in
G_M^*(t+2)$. Therefore, $i$ must have executed O2 during \aoc{either} slot
$t+1$ or slot $t$, and in the latter case it must not
have executed O4a during slot $t+1$.

Let us suppose that $i$ executes O2 during slot $t+1$, and
sets thus $P_i(t+2) =j$ for some $j$ that sent at time $t+1$ a
message with the estimate $M^*=M_i(t+2)$. The rules of the
algorithm imply that {\aoe $M^* = M_j(t+1)$. We can also conclude
that $M_j(t+1)=M_j(t)$, since if this \aoc{were} not true, node $j$ would
have sent out a restart at time $t+1$. Thus $M_j(t) = M^*$. It }
remains to prove that the estimate $M^*$ of $j$ at time $t$ is not
valid. Suppose, to obtain a contradiction, that it is valid. Then it
follows from Lemma \ref{lem:valid+max-->final} that $j$ also has a
valid estimate at time $t+2$, and from the definition of validity
that the estimate of $i$ is also valid at $t+2$, in contradiction
with \aoc{the assumption} $i\in G_{M^*}(t+2)$. {\aoe Thus we have \aoc{established} that }
$P_i(t+2) = j \in G_{M^*}(t)$ if $i$ executes O2 during slot
$t+1$. The same argument applies if $i$ executes O2 during slot
$t$, without executing O4a or O2 during the slot $t+1$, \aoc{using the fact that
in this case} $P_i(t+2) = P_i(t+1)$.
\end{proof}

\jblue{\jred{Loosely speaking,} the \aoc{next} lemma asserts that the
removal of an invalid maximal estimate $M^*$, through the
propagation of restarts, takes place at unit speed.}

\begin{lemma} \label{lemma:roots}
\jblue{Fix some time $t>T'$,} and let $M^*$ be the largest
estimate at \jblue{that time.} Suppose that $i$ is a root
\jnt{(i.e., has zero out-degree)} in the forest $G_{M^*}(t+2)$.
Then, either (i) $i$ is the root of \jnt{an one-element} tree in
\mod{$G_{M^*}(t+2)$} \aoc{consisting} only of $i$, or (ii) $i$ is
\jmj{at least} ``two levels down in $G_{M^*}(t)$, i.e., there
\jnt{exist} nodes $i',i''$ with $(i,i'), (i',i'') \in G_{M^*}(t)$.
\end{lemma}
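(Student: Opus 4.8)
The plan is to track a single root $i$ of the invalidity forest $G_{M^*}(t+2)$ two time steps backwards, showing that the restart which has just ``cut off'' $i$ is the front of a restart wave that sat two levels deeper inside $G_{M^*}$ at time $t$. Throughout I would use that $t>T'$, so no node ever executes O1, and that Lemma \ref{lem:nondecreasing} (applied with $\widehat M$ the smallest element of $\mathcal U$ exceeding $M^*$, the case $M^*=\max\mathcal U$ being trivial) bounds the maximum estimate by $\max_k M_k(s)\le M^*$ for all $s\ge t$.

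First I would record what being an invalid root means. Since $i$ has out-degree zero in $G(t+2)$ and its estimate $M^*$ is invalid, the characterization of invalid estimates (the Remark following the definition of validity) forces $P_i(t+2)=j\neq i$ together with a restart message sent from $j$ to $i$ at time $t+2$; hence $j$ executed O2 or O4a during slot $t+1$, and O2 is excluded because it would give $M_j(t+2)>M^*$. I would keep in reserve the contrapositive of Lemma \ref{lem:valid+max-->final}: once I know $M_i(t)=M^*=\max_k M_k(t)$, a \emph{valid} estimate at $i$ would persist forever, contradicting invalidity at $t+2$, so $i$ must already be invalid at time $t$.

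Next I would dispose of case (i) and assume $i$ is not a singleton, so some edge $(k,i)\in G_{M^*}(t+2)$ exists. Then $i$ sent no restart at time $t+2$, so it executed O3 or O4b during slot $t+1$; ruling out O3 (which forces $P_i(t+2)=i$) leaves O4b, pinning down $P_i(t+1)=j$, $M_i(t+1)=M^*$, $(i,j)\in G(t+1)$, and $M_j(t+1)=M^*$. Feeding $M_j(t+1)=M^*$ and $M_j(t+2)\le M^*$ into the options for $j$ at slot $t+1$ eliminates O2 there as well, so $j$ executed O4a, giving $P_j(t+1)=j'\neq j$. I would then push both nodes one more step back to slot $t$: for $i$, Lemma \ref{lem:speed1/2}(a) together with the incoming edge $(k,i)\in G(t+2)$ rules out O2, while O3 and O4a are excluded since they would force $P_i(t+1)=i$, so $i$ executed O4b, yielding $P_i(t)=j$, $M_i(t)=M^*$, $(i,j)\in G(t)$, and $M_j(t)=M^*$. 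Applying the identical elimination to $j$ (O2 excluded since $M_j(t)=M_j(t+1)$, and O3, O4a excluded since $P_j(t+1)=j'\neq j$) gives O4b for $j$ and hence $(j,j')\in G(t)$.

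Finally I would assemble the conclusion. With $M_i(t)=M^*$ maximal and $i$ invalid at $t+2$, the reserved contrapositive of Lemma \ref{lem:valid+max-->final} shows $i$ is invalid at time $t$, so $i\in G_{M^*}(t)$; by Lemma \ref{lem:edge->mi=mj} the tree through $i,j,j'$ carries estimate $M^*$ and is invalid, so $(i,j),(j,j')$ lie in $G_{M^*}(t)$, and acyclicity (Lemma \ref{lem:acyclic}) makes $i,j,j'$ distinct. Taking $i'=j$ and $i''=j'$ then witnesses case (ii). The main obstacle is precisely the bookkeeping in these step-back arguments: correctly matching the flowchart operations O1--O4b to the constraints on $M_i,P_i$ at three consecutive times, and using the restart/half-speed asymmetry through Lemma \ref{lem:speed1/2} and the non-increase of the maximum estimate to exclude the forwarding operation O2 at exactly the right slots.
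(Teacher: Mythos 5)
Your proof is correct, and it reaches the conclusion by a genuinely different route than the paper's, even though it draws on the same toolkit (Lemmas \ref{lem:speed1/2}, \ref{lem:edge->mi=mj}, \ref{lem:nondecreasing}, and the contrapositive of Lemma \ref{lem:valid+max-->final}). The paper freezes $i$'s state in one shot --- cases (a) and (b) of Lemma \ref{lem:speed1/2} plus the incoming edge give $M_i(t)=M_i(t+1)=M_i(t+2)=M^*$ and $P_i(t)=P_i(t+1)=P_i(t+2)$ --- and then argues \emph{forward} from the root $r$ of $i$'s invalid tree at time $t$: $r$ must execute O4a during slot $t$ and broadcast restarts at time $t+1$, and the frozen pointer is then incompatible with either $i=r$ or $P_i(t)=r$, which is exactly statement (ii). You instead chase the restart \emph{backward}: the restart severing $i$ at time $t+2$ identifies $j$'s operation at slot $t+1$ as O4a (hence a restart from $j'=P_j(t+1)$ at time $t+1$), while the O4b executions at slot $t$ for both $i$ and $j$ certify the edges $(i,j)$ and $(j,j')$ in $G(t)$ directly. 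Your version never needs to locate or reason about the root of $i$'s tree; it produces the two witnessing edges constructively and makes the unit-speed propagation of restarts explicit, at the price of heavier operation-by-operation bookkeeping (eliminations across two slots and two nodes, versus the paper's two short pointer contradictions). Both arguments are valid; the paper's is shorter, while yours is more self-contained at the final step, since membership of the edges in $G_{M^*}(t)$ falls out of Lemma \ref{lem:edge->mi=mj}, Lemma \ref{lem:acyclic}, and the invalidity transfer rather than from the structure of the tree rooted at $r$.

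One presentational caveat: in your second paragraph, the exclusion of O2 for $j$ at slot $t+1$ (``it would give $M_j(t+2)>M^*$'') is not justified at that point, since you then only know $M_j(t+1)\leq M^*$, and O2 would only force $M_j(t+2)>M_j(t+1)$. This is harmless because you re-derive the exclusion correctly in the third paragraph, after establishing $M_j(t+1)=M^*$ via the edge $(i,j)\in G(t+1)$ and Lemma \ref{lem:edge->mi=mj}; simply delete the premature claim or defer it to where it is actually proved.
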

\begin{proof}
Consider such a node $i$ and assume that (i) does not hold. Then,
\begin{eqnarray} M_i(t) & = & M_i(t+1) = M_i(t+2) = M^* \nonumber \\
P_i(t) & = & P_i(t+1) = P_i(t+2) \label{equality} \end{eqnarray}
\aoc{This is because} otherwise, cases (a) and (b) of Lemma
\ref{lem:speed1/2} imply that $i$ has \aoc{zero in-degree} in
$G_{M^*}(t+2)\subseteq G(t+2)$, in addition to having a zero
out-degree, contradicting our assumption that (i) does not hold.
Moreover, the estimate of $i$ is not valid at $t$, because it
would then also be valid at $t+2$ by Lemma
\ref{lem:valid+max-->final}, in contradiction with $i\in
G_{M^*}(t+2)$. Therefore, $i$ belongs to the forest $G_{M^*}(t)$.
Let $r$ be the root of the connected component to which \aoc{$i$}
belongs. We \aoc{will} prove that $i\neq r$ and $P_i(t) \neq r$,
and thus that (ii) holds.

Since $r\in G_{M^*}(t)$, \aoc{we have} $M_r(t) = M^*$ and
\aoc{thus} $r$ does thus not execute O2 during slot $t$. Moreover,
$r$ is a root and has an invalid estimate, so $P_r(t) \neq r$ and
there is a ``restart" message from $P_r(t)$ to $r$ at time $t$.
Therefore, $r$ executes O4a during slot $t$, setting $P_r(t+1) =r$
and sending ``restart" messages to all its neighbors at time
$t+1$. This implies that $i\neq r$, as we have seen that $P_i(t) =
P_i(t+1)$. Let us now assume, to obtain a contradiction, that
$P_i(t) =r$ and thus \aoc{by Eq. (\ref{equality})}, $P_i(t+2) =
P_i(t+1) =r$. In that case, we have just seen that there is at
time $t+1$ a ``restart" message from $r=P_i(t+1)\neq i$ to $i$, so
$i$ executes O4a during slot $t+1$ and sets $P_i(t+2) =i$. This
however contradicts the fact $P_i(t+2) = P_i(t+1)$. Therefore,
{\aoe $r \neq i$ and $r \neq P_i(t)$,} i.e.,  $i$ is ``at least
two levels down" in $G_{M^*}(t)$.
\end{proof}

\ao{Let the {\em depth} of a tree be the largest distance between a leaf
of the tree and the root; the depth of a forest is the largest depth
of any tree in the forest. We will use $g(\cdot)$ to denote depth. }
\jblue{The following lemma uses the previous two lemmas to
assert that a {\aoe forest }carrying an invalid maximal estimate
has its depth decrease by at least one over a time interval
of length two.}

\begin{lemma} \jblue{Fix some
time $t > T'$,} and let $M^*$ be the largest estimate value at
\jblue{that time.} If $g(G_{M^*}(t+2))
> 0$, then \aoc{$g( G_{M^*}(t+2)) \leq g(G_{M^*}(t))-1.$}
\label{lemma:depth_decrease}
\end{lemma}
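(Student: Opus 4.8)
The plan is to prove the equivalent bound $g(G_{M^*}(t)) \geq g(G_{M^*}(t+2)) + 1$ by extracting, from a deepest path of the forest $G_{M^*}(t+2)$, a strictly longer path lying inside $G_{M^*}(t)$. Write $d = g(G_{M^*}(t+2))$ and assume $d \geq 1$. Fix a path of length $d$ in $G_{M^*}(t+2)$ running from a leaf $v_d$ down to the root $v_0 = r$ of its tree, so that $P_{v_k}(t+2) = v_{k-1}$ for $k = 1,\ldots,d$, with edges $(v_k,v_{k-1}) \in G_{M^*}(t+2)$. I would control the two ends of this path separately: the bottom end (near the root) via Lemma~\ref{lemma:roots}, and the top end (near the leaf) via Lemma~\ref{lemma:addition}.

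For the bottom end, apply Lemma~\ref{lemma:roots} to the root $r$. Since $d \geq 1$, the tree of $r$ is not a single node, so alternative (i) is excluded and alternative (ii) must hold: there exist $i',i''$ with $(r,i'),(i',i'') \in G_{M^*}(t)$. In particular $r \in G_{M^*}(t)$, and there is a downward chain $r \to i' \to i''$ of length $2$ in $G_{M^*}(t)$, i.e.\ the distance from $r$ to its own root in $G_{M^*}(t)$ is at least $2$. This is precisely where the ``two levels down'' conclusion of Lemma~\ref{lemma:roots} buys back depth.

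For the top end, I would use Lemma~\ref{lemma:addition} to pull the upper portion $v_d, v_{d-1},\ldots,r$ back to time $t$. The mechanism to exploit is that Lemma~\ref{lemma:addition} says any node that \emph{entered} the invalidity forest between $t$ and $t+2$, i.e.\ any node in $G_{M^*}(t+2)\setminus G_{M^*}(t)$, points to a node already present in $G_{M^*}(t)$; hence fresh nodes can only have been appended directly above nodes that were already there, and over two steps the subtree above $r$ can have grown taller by at most one level. The target inequality is $h_t(r) \geq d-1$, where $h_t(r)$ denotes the length of the longest leaf-to-$r$ path inside $G_{M^*}(t)$. Combined with the length-$2$ chain $r\to i'\to i''$ below $r$, this produces a leaf-to-root path of length at least $(d-1)+2 = d+1$ in $G_{M^*}(t)$, which gives $g(G_{M^*}(t)) \geq d+1$ and hence the claim.

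The hard part will be making the top-end argument fully rigorous, because Lemma~\ref{lemma:addition} only locates the parent of each freshly added node and says nothing about whether the pointers $P_{v_k}$ among the \emph{old} nodes on the path agree at time $t$ and at time $t+2$. The genuine obstacle is to rule out (or absorb) the possibility that an upper node momentarily resets by executing O4a and then re-enters $G_{M^*}$ via O2 with a different parent. Here one must invoke the unit-speed propagation of restart messages --- a restart entering at a root climbs at most two levels within two time steps, in the spirit of the arguments behind Lemmas~\ref{lem:speed1/2} and \ref{lemma:roots} --- so that nodes sitting more than two levels above any root cannot be disturbed during $[t,t+2]$, and their maximal-estimate pointers persist. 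Formalizing this persistence, and handling the few boundary levels using the alternation forced by Lemma~\ref{lemma:addition} (no two consecutive nodes on the path can both be new), is the crux that secures $h_t(r)\geq d-1$ and therefore $g(G_{M^*}(t+2)) \leq g(G_{M^*}(t)) - 1$.
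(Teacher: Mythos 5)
Your skeleton matches the paper's: take a deepest leaf-to-root path in $G_{M^*}(t+2)$, gain two extra levels below the root via Lemma~\ref{lemma:roots}, and argue that all of the path except the leaf edge already existed at time $t$, for a net gain of one. Your bottom-end step is correct. But the top-end step --- establishing that the old portion of the path, of length $d-1$, sits inside $G_{M^*}(t)$ --- is exactly where your argument has a genuine gap, and the mechanism you propose does not close it. Lemma~\ref{lemma:addition} only tells you that a node which is new to the invalidity forest has its parent in $G_{M^*}(t)$; as you yourself note, it says nothing about edges between old nodes, and the alternation it forces (no two consecutive new nodes on the path) is not enough: a time-$(t+2)$ path alternating old and new nodes is perfectly compatible with the old nodes having pointed elsewhere at time $t$, in which case no path of length $d-1$ in $G_{M^*}(t)$ follows. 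Your claim that ``the subtree above $r$ can have grown taller by at most one level over two steps'' is precisely the statement that needs proof, not a consequence of Lemma~\ref{lemma:addition}, and the restart-speed argument you gesture at is never carried out.

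The missing idea is that Lemma~\ref{lem:speed1/2} already does the whole job, with no growth-rate analysis. Every node $k$ on the deepest path other than the leaf has positive in-degree in $G_{M^*}(t+2)\subseteq G(t+2)$, because its predecessor on the path points to it. Hence: by case (c) of Lemma~\ref{lem:speed1/2}, the edge $(k,k')$ out of $k$ cannot have appeared at time $t+1$ or $t+2$ (a newly appeared edge would force $k$ to have in-degree zero in $G(t+2)$); by case (b), $M_k(t)=M_k(t+1)=M^*$; and $k$'s estimate at time $t$ must be invalid, since by Lemma~\ref{lem:valid+max-->final} a valid estimate at time $t$ would remain valid at time $t+2$, contradicting $k\in G_{M^*}(t+2)$. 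So the entire path from the second node down to the root --- edges, estimates, and invalidity --- persists backward to time $t$ (only the leaf edge may be new, which is why one level is sacrificed), giving a path of length $d-1$ in $G_{M^*}(t)$; appending the two edges $(j,j'),(j',j'')$ supplied by Lemma~\ref{lemma:roots} yields $g(G_{M^*}(t))\geq d+1$. This is the paper's proof. Note that the scenario you worried about --- a node on the path momentarily resetting via O4a and re-entering via O2 --- is ruled out by the same token: executing O1, O2, or O4a is case (a) of Lemma~\ref{lem:speed1/2} and is likewise incompatible with having positive in-degree at time $t+2$.
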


\begin{proof}
\jblue{Suppose that $g(G_{M^*}(t+2)) > 0$. Let us fix a leaf $i$
and a root $j$ in the forest $G_{M^*}(t+2)$ such that the length
of the path from $i$ to $j$ is equal to the depth of
$G_{M^*}(t+2)$.} \mod{Let $i'$ be the \jblue{{\aoe \aoc{single
neighbor}} of node} $i$ in $G_{M^*}(t+2)$. \jblue{We \jmj{first}
claim} that every edge $(k,k')$ on the path from $i'$ to $j$ in
$G_{M^*}(t+2)$ was also present in $G_{M^*}(t)$.} { \aoe Indeed,
by Lemma \ref{lem:speed1/2}, the appearance of a new edge $(k,k')$
at time $t+1$ or $t+2$ implies that node $k$ has in-degree $0$ in
$G(t+2)$, which contradicts $k$ being an intermediate node on the
path from $i$ to $j$ in $G_{M^*}(t+2)$. The same argument
establishes that $M_k(t)=M_k(t+1)=M^*$. Finally, the estimate of
$k$ at time $t$ is \aoc{invalid}, for if it were \aoc{valid}, it
would still be \aoc{valid} at time $t+2$ by Lemma
\ref{lem:valid+max-->final}, so $i$ \aoc{would also} have a
\aoc{valid} estimate at time $t+2$, which is false by assumption.
Thus we have just established \aoc{that both} the node $k$ and its
edge $(k,k')$ at time $t+2$ belong to $G_{M^*}(t)$. }

\aoc{Thus the graph $G_{M^*}(t)$ includes a path from $i'$ to $j$ of length
$g(G_{M^*}(t+2))-1$.} Moreover, by Lemma \ref{lemma:roots}, we know
that at time $t$ \jblue{some} edges $(j,j')$ and $(j',j'')$ were
present in $G_{M^*}(t)$, so the \jred{path length} from $i'$ to
\jmj{$j''$ is at least $g( G_{M^*}(t+2)) + 1$}. This proves \aoc{
that $g( G_{M^*}(t)) \geq g(G_{M^*}(t+2))+1$ and the
lemma.}
\end{proof}

The following lemma {\aoe analyzes the remaining case of invalidity
graphs with zero depth.} \aoc{It shows that the invalidity graph will be empty
two steps after its depth reaches zero.}

\begin{lemma}\label{lem:invalid_length0}
Fix some time $t > T'$, and let $M^*$ be the largest estimate value
at \jblue{that time.}  If $G_{M^*}(t+2)$ is not empty, then $g(
G_{M^*}(t+1))
>0 $ or $g( G_{M^*}(t))  >0 $.
\end{lemma}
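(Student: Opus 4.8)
The plan is to argue by contradiction: assume that $G_{M^*}(t+2)$ is nonempty while $g(G_{M^*}(t)) = 0$ and $g(G_{M^*}(t+1)) = 0$, and derive a contradiction. First I would record that $M^*$ remains the maximal estimate at each of the times $t$, $t+1$, $t+2$: since $G_{M^*}(t+2)$ contains a node whose estimate equals $M^*$, and since by Lemma \ref{lem:nondecreasing} the largest estimate is nonincreasing after $T'$, the quantity $\max_i M_i(s)$ is squeezed to equal $M^*$ for $s \in \{t, t+1, t+2\}$. This lets me invoke the earlier two-step lemmas with the same value $M^*$ at all three times.

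The central local observation is that a node $r$ that is isolated in some $G_{M^*}(s)$ with $s > T'$ must execute O4a during slot $s$. Indeed, such an $r$ is an invalid root, so $P_r(s) \neq r$ and (by the definition of $G(s)$) $r$ receives a restart from $P_r(s)$ at time $s$; since $s > T'$ it cannot execute O1, and since no incoming estimate can exceed the maximal value $M^*$ it cannot execute O2, so it resets via O4a, giving $P_r(s+1) = r$. By Lemma \ref{lem:m_geq_y} this makes $r$'s estimate valid at time $s+1$, and if moreover $M_r(s+1) = M^*$ then Lemma \ref{lem:valid+max-->final} forces it to stay valid forever.

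Now take a root $i$ of a tree in $G_{M^*}(t+2)$. By Lemma \ref{lemma:roots} either $i$ is isolated, or there is a path $i \to i' \to i''$ in $G_{M^*}(t)$, the latter giving $g(G_{M^*}(t)) \ge 2$ and contradicting the hypothesis. So $i$ is an isolated invalid node with $M_i(t+2) = M^*$ and $P_i(t+2) \neq i$. I would then split on $M_i(t+1)$. If $M_i(t+1) = M^*$, then either $i$ is already valid at $t+1$ (hence valid at $t+2$ by the observation above, a contradiction), or $i$ is invalid, i.e.\ $i \in G_{M^*}(t+1)$; depth $0$ at $t+1$ makes $i$ isolated there, so the observation gives $P_i(t+2) = i$, contradicting $P_i(t+2) \neq i$. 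If instead $M_i(t+1) < M^*$, the rise of $M_i$ to $M^*$ during slot $t+1$ can only come from O2 or O4a; O4a would set $P_i(t+2)=i$, so it must be O2, reading the value $M^*$ from a neighbor $j$ with $M_j(t+1) = M^*$.

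It remains to run $j$ back one slot, which is the main obstacle. Because $j$ broadcast an estimate (not a restart) at time $t+1$, it executed O3 or O4b during slot $t$, so $M_j(t) = M_j(t+1) = M^*$. If it executed O3 then $P_j(t) = j$ and Lemma \ref{lem:m_geq_y} makes $j$ valid at $t$ with estimate $M^*$, hence valid thereafter by Lemma \ref{lem:valid+max-->final} --- contradicting that $j$ must itself restart at time $t+2$ (which is what keeps $i$ isolated). If it executed O4b then $j$ has an outgoing edge $(j, P_j(t)) \in G(t)$ with $M_{P_j(t)}(t) = M^*$ by Lemma \ref{lem:edge->mi=mj}; since $j$ is not valid at $t$ (again by Lemma \ref{lem:valid+max-->final}), both $j$ and $P_j(t)$ lie in $G_{M^*}(t)$ and this edge witnesses $g(G_{M^*}(t)) \ge 1 > 0$, the final contradiction. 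The delicate point throughout is the half-speed propagation of estimates: there is no one-step ``edge appearance'' lemma, so the O2 branch must be resolved by hand using the precise message semantics (estimates carry the pre-update value $M_j$, and restarts are emitted exactly by O1, O2, O4a) together with the validity structure, rather than by a direct appeal to Lemma \ref{lemma:addition}.
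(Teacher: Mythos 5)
Your proof is correct, and at its heart it runs the same two-slot backward trace as the paper's own argument: you split according to whether $M_i(t+1)=M^*$ (equivalently, whether $i$ executed O2 during slot $t+1$), and in the O2 case you pass to the sender $j$, rule out $j$'s validity via Lemma \ref{lem:valid+max-->final}, and exhibit an edge of $G_{M^*}(t)$ --- this is exactly the paper's case (i)/case (ii) dichotomy. The packaging, however, is genuinely different. The paper argues directly rather than by contradiction: it takes an \emph{arbitrary} node $i\in G_{M^*}(t+2)$ (not a root), shows in case (i) that the edge $(i,P_i(t+1))$ lies in $G_{M^*}(t+1)$, and in case (ii) re-runs the same argument on $j$ one slot earlier; consequently it never needs Lemma \ref{lemma:roots}, never needs isolation of $i$, and it rules out validity of $j$ by observing that validity would force $(i,j)\in E(t+2)$ and hence validity of $i$, rather than by your ``a restart from $j$ must arrive at time $t+2$'' argument. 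Your contradiction framing is what forces the extra machinery: you must first secure that $M^*$ stays maximal at $t+1$ and $t+2$ (Lemma \ref{lem:nondecreasing}), then use Lemma \ref{lemma:roots} together with the depth-zero hypothesis at time $t$ to get isolation, and then invoke the ``isolated invalid node with maximal estimate executes O4a'' observation (which replicates reasoning found inside the paper's proof of Lemma \ref{lemma:roots}) to dispose of your case A. What your route buys is explicitness --- the restart semantics and the role of each operation O1--O4b are spelled out, and the surrounding lemmas are reused wholesale; what the paper's route buys is economy and slightly more information, since each case constructively identifies \emph{which} of $G_{M^*}(t+1)$, $G_{M^*}(t)$ has positive depth instead of merely refuting that both depths vanish. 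One small point of hygiene: in your case B you should also name and exclude O1 (alongside O4a) as a possible cause of the increase of $M_i$ during slot $t+1$; the exclusion is immediate from $t+1>T'$, as you already noted in your central observation, but it belongs in the list.
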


\begin{proof}
Let us take a node $i\in G_{M^*}(t+2)$ and let $j = P_i(t+2)$. It follows from the
definition of a valid estimate that $j \neq i$. This
implies that $i$ did not execute O4a (or O1) during slot $t+1$.
We treat two cases separately:

(i) \emph{Node $i$ did not execute O2 during slot $t+1$.}
In this case, $P_i(t+1) = P_i(t+2) = j$ and $M_j(t+1) = M_j(t+2) =
M^*$. Besides, there is no ``restart" message from $j=P_i(t+1)$ to
$i$ at time $t+1$, for otherwise $i$ would have executed O4a
during slot $t+1$, which we know it did not. Therefore,  $(i,j)\in
E(t {\aoe + 1})$ by definition of $G(t)$, and $M_j(t+1) = M_i(t+1)
= M^*$ by Lemma \ref{lem:edge->mi=mj}. Moreover, neither $i$ nor
$j$ have a valid estimate, for otherwise Lemma
\ref{lem:valid+max-->final} would imply that they both hold the
same valid estimate at $t+2$, in contradiction with $i\in
G_{M^*}(t+2)$. So the edge $(i,j)$ is present in $G_{M^*}(t+1)$,
which has thus a positive minimal depth.\\
(ii) \emph{Node $i$ did execute O2 during slot $t+1$:}
In that case, there was a message \aoc{with} the value $M_i(t+2) = M^*$
from $j$ to $i$ at time $t+1$, which implies that $M_j(t+1) =
M_j(t+2) = M^*$. This implies that $j$ did not execute operation O2
during slot $t$. Moreover, node $j$ did not have a valid
estimate at time $t+1$. Otherwise, {\aoe part (a) }of Lemma
\ref{lem:valid+max-->final} implies that \aoc{$j$ has} a valid
estimate at time $t+2$, {\aoe and part (b) of the same lemma
implies} there \aoc{was not} a ``restart" message {\aoe from} $j$ at
$t+2$, so that $(i,j)\in E(t+2)$. This would in turn imply that $i$
has a valid estimate at time $t+2$, contradicting $i\in
G_{M^*}(t+2)$. {\aoe To summarize,} $j$ has an invalid estimate
$M^*$ at time $t+1$ and did not execute O2 during slot $t$.
{\aoe We now simply observe that the argument of case (i) applies}
to $j$ at time $t+1$.
\end{proof}

\jmj{The next lemma asserts that the largest invalid estimates are
eventually purged, and thus that eventually, all remaining largest
estimates are valid.}

\begin{lemma}\label{prop:no_unreliable}
Fix some time $t > T'$, and let $M^*$ be the largest estimate value
at \jblue{that time.} Eventually $G_{M^*}(t)$ is empty.
\end{lemma}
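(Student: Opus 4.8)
The plan is to keep the time $t>T'$ and the value $M^{*}=\max_i M_i(t)$ from the statement fixed, rename this time $t_0$, and track the behavior of $G_{M^{*}}(s)$ for $s\geq t_0$. The first move is to extract a clean dichotomy from Lemma~\ref{lem:nondecreasing}: after $T'$ the largest estimate can never increase, so either the maximum estimate drops strictly below $M^{*}$ at some finite time, or it stays equal to $M^{*}$ for every $s\geq t_0$. In the first case there is nothing to prove, because once $\max_i M_i(s)<M^{*}$ no node holds the estimate $M^{*}$, and hence $G_{M^{*}}(s)$ (which lives only on nodes whose estimate equals $M^{*}$) is empty from that time onward. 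So the entire content sits in the second case, where $M^{*}$ remains the largest estimate for all $s\geq t_0$; this is exactly the hypothesis needed to invoke Lemmas~\ref{lemma:depth_decrease} and \ref{lem:invalid_length0} at every such time.

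In this remaining case I would run a termination argument on the depth $g(G_{M^{*}}(s))$ of the invalidity forest. By Lemma~\ref{lemma:depth_decrease}, whenever $g(G_{M^{*}}(s+2))>0$ we have $g(G_{M^{*}}(s+2))\leq g(G_{M^{*}}(s))-1$; equivalently, $g(G_{M^{*}}(s))\leq 1$ forces $g(G_{M^{*}}(s+2))=0$. Restricting to times of a fixed parity, the depth is a nonnegative integer that strictly decreases as long as it stays positive, so along each of the two parity classes it reaches $0$ after finitely many steps and, once $0$, remains $0$. Hence there is a time $T_1$ with $g(G_{M^{*}}(s))=0$ for all $s\geq T_1$.

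To finish, I would apply the contrapositive of Lemma~\ref{lem:invalid_length0}: for every $s\geq T_1$ both $g(G_{M^{*}}(s))$ and $g(G_{M^{*}}(s+1))$ fail to be positive, so $G_{M^{*}}(s+2)$ must be empty. Therefore $G_{M^{*}}(s)$ is empty for all $s\geq T_1+2$, which is precisely the claim that $G_{M^{*}}(t)$ is eventually empty.

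I expect the only genuinely delicate point to be the first step, namely guaranteeing that $M^{*}$ stays the current maximum so that the hypotheses of the structural lemmas are actually met, and this is exactly what the dichotomy from Lemma~\ref{lem:nondecreasing} resolves (with Lemma~\ref{lem:valid+max-->final} available if one prefers instead to argue that a surviving valid estimate of value $M^{*}$ pins the maximum in place). A secondary, purely bookkeeping concern is to keep the ``empty forest'' and ``depth-zero but nonempty forest'' situations distinct when combining the two-step depth decrease with Lemma~\ref{lem:invalid_length0}, since both give $g=0$ yet only the latter can still carry edges two steps later; the argument above handles this by waiting for two consecutive zero-depth times before invoking emptiness.
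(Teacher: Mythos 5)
Your proof is correct and follows essentially the same route as the paper's: repeated application of Lemma \ref{lemma:depth_decrease} to drive the depth of $G_{M^*}$ to zero permanently, followed by (the contrapositive of) Lemma \ref{lem:invalid_length0} to conclude emptiness two steps later. The only difference is that you explicitly verify, via the dichotomy from Lemma \ref{lem:nondecreasing}, that $M^*$ remains the largest estimate so the structural lemmas stay applicable at every later time --- a hypothesis check that the paper's two-line proof leaves implicit.
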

\begin{proof}
{\aoe Lemma \ref{lemma:depth_decrease} implies there is a time
$t'>T'$ after which $g(G_{M^*}(t'')) =0$ for all $t''>t'$. Lemma
\ref{lem:invalid_length0} then implies} that $G_{M^*}(t)$ is empty
for all $t>t'+2$.
\end{proof}

\jnt{We are now ready for the {\aoe proof of the main theorem}.}

\begin{proof}[Proof of Theorem \ref{thm:maxtracking}.]
\jblue{Let $\oM=\max_i u_i(T')$.} \jmj{It follows from the
definition of a valid estimate that any node holding an estimate
$M_i(t)> \oM$ \aoc{at time $t \geq T'$} has an invalid estimate.} Applying Lemma
\ref{prop:no_unreliable} repeatedly shows the existence of some time
$\oT\jblue{\geq T'}$ such that when $t \geq \oT$, \jmj{no node has
an estimate larger than $\oM$, and every node having an estimate
$\oM$ has a valid estimate.}

{\aoe We will assume that the time $t$ in
every statement we make below satisfies $t \geq \overline{T}$.  Define} $Z(t)$ as the set of nodes
having the estimate $\oM$ at time $t$. Every node in $Z(t)$ holds a
valid estimate, and $Z(t)$ is never empty because Lemma
\ref{lem:m_geq_y} implies that $M_i(t)\geq \oM$ for every $i$ with
$u_i = \oM$. Moreover, it follows from Lemma
\ref{lem:valid+max-->final} and the definition of validity  that any
node belonging to some $Z(t)$ will forever afterwards \aoc{maintain $M_i(t) = \oM$ and will} satisfy
 the conclusion of  Theorem
\ref{thm:maxtracking}.

{\aoe We conclude the proof by arguing that eventually every node is
in $Z(t)$. In particular, we will argue that}  a node $i$ adjacent
to a node $j\in Z(t)$ necessarily belongs to $Z(t+2)$. {\aoe Indeed,
} it follows from Lemma \ref{lem:valid+max-->final} that node $j$
sends to $i$ a message \aoc{with} the estimate $\oM$ at time $t+1$. If
$i\in Z(t+1)$, then $i \in Z(t+2)$; else $M_i(t+1)< \oM$, $i$
executes O2 during slot $t+1$, and sets $M_i(t+2) = \oM$, so indeed
$i\in Z(t+2)$. \end{proof}

\section{\jnt{Interval-averaging} \label{sec:comput_average}}

\jnt{In this section, we present an interval-averaging algorithm
and prove its correctness. We start by repeating the informal discussion of
its main idea from the beginning of the chapter.} Imagine the integer \jnt{input} value $x_i$
\green{as represented by a number of $x_i$} pebbles \jblue{at node
$i$.} \jnt{The} algorithm attempts to exchange pebbles between
nodes with unequal numbers so that the overall distribution
\green{becomes} more even. Eventually, either all nodes will have
the same number of pebbles, or some will have a certain number and
others just one more. \red{We let $u_i(t)$ be the current number
of pebbles at node $i$; in particular, $u_i(0)=x_i$. An important
property of the algorithm \ora{will be} that the total number of
pebbles is conserved.}

To match nodes with unequal number of pebbles, we use the maximum
tracking algorithm of Section \ref{sec:max_tracking}. Recall that
the algorithm provides nodes with pointers which attempt to track
the \green{location of the} maximal values. When a node with
$\red{u}_i$ pebbles comes to believe in this way that a node with
at least $u_i+2$ pebbles exists, it sends a request in the
direction \green{of the latter node} to obtain one or more
pebbles. This request follows \blue{a} path to \blue{a} node with
\blue{a} maximal number of pebbles until \blue{the request}
either gets denied, or gets accepted by a node with at least
$u_i+2$ pebbles.

\subsection{\jnt{The algorithm} \label{se:alg}}
\jnt{The} algorithm uses two types of messages. \jblue{Each type
of message}  \jnt{can be either \emph{originated} at a node or
\emph{forwarded} by a node.}

\smallskip
(a) \green{(Request, $r$): This is a request for a
transfer of \jnt{pebbles.} Here, $r$ is an integer that} represents the
number of pebbles \ora{$u_i(t)$} at the
\jnt{node \ora{$i$} that first originated the request,}
\jblue{at the time \ora{$t$} that the request was originated.}
\ora{(Note, however, that this request is actually sent at time $t+1$.)}
\smallskip

(b)
\green{(Accept, $w$): This corresponds to} acceptance of
a request, and \jblue{a} transfer of  $w$ pebbles
\jnt{towards the node that originated the request.}
\jnt{An acceptance} with a value $w=0$ represents a request denial.
\smallskip

\blue{As} part of the algorithm, the nodes run the maximum
tracking algorithm of Section \ref{sec:max_tracking}, as well as a
minimum tracking counterpart. In particular, each node $i$
has access to the variables $M_i(t)$ and $P_i(t)$ of the maximum
tracking algorithm (recall that these are, respectively, the
\green{estimated maximum and a} pointer to a neighbor \aoc{or to itself}).
\green{Furthermore,} each node maintains three additional
variables.

\smallskip
(a)
``\jnt{M}ode($t$)"$\in$ \{\jnt{F}ree,\ \jnt{B}locked\}. Initially, the mode of every
node is free.
\jnt{A node is blocked if it has originated or forwarded a request,
\jblue{and is still waiting to hear whether the request is} accepted (or denied).}

\smallskip
(b) ``$\Rin_i(t)$'' and ``$\Rout_i(t)$" are pointers to
a neighbor of $i$, or to \jnt{$i$} itself. \jnt{The \ora{meaning}
of these pointers \ora{when in blocked mode} are as follows. If
$\Rout_i(t)=j$, then node $i$ has sent (either originated or
forwarded) a request to node $j$, \jblue{and is still in blocked
mode, waiting to hear whether the request} \ora{is accepted or
denied.} If $\Rin_i(t)=k$, and $k\neq i$, then node $i$ has
received a request from node $k$ \jblue{but has not yet responded
to node $k$.} If $\Rin_i(t)=i$, then node $i$ has originated a
request \jblue{and is still in blocked mode, waiting to hear
whether the request} is accepted or denied.}

\smallskip

\jnt{A precise description of the algorithm is given in Figure
\ref{fig:descr_avg_algo}. The proof of  correctness is given in the
next subsection, thus also establishing Theorem
\ref{thm:average_computation}. Furthermore,  we will show that the
time until the algorithm settles on the correct output is of order
$O(n^2K^2 \log K)$.}

\subsection{\jnt{Proof of correctness}}

\old{\jnt{To facilitate understanding, let us start with some observations.
A node alternates between the Free and Blocked modes. Whenever
it switches from Blocked to Free, $\Rin_i$ and $\Rout_i$ are set
to $\emptyset$; and whenever it switches from Free to Blocked,
$\Rin_i$ and $\Rout_i$ are set to values other than $\emptyset$.
Thus, Mode$_i(t)=$Free if and only if $\Rout_i(t)=\Rin_i(t)=\emptyset$.}}

We begin by arguing that the rules of the algorithm
preclude one potential obstacle; \ora{we will show that} nodes will not get stuck sending
requests to themselves.

\begin{lemma}\label{l:no-self} \jnt{A node never sends (originates or forwards) a request to itself. More precisely, $\Rout_i(t)\neq i$, for all $i$ and $t$.}
\end{lemma}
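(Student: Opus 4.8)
The plan is to isolate the only two places in the algorithm of Figure \ref{fig:descr_avg_algo} where $\Rout_i$ is set to something other than $\emptyset$: the step in which node $i$ \emph{originates} a request, and the step in which it \emph{forwards} a request it has just received. In both cases the request is dispatched in the direction of a maximally-loaded node, i.e., toward the neighbor (or self) indicated by the maximum-tracking pointer $P_i$, so that the value assigned to $\Rout_i$ is precisely $P_i$ evaluated in the slot during which the request is sent. Thus the whole lemma reduces to a single claim: \emph{whenever node $i$ sends a request, its pointer $P_i$ does not point to $i$ itself.}

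To prove this reduced claim I would invoke Lemma \ref{lem:m_geq_y}, which states that $P_i(t)=i$ forces $M_i(t)=u_i(t-1)$, whereas $P_i(t)\neq i$ gives $M_i(t)>u_i(t-1)$. A request is issued (originated or forwarded) only when node $i$ believes that some strictly heavier node exists, and more precisely when $M_i(t)\ge u_i(t)+2$; this is exactly the guard that triggers the request in Figure \ref{fig:descr_avg_algo}. Hence, if we had $P_i(t)=i$ at the moment of sending, Lemma \ref{lem:m_geq_y} would give $M_i(t)=u_i(t-1)$, and combining with the guard $M_i(t)\ge u_i(t)+2$ we would obtain $u_i(t-1)\ge u_i(t)+2$. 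The argument therefore comes down to showing that this last inequality cannot hold at an instant when $i$ is in a position to send a request.

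The main obstacle, and the part that needs care, is reconciling the two time indices $u_i(t-1)$ and $u_i(t)$. Since pebble counts are conserved and can change only through a completed transfer (an \textbf{Accept} with positive weight), I would argue that a node which sheds two or more pebbles between slots $t-1$ and $t$ must have done so while in Blocked mode and responding to a pending request, and that the state transitions of Figure \ref{fig:descr_avg_algo} do not leave such a node Free and eligible to originate or forward a fresh request in the very next slot without first refreshing its pointer $P_i$ and estimate $M_i$. In other words, at any slot in which $i$ is eligible to send a request we will have $u_i(t)\ge u_i(t-1)$, so that $M_i(t)=u_i(t-1)\le u_i(t)$ directly contradicts the guard $M_i(t)\ge u_i(t)+2$. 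I expect this reconciliation to require a short induction on $t$ that tracks the coupling between the Mode variable, the pebble count $u_i$, and the pointer $P_i$, rather than any nontrivial computation; once it is in place the desired conclusion $\Rout_i(t)=P_i(t)\neq i$ follows immediately.
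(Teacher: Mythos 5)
Your reduction is the same as the paper's: $\Rout_i$ is only set when node $i$ originates or forwards a request, in both cases to the current pointer value $P_i(t)$, so the lemma comes down to showing $P_i(t)\neq i$ at such moments; and the invariant you invoke from Lemma \ref{lem:m_geq_y} (that $P_i(t)=i$ forces $M_i(t)=u_i(t-1)$) is exactly what the paper uses, re-derived inline from the maximum-tracking rules. The gap is in how you close the argument. You propose to prove, by an induction coupling the Mode variable with $u_i$ and $P_i$, that a node which has just shed pebbles cannot be eligible to send a request, and you support this with the claim that shedding happens ``while in Blocked mode.'' That claim is backwards: in this algorithm a Blocked node denies every incoming request (see the proof of Proposition \ref{prop:final_proof_average}), and acceptances---the only way $u_i$ can decrease---are executed by nodes in Free mode (this is also why $w_i(t)=0$ for the accepting node in the proof of Lemma \ref{prop:finite_request_acceptances}). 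So a node that has just accepted a request, and hence has $u_i(t)<u_i(t-1)$, is Free, and nothing about its Mode prevents it from originating a request in slot $t$; in that very situation one can have $P_i(t)=i$ with $M_i(t)=u_i(t-1)\geq u_i(t)+2$, so your guard $M_i(t)\geq u_i(t)+2$ would be satisfied while your intended conclusion fails. No Mode-based induction can rescue this step.

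What actually prevents it---and what the paper's proof uses---is that both request-sending boxes of Figure \ref{fig:descr_avg_algo} are explicitly gated on the condition $u_i(t)=u_i(t-1)$: a node whose value has just changed does not send a request in that slot (consistent with the fact that the maximum-tracking subroutine is then executing its restart operation O1). With that read off the flowchart, the proof is immediate: origination requires $M_i(t)>u_i(t)=u_i(t-1)$, and forwarding is enabled only when $u_i(t)=u_i(t-1)$ and $u_i(t)-1\leq r<M_i(t)-1$ (note the forwarding guard is stated in terms of the incoming request value $r$, not as $M_i(t)\geq u_i(t)+2$), so in both cases $M_i(t)>u_i(t-1)$, which contradicts $M_i(t)=u_i(t-1)$ and rules out $P_i(t)=i$. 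In short, your skeleton is right, but the reconciliation of $u_i(t-1)$ with $u_i(t)$ is not a derived property requiring induction; it is part of the algorithm's enabling condition, and attempting to derive it from the Mode dynamics as you suggest would fail.
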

\begin{proof} \jnt{By inspecting} the first two \jnt{cases for the} free mode, we observe that \jnt{if} node $i$
\jnt{originates} a request \ora{during time slot $t$ (and sends a
request message at time $t+1$),} \aoe{ then $P_i(t) \neq i$. Indeed,
to send a message, it must be true $M_i(t) > u_i(t) = u_i(t-1)$.
However, any action of the maximum tracking algorithm that sets
$P_i(t)=i$ also sets $M_i(t)=u_i(t-1)$, and moreover, as long as
$P_i$ doesn't change neither does $M_i$.  So }
\jnt{the recipient $P_i(t)$ of the request originated by $i$ is
different than $i$, \ora{and accordingly,} $\Rout_i\jblue{(t+1)}$ is
set to a value different than $i$. \ora{We argue} that the same is
true for the case where $\Rout_i$ is set by the the ``Forward
request'' box of the free mode. Indeed, that box is enabled  only
when $u_i(t)=u_i(t-1)$ and $u_i(t)-1\leq r <M_i(t)-1$, so that
$u_i(t-1)<M_i(t)$. \ora{As in the previous case, this implies that}
$P_i(t)\neq i$ and that $\Rout_i(t+1)$ is again set to a value other
than $i$. We conclude that $\Rout_i(t)\neq i$ for all $i$ and $t$.}
\end{proof}

\ora{We will now analyze the evolution of the requests.
A request is originated at some time $\tau$ by some originator node $\ell$ who sets $\Rin_{\ell}(\tau+1)=\ell$ and sends the request to some node $i=\Rout_{\ell}(\tau+1)=P_{\ell}(\tau)$.
The recipient $i$ of the request either accepts/denies it, in which case $\Rin_i$ remains unchanged, or forwards it while also setting $\Rin_i(\tau+2)$ to $\ell$. The process then continues similarly. The end result is that at any given time $t$, a request initiated by node $\ell$ has resulted in a ``request path of node $\ell$ at time $t$,'' which
is a maximal sequence of nodes $\ell,i_1,\ldots,i_k$ with
$\Rin_{\ell}(t)=\ell$, $\Rin_{i_1}(t)=\ell$, and $\Rin_{i_m}(t)=i_{m-1}$ for
$m\leq k$.

\newpage
\begin{figure}
\centering

\epsfig{file=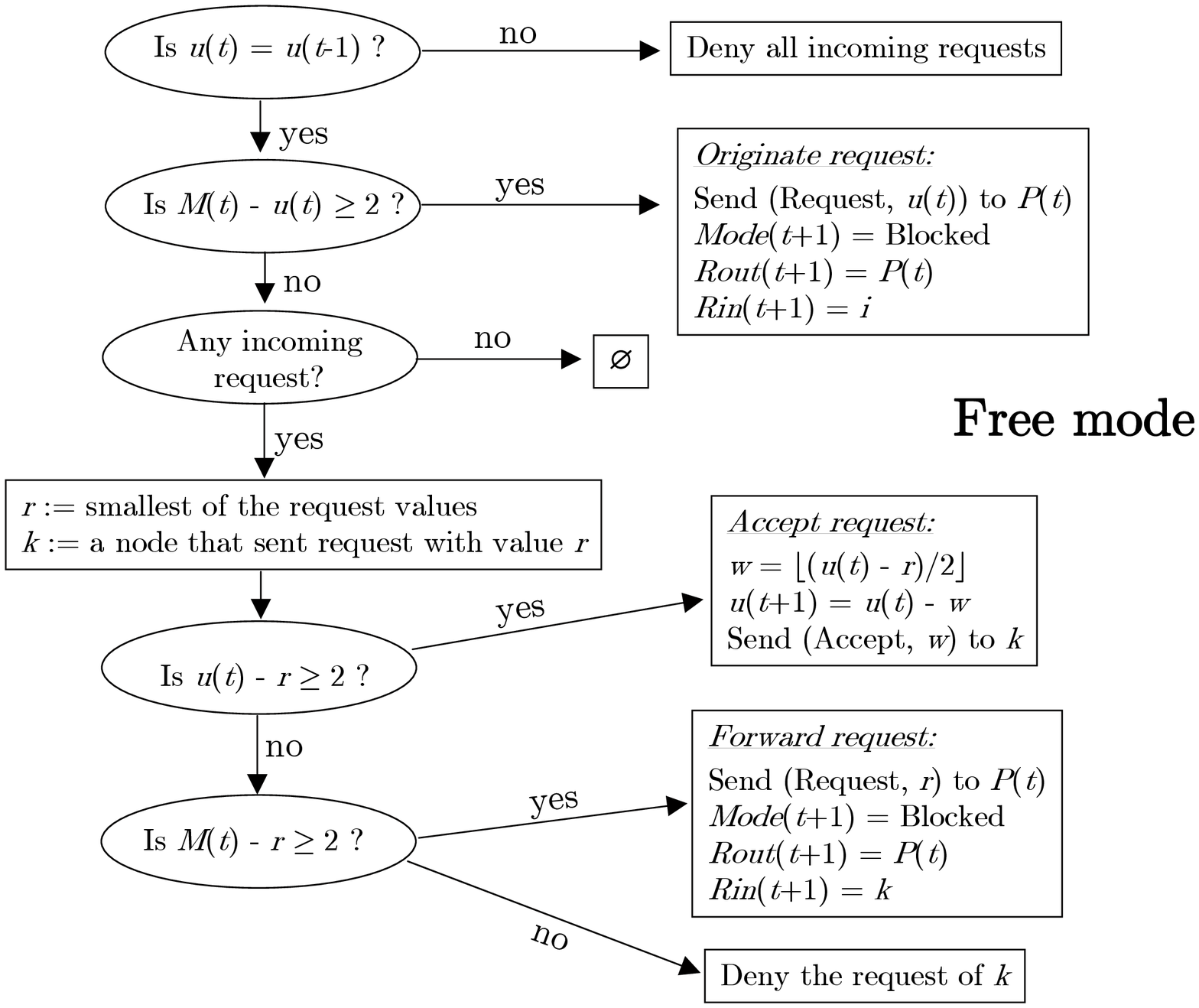, scale=0.5}


\epsfig{file=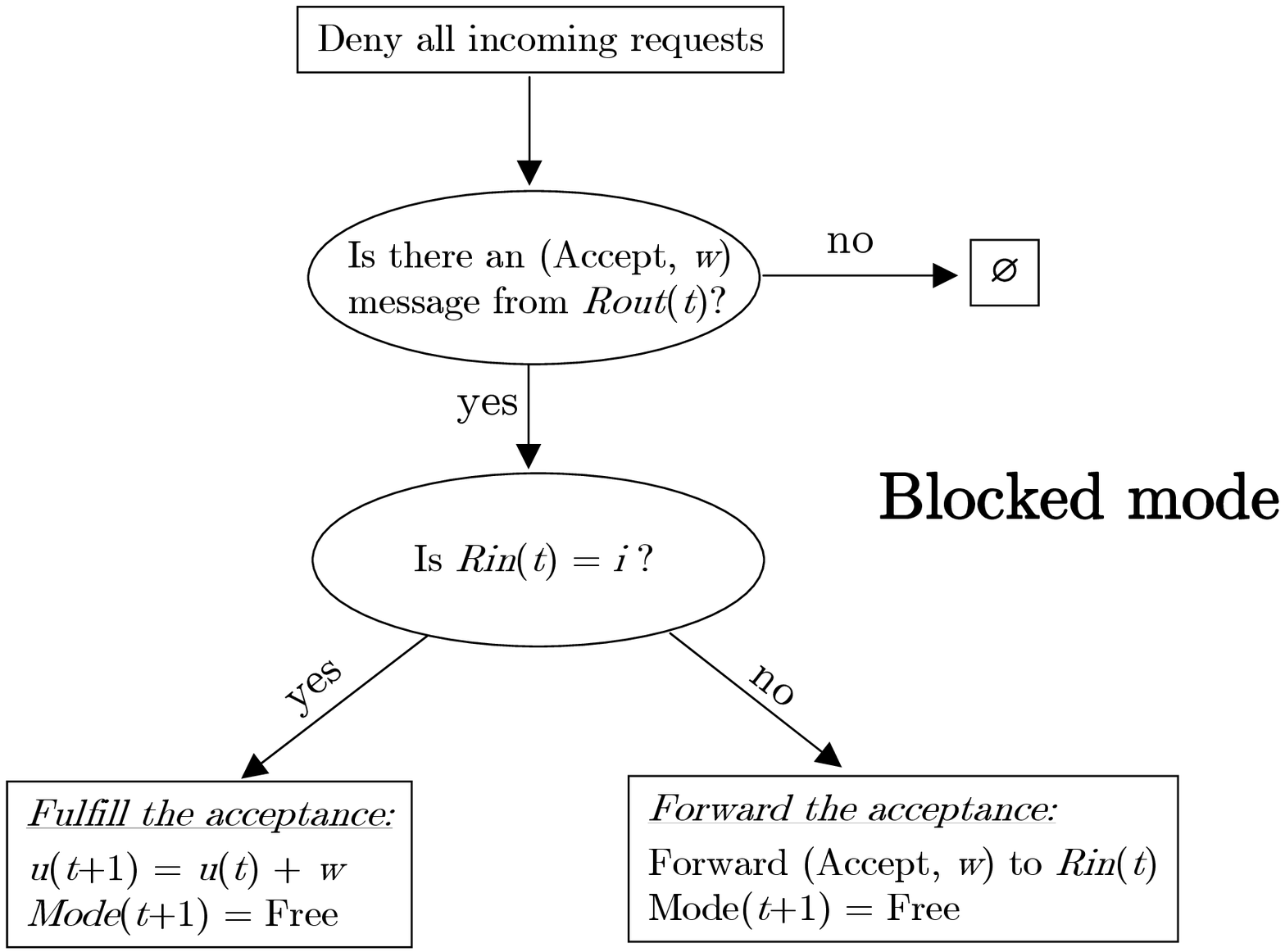, scale=0.55}
\caption{\jnt{Flowchart of the procedure used by node $i$ during slot $t$ in the interval-averaging algorithm.
The subscript $i$ is omitted from variables such as Mode$(t)$, $M(t)$, etc.
Variables for which an update is not explicitly indicated are assumed to remain unchanged.}
\jblue{``Denying a request'' is a shorthand for
$i$ sending a message of the form (Accept, 0) at time $t+1$ to a node from which $i$ received a request at time $t$. Note also that \ora{``forward the acceptance''} in the blocked mode includes the case where
the answer had $w=0$ (i.e., \ora{it was} a request denial), in which case the denial is forwarded.}
}\label{fig:descr_avg_algo}
\end{figure}
\newpage

\begin{lemma}\label{lem:descr_Gr}
At any given time, different request paths cannot intersect (they involve disjoint sets of nodes). Furthermore, at any given time, a request path cannot visit the same node more than once.
\end{lemma}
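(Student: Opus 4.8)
The plan is to prove the statement by induction on time $t$, maintaining a structural invariant on the collection of pointers $\{(\Rin_i(t),\Rout_i(t))\}_{i=1}^n$. I would first set up the right object to track: the directed graph $R(t)$ whose vertices are the nodes and which contains an edge $k\to i$ exactly when $\Rin_i(t)=k$ and $k\neq i$ (``node $i$ received, and has not yet answered, a request forwarded by $k$''). A request path of an originator $\ell$, as defined just before the lemma, is precisely a maximal directed path of $R(t)$ that starts at $\ell$, where $\ell$ is characterized by $\Rin_\ell(t)=\ell$. Thus the two assertions of the lemma are equivalent to: (i) distinct maximal paths of $R(t)$ are vertex-disjoint, and (ii) each such path is simple. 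Both follow once I show that in $R(t)$ every vertex has in-degree at most $1$ and out-degree at most $1$.

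The in-degree bound is immediate: $\Rin_i(t)$ is a single-valued variable, so $i$ has at most one incoming edge, namely from $\Rin_i(t)$. The out-degree bound is the heart of the matter and is where I expect the real work to lie. I would prove it together with a consistency invariant by induction on $t$: at every time, a node $k$ has an outgoing edge in $R(t)$ only if it is in Blocked mode with $\Rout_k(t)$ equal to the unique node $j$ it points to, and conversely $\Rin_j(t)=k$ (for $j\neq k$) holds only for that same $j$. The induction examines the three ways the pointers change during a slot --- a Free node \emph{originating} a request (setting $\Rin_i:=i$ and $\Rout_i:=P_i(t)$), a Free node \emph{forwarding} a received request (appending itself to the tail of an existing chain), and a node \emph{accepting or denying}, which dissolves the relevant pointers and returns nodes to Free mode. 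In each case one checks that the pairing $\Rout_k=j \Leftrightarrow \Rin_j=k$ is preserved and that no node acquires a second outgoing edge, since a node becomes Blocked the moment it sends its single request and cannot send another until it is answered and returns to Free mode.

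Given both degree bounds, $R(t)$ decomposes into vertex-disjoint simple directed paths and simple directed cycles, which already yields disjointness of distinct components. To finish I would rule out repeats within a path and rule out cycles. For simplicity, suppose a path $\ell,i_1,\ldots,i_k$ (with the convention $i_0=\ell$) revisited a node, say $i_a=i_b$ with $a<b$; since the in-path predecessor of a node equals $\Rin$ of that node, the single-valuedness of $\Rin$ forces $i_{a-1}=i_{b-1}$, and descending this equality back to position $0$ gives $\ell=i_{b-a}$ with $b-a\geq 1$. This is impossible, because $\Rin_\ell(t)=\ell$ identifies the originator, whereas Lemma \ref{l:no-self} guarantees that consecutive path nodes are distinct, so a non-initial node $i_{b-a}$ satisfies $\Rin_{i_{b-a}}(t)\neq i_{b-a}$. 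The same computation shows a cycle cannot contain an originator, and since every chain of ``who forwarded this request to me'' links terminates at the node that first originated it, no component of $R(t)$ can be a cycle. The main obstacle throughout is the bookkeeping around the out-degree and consistency invariant: one must track the Free/Blocked mode transitions and the one-slot message delay carefully, and in particular verify that a request is only ever forwarded into a Free node --- a node not presently on any path --- so that appending it can neither create a repeat nor merge two paths.
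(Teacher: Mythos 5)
Your proposal is correct, and at bottom it exploits the same mechanism as the paper's proof, but the two arguments are organized quite differently. The paper's proof is purely dynamic and takes four sentences: once a node $i$ joins a request path it remains in Blocked mode for as long as it stays on that path, so $\Rin_i$ cannot change; hence no new edge pointing into $i$ can ever be created, and this at once prevents cycles from forming and prevents two request paths from ever acquiring a common node. Your proof instead establishes a static invariant of the pointer graph $R(t)$ at each fixed time --- in-degree at most one (trivial, from single-valuedness of $\Rin_i$), out-degree at most one (your $\Rout$/$\Rin$ consistency invariant, proved by induction over mode transitions), and in-degree zero at originators --- and then derives both assertions combinatorially, via the decomposition into simple paths and the backward-descent computation. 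What you gain is rigor exactly where the paper is terse: the step the paper dismisses with ``this readily implies'' becomes explicit, and all temporal reasoning is confined to two cleanly stated facts, namely the consistency invariant and the fact that a request is only ever forwarded into a Free node; that last fact is precisely the paper's key observation, so your fully written-out induction would contain the paper's argument as a sub-step. What the paper gains is brevity, and it never needs the out-degree bound explicitly, since ``paths grow only by appending fresh Free nodes'' subsumes it. One simplification available to you: ruling out cycle components of $R(t)$ is unnecessary for the lemma. Indeed, in-degree $\leq 1$ together with the in-degree-zero property of originators already shows that a request path can never reach a node lying on a cycle (the path's predecessor chain would have to coincide with the cycle's, forcing the originator onto the cycle, contradicting its in-degree zero), so your backward-termination claim --- whose justification would itself require the temporal induction --- can simply be dropped.
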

\begin{proof}
For any time $t$, we form a graph that consists of all edges that lie on some request path.
Once a node $i$ is added to some request path, and as long as that request path includes $i$, node $i$ remains in blocked mode and the value of $\Rin_i$ cannot change. This means that adding a new edge  that points into $i$ is impossible. This readily implies that cycles cannot be formed and also that two request paths cannot involve a
common node.
\end{proof}
}

\ora{We use $p_\ell(t)$ to denote the request path of node $\ell$ at time $t$, and
$s_\ell(t)$ to denote the last node
on this path.}
We will say that a request \jnt{originated} by node $\ell$ \emph{terminates} when
node $\ell$ receives an \jnt{(Accept, $w$)} message, with \jnt{any} value
$w$.

\begin{lemma}\label{lem:request_terminate}
Every request eventually terminates. Specifically, if node
\ora{$\ell$} \jnt{originates} a request at time $t'$ \ora{(and sends a request message at time $t'+1$),} then \jnt{there exists a}  later time
$t''\mod{\leq t'+n}$ \jnt{at which} node $s_r(t'')$ receives an ``accept
request'' message \jnt{(perhaps with $w=0$),} which is forwarded until it reaches $\ell$,
\mod{no later than \ora{time} $t''+n$}. 
\end{lemma}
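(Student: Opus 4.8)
The plan is to exploit the structural facts about request paths established in Lemma \ref{lem:descr_Gr}, together with the observation that a request message advances exactly one hop per time slot. First I would recall the basic dynamics: once node $\ell$ originates a request during slot $t'$ and transmits it at time $t'+1$, the recipient either \emph{resolves} the request (by accepting or denying it, which leaves its $\Rin$ pointer unchanged, so that it does not join the path) or \emph{forwards} it (setting its $\Rin$ pointer and thereby appending itself to the request path of $\ell$). Thus in each successive slot the frontier $s_\ell(t)$ either advances by one hop or the request is resolved one hop beyond the current frontier. I would also invoke the stability argument from the proof of Lemma \ref{lem:descr_Gr}: once a node is appended to a pending path it remains in blocked mode with a frozen $\Rin$ pointer until the request is resolved, so the path can only grow at its frontier and never rearranges.

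Next I would turn this into a quantitative bound. By Lemma \ref{lem:descr_Gr} the nodes $\ell=i_0,i_1,i_2,\ldots$ of the request path are distinct, and the node to which the current frontier forwards is likewise distinct from all path nodes (a blocked node cannot admit a new request into its $\Rin$ pointer). Hence the path can contain at most $n$ nodes, and after at most $n-1$ consecutive forwards the message reaches a node that can no longer append a new node and must instead respond. Tracing the timing, the message reaches the $k$-th node at time $t'+k$ with $k\le n-1$; that node issues an (Accept, $w$) message (possibly with $w=0$) in the next slot, so the frontier node $s_\ell(t'')$ receives it at a time $t''\le t'+n$, which is the first claimed bound.

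I would then follow the acceptance back to the originator. Since the (Accept, $w$) message is relayed down the same request path toward $\ell$, advancing one hop per slot, and that path has at most $n$ nodes, it reaches $\ell$ no later than $n$ slots after $t''$, i.e.\ by time $t''+n$. At that moment $\ell$ receives an (Accept, $w$) message and, by definition, the request terminates; this establishes both the quantitative bound and the claim that every request eventually terminates.

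The step I expect to require the most care is justifying that forwarding cannot continue indefinitely, that is, that the frontier must eventually meet a node that responds rather than forwards. This is exactly where Lemma \ref{lem:descr_Gr} (together with Lemma \ref{l:no-self}, which rules out a node forwarding to itself) is indispensable: without the guarantee that a pending path never revisits a node and that blocked nodes cannot absorb new requests, one could not exclude a request circulating forever. I would therefore verify against the flowchart of Figure \ref{fig:descr_avg_algo} that every case in which the frontier cannot append a genuinely new node, in particular when its forwarding target $P_i(t)$ is already blocked, is handled by issuing a denial (an (Accept, $0$) message), so that termination within the stated time bounds is guaranteed in all cases.
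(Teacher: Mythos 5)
Your proof is correct and follows essentially the same route as the paper's: bound the request path length by $n$ via Lemma \ref{lem:descr_Gr}, conclude that the frontier must receive an (Accept, $w$) message within $n$ steps since the path grows by one node per slot until resolution, and then trace the acceptance back along the path in at most $n$ further steps. Your added care about the case where the forwarding target is already blocked (handled by a denial) is a detail the paper glosses over, but it does not change the argument.
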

\begin{proof} By the rules of our algorithm, node $\ell$ sends a request
message to node $P_{\ell}(t')$ at time \ora{$t'+1$.} If node $P_\ell(t')$ replies
at time \ora{$t'+2$}
with a ``deny request'' \ora{response}  to $\ell$'s request, then
\ora{the claim is true; otherwise,} observe that $p_\ell(t'+\ora{2})$ is nonempty and until
$s_\ell(t)$ receives an ``accept request'' message, the length of
$p_\ell(t)$ increases \jnt{at each time step.} Since
this length cannot be larger than $n\ora{-1}$, by Lemma \ref{lem:descr_Gr},
it follows \jnt{that} $s_\ell(t)$ receives an ``accept request'' message at most
$n$ steps after $\ell$ initiated the \ora{request.}
\jnt{One can then} easily show that this \jnt{acceptance} message is forwarded \jnt{backwards along the path} \ora{(and the request path keeps shrinking)}
until the acceptance message reaches $\ell$, at most $n$ steps later.
\end{proof}

\ora{The arguments so far had mostly to do with deadlock avoidance.
The next  lemma concerns the progress made by the algorithm. Recall
that a central idea of the algorithm is to conserve the total number
of ``pebbles,'' but this must include both pebbles possessed by
nodes and pebbles in transit. We capture the idea of ``pebbles in
transit'' by defining a new variable. If $i$ is the originator  of
some request path that is present at time $t$, and if the final node
$s_i(t)$ of that path} receives an \jnt{(Accept, $w$)} message at
time $t$, we let $w_i(t)$ be \jnt{the value $w$ in that message.
(This convention includes the special case where $w=0$,
corresponding to a denial of the request).} \ora{In all other cases,
we} set $w_i(t) =0$. Intuitively, $w_i(t)$ is the value that has
already been given away by a node who answered a request originated
by node $i$, and that will eventually be added to \ora{$u_i$}, once
the answer reaches \ora{$i$}.

We now define
$$\hat u_i(t)= u_i(t) + w_i(t).$$ By the rules of our algorithm,
if $w_i(t)\ora{=w}>0$, \ora{an amount $w$} will eventually be added to $u_i$, once
the \ora{acceptance message is forwarded back to $i$.}  The value $\hat u_i$ can thus be
seen as a future value of $u_i$, that includes its present value
and the \ora{value that has} been sent to $i$ but has not yet reached it.

\ora{The rules of our algorithm imply
that the sum of the $\hat u_i$ remains constant. Let $\bar x$ be the average of the initial values $x_i$. Then,
\[ \frac{1}{n} \sum_{i=1}^n \hat{u}_i(t) =
\frac{1}{n} \sum_{i=1}^n x_i =\bar{x}. \] We define the \emph{variance} function $V$ as
\[ V(t) = \sum_{i=1}^n (\hat{u}_i(t) - \bar{x})^2.\]}

\begin{lemma}\label{prop:finite_request_acceptances}
The number of times \ora{that} a node \jnt{can send an \ora{acceptance} message (Accept, $w$) with $w\neq 0$,}
is finite.
\end{lemma}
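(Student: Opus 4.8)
The plan is to exhibit a potential function that is conserved in aggregate but strictly decreases at every nonzero acceptance, and is bounded below by zero; this immediately caps the number of such acceptances. The natural candidate is the variance $V(t) = \sum_{i=1}^n (\hat{u}_i(t) - \bar{x})^2$ already introduced, together with the conservation identity $\frac{1}{n}\sum_{i=1}^n \hat{u}_i(t) = \bar{x}$. First I would isolate the events that can change the vector $(\hat{u}_1(t),\ldots,\hat{u}_n(t))$ by checking, case by case against the flowchart of Figure \ref{fig:descr_avg_algo}, that forwarding a request, denying a request (an (Accept, $0$) message), and the physical arrival of transferred pebbles at the originator all leave every $\hat{u}_i$ unchanged. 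The arrival case is the only delicate one: when the acceptance finally reaches the originator $\ell$, its $u_\ell$ increases by $w$ while its in-transit credit $w_\ell$ drops from $w$ to $0$, so that $\hat{u}_\ell = u_\ell + w_\ell$ is unchanged. Hence the only event that moves $\hat{u}$ is the generation of an (Accept, $w$) message with $w \neq 0$.

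The second step is to compute the variance decrease at such an event. Suppose the acceptance is generated by a node $j$ in response to a request that originated at node $\ell$ carrying the value $r$. I would argue that $\hat{u}_\ell = r$ at this instant: node $\ell$ has been continuously in blocked mode since originating the request and has received no acceptance yet, so $u_\ell$ still equals $r$ and $w_\ell = 0$. By the acceptance rule, $j$ accepts only when $u_j \geq r + 2$, transferring $w$ pebbles with $1 \leq w \leq \lfloor (u_j - r)/2 \rfloor$ so that the gap does not reverse. Writing $a = u_j = \hat{u}_j$ and $b = \hat{u}_\ell = r$, the event replaces the pair $(a,b)$ by $(a-w, b+w)$ with $a \geq b+2$ and $1 \leq w \leq (a-b)/2$. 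A direct expansion gives that $V$ changes by exactly $-2w\bigl((a-b) - w\bigr)$. Since $w \geq 1$ and $(a-b) - w \geq (a-b)/2 \geq 1$, this change is at most $-2$.

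The conclusion is then immediate. Because $\hat{u}$ changes only at nonzero acceptances and each such change lowers $V$ by at least $2$, the function $V(t)$ is nonincreasing and satisfies $V(t) \leq V(0) - 2N(t)$, where $N(t)$ denotes the number of nonzero acceptances generated up to time $t$. As $V(t) \geq 0$ and $V(0) = \sum_{i=1}^n (x_i - \bar{x})^2 \leq nK^2 < \infty$, we obtain $N(t) \leq V(0)/2$ for all $t$, so the total number of (Accept, $w$) messages with $w \neq 0$ is finite. This quantitative bound is moreover exactly the ingredient needed later for the $O(n^2 K^2 \log K)$ running-time estimate.

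The main obstacle is the bookkeeping underlying the first two steps rather than the final counting argument. I expect the hardest part to be verifying rigorously that $\hat{u}_\ell = r$ holds at the moment of acceptance, that is, that a blocked originator's pebble count cannot drift between origination and the return of its answer, and that the in-transit variable $w_i(t)$ behaves as claimed throughout the backward journey of the acceptance so that no spurious change in $V$ is introduced. Both facts hinge on a careful reading of the mode transitions in Figure \ref{fig:descr_avg_algo} and on Lemma \ref{lem:request_terminate}, which guarantees that each request path eventually resolves and the associated pebbles are accounted for.
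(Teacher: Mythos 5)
Your proposal is correct and takes essentially the same route as the paper's own proof: the same potential $V$ built from $\hat{u}_i = u_i + w_i$, the same case-check that forwarding, denials, and fulfillment of an acceptance leave every $\hat{u}_i$ unchanged (so only nonzero acceptances move $\hat{u}$), the same observation that the blocked originator has $\hat{u}_\ell = r$, the same per-acceptance decrease $-2w\bigl((a-b)-w\bigr) \leq -2$, and the same counting conclusion from $V \geq 0$. The only detail the paper makes explicit that you leave implicit is that several acceptances occurring in the same slot involve disjoint pairs of nodes (a consequence of the disjointness of request paths), so their effects on $V$ are additive and your bound $V(t) \leq V(0) - 2N(t)$ indeed holds.
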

\begin{proof} Let us first describe the idea behind the proof. Suppose that nodes
could instantaneously transfer value to each other. \ora{It is
easily checked that if} a node $i$ \ora{transfers} an amount $w^*$
to a node $j$ with $u_i-u_j\geq 2$ and $1\leq w^*\leq
\frac{1}{2}(u_i-u_j)$, the  variance $\sum_i (u_i- \ora{\bar x})^2$
decreases by at least \aoe{2}. \ora{Thus, there can only be} a
finite number of \ora{such transfers.} In our model, \ora{the
situation is more complicated because transfers}  are not immediate
\ora{and involve a process of requests and acceptances. A key
element of the argument is to realize that the algorithm can be
interpreted as if it only involved instantaneous exchanges involving
disjoint pairs of nodes.}

\ora{Let us consider the difference  $V(t+1)-V(t)$ at some typical time
$t$.
Changes in $V$ are solely due to changes in the $\hat u_i$.
Note that if a node $i$ executes the ``\aoc{fulfill} the acceptance'' instruction at time $t$, node $i$ was the
originator of the request
and the request path has length zero, so that it is also the final node on the path, and $s_i(t)=i$. According to our definition, $w_i(t)$ is the value $w$ in the message received by node $s_i(t)=i$.
At the next time step, we have $w_i(t+1)=0$ but $u_i(t+1)=u_i(t)+w$.
Thus, $\hat u_i$ does not change, and the function $V$ is unaffected.}

\ora{By inspecting the algorithm, we see that  a nonzero  difference
$V(t+1)-V(t)$ is possible only if some node $i$ executes the ``accept request'' instruction at slot $t$, with
some particular value $w^*\aoc{\neq 0}$,
in which case $u_i(t+1)=u_i(t)-w^*$. For this to happen, node $i$ received a message (Request, $r$) at time $t$ from a node $k$ for which $\Rout_k(t)=i$, and with $u_i(t) - r\geq 2$.
That node $k$ was the last node, $s_{\ell}(t)$, on the request path of some originator node $\ell$. Node $k$ receives an (Accept, $w^*$)
message at time $t+1$ and, therefore, according to our definition, this sets $w_\ell(t+1)=w^*$.}

It follows from the rules of our algorithm that
$\ell$ \ora{had} originated a request with value $r=u_{\ell}(t')$ at
some previous time $t'$. Subsequently, node $\ell$ entered the blocked
mode, preventing any modification of $u_{\ell}$, so that
$r=u_{\ell}(t) = u_{\ell}(t+1)$. Moreover, observe that $w_{\ell}(t)$ was 0 because \ora{by time $t$,} no node
had answered \ora{$\ell$'s} request.
\ora{Furthermore,} $w_i(t+1) = w_i(t) =0$ because
having a positive $w_i$ requires \ora{$i$} to be in blocked mode, preventing
the execution of ``accept request". It follows that
$$\hat
u_i(t+1) = u_i(t+1) = u_i(t) - w^* = \hat u_i(t) - w^*,$$ and
$$\hat
u_{\ell}(t+1) = r + w^* = \hat u_{\ell}(t) + w^*.$$
\ora{Using the update equation $w^*=\lfloor(u_i(t)-r)/2 \rfloor$, and the fact $u_i(t) - r\geq 2$,
we obtain} $$1\leq w^* \leq \frac{1}{2}(u_i(t) -
r) = \frac{1}{2}(\hat u_i(t) - \hat u_{\ell}(t)).$$ Combining
with the previous equalities, we have
$$
\hat u_{\ell}(t) +1 \leq  \hat u_{\ell}(t+1) \leq \hat u_i(t+1)
\leq \hat u_i(t) -1.
$$
\ora{Assume for a moment that node $i$ was the only one that
executed the ``accept request'' instruction at time $t$. Then, all
of the variables $\hat u_j$, for $j\neq i,\ell$, remain unchanged.}
Simple algebraic manipulations then show that $V$ decreases by at
least \aoe{2}. \ora{If there was another pair of nodes, say $j$ and
$k$, that were involved in a transfer of value at time $t$, it is
not hard to see that the transfer of value was related to a
different request, involving a separate request path. In particular,
the pairs $\ell, i$ and $j,k$ do not overlap. This implies that the
cumulative effect of multiple transfers on the difference
$V(t+1)-V(t)$ is the sum of the effects of individual transfers.
Thus, at every time for which at least one ``accept request'' step
is executed, $V$ decreases by at least 2. We also see that no
operation can ever result in an increase of $V$. It follows that}
the instruction ``accept request" can be executed only a finite
number of times.
\end{proof}

\begin{proposition}\label{prop:final_proof_average} There is a time $t'$ such that $\rt{u}_i(t) =
\rt{u}_i(t')$,  for \jnt{all $i$ and} all $t \geq t'$. Moreover,
\begin{eqnarray*}
\sum_\jnt{i} \rt{u}_i(t') & = & \sum_\jnt{i} x_i, \\
\jnt{\max_i \rt{u}_i(t')-\min_i \rt{u}_i(t')} & \leq &   1.
\end{eqnarray*}
\end{proposition}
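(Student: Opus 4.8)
The plan is to combine the conservation of pebbles with the two ``progress'' facts already in hand: that only finitely many nonzero transfers ever occur (Lemma \ref{prop:finite_request_acceptances}) and that every request terminates within $O(n)$ steps (Lemma \ref{lem:request_terminate}). First I would establish the stabilization together with the conservation identity. Recall that $\sum_i \hat u_i(t)$ is conserved and equals $\sum_i x_i$. By Lemma \ref{prop:finite_request_acceptances} there is a time $T_1$ after which no node ever executes ``accept request'' with $w \neq 0$, so no new pebbles are put in transit after $T_1$. The pebbles already in transit at time $T_1$ are carried by finitely many acceptance messages, each of which is forwarded back to its originator within at most $n$ further steps by Lemma \ref{lem:request_terminate}; once such a message reaches its originator $i$, the quantity $w_i$ drops to $0$ and the pebbles are added to $u_i$. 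Hence there is a time $t' \geq T_1$ at which $w_i(t')=0$ for all $i$ and no $u_i$ ever changes again, giving $u_i(t)=u_i(t')$ for all $t \geq t'$. At that time $\hat u_i(t')=u_i(t')$, so $\sum_i u_i(t') = \sum_i \hat u_i(t') = \sum_i x_i$, which is the first displayed identity.

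The heart of the argument is the bound $\max_i u_i(t')-\min_i u_i(t') \le 1$, which I would prove by contradiction. Suppose instead that $\bar{M} - u_\ell \ge 2$, where $\bar{M} = \max_j u_j(t')$ and $\ell$ attains the minimum. Since the inputs $u_i(\cdot)$ to the maximum-tracking subroutine are now constant for $t \ge t'$, Theorem \ref{thm:maxtracking} guarantees a time after which, for every node, $M_i = \bar{M}$ and the chain $P_i, P_i^2, \dots$ terminates at a node whose value equals $\bar{M}$. At a suitable later time (once any requests left over from before stabilization have terminated, again by Lemma \ref{lem:request_terminate}) node $\ell$ satisfies $M_\ell = \bar{M} \ge u_\ell+2$ and therefore originates a request carrying the value $r = u_\ell$, forwarded along $P_\ell$ toward a node of maximal value. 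I would then trace this request along the stable pointer chain: at each node $i$ it reaches, $M_i = \bar{M} \ge r+2$, so the denial condition never holds; the request is always either accepted (when $u_i - r \ge 2$) or forwarded, and in the worst case it is forwarded all the way to a maximal node $j^*$, where $u_{j^*}-r = \bar{M} - u_\ell \ge 2$ forces acceptance with transfer $w^* = \lfloor (u_{j^*}-r)/2 \rfloor \ge 1$. This is a nonzero acceptance occurring after $T_1$, contradicting the defining property of $T_1$; hence the spread is at most $1$.

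The step I expect to be the main obstacle is this last one: verifying that the request originated by $\ell$ genuinely reaches a node that accepts it, rather than stalling. Two issues must be handled. First, intermediate nodes on the intended path may be temporarily in \emph{blocked} mode while awaiting responses to their own requests; here I would invoke Lemma \ref{lem:request_terminate}, which ensures such blocks clear in bounded time, together with the disjointness of distinct request paths from Lemma \ref{lem:descr_Gr}, so that after all pre-existing requests drain, the path toward $j^*$ is free. Second, I must confirm against the flowchart of Figure \ref{fig:descr_avg_algo} that the forwarding guard $u_i-1 \le r < M_i-1$ and the acceptance guard $u_i - r \ge 2$ together cover every node encountered along a chain on which $M_i = \bar{M} \ge r+2$, so that no node can legitimately deny the request and the request must end in a genuine transfer. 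Once these mechanics are pinned down the contradiction closes, and the two displayed relations follow; the quantitative convergence-time estimate of Theorem \ref{thm:average_computation} is then a separate accounting, bounding the number of nonzero transfers via the variance function $V$ and the per-transfer time via Lemma \ref{lem:request_terminate}.
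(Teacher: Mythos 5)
Your first paragraph (stabilization and conservation) is correct and follows the same route as the paper: Lemma \ref{prop:finite_request_acceptances} gives a time after which no acceptance with $w\neq 0$ occurs, Lemma \ref{lem:request_terminate} lets the in-transit acceptances drain within $n$ steps, and conservation of $\sum_i \hat u_i$ then yields $\sum_i u_i(t')=\sum_i x_i$. Your setup for the spread bound is also the paper's: argue by contradiction using Theorem \ref{thm:maxtracking} so that all estimates $M_i$ equal the true maximum and pointer chains lead to maximal nodes, and note (correctly) that once $M_i=\bar M\geq r+2$, a node in free mode can never deny a request with value $r$ --- it must accept or forward.

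However, your resolution of the blocking issue has a genuine gap. You propose to wait until ``all pre-existing requests drain'' so that the pointer path from $\ell$ to a maximal node $j^*$ is free, and then trace $\ell$'s single request along it. But requests never stop being generated: every node in the low set re-originates a request each time it returns to free mode, which happens infinitely often (this is stated explicitly in the paper's proof). Consequently there need not exist \emph{any} time at which the entire path from $\ell$ to $j^*$ is simultaneously unblocked; another low node whose request path shares intermediate nodes with $\ell$'s can, under an unfortunate interleaving, block it again and again, and Lemma \ref{lem:request_terminate} plus the disjointness in Lemma \ref{lem:descr_Gr} only guarantee that each individual block clears, not that the blocks ever clear all at once. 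The paper's proof avoids tracing a single request precisely for this reason: it observes that a denial can only be issued by a \emph{blocked} node, and a blocked node has itself already forwarded some other request further along the link $(i,P_i(t))$ toward the maximum. Hence, collectively, requests keep making strict progress along pointer chains of length at most $n$; a maximal node never originates or forwards (the forwarding guard $u_i-1\leq r<M_i-1$ is vacuous when $u_i=M_i$) and so is never blocked, so some request --- not necessarily $\ell$'s --- must eventually reach a maximal node and be accepted with $w\geq 1$, which is the contradiction. To repair your argument you would need to replace the ``drain the path'' step by this progress-of-some-request argument (or an equivalent potential/induction over positions along pointer chains).
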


\begin{proof}
\noindent It follows from \jnt{Lemma}
\ref{prop:finite_request_acceptances} that there is a time \aoc{$t'$}after
which no more requests are accepted \jnt{with $w\neq 0$.} By Lemma \ref{lem:request_terminate}, this
implies that after at most $n$ additional time steps, the system
will never \ora{again} contain any  ``accept request'' messages \jnt{with $w\neq 0$,} so no node
will change its value \ora{$u_i(t)$ thereafter.}

We have already argued that the \ora{sum (and therefore the
average) of the variables $\hat u_i(t)$} does not change.
\jnt{Once there are} no more ``accept request'' messages in the
system \jnt{with $w\neq 0$, we must have  $w_i(t)=0$, for all $i$.
Thus,} at this stage the \jnt{average of the $\rt{u}_i(t)$} is the
same as the \jnt{average} of \jnt{the $x_i$.}

It remains to show that once \jnt{the $\rt{u}_i(t)$ stop} changing, the
maximum and minimum \jnt{$u_i(t)$} differ by \jnt{at most} $1$. \jnt{Recall (cf.\ Theorem that
\ref{thm:maxtracking})} \aoc{that} at some time after \jnt{the $\rt{u}_i(t)$
stop} changing, \ora{all
estimates $M_i(t)$ of the maximum will be} equal to $M(t)$, the true maximum of \jnt{the
$\rt{u}_i(t)$;} moreover, \jnt{starting at
any node and} following the pointers $P_i(t)$  \jnt{leads to a node $j$} whose value \jnt{$u_j(t)$} is the true \jnt{maximum,} $M(t)$.
Now let \ora{$A$} be the set of nodes whose value at this stage is at
most $\max_i \rt{u}_i(t)-2$. To \ora{derive} a contradiction, \jnt{let us} suppose
that \ora{$A$} is nonempty.

\jnt{Because} only nodes in \ora{$A$} will \jnt{originate} requests, and \jnt{because} every
request eventually terminates \jnt{(cf.\ Lemma \ref{lem:request_terminate})}, if we wait
some finite amount of time, we will have
the additional property that all requests in the system originated
from \ora{$A$}. Moreover,  nodes in \ora{$A$}  \jnt{originate} requests every time they \aoc{are in} \ora{the
free mode,}
which is infinitely
often.

\ora{Consider now a request originating at a node in the set
\ora{$A$}. The value $r$ of such a request satisfies $M(t)-r\geq 2$,
which implies that every node that receives \aoc{it} either accepts it
(contradicting the fact that no more requests are accepted after
time \aoc{$t'$}), or forwards it, or denies it. But a node $i$ will deny a
request only if it is in blocked mode, that is, if it has already
forwarded some other request to node $P_i(t)$. This shows that
requests will keep propagating along links of the form $(i,P_i(t))$,
and therefore will eventually reach a node at which $u_i(t)=M(t)\geq
r+2$, at which point they will be accepted---a contradiction.}
\end{proof}

We are now ready to conclude.

\begin{proof}[Proof of Theorem \ref{thm:average_computation}]
\jnt{Let $u_i^*$ be the value that $u_i(t)$ eventually settles on.
Proposition \ref{prop:final_proof_average} \ora{readily} implies} that if the average $\bar x$ of
the $x_i$ is an integer, then $\rt{u}_i(t)=\rt{u}_i^*=\bar x$ will
eventually hold for every $i$. If \jnt{$\bar x$} is not an integer, then some
nodes will eventually have $\rt{u}_i(t)=\rt{u}_i^* = \lfloor \bar x
\rfloor$ and \ora{some other nodes} $\rt{u}_i(t) = \rt{u}_i^*=\lceil \bar x
\rceil$. Besides, using the maximum and minimum computation
algorithm, nodes will eventually have a correct estimate of $\max
\rt{u}_i^*$ and $\min \rt{u}_i^*$, since all
$\rt{u}_i(t)$ \jnt{settle on the fixed values $\rt{u}_i^*$.} This allows \jnt{the nodes} to \jnt{determine whether}
the average is exactly $\rt{u}^*_i$ (integer average), or \jnt{whether}  it
lies in $(\rt{u}_i^*,\rt{u}_i^*+1)$ or $( \rt{u}_i^*-1,
\rt{u}^*_i)$ (fractional average). Thus, \jnt{with some simple post-processing at each node (which can be done using finite automata), the nodes can produce the correct output for the interval-averaging problem. The proof of
Theorem \ref{thm:average_computation} is complete.}
\end{proof}

\aoe{Next, we give a convergence time bound for the algorithms we
have just described.}


\smallskip

\aoe{The general idea
is} quite simple. We have just argued that the
nonnegative function $V(t)$ decreases by at least $2$ each time a
request is accepted. \aoc{It also satisfies $V(0)=O(n K^2)$}. Thus there are at
most \aoc{$O(n K^2)$} acceptances. To complete the argument, one needs to
argue that if the algorithm has not terminated, there will be an
acceptance within $O(n)$ iterations (which corresponds to $O(n \log K)$ real-time
rounds of communication due to the $O(\log K)$ slowdown from transmitting elements in
$\{0,\ldots,K\}$). This should be \aoc{fairly clear} from the
proof \aoc{of Theorem \ref{thm:average_computation}. A formal argument is
given in the next section. 
It is also shown there that the running time of our algorithm, for  many graphs,
satisfies a $\Omega(n^2)$ lower bound, in the worst case  over all initial conditions.  }

\section{The Time to Termination}\label{sec:complexity}

The aim of this section is to prove an $O(n^2 K^2 \log K)$ upper bound
on the time to termination of the interval-averaging algorithm
from Section \ref{sec:comput_average}.

We will use the notation $M'(t)$ to denote the largest estimate
\jh{held by any node} at time $t$ or in the $n$ time steps
preceding it: \[ M'(t) = \max_{
\begin{array}{c}
                                    i=1,\ldots,n \\
                                    k=t,t-1,\ldots,t-n \\
                                  \end{array}
} ~~~M_i(k).\] For $M'(t)$ to be well defined, we will adopt the
convention that for all negative times $k$, $M_i(k)=u_i(0)$.

\aoc{\begin{lemma}\label{lem:M'monotonous} In the course of the
execution of the interval-averaging algorithm, $M'(t)$ never
increases.
\end{lemma}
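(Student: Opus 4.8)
My plan is to show the single-step inequality $M'(t+1)\le M'(t)$, which gives monotonicity by induction. Writing out the defining window, $M'(t+1)=\max\bigl(\max_i M_i(t+1),\ \max_{i,\,k\in\{t+1-n,\dots,t\}}M_i(k)\bigr)$, and since $\{t+1-n,\dots,t\}\subseteq\{t-n,\dots,t\}$ the second term is already $\le M'(t)$ by definition. So everything reduces to proving $\max_i M_i(t+1)\le M'(t)$. Fixing $i$, I would then split on which box of the maximum-tracking algorithm (Figure~\ref{fig:maxtracking}) node $i$ executes during slot $t$. If $i$ executes O3 or O4b, then $M_i(t+1)=M_i(t)\le M'(t)$ since $k=t$ lies in the window. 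If $i$ executes O2, then as noted in the proof of Lemma~\ref{lem:nondecreasing} the new value is $M_i(t+1)=M_j(t)$ for the neighbor $j$ whose message it read, and again $M_j(t)\le M'(t)$. The only genuine case is O1 or O4a, where $M_i(t+1)=u_i(t)$; here I must establish that the current pebble count obeys $u_i(t)\le M'(t)$.

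To bound $u_i(t)$ I would introduce the shorthand $U(s)=\max_j u_j(s)$ and prove two facts. First, Lemma~\ref{lem:m_geq_y} gives $M_i(k)\ge u_i(k-1)$ for every $i,k$, so
\[
M'(t)\ \ge\ \max_{i,\,k\in\{t-n,\dots,t\}}u_i(k-1)\ =\ \max_{s\in\{t-n-1,\dots,t-1\}}U(s),
\]
(boundary values handled by the stated conventions $M_i(k)=u_i(0)$ and $u_i(-1)=u_i(0)$). Second, I would argue that $U$ cannot jump above its recent past, i.e. $U(t)\le\max_{s\in\{t-n,\dots,t-1\}}U(s)$. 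Combining these two inequalities yields $u_i(t)\le U(t)\le M'(t)$, completing the O1/O4a case and hence the lemma.

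The crux — and the step I expect to be the main obstacle — is the second fact, that an increase of $u_i$ at time $t$ is dominated by $U$ at some point in the preceding window. The only way $u_i$ increases is when node $i$ fulfills an acceptance, setting $u_i(t)=r+w^\ast$, where $r$ was $i$'s value when it originated the request and $w^\ast$ is the amount granted by the accepting node. From the analysis in the proof of Lemma~\ref{prop:finite_request_acceptances} I would reuse the halving/conservation bound $1\le w^\ast\le\tfrac12\bigl(u_{\mathrm{acc}}(t_a)-r\bigr)$, where $t_a$ is the time the acceptor gave away its pebbles; this forces $r+w^\ast\le\tfrac12\bigl(u_{\mathrm{acc}}(t_a)+r\bigr)\le u_{\mathrm{acc}}(t_a)\le U(t_a)$. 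It then remains to show $t_a$ lies in the window, which is exactly what the window width $n$ is designed for: by Lemma~\ref{lem:descr_Gr} a request path visits at most $n$ distinct nodes, so the acceptance message travels back to the originator in at most $n$ hops (cf.\ Lemma~\ref{lem:request_terminate}), whence $t_a\ge t-n$. Thus every $u_i(t)$ that just increased satisfies $u_i(t)\le U(t_a)\le\max_{s\in\{t-n,\dots,t-1\}}U(s)$, while every node that did not increase has $u_i(t)\le u_i(t-1)\le U(t-1)$; taking the maximum over $i$ gives the desired $U(t)\le\max_{s\in\{t-n,\dots,t-1\}}U(s)$. Assembling the pieces proves $M'(t+1)\le M'(t)$, and the careful bookkeeping of the acceptance-to-fulfillment delay against the look-back window is where the real work lies.
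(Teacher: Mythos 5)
Your proof is correct and follows essentially the same route as the paper's: the same reduction to $\max_i M_i(t+1)\le M'(t)$, the same case analysis on the boxes of Figure \ref{fig:maxtracking}, and the same three ingredients in the O1/O4a case (Lemma \ref{lem:m_geq_y}, the halving bound $w^\ast\le\tfrac12(u_{\mathrm{acc}}(t_a)-r)$, and the at-most-$n$-step delay between acceptance and fulfillment). The only cosmetic difference is that you route the estimate through the auxiliary quantity $U(s)=\max_j u_j(s)$ and prove two general facts about it, whereas the paper chains directly through the specific acceptor: $u_i(t)<u_j(\hat t)\le M_j(\hat t+1)\le M'(t)$.
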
 }
\begin{proof}
Fix a time $t$. We will argue that \begin{equation} \label{eq:it}
M_i(t+1) \leq M'(t) \end{equation} for each $i$. This clearly
implies $M'(t+1) \leq M'(t)$.

If $M_i(t+1) \leq M_i(t)$, then Eq. (\ref{eq:it}) is obvious. We
can thus suppose that $M_i(t+1) > M_i(t)$. There are only three
boxes in Figure \ref{fig:maxtracking} which result in a change
between $M_i(t)$ and $M_i(t+1)$. These are $O2$, $O1$, and $O4a$.
We can rule out the possibility that node $i$ executes $O2$, since
that merely sets $M_i(t+1)$ to some $M_j(t)$, and thus cannot
result in $M_i(t+1) > M'(t)$.

Consider, then, the possibility that node $i$ executes $O1$ or
$O4a$, and as a consequence $M_i(t+1)=u_i(t)$. If $u_i(t) \leq
u_i(t-1)$, then we are finished because \[ M_i(t+1) = u_i(t) \leq
u_i(t-1) \leq M_i(t), \] which contradicts the assumption
$M_i(t+1)>M_i(t)$. Note that the last step of the above chain of
inequalities used Lemma \ref{lem:m_geq_y}.

Thus we can assume that $u_i(t)>u_i(t-1)$. In this case, $i$ must
have fulfilled acceptance from some node $j$ during slot $t-1$.
\jh{Let $\hat{t}$ the time when node $j$ received the
corresponding request message from node $i$. The rules of our
algorithm imply that $u_i(\hat t) = u_i(t-1)$, and that the
quantity $w$ sent by $j$ to $i$ in response to the request is no
greater than $\frac{1}{2}(u_j(\hat t)-u_i(\hat t))$. This implies
that $u_i(t) = u_i(t-1) +w < u_j(\hat t)$.}

Crucially,  we have that $\hat{t} \in [t-1-n,t-1]$, since at most
$n+1$ time steps pass between the time node $j$ receives the
request message it will accept and the time when node $i$ fulfills
$j$'s acceptance. So \[ M_i(t+1) = u_i(t) < u_j(\hat{t}) \leq
M_j(\hat{t}+1) \leq M'(t).\] We have thus showed that $M_i(t+1)
\leq M'(t)$ in every possible case, \jh{which implies that
$M'(t+1)\leq M'(t)$.}
\end{proof}

\begin{lemma}\label{lem:conv_time_minmax}
\ora{Consider the maximum tracking algorithm.} If each
\aoc{$u_i(t)$} is constant for $\aoc{t \in } [t_0,t_0+4n]$, then
\aoc{at least one of the following two statements is true}:
(a) $M'(t_0+3n) \ora{<} M'(t_0)$.
(b) $M_i(t_0+4n) = \max_j u_j(t_0)$ for every $i$.
\end{lemma}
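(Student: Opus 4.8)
The plan is to set $M^* := \max_j u_j(t_0)$ (well defined and constant on the whole window, since $\mathcal{U}$ is finite and the $u_i$ are frozen on $[t_0,t_0+4n]$) and to run the dichotomy off the single quantity $V := M'(t_0)$, compared against $M^*$. Throughout I would treat $t_0$ as playing the role of the stabilization time $T'$ of the correctness analysis: because the $u_i$ are constant on $[t_0,t_0+4n]$ and the invalidity/propagation lemmas (\ref{lemma:depth_decrease}, \ref{lem:invalid_length0}, \ref{prop:no_unreliable}, \ref{lem:valid+max-->final}) are local in time — each inspects only times $t,t+1,t+2$ — they remain valid verbatim as long as their arguments stay inside the window. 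Writing $\mu(t):=\max_i M_i(t)$, the windowed analogue of Lemma \ref{lem:nondecreasing} gives that $\mu$ is nonincreasing on $[t_0+1,t_0+4n]$, so for $t\in[t_0+n+1,t_0+4n]$ the sliding maximum collapses to $M'(t)=\mu(t-n)$; in particular $M'(t_0+3n)=\mu(t_0+2n)$.

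First I would record that $V\ge M^*$. Picking $j^*$ with $u_{j^*}(t_0)=M^*$, Lemma \ref{lem:m_geq_y} gives $M_{j^*}(t_0+2n)\ge u_{j^*}(t_0+2n-1)=M^*$, and since $t_0+2n$ lies in the window $[t_0+2n,t_0+3n]$ defining $M'(t_0+3n)$, Lemma \ref{lem:M'monotonous} yields $V=M'(t_0)\ge M'(t_0+3n)\ge M^*$. Thus exactly two cases arise, and they are exhaustive. In the case $V>M^*$ I would prove (a): since $V$ exceeds the true maximum, every node holding the estimate $V$ at any time of the window is \emph{invalid} (the observation used at the start of the proof of Theorem \ref{thm:maxtracking}), so the $V$-carriers are precisely the invalidity graph $G_V$. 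Starting from $t_0+1$, Lemma \ref{lemma:depth_decrease} forces $g(G_V)$ to drop by at least one every two steps from its initial value $\le n-1$, and Lemma \ref{lem:invalid_length0} (equivalently the endgame in the proof of Lemma \ref{prop:no_unreliable}) empties $G_V$ shortly after its depth reaches $0$. Hence $G_V$ is empty by $t_0+2n$, so no node holds $V$ then, i.e. $\mu(t_0+2n)<V$, which is exactly $M'(t_0+3n)<M'(t_0)$.

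In the case $V=M^*$ we have $M^*\le M'(t_0+3n)\le M'(t_0)=M^*$, so $M'(t_0+3n)=M^*$ and (a) fails; here I must prove (b). Now $\mu(t)=M^*$ on $[t_0+1,t_0+2n]$ and no estimate ever exceeds $M^*$, so the only obstruction is invalid copies of $M^*$, which the same purge argument (Lemma \ref{prop:no_unreliable}) eliminates within $O(n)$ steps; thereafter the set $Z(t)$ of nodes holding a \emph{valid} estimate $M^*$ is nonempty and, by Lemma \ref{lem:valid+max-->final}, absorbing. The propagation step from the proof of Theorem \ref{thm:maxtracking} (a neighbour of a node in $Z(t)$ joins $Z(t+2)$) then spreads the correct value across the connected graph in at most $2n$ further steps, so every node holds $M_i=M^*$ by time $t_0+4n$, giving (b).

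The genuinely delicate part is the constant bookkeeping. I must check that the purge really finishes by $t_0+2n$ in the case $V>M^*$ (so that the drop registers in $M'$ at $t_0+3n$, which lags by $n$), and that ``purge $+$ propagation'' in the case $V=M^*$ fits inside the $4n$ budget. Both reduce to the quantitative facts that the invalidity forest has depth $\le n-1$ and loses a unit of depth every two steps, and that validity propagates at one hop per two steps; the window length $4n$ and the gap between the $3n$ in (a) and the $4n$ in (b) are precisely the slack these rate-$\tfrac12$ processes require. Verifying these offsets carefully — including the $O(1)$ slack coming from the sliding-window lag $M'(t)=\mu(t-n)$ and from the fact that Lemma \ref{lemma:depth_decrease} only applies for $t>t_0$ — is the main obstacle, but it becomes routine once the two-case structure above is in place.
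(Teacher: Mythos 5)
Your proof is correct and follows essentially the same route as the paper's: both rest on the purge machinery (Lemmas \ref{lemma:depth_decrease} and \ref{lem:invalid_length0} applied to the invalidity forest) together with the valid-estimate propagation argument from the proof of Theorem \ref{thm:maxtracking}, with identical rate-$1/2$ bookkeeping, and your dichotomy $M'(t_0)>\max_j u_j(t_0)$ versus $M'(t_0)=\max_j u_j(t_0)$ is just a reorganization of the paper's split on whether any node still holds the estimate $M'(t_0)$ during $[t_0+2n,t_0+3n]$. Even your claim that the purge finishes by time $t_0+2n$ (which a strict count of the depth-decrease and clean-up lemmas only guarantees by $t_0+2n+1$) coincides with the paper's own assertion, so your constant bookkeeping is no looser than the original.
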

\begin{proof}
\jh{Suppose first that no node holds an estimate equal to
$M'(t_0)$ at some time between $t_0+2n$ and $t_0+3n$. Then it
follows from the definition of $M'(t)$ and its monotonicity
(\ref{lem:M'monotonous}) that condition (a) holds. Suppose now
that some node holds an estimate equal to $M'(t_0)$ at some time
between $t_0+2n$ and $t_0+3n$. The definition of $M'(t)$ and the
monotonicity of $\max_i M_i(t)$ when all $u_i$ are constant (Lemma
\ref{lem:nondecreasing}) imply that $M'(t_0) = \max_i
M_i(t)$ for all $t\in [t_0,t_0+3n]$. It follows from repeated
application of Lemmas \ref{lemma:depth_decrease} and
\ref{lem:invalid_length0} (similarly to what is done in the proof
of Proposition \ref{prop:no_unreliable}) that every estimate
$M'(t_0)$ at time $t_0 + 2n$ is valid, which by definition implies
the existence of at least one node $i$ with $u_i(t_0) = M'(t_0)$.
Besides, since $M_i(t)\geq u_i(t)$ holds for all $i$ and $t$ by
Lemma \ref{lem:m_geq_y} and since we know that $M'(t_0) = \max_i
M_i(t)$ for all $t\in [t_0,t_0+3n]$, we have $M'(t_0) = \max_i
u_i(t_0)$. As described \ora{in the} proof of Theorem
\ref{thm:maxtracking}, this implies that after at most $2n$ more
time steps, $M_i(t)=M'(t_0)$ holds for every $i$, \aoc{and so (b)
holds.}}
\end{proof}

The next lemma \aoc{upper bounds} the largest time before
\aoc{some} request is accepted or \aoc{some} outdated estimate is
purged from the system. Recall that $\bar x = (\sum_i^nx_i)/n$.

\begin{lemma}\label{lem:bound_interoperation_time}
\ora{Consider the interval-averaging} algorithm described in
Sections \ref{sec:max_tracking} and \ref{sec:comput_average}. For
any $t_0$, at least one of the following is true: (a)
Some node accepts a request at \ora{some slot $t\in
[t_0,t_0+8n-1]$.} (b) We have $M'(t+1) < M'(t)$ for some
$t\in [t_0+1,t_0+3n]$. (c) All $u_i$ remain forever
\aoc{constant} after time $t_0+n$, with $u_i \in \{\floor{\bar x}
, \ceil{\bar x}\}$, and all $M_i(t)$ remain \aoc{forever} constant
after time $t_0 + 5n$, with $M_i = \ceil{\bar x}$.
\end{lemma}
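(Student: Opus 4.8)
The plan is to establish the trichotomy contrapositively: I will assume that neither (a) nor (b) holds and deduce (c). Throughout I will write \emph{acceptance} for the execution of the ``accept request'' step, i.e.\ the sending of a message (Accept,$w$) with $w\neq 0$, since these are the only events that move pebbles. The first step is to convert the failure of (a) into constancy of the pebble counts. If no acceptance occurs during $[t_0,t_0+8n-1]$, then the only way some $u_i$ can still change in this window is through a node fulfilling an acceptance that was generated before $t_0$; by Lemma \ref{lem:request_terminate} every such acceptance message travels back to its originator within $n$ steps, so all pending transfers are completed by time $t_0+n$. Hence $u_i(t)$ is constant for every $i$ and all $t\in[t_0+n,t_0+8n-1]$. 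Moreover, once the transfers in transit at $t_0$ are delivered there are no pebbles ``in transit,'' so $\sum_i u_i(t)=\sum_i x_i=n\bar{x}$ for all $t\ge t_0+n$.

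With the inputs to the maximum-tracking subroutine frozen on an interval of length at least $4n$, I will invoke Lemma \ref{lem:conv_time_minmax} with its initial time set to $t_0+n$. This yields two alternatives: either $M'$ strictly decreases across the interval, namely $M'(t_0+4n)<M'(t_0+n)$, or every node's estimate equals the true maximum, $M_i(t_0+5n)=\max_j u_j(t_0+n)$ for all $i$. I will rule out the first alternative using the failure of (b) together with the monotonicity of $M'$ (Lemma \ref{lem:M'monotonous}): a strict drop $M'(t_0+4n)<M'(t_0+n)$ forces some single step $M'(t+1)<M'(t)$, and since $M'(t)$ looks back $n$ steps, this step necessarily lands in the window $[t_0+1,t_0+3n]$, contradicting the negation of (b). This alignment between the length-$4n$ convergence window and the length-$3n$ window of (b) is exactly where the constant look-back in the definition of $M'$ is used, and it is the one genuinely fiddly piece of index bookkeeping. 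Consequently the second alternative must hold; since the $u_i$ are frozen, the estimates $M_i$ thereafter remain constant, which supplies the $M_i$-part of (c) once I identify $\max_j u_j$ with $\lceil\bar{x}\rceil$ below.

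It remains to show that the frozen configuration satisfies $\max_i u_i-\min_i u_i\le 1$, from which the values $\{\lfloor\bar{x}\rfloor,\lceil\bar{x}\rceil\}$ and the ``forever'' statements follow. Here I will argue exactly as in the proof of Proposition \ref{prop:final_proof_average}: if some node lagged by two or more, i.e.\ $u_i\le \max_j u_j-2$, then, because such a node returns to free mode infinitely often, it would originate a request with value $r$ satisfying $\max_j u_j - r\ge 2$; by the previous paragraph its pointers $P_i$ now lead to a node holding the true maximum, so by Lemma \ref{lem:request_terminate} the request would reach, within $n$ steps, a node whose value exceeds $r$ by at least $2$, where it would be accepted --- contradicting the failure of (a) over the whole window $[t_0,t_0+8n-1]$. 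Hence $\max_i u_i-\min_i u_i\le 1$, and combined with the conservation $\sum_i u_i=n\bar{x}$ this forces $u_i\in\{\lfloor\bar{x}\rfloor,\lceil\bar{x}\rceil\}$ and $\max_i u_i=\lceil\bar{x}\rceil$. Finally the configuration is a genuine fixed point: an acceptance requires a gap of at least $2$ between matched values, which no longer exists, so no acceptance ever occurs again, no $u_i$ ever changes after $t_0+n$, and the converged estimates $M_i=\lceil\bar{x}\rceil$ persist after $t_0+5n$; this is precisely (c). The main obstacle is this last step: I must verify that the single window $[t_0,t_0+8n-1]$ is long enough to simultaneously (i) let the max-tracking estimates converge ($O(n)$ steps, via Theorem \ref{thm:maxtracking}), (ii) wait for a lagging node to re-enter free mode and originate a request ($O(n)$ steps), and (iii) route that request to a maximal node and have it accepted ($O(n)$ steps, by Lemma \ref{lem:request_terminate}); it is exactly this budgeting of three $O(n)$ phases that dictates the constant $8n$.
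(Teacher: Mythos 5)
Your proposal is correct and follows essentially the same route as the paper's own proof: assume no acceptance occurs, use Lemma \ref{lem:request_terminate} to freeze the $u_i$ after $t_0+n$, invoke Lemma \ref{lem:conv_time_minmax} and identify its first alternative with condition (b) via the monotonicity of $M'$, and then rule out a gap of $2$ or more by the ``lagging node keeps emitting requests, one of which must be accepted within $n$ steps'' argument borrowed from Proposition \ref{prop:final_proof_average}. Even your loose index bookkeeping (the exact alignment of the $M'$-decrease window with $[t_0+1,t_0+3n]$, and the budgeting of the three $O(n)$ phases inside $[t_0,t_0+8n-1]$) mirrors the level of detail in the paper's own argument.
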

\begin{proof}
Suppose that condition (a) does not hold, i.e., that no node
accepts a request between $t_0$ and $t_0 + 8 n$. \ora{Since an
acceptance message needs to travel through} at most $n-2$
intermediate nodes before reaching the \ora{originator} of the
request (cf.\ Lemma \ref{lem:request_terminate}), we conclude that the system does
not contain any \ora{acceptance messages}  after \ora{time}
$t_0+n$. As a result, no node modifies its \ora{value} $u_i(t)$
between \ora{times} $t_0+n$ and $t_0 + 8 n$. Abusing notation \aoc{slightly}, we
\aoc{will} call these values $u_i$.

It follows from Lemma \ref{lem:conv_time_minmax} that either
condition (b) holds, or that there is a time $\tilde{t} \leq
t_0+5n$ at which $M_i(\tilde{t}) = \max_j u_j$ for every $i$. Some
requests may have been emitted in the interval $[t_0,t_0+5n]$.
\aoc{Since we are assuming that} condition (a) does not hold,
\ora{these requests must have all been} denied. It follows from
Lemma \ref{lem:request_terminate} that none of {these requests} is
present by time $t_0+ 7n$. Moreover, by the rules of our
algorithm, once $M_i$ \ora{becomes equal to} $\max_j u_j$ for
every node $i$, every node with $u_i \leq \max_j u_j -2$ \ora{will
keep} emitting requests. \ora{Using an argument similar to the one
at the end of the proof of}  Proposition
\ref{prop:final_proof_average}, if such requests are sent, at
least one must be accepted within $n$ time steps, that is, no
later than \ora{time} $t_0 +8n$. \aoc{Since by assumption this has
not happened, no such requests could have been sent, implying
that} $u_i \geq \max_j u_j -1$ for every $i$. \aoc{Moreover, this
implies that no request messages/acceptance are ever sent after
time $t_0 + 7n$, so that $u_i$ never change. It is easy to see
that the $M_i$ never change as well, so that condition condition
(c) is satisfied.}
\end{proof}

We can now give an upper bound on the \ora{time until our
algorithm terminates.}

\begin{theorem}
The interval-averaging algorithm described in \aoc{Section
\ref{sec:comput_average}} \ora{terminates after} at most
\aoc{O($n^2K^2 \aoc{\log K})$} time steps.
\end{theorem}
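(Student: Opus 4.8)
The plan is to combine the three preceding lemmas into a simple counting argument over time windows of length $8n$, and then account for the $\log K$ slowdown incurred by transmitting values in $\{0,\ldots,K\}$ one bit at a time. First I would bound the total number of accepted requests: since $\hat{u}_i(0)=x_i\in\{0,\ldots,K\}$ and $\bar{x}\in[0,K]$, the initial variance satisfies $V(0)=\sum_{i=1}^n(\hat{u}_i(0)-\bar{x})^2\le nK^2$. By the argument in the proof of Lemma \ref{prop:finite_request_acceptances}, every accepted request with $w\neq 0$ decreases $V$ by at least $2$, while $V$ never increases and stays nonnegative; hence the number of such acceptances over the entire run is at most $V(0)/2=O(nK^2)$.

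Next I would bound the number of strict decreases of $M'(t)$. The quantity $M'(t)$ is integer-valued, lies in $[0,K]$ (every estimate $M_i$ equals some past value $u_j$, all of which are integers bounded by the global maximum, which never grows), and by Lemma \ref{lem:M'monotonous} it is nonincreasing. Therefore $M'(t)$ can strictly decrease at most $K$ times.

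Then comes the windowing step, which I expect to be the heart of the argument. I would partition time into the disjoint windows $W_k=[8nk,\,8n(k+1)-1]$ for $k=0,1,2,\ldots$, and apply Lemma \ref{lem:bound_interoperation_time} with $t_0=8nk$. For each $k$, either alternative (c) holds---in which case the values $u_i$ and the estimates $M_i$ have permanently frozen, so the algorithm has effectively terminated at or before window $k$---or one of (a),(b) holds, in which case $W_k$ contains either an acceptance (from (a)) or a strict decrease of $M'$ (from (b), since $[t_0+1,t_0+3n]\subseteq W_k$ for $n\ge 1$). Because the windows are disjoint, each non-terminating window is charged to a \emph{distinct} acceptance or $M'$-decrease. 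Combining the two counts above, the number of non-terminating windows is at most $O(nK^2)+K=O(nK^2)$, so termination occurs after at most $8n\cdot O(nK^2)=O(n^2K^2)$ algorithm slots.

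Finally I would convert slots to real-time communication rounds. Each slot of the interval-averaging algorithm, together with its embedded maximum- and minimum-tracking subroutines, transmits messages whose payloads are values in $\{0,\ldots,K\}$; under the binary-message model this costs $O(\log K)$ rounds per slot, as already noted. Multiplying, the algorithm terminates within $O(n^2K^2\log K)$ rounds, which is the claimed bound. The one delicate point is ensuring the charging in the windowing step is injective---this is exactly why the windows are taken disjoint and why alternative (c) is used to halt the counting rather than being charged to an event.
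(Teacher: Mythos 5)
Your proposal follows essentially the same route as the paper's proof: bound the total number of acceptances by $O(nK^2)$ via the nonincreasing variance $V$, bound the strict decreases of $M'(t)$ by $K$ via Lemma \ref{lem:M'monotonous}, charge disjoint windows of length $8n$ to these events using Lemma \ref{lem:bound_interoperation_time}, and then multiply by the $O(\log K)$ slowdown for transmitting values in $\{0,\ldots,K\}$. The only (minor, lower-order) omission is that full termination also requires the parallel minimum-tracking algorithm to settle after condition (c) holds---the nodes need both $\max_i u_i^*$ and $\min_i u_i^*$ to produce the interval output---which the paper dispatches with an additional $O(nK)$ steps by an argument analogous to Lemma \ref{lem:conv_time_minmax}, leaving the $O(n^2K^2\log K)$ bound unchanged.
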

\begin{proof}
Consider the function $V(t) = \sum_{i=1}^n \left(\hat u_i(t) -
\bar u\right)^2$, where $\bar u$ \ora{is the average of the $x_i$,
which is also the average of the $\hat u_i$, and where the $\hat
u_i(t)$ are} as defined before Lemma
\ref{prop:finite_request_acceptances}. Since $\hat u_i(t)\in
\{0,1,\dots,K\}$ for all $i$, one can verify that $V(0) \leq
\frac{1}{4}n K^2$.  Moreover, as explained in the proof of Lemma
\ref{prop:finite_request_acceptances}, $V(t)$ is non-increasing,
and decreases by at least 2 with every request acceptance.
Therefore, \ora{a total of} at most $\frac{1}{8}nK^2$ requests can
be accepted. Furthermore, \aoc{we showed that $M'(t)$ is
non-increasing, and since $M'(t)$ always belongs to
$\{0,1,\dots,K\}$, it can strictly decrease at most $K$ times.} It
follows then from Lemma \ref{lem:bound_interoperation_time} that
condition (c) must hold after at most $\frac{1}{8}nK^2\cdot 8n +
K\cdot 3n \jh{+5}$ time steps.

\ora{Recall that} in parallel with the maximum-tracking and
averaging algorithm, we \ora{also} run a minimum tracking
algorithm. In the previous paragraph, we demonstrated that
condition (c) of Lemma \ref{lem:bound_interoperation_time} holds,
i.e. $u_i$ remain fixed forever, after $n^2 K^2 + K\cdot 3n$ time
steps. A similar argument to Lemma \ref{lem:conv_time_minmax}
implies that the minimum algorithm will reach a fixed point after
an additional \jh{$(3K+4)n$} steps.

Putting it all together, the algorithm reaches a fixed point after
\jh{$n^2 K^2 + (6K+4)\cdot n$} steps. \aoc{Accounting in addition for the $\log K$ slowdown
from transmitting values in $\{0,\ldots,K\}$, we obtain a convergence time of
$O(n^2 K^2 \log K)$.}
\end{proof}

\bigskip

\aoc{We note that there are cases where the running time of
interval averaging is quadratic in $n$. For example,} consider the
network in Figure \ref{fig:lower_complexity_bound}, consisting of
two arbitrary \aoc{connected} graphs $G_1,G_2$ with $n/3$ nodes
each, connected by a \aoc{line} graph of $n/3$ nodes. Suppose that
$K=2$, and that $x_i=0$ if $i\in G_1$, $x_i=2$ if $i\in G_2$, and
$x_i =1$ otherwise. \aoc{The algorithm will have} the nodes of
$G_1$ with $u_i = 0$ send requests to nodes $j$ in $G_2$ with
$u_j=2$, and each successful request will result in the pair of
nodes changing their values, $u_i$ and $u_j$, to 1. The system
will reach its final state after $n/3$ such successful requests.
Observe now that each successful request must cross the line
graph, which takes $n/3$ time steps in each direction. Moreover,
since nodes cannot simultaneously treat multiple requests, once a
request begins crossing the line graph, all other requests are
denied until the response to the first request reaches $G_1$,
which takes at least $2n/3$ time steps. Therefore, in this
example, it takes at least $2n^2/\aoc{9}$ time steps until the
algorithm terminates.


\bc \begin{figure}[h] \bc \label{fig:lower_complexity_bound}
        \epsfig{file=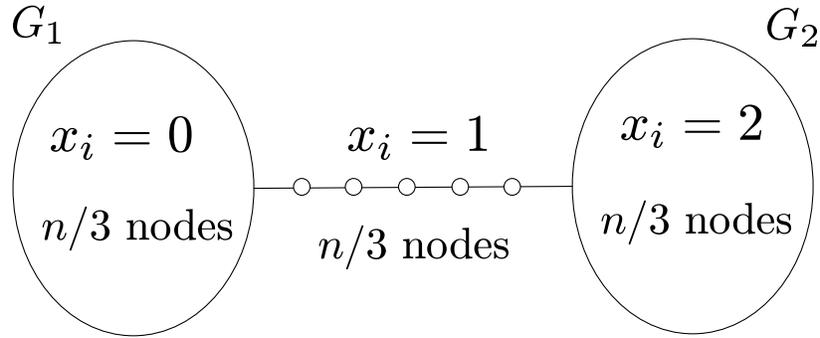,width=11cm} \caption{A class of networks and initial
conditions for which our algorithm takes $\aoc{\Theta}(n^2)$ time steps to
reach its final state. } \ec
  \end{figure} \ec

\section{Simulations \label{se:simul}}
\begin{figure}[h] \hspace{0cm}\centering
 \epsfig{file=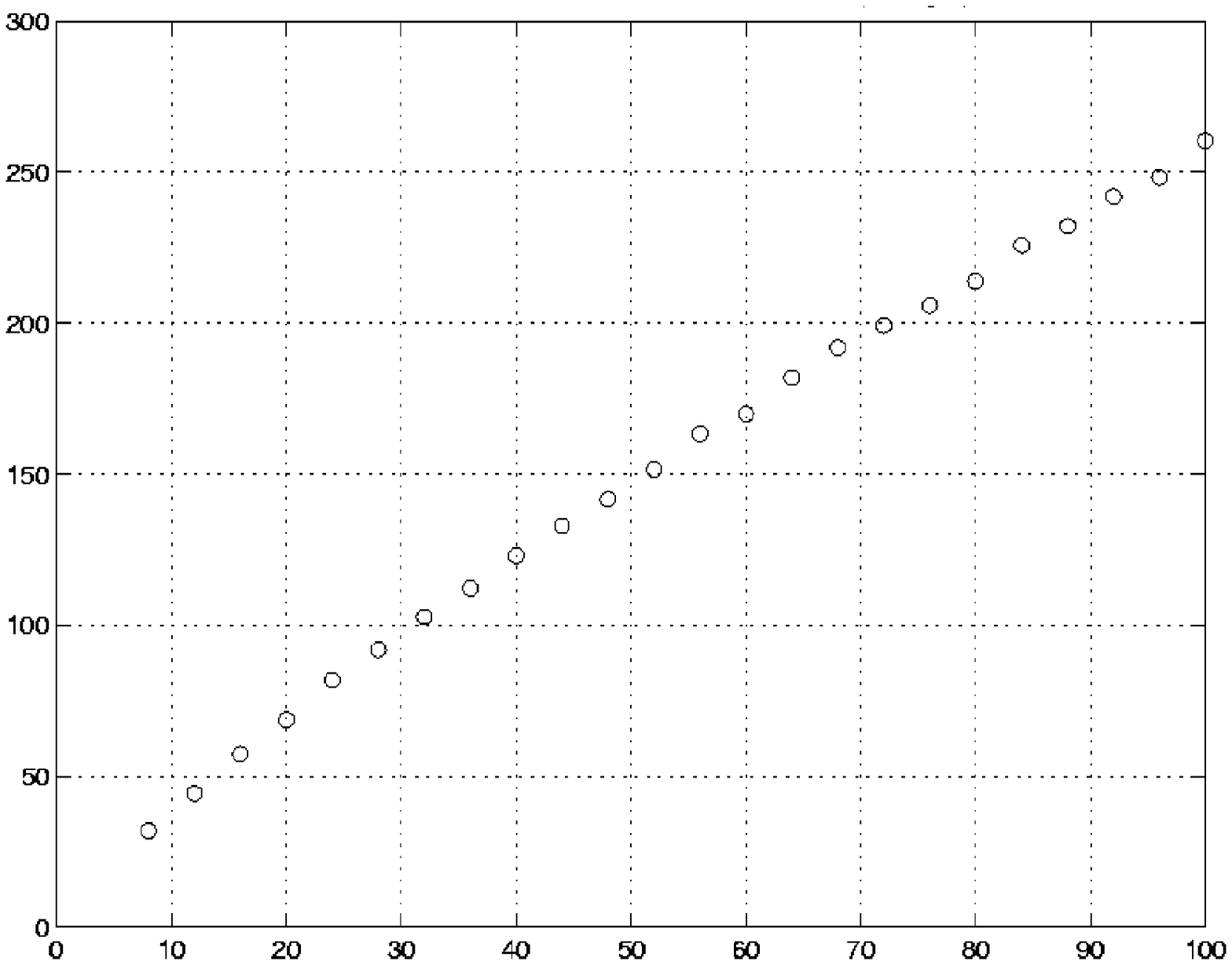, scale=0.72} \caption{\aoc{The number of iterations as a function
of the number of nodes for a complete graph.}} \label{simulc}
\end{figure}

\begin{figure}[h] \hspace{0cm}\centering
 \epsfig{file=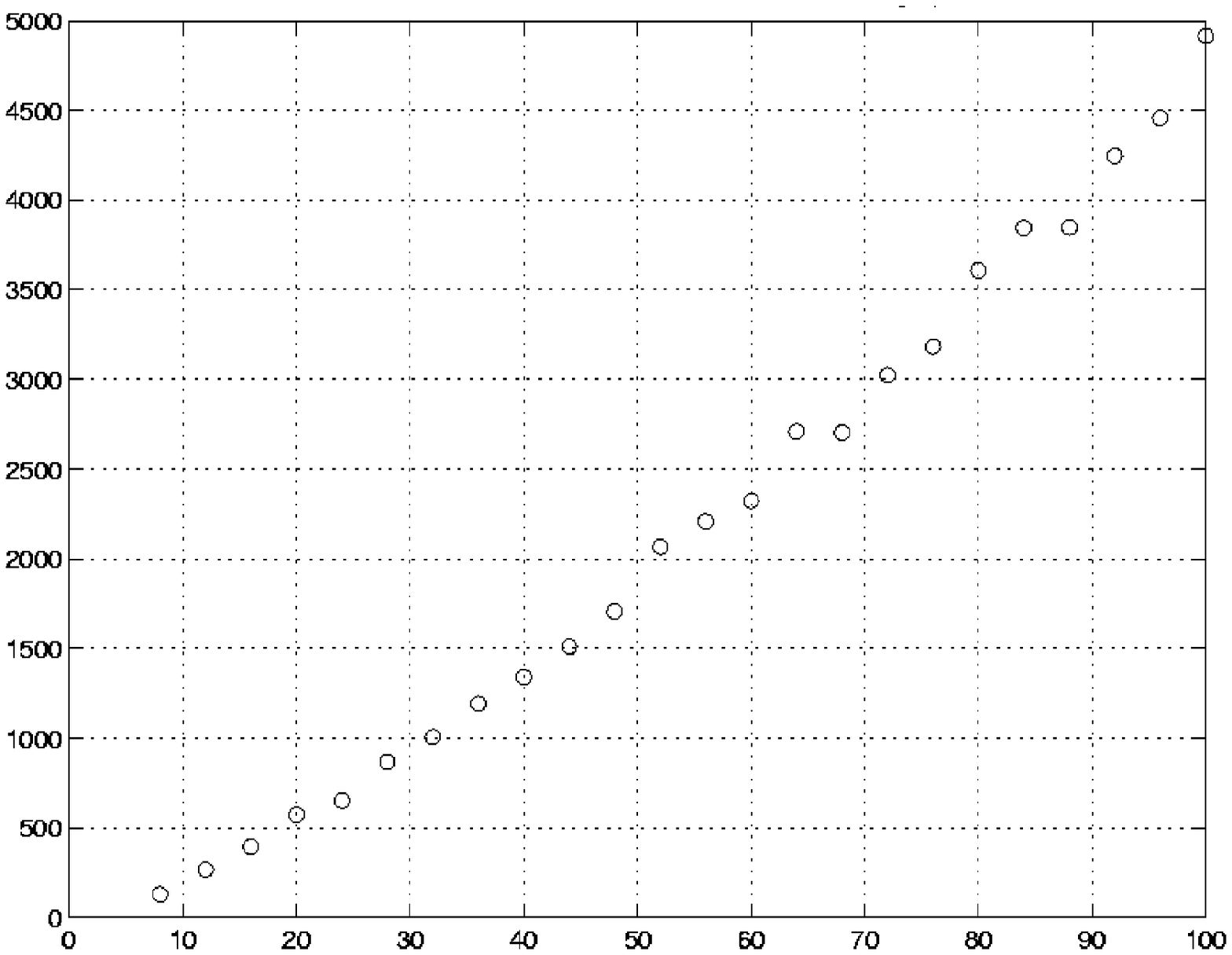, scale=0.6} \caption{\aoc{The number of iterations as a function
of the number of nodes for a line graph.}} \label{simull}
\end{figure}

\aoe{We report here on simulations involving our
algorithm on several natural graphs. \aoc{Figures \ref{simulc} and
\ref{simull} describe the results for a complete graph and a line. Initial conditions were
random integers between $1$ and $30$, and each data point represents
the average of two hundred runs.} As expected, convergence is faster on the complete graph. Moreover,
\aoc{convergence time in both simulations appears to be approximately linear.}

}

\section{Concluding remarks}

\aoc{In this chapter, we have given a deterministic algorithm for averaging which stores a constant
number of bits per each link. Unfortunately, this algorithm works only for static graphs; whether
such an algorithm exists for time-varying graph sequences is still an open question.}

\aoc{As discussed in chapter \ref{why}, low storage is one of the main reasons to pick an averaging 
algorithm over competitors such as flooding. It is therefore interesting to consider just how low
the storage requirements of averaging algorithms are. In the next chapter, we will ask the
analogous question for the problem of computing arbitrary functions.}

\chapter{A framework for distributed function computation \label{ch:function}}

\aoc{We now wish to take a broader perspective and consider general
distributed function computation problems modeled on consensus.}
Indeed, the goal of many multi-agent systems, \green{distributed}
computation \green{algorithms,} and decentralized data fusion
methods is to have a set of \green{nodes compute} \blue{a} common
value based on \green{initial values or observations at each node.
Towards this purpose, the nodes,} which we will \green{sometimes
refer to as agents,} perform some internal computations and
repeatedly communicate with each other. \jnt{The objective of this
chapter is to understand the fundamental limitations and
capabilities of such systems and algorithms when the available
information and computational resources at each node are limited.}

\aoc{Our exposition in this chapter will follow the preprint
\cite{HOT10}, where the results described here have previously
appeared.}

\section{\jnt{Motivation}}\label{se:motiv}

\noindent {\bf (a) Quantized consensus:} Suppose
\green{that} each \jnt{node} begins with an integer value $x_i(0)
\green{\in \{0,\ldots,K\}}$. We would like \green{the \jnt{nodes} to
end up, at some later time, with values $y_i$ that are almost
equal, i.e., $|y_i - y_j| \leq 1$, for all $i,j$, while preserving
the sum of the values, i.e., $\sum_{i=1}^n x_i(0) =
\sum_{\blue{i=1}}^n y_i$.} This is the so-called quantized
\red{averaging} problem, which has received considerable attention
recently; see, \jnt{e.g.,} \cite{KBS07,FCFZ08,ACR07, KM08, BTV08}, \aoc{and we have
talked about it at some length in the previous Chapters \ref{qanalysis} and \ref{ch:constantstorage}.}
It may be viewed as
the problem of computing the function $(1/n) \sum_{i=1}^n x_i$,
rounded to \jnt{an integer value}. 

\smallskip
\noindent {\bf (b) Distributed hypothesis testing} and \green{\bf majority voting:} Consider $n$
sensors \green{interested in deciding} between two hypotheses,
$H_0$ and $H_1$. Each sensor collects measurements and makes a
preliminary decision \red{$x_i\in\{0,1\}$} in favor of one of the
hypotheses. The sensors would like to \green{make a final decision
by majority vote, in which case they need to compute the indicator
function of the event $\sum_{i=1}^n x_i \geq n/2$, in a
distributed way. Alternatively, in a weighted majority vote, they
may be interested in computing the indicator function of \jnt{an event such as}
$\sum_{i=1}^n x_i \geq 3n/4$.} \rt{A variation of this problem
involves the possibility that some sensors \jnt{abstain from the vote, perhaps} due to their inability to gather
sufficiently reliable information.}

\smallskip
\noindent {\bf (c) \aoc{Direction coordination on a ring}:}
\rt{Consider $n$ vehicles placed on a ring, each with some
arbitrarily chosen 
direction of motion (clockwise or counterclockwise). We would
like the $n$ vehicles to agree on a single direction of motion.}
\rt{A variation of this problem was considered in \cite{MBCF07},
where, however, additional requirements on the vehicles were
imposed which we do not consider here. The solution \jnt{provided} in
\cite{MBCF07} was semi-centralized in the sense that vehicles had
unique \jnt{numerical} identifiers, and the final direction of most \jnt{vehicles} was \jnt{set to the
direction of the vehicle} with the \jnt{largest identifier.}  We wonder \jnt{whether} the
direction coordination problem \jnt{can} be solved in a completely
decentralized way. Furthermore, we would like the final direction
of motion to correspond to the initial direction of the majority
of the vehicles: if, say, 90\% of the vehicles are moving
counterclockwise, \jnt{we would like} the other 10\%  to turn around.}
\rt{If we define $x_i$ to be $1$ \jnt{when} the $i$th vehicle is
initially oriented clockwise, and $0$ if it is oriented
counterclockwise, then, coordinating on a direction involves the
distributed computation of the \jnt{indicator function of the event}  $\sum_{i=1}^n x_i \geq
n/2$}.

\smallskip
\noindent {\bf (d) Solitude verification:} \green{This is the
problem of} \jnt{checking whether exactly} \green{one} node has a given state. \green{This problem is of interest if we want
to avoid simultaneous transmissions over a common channel
\cite{GFL83}, or if we want to maintain a single leader (as in
motion coordination  --- see for example \cite{jad}) Given
\aoc{states} $x_i \in
\{\aoc{0},1,\ldots,\blue{K}\}$, \green{solitude verification} is
equivalent to \green{the problem of} computing the binary function
which is equal to 1 if and only if $|\{i: x_i=\aoc{0}\}| =1$.}
\smallskip

There are numerous methods that have been proposed for solving
problems such as the above. Oftentimes, different algorithms involve
different computational capabilities on the part of the \jnt{nodes},
which makes it hard to talk about \jnt{a} ``best'' algorithm. At the
same time, simple algorithms (such as setting up a spanning tree and
\jnt{aggregating} information by progressive summations over the
tree, as in Chapter \ref{why}) are often \jnt{considered
undesirable} \blue{because they require} too much coordination or
global information. \jnt{It should be clear} that a sound discussion
of such issues requires the specification of a precise model of
computation, followed by a systematic analysis of fundamental
limitations under  a given model. This is precisely the objective of
this chapter: \blue{to} propose a particular model, and to
characterize the class of \jnt{functions computable} under this
model.

\subsection{\ora{The features of our model}}\label{se:features}

Our model provides an abstraction for \aoc{common}
requirements for distributed algorithms in the \ora{wireless} sensor network
literature.
\jnt{We
model the nodes as interacting
deterministic finite automata that
exchange messages on a fixed undirected network,
with no time delays or unreliable transmissions.
Some important qualitative features of our model are the following.}

\smallskip

\noindent \textbf{\red{Identical} \jnt{nodes}:} Any two \jnt{nodes} with the
same number of neighbors must run the same algorithm.

\smallskip

\noindent\textbf{Anonymity:} \green{\jnt{A node} can distinguish its
neighbors using its own, private, local identifiers. However,
\jnt{nodes} do not have global identifiers.}

\smallskip

\noindent\textbf{\jnt{Determinism:}}
\jnt{Randomization is not allowed. This restriction is imposed in order to preclude essentially centralized solutions that rely on randomly generated distinct identifiers and thus bypass the anonymity requirement. Clearly, developing algorithms is much harder, and sometimes impossible, when randomization is disallowed.}

\smallskip
\jnt{
\noindent\textbf{Limited memory:} We focus on the case where the nodes can be described by finite automata, and pay special attention to the required memory size. Ideally, the number of memory bits required at each node should be bounded above by a slowly growing function of the degree of a node.}

\smallskip

\noindent\textbf{Absence of global information:} \jnt{Nodes} have no
\green{global information, and do not even have an upper bound on
the total number of nodes. Accordingly, the algorithm that each
\jnt{node} is running is independent of the network size and
topology.}\smallskip

\noindent\textbf{Convergence \jnt{requirements:}} \jnt{Nodes hold an estimated output that
must converge to a desired value which is a function of all nodes' initial
observations or values.}
 \green{In particular, for the case
of discrete outputs,} all \jnt{nodes} must eventually \green{settle on}
the desired value. \green{On the other hand, the \jnt{nodes} do not
need to \jnt{become} aware of such termination, which is anyway impossible
in} the absence of \green{any} global information \cite{ASW88}.

\smallskip

\jnt{\jblue{In this chapter, we
only consider the special case of} \textbf{fixed graph topologies,} where the underlying (and unknown) interconnection graph does not change with time. Developing a meaningful model for the time-varying case and extending our algorithms to that case is \jblue{an interesting topic, but} outside the scope of this thesis.}

\subsection{\jnt{Literature review}}

There is a \jnt{very} large literature on distributed function
computation in related models of computation \cite{BT89, L96}.
\jnt{This literature can be broadly divided into two strands,
although the separation is not sharp: works that address general
computability issues for various  models, and works that focus on
the computation of specific functions, such as the majority function
or the average. We start by discussing the first strand.}

A common model in the distributed computing literature involves the
requirement that \green{all} processes terminate \jnt{once} the
\green{desired output is produced} \jblue{and \aoc{that} nodes become
aware
that termination has occurred.} A consequence of the termination
requirement is that nodes typically need to know the network size
$n$ (or an upper bound on $n$) to compute non-\aoc{trivial} functions. We
refer the reader to \cite{A80, ASW88, YK88, KKB90, MW93} for
\green{some} fundamental results in this setting, \green{and to
\cite{FR03} for} a \jnt{comprehensive} summary of known results.
\ora{Closest to our work is the reference \cite{CS}  which provides
an impossibility result very similar to our Theorem
\ref{unbounded-bound}, for a closely related model computation.}

\ora{The} biologically-inspired ``population algorithm" model
of distributed computation has some features in common with our
model, namely, \jblue{anonymous,} \jnt{bounded-resource} \jnt{nodes,} and
\jblue{no requirement of termination awareness;}
\green{see \cite{AR07} for \jnt{an overview of available}
results.} However, \green{\jnt{this} model involves a
different type of node interactions from the ones we consider;
in particular, \jblue{nodes interact pairwise at times that may be chosen adversarially.}
}

\jnt{Regarding the computation of specific functions, \cite{LB95}
shows} the impossibility of \jnt{majority voting} \jnt{if the nodes}
\jblue{are limited to a binary state.} Some experimental memoryless
algorithms \jnt{(which are not guaranteed to always converge to the
correct answer) have been} proposed \green{in the physics
literature} \cite{GKL78}.  Several papers have quantified the
performance of simple heuristics for computing specific functions,
typically in randomized settings. We refer the reader \blue{to}
\cite{HP01}, \aoc{which} studied simple heuristics \green{for computing}
the
majority function, \jblue{and to \cite{PVV08}, which provides a
heuristic that has guarantees only for the case of complete graphs.}

The large literature on quantized averaging often tends to involve
themes \jnt{similar to those addressed in this chapter}
\cite{FCFZ08, KM08, ACR07, CB08, KBS07}.  \jnt{However, the
underlying models of computation
 are typically more powerful than ours, as they allow for randomization and unbounded memory.
  Closer to the current chapter, \cite{NOOT07}  develops \ora{an algorithm with $O(n^2)$ convergence time}
  for a variant of the quantized averaging problem, but requires unbounded memory.
  Reference \cite{BTV08} provides an algorithm for the particular quantized averaging problem that
   we consider in Section \ref{sec:reduction_to_avg} \jblue{(called in \cite{BTV08} the ``interval consensus problem''),} which uses
\ora{randomization but only} bounded memory (a total of two bits at
each node). \ora{Its convergence time is addressed in \cite{DV10},
but a precise convergence time bound, as a function of $n$, is not
available. Nevertheless, \aoc{it appears} to be significantly higher
than $O(n^2)$.} Similarly, the algorithm in \cite{ZM09} runs in
$O(n^5)$ time for the case of fixed graphs. (However, we note that
\cite{ZM09} also addresses the case of time-varying graphs.) Roughly
speaking, the algorithms in \cite{BTV08, ZM09} work by having
positive and negative ``load tokens'' circulate randomly in the
network until they meet and annihilate each other. Our algorithm
involves a similar idea. However, at the cost of some algorithmic
complexity, our algorithm is deterministic. This allows for fast
progress, in contrast to the slow progress of algorithms that need
to wait until the coalescence time of two independent random walks.}
Finally, a deterministic algorithm for computing the majority
function (and some more general functions) was proposed in
\cite{LBRA04}. However, the algorithm appears to rely on
\aoc{the computation of shortest path lengths}, and thus \aoc{requires unbounded
memory at each node.}

Semi-centralized versions of the problem, in which the nodes
ultimately transmit to a fusion center, have often \jblue{been} considered
in the literature, e.g., for distributed statistical inference
\cite{MK08} \green{or detection} \cite{KLM08}. The papers
\cite{GK05},
 \cite{KKK05}, and \cite{YSG07} consider the complexity of computing a function and
communicating \green{its value} to a sink node. We refer the reader
to the references therein for an overview of \green{existing}
results in such semi-centralized settings. \jblue{However, the
underlying model is fundamentally different from ours, because the
presence of a fusion center violates our anonymity assumption.}

\jblue{Broadly speaking,} our results differ from previous works in
several key respects: (i) Our model, which involves totally
decentralized computation, deterministic algorithms, and constraints
on memory and computation resources at the nodes, but does not
require the nodes to know when the computation is over, \ora{is
different from that considered in \aoc{almost all} of the relevant
literature.}
 (ii) Our focus is on identifying computable
\green{and} non-computable functions \blue{under our model,} and
we achieve a nearly tight separation, \rt{as evidenced by Theorem \ref{unbounded-bound} and Corollary
\ref{cor:approx_set_epsilon}.}

\subsection{\jnt{Summary and Contributions}}

We \green{provide}  a general model of decentralized
anonymous computation \jblue{on fixed graphs,} \jnt{with the features described in Section \ref{se:features},}
and characterize the type of functions of the initial
\green{values} that can be computed.

We prove that if a function is computable under \green{our model,}
then its value \jnt{can only depend} on the \green{frequencies of the
different possible initial values.} For example, if the initial
\green{values} $x_i$ \jnt{are binary,} a computable
function \jnt{can} only depend on $p_0:= | \{i:x_i=0\}|/n$ and
$p_1:=|\{i:x_i=1\}|/n$. In particular, determining the number of
nodes, or \green{whether} at least two nodes have an initial
\green{value} \blue{of} $1$, is \green{impossible.}

Conversely, we prove that if a function only depends on the
\green{frequencies of the different possible initial values (and
is measurable)}, then the function can be approximated
\green{with any given} precision, except possibly on a set of
frequency vectors of  arbitrarily small volume. Moreover, if the
dependence on \green{these frequencies} can be expressed \jblue{through} a
combination of linear inequalities with rational coefficients,
then the function is computable exactly. In particular, the
functions involved in the quantized consensus, distributed
hypothesis testing, \rt{and direction coordination} examples are
computable, whereas the function involved in solitude verification
is not. Similarly, statistical measures such as the standard
deviation \jnt{of the distribution of the initial values} can be approximated with arbitrary
precision.
Finally, we show that with infinite memory, \red{the frequencies
of the different \jnt{initial} values} (i.e., $p_0$, $ p_1$ in the binary case) are
computable \jnt{exactly}, \aoc{thus obtaining a precise characterization of the
computable functions in this case.}

\jnt{The key to our positive results is the algorithm for calculating the (\jblue{suitably} quantized) average of the initial values \aoc{described in the previous Chapter \ref{ch:constantstorage}.}}

\subsection{\rt{Outline}}

In
Section \ref{sec:formal_descr}, we describe formally our model of computation. In Section
\ref{sec:nec_conditions}, we establish necessary conditions for a function
to be computable. In Section \ref{sec:reduction_to_avg}, \jnt{we provide}
sufficient conditions for a function to be
computable or approximable.
\jnt{Our positive results}
rely on an algorithm that keeps track of nodes
with maximal values, and an algorithm that calculates
\jnt{a suitably rounded average of the nodes' initial values; these} \aoc{were described
in the previous Chapter \ref{ch:constantstorage}}.
\aoc{We we end with some concluding remarks, in
Section \ref{sec:conclusions}}.

\section{Formal description of the model}\label{sec:formal_descr}

\jnt{Under our model, a distributed computing system consists of three elements:

\smallskip
\noindent {\bf (a) A network:} A network is a triple $(n,G,{\cal L})$,
where $n$ is the number of nodes, and $G=(V,E)$ is a \aoc{{\it connected undirected}} graph $G=(V,E)$ with $n$
nodes (and no self-arcs). 
We define $d(i)$ as the of node $i$. Finally, $\cal L$ is a \emph{port labeling} which assigns a
\emph{port number} \ora{(a distinct integer in the set $\{\aoc{0,1,}\ldots,d(i)\}$)}  to each outgoing edge of
\jblue{any} node $i$.}

\ora{We are interested in algorithms that work for arbitrary port labelings. However, in the case of wireless networks, port labelings can be assumed to have some additional structure. For example, if two neighboring nodes $i$ and $j$ have coordinated so as to communicate over a distinct frequency, they should be able to coordinate their port numbers as well, so that the port number assigned by any node $i$ to an edge $(i,j)$ is the same as the port number assigned by $j$ to edge $(j,i)$. \aoc{We will say that a network is {\em edge-labeled} if port numbers have this property.} In Section~\ref{sec:nec_conditions} we will note that our negative results also apply to \aoc{edge-labeled networks}.
}

\jnt{
\smallskip\noindent {\bf (b) Input and output sets:}
The input set is a finite set $X=\{0,1,\ldots,K\}$ to which the initial value of each node belongs. The output set is a finite set $Y$ to which the output of each node belongs.}

\jnt{
\smallskip\noindent {\bf (c) An algorithm:} An algorithm is defined as a family of finite
automata $(A_d)_{d = 1,2,\ldots}$, where the automaton $A_d$
describes the behavior of a node with degree $d$.}
The state of the automaton $A_d$ is \green{a} tuple $\jnt{[}x,z,y; (m_1,
\ldots, m_d)]$;  we will call $x \in X$ the
{\it initial value}, $z\in \red{Z_d}$ the {\it internal memory state},
$y\in Y$ the {\it output} or {\it estimated answer}, and $m_1, \ldots, m_d \in
M$ the \jnt{outgoing} {\it messages.} The sets $Z_d$ and $M$ are assumed finite.
\jnt{We allow the cardinality of $Z_d$ to increase with $d$. Clearly, this would be necessary for any algorithm
that needs to store the messages received in \aoc{the} previous time step.}
\jnt{Each automaton $A_d$ is identified with a transition law
from $X \times \blue{Z_d}
\times Y \times M^d$ into itself, which maps each}
$\left[x,z,y;(m_1,\dots,m_d)\right]$ to some
$\left[x, z', y';(m'_1,\dots, m'_d)\right]. $ In words, \jblue{at each iteration, the
automaton takes $x$, $z$, $y$, and incoming messages into account, to create} a new memory state, output, and
\green{(outgoing)} messages, but does not change
the initial \blue{value.}

\smallskip
\jnt{Given the above elements of a distributed computing system, an algorithm proceeds as follows.
For convenience, we assume that the above defined sets $Y$, $Z_d$, and
$M$ contain a special element, denoted by $\emptyset$.
Each} node $i$ begins with an initial value
$x_i\in X$ and implements the automaton $A_{d(i)}$, initialized with $x=x_i$ and
$z=y=m_1=\cdots=m_d = \emptyset$.
\green{We use $S_i(t)=
[ \blue{x_i},y_i(t),z_i(t),m_{i,1}(t), \ldots, m_{i, d(i)}(t)]$
to denote the}
state
of \jblue{node $i$'s} automaton at \jblue{time} $t$.
\jnt{Consider a particular node $i$. Let}
$j_1, \ldots,
j_{d(i)}$ be an enumeration of \jnt{its} neighbors, according to the port numbers. (Thus, $j_k$ is the \jblue{node at the other end of} the $k$th outgoing edge at node $i$.) Let $p_k$
be the port number \jnt{assigned to link $(j_k,i)$ according to the port labeling at node $j_k$. At each time step, node $i$ carries out the following update:}

\begin{small}
\begin{eqnarray*}\left[x_i,
z_i(t+1),y_i(t+1); m_{i,1}(t+1), \ldots, m_{i, d(i)}(\red{t+1}) \right]  & 
 & \\  = A_{d(i) }\left[\jnt{x_i}, z_i(t), y_i(t) ; m_{\blue{j_1, p_1}}(t),
\ldots, m_{j_{d\jnt{(i)}}, p_{d{\jnt{(i)}}}}(t) \right]. &&
\end{eqnarray*}
\end{small}

\vspace{-8pt}
\noindent
In words, the
messages \jnt{$m_{j_{k}, p_{k}}(t)$, $k=1,\ldots,d(i)$,} ``sent'' by the  neighbors of $i$ into the ports
leading to $i$ are used to transition to a new state and
create new messages \jnt{$m_{i,k}(t+1)$, $k=1,\ldots,d(i)$,} that  $i$ ``sends'' to its neighbors at \jblue{time $t+1$.} We say that
\jnt{the algorithm \emph{terminates} if there exists some
$y^*\in Y$ (called the \emph{\green{final output}} of
the algorithm) and} a time $t'$ such that $y_i(t) =
y^*$ for every $i$ and $t\geq t'$.

Consider now a family of functions $(f_n)_{n = 1,2,\ldots}$, \jnt{where
$f_n:X^n\to Y$.} We say that such a
family is \emph{computable} if there exists a family of automata
$(A_d)_{d=1,2,\ldots}$ such that \blue{for any $n$,} for any
\jnt{network $(n,G,{\cal L})$},
and any set of initial conditions
$x_1,\dots,x_{\blue{n}}$, the \jnt{resulting algorithm terminates and the final output} is
$f_{\blue{n}}(x_1,\dots,x_n)$.

\jnt{As an exception to the above definitions, we note that although we primarily focus on the finite case, we will} \jblue{briefly consider
in Section \ref{se:inf}}
function families $(f_n)_{n = 1,2,\ldots}$ {\em computable with infinite memory},  by which
we mean that the internal memory \blue{sets $Z_d$} and \jnt{the} output set $Y$ are
\jnt{countably infinite,} the rest of the model \jnt{remaining the same.}

\jnt{The rest of the chapter focuses on the following general
question:} what \green{families} of functions are computable, and
how can we design a \jnt{corresponding algorithm
$(A_d)_{d=1,2,\ldots}$? To illustrate the nature of our model and
the type of algorithms that it supports, we provide a simple
example.}

\smallskip

\rt{\noindent \textbf{Detection problem:} In this \jnt{problem}, all
nodes \jnt{start with a binary initial value $x_i\in \{0,1\}=X$. We wish to
detect whether at least one node has an initial value equal to $1$.
We are thus dealing with the function family $(f_n)_{n=1,2,\ldots}$, where
$f_n(x_1,\ldots,x_n)=\max\{x_1,\ldots,x_n\}$.
This function family is computable by a family of
automata with binary messages, \jblue{binary} internal state, and
with the following transition rule:}}

\algsetup{indent=2em}
\begin{algorithmic}
\medskip
\IF {$x_i=1$ or $z_i(t) = 1$ or $\jnt{\max}_{j:(i,j)\in E}m_{ji}(t) =
1$}
 \STATE
set $z_i(t+1) = y_i(t+1) =1$ \STATE send $m_{ij}(t+1) = 1$ to
every neighbor $j$ of $i$

\ELSE \STATE set $z_i(t+1) = y_i(t+1) =0$ \STATE send $m_{ij}(t+1)
= 0$ to every neighbor $j$ of $i$

\ENDIF
\end{algorithmic}

\smallskip

\jnt{In the above algorithm, we initialize by setting $m_{ij}(0)$, $y_i(0)$, and $z_i(0)$ to zero instead of the special symbol $\emptyset$.}
One can easily verify that if $x_i=0$ for every $i$,
\jnt{then $y_i(t)=0$ for all $i$ and $t$.}
If on the other hand $x_k=1$
for some $k$, then at each time step $t$, those nodes \jnt{$i$} at distance
less than $t$ from \jnt{$k$} will have \jnt{$y_i(t)=1$.}
\jnt{Thus, for connected graphs, the algorithm will terminate within $n$ steps, with the correct output.}
It is important to note, \jnt{however, that because $n$ is unknown, a node $i$
can never know whether
its current output $y_i(t)$} is the final one. In particular, if
$y_i(t)=0$, node $i$
\jnt{cannot exclude the possibility that $x_k=1$ for some node whose distance from $i$ is larger than $t$.}

\section{Necessary condition for computability}\label{sec:nec_conditions}

\jnt{In this section we establish our main negative result, namely,
that if a function family is computable, then the final output can
only depend on the frequencies of the different possible initial
values. Furthermore, this remains true even if we allow for infinite
memory,} \ora{or restrict to networks in which neighboring nodes
share a common label for the edges that join them. This result is
quite similar to Theorem 3 of \cite{CS}, and so is the proof.
Nevertheless, we provide a proof in order to keep the chapter
self-contained.}

\def\pb{frequency-based}

\jnt{We first need some definitions. Recall that $X=\{0,1,\ldots,K\}$. We let $D$ be the \emph{unit simplex,} that is,} $D =\{(\jnt{p_0},\dots,p_K)\in [0,1]^{\jnt{K+1}} : \sum_{k=\jnt{0}}^Kp_k =1 \}$. We say that a function $h:D\to Y$
\emph{corresponds} to a \jnt{function family $(f_n)_{n=1,2,\ldots}$ if for every $n$ and} every $x\in \blue{X}^n$, we have
$$
f(x_1,\dots,x_n)=
h\left(\aoc{p_0}(x_1,\ldots,x_n), p_1(x_1,\ldots,x_n),\dots,p_K(x_1,\ldots,x_n)\right),
$$
where
\[ p_k(x_1,\ldots,x_n) = |\{ \blue{i} ~|~ x_{\blue{i}} = \blue{k} \}|/n,\] so that
$p_k(x_1,\ldots,x_n)$ is the frequency of occurrence of the initial value $k$.
In this case, we say that the family $(f_n)$ is \emph{\pb.}

\begin{theorem} \label{unbounded-bound}
Suppose that the family $(f_n)$ is computable with infinite memory,
Then, this family is \pb. \ora{The result remains true even if we
only require computability over edge-labeled networks.}
\end{theorem}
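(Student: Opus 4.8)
The plan is to prove directly that whenever two initial configurations have the same frequency vector, a computable family must produce the same final output on both; the common value then defines $h$ on the rational points of $D$, establishing that $(f_n)$ is frequency-based. The entire argument rests on an indistinguishability (covering) principle and never uses finiteness of $Z_d$ or $Y$, so it applies verbatim to the infinite-memory case, and — as I note at the end — to edge-labeled networks.

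First I would set up the covering machinery. Call a port-labeled, value-carrying network $\tilde N$ a \emph{cover} of $N=(n,G,\mathcal L,x)$ if there is a surjection $\varphi$ from the nodes of $\tilde N$ onto those of $N$ that preserves initial values and degrees and respects ports in both directions: if port $p$ at $v$ leads to $v'$, then port $p$ at $\varphi(v)$ leads to $\varphi(v')$, and the reverse port that $v'$ assigns to the edge $(v',v)$ equals the reverse port $\varphi(v')$ assigns to $(\varphi(v'),\varphi(v))$. The key \textbf{Lifting Lemma} states that, running the same automaton family on $\tilde N$ and on $N$, one has $S_v(t)=S_{\varphi(v)}(t)$ for every node $v$ of $\tilde N$ and all $t$. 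This is proved by induction on $t$: the base case holds since both sides initialize with the same value and all-$\emptyset$ auxiliary fields, and the inductive step holds because the update at $v$ reads exactly the messages $m_{v',p}(t)$ sent into its ports, which by the two-sided port condition and the induction hypothesis coincide with the messages read by $\varphi(v)$. Consequently, if the algorithm on $N$ terminates with uniform output $y^*$, then so does the algorithm on $\tilde N$, with the \emph{same} $y^*$.

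Next I record two reductions. \textbf{Symmetry:} $f_n$ is invariant under permutations of its arguments, since relabeling the nodes of a network by a permutation yields an isomorphic network whose execution is identified with the original's, hence has the same uniform output; computability forces that output to be $f_n$ of the permuted values, so $f_n(x)=f_n(x_\sigma)$. \textbf{Blow-up invariance:} $f_{kn}(\underbrace{x,\dots,x}_{k})=f_n(x)$ for every $k\ge 1$. To see this, realize $x$ on a connected, $2$-edge-connected graph $G$ (we are free to choose $G$, as computability is required for all graphs), pick a non-bridge edge $e=(a,b)$ with its ports, take $k$ disjoint copies of $N$, delete the copy of $e$ in each, and cyclically reconnect, joining $a$ in copy $\ell$ to $b$ in copy $\ell+1 \bmod k$ with the same port numbers. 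The result is a connected network that is a genuine cover of $N$ — every node retains the port structure of the base node it copies — with value-multiset equal to $k$ copies of that of $x$. By the Lifting Lemma all its nodes output $f_n(x)$, while computability plus Symmetry say they output $f_{kn}(x,\dots,x)$. Combining: if $x$ (length $n$) and $x'$ (length $m$) share the frequency vector $p$, then the $m$-fold blow-up of $x$ and the $n$-fold blow-up of $x'$ both have length $nm$ with exactly $nm\,p_k$ occurrences of each value $k$, hence are permutations of each other, and
\[ f_n(x)=f_{nm}(\underbrace{x,\dots,x}_{m})=f_{nm}(\underbrace{x',\dots,x'}_{n})=f_m(x'). \]
Thus $f_n(x)$ depends only on $p$; setting $h(p)$ to this common value shows $(f_n)$ is frequency-based.

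The main obstacle, conceptually, is to resist the tempting route of building a single network covering \emph{both} $N$ and $N'$: a value-preserving common cover forces the initial values to be periodic along covering walks, which is incompatible with two independently prescribed value multisets (already visible when both graphs are cycles, where periodicity collapses to constancy). The argument above sidesteps this entirely by covering only a \emph{single} network via blow-ups, where value-preservation is automatic, and transferring between the two configurations through permutation-invariance of $f$. The remaining delicate points I would treat carefully are the precise two-sided port bookkeeping that makes the Lifting Lemma faithful to the message-passing update, and the verification that the cyclic reconnection yields a \emph{connected} cover — hence a legal network, since the model admits only connected graphs — while preserving the edge-labeled property, which is exactly what upgrades the result to edge-labeled networks.
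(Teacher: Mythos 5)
Your proposal is correct and follows essentially the same route as the paper: indistinguishability of executions under port-respecting identifications, permutation invariance of $f_n$ via relabelings of a complete graph, invariance under self-concatenation, and the same final step of comparing $x^m$ with $y^n$ through a permutation. The only real difference is cosmetic: where the paper proves the concatenation lemma by comparing an $n$-node ring with periodic labels to its $mn$-node cyclic cover, you prove the same fact via a general $k$-fold covering construction (cutting a non-bridge edge in $k$ copies and reconnecting them cyclically), of which the paper's ring argument is the special case, and both constructions share the same minor restriction to $n\geq 3$.
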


The following \jnt{are some applications of Theorem \ref{unbounded-bound}.}

\def\texitem#1{\par
\noindent\hangindent 25pt
\hbox to 25pt {\hss #1 ~}\ignorespaces}

\texitem{(a)}
The parity function $\sum_{i=1}^n x_i ~ ({\rm mod} \ k)$ is not
computable, for any $k>\aoc{1}$.
\texitem{(b)}
In a binary setting \blue{$(X=\{0,1\})$}, checking whether the number of nodes with
$x_i=1$ is \jnt{larger than or equal to} the number of nodes with $x_i=0$ \jnt{plus 10} is
not computable.
\texitem{(c)}
Solitude verification, i.e.,
checking whether $|i: \{x_i=0\}|=1$, is not computable.
\texitem{(d)}
An aggregate difference function such as
$\sum_{i<j} |x_i - x_j|$ is not computable, even \jnt{if it is to be} \green{calculated modulo $k$.}

\subsection{Proof of Theorem \ref{unbounded-bound}}

\ora{The proof of Theorem \ref{unbounded-bound} involves a particular degree-two network (a ring), in which all port numbers take values in the set $\{\aoc{0},1,2\}$, and in which any two
edges $(i,j)$ and $(j,i)$ have the same port number.
The proof proceeds through}
\jnt{a sequence of intermediate results, starting
with} the following lemma, which can be easily proved
by induction on time; \jnt{its proof is omitted.}

\begin{lemma} \label{switching}
Suppose that $G=(\{1,\ldots,n\},E)$ and $G'=(\{1,\ldots,n\},E')$
are isomorphic; that is, there exists a permutation $\pi$ such
that $(i,j) \in E$ if and only if $(\pi(i),\pi(j)) \in E'$.
Furthermore, suppose that the \green{port label at node $i$ for the edge leading to} $j$ in $G$ is the
same as the \green{port label at node $\pi(i)$ for the edge leading to} $\pi(j)$ in $G'$.  Then, \green{the state
$S_i(t)$ resulting} from \blue{the initial values $x_1,\ldots,x_n$} \jnt{on}
the graph $G$ is the same as \blue{the state} $S_{\pi(i)}(t)$
\blue{resulting from the initial values $x_{\jnt{\pi^{-1}}(1)},\ldots,x_{\jnt{\pi^{-1}}(n)}$}
 \jnt{on} the graph $G'$.
\end{lemma}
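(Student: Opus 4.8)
The plan is to prove the statement by induction on the time step $t$, establishing the stronger-looking but equivalent claim that $S_i(t) = S'_{\pi(i)}(t)$ for every node $i$ and every $t$, where I reserve $S_i(t)$ for the state of node $i$ in the computation on $G$ with initial values $x_1,\ldots,x_n$, and introduce $S'_j(t)$ for the state of node $j$ in the computation on $G'$ with initial values $x'_j = x_{\pi^{-1}(j)}$ (so that node $\pi(i)$ in $G'$ starts with $x_i$). The base case $t=0$ is immediate: since $\pi$ is a graph isomorphism it preserves degrees, so $d_{G'}(\pi(i)) = d_G(i) =: d(i)$, and hence both $i$ and $\pi(i)$ run the same automaton $A_{d(i)}$. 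Node $i$ is initialized with value $x_i$, while node $\pi(i)$ is initialized with $x'_{\pi(i)} = x_{\pi^{-1}(\pi(i))} = x_i$; the remaining components $z$, $y$, and the outgoing messages are all set to the common symbol $\emptyset$. Therefore $S_i(0) = S'_{\pi(i)}(0)$.

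For the inductive step I would assume $S_i(t) = S'_{\pi(i)}(t)$ for all $i$ and deduce the same equality at $t+1$. The only point requiring care is matching the incoming messages, and this is precisely where the port-label hypothesis is used. Following the update rule, enumerate the neighbors $j_1,\ldots,j_{d(i)}$ of $i$ in $G$ by their port numbers at $i$, so that $j_k$ sits at port $k$, and let $p_k$ be the port number that node $j_k$ assigns to the edge $(j_k,i)$; then the message arriving at $i$ through port $k$ at time $t$ is $m_{j_k,p_k}(t)$. The assumption that the port label at $i$ for the edge toward $j_k$ in $G$ equals the port label at $\pi(i)$ for the edge toward $\pi(j_k)$ in $G'$ shows that in $G'$ the port-$k$ neighbor of $\pi(i)$ is exactly $\pi(j_k)$; applying the same assumption to the edge $(j_k,i)$ shows that $\pi(j_k)$ assigns port number $p_k$ to the edge toward $\pi(i)$. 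Hence the message reaching $\pi(i)$ through port $k$ at time $t$ in $G'$ is $m'_{\pi(j_k),p_k}(t)$, and since the induction hypothesis $S_{j_k}(t) = S'_{\pi(j_k)}(t)$ includes equality of the outgoing-message components, we get $m_{j_k,p_k}(t) = m'_{\pi(j_k),p_k}(t)$ for each $k$.

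At this stage the conclusion is immediate: nodes $i$ and $\pi(i)$ feed identical arguments into the common transition law $A_{d(i)}$, namely the same current tuple $[x,z,y;(m_1,\ldots,m_{d(i)})]$ by the induction hypothesis and the same vector of incoming messages just matched, so they produce identical successor tuples. This yields $S_i(t+1) = S'_{\pi(i)}(t+1)$ and closes the induction. The main obstacle, such as it is, is entirely bookkeeping: one must keep clearly separate the port number $k$ that $i$ assigns to the edge toward $j_k$ from the port number $p_k$ that $j_k$ assigns to the edge toward $i$ (a message being indexed by the \emph{sender's} local port), and verify that the isomorphism-plus-labeling hypothesis preserves both of these indices simultaneously so that the incoming message vectors line up slot by slot.
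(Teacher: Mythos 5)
Your proof is correct and is exactly the argument the paper has in mind: the paper omits the proof of this lemma, noting only that it ``can be easily proved by induction on time,'' and your induction on $t$ --- with the careful matching of the sender-side port indices $p_k$ via the labeling hypothesis, which is the only nontrivial bookkeeping step --- fills in that omitted proof correctly.
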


\begin{lemma} \label{symmetry}
Suppose that the family \jnt{$(f_n)_{n=1,2,\ldots}$} is computable with infinite memory
\aoc{on edge-labeled networks}. Then, each $f_i$
is invariant under permutations of its arguments.
\end{lemma}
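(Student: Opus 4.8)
The plan is to establish invariance of $f_n$ under an arbitrary permutation $\sigma$ of $\{1,\ldots,n\}$ by exhibiting, for each such $\sigma$, two networks related by a label-preserving isomorphism $\pi=\sigma$, and then invoking Lemma \ref{switching}. The natural vehicle is the complete graph $K_n$: every permutation of its vertices is an automorphism, so $K_n$ supplies the full symmetric group $S_n$ as its automorphism group. (A ring would only furnish the dihedral group, which is insufficient for $n\geq 4$, so a genuinely richer graph is needed for \emph{full} permutation invariance.) Since $K_n$ is connected, it is an admissible network under our model, and the fixed automaton family $(A_d)_{d=1,2,\ldots}$ must compute $f_n$ on it, with each node using $A_{n-1}$.

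Concretely, I would fix $n$ and $\sigma\in S_n$, let $G=(\{1,\ldots,n\},E)$ be $K_n$ equipped with an arbitrary edge-labeling $\mathcal{L}$, and construct a second copy $G'=K_n$ with an edge-labeling $\mathcal{L}'$ defined by declaring the port number assigned by node $\sigma(i)$ to the edge $(\sigma(i),\sigma(j))$ in $G'$ to equal the port number assigned by node $i$ to the edge $(i,j)$ in $G$. By construction, $\sigma$ is then a label-preserving isomorphism from $G$ to $G'$ in exactly the sense required by Lemma \ref{switching}, and the isomorphism condition $(i,j)\in E \Leftrightarrow (\sigma(i),\sigma(j))\in E'$ is trivial since both graphs are complete. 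A short verification would confirm that $\mathcal{L}'$ is a legitimate edge-labeling: the consistency of $\mathcal{L}$ (the two endpoints of an edge agreeing on its port number) transfers to $\mathcal{L}'$, because $\sigma$ carries the edge $\{i,j\}$ to $\{\sigma(i),\sigma(j)\}$ while preserving the shared port number. This is precisely what lets the argument survive the weaker hypothesis of computability over edge-labeled networks.

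With this setup, I would apply Lemma \ref{switching} with $\pi=\sigma$: the state $S_i(t)$ produced by running the algorithm on $G$ with initial values $(x_1,\ldots,x_n)$ coincides, for every $i$ and $t$, with the state $S_{\sigma(i)}(t)$ produced by running on $G'$ with initial values $(x_{\sigma^{-1}(1)},\ldots,x_{\sigma^{-1}(n)})$. By computability, the run on $G$ terminates with common final output $f_n(x_1,\ldots,x_n)$, and the run on $G'$ terminates with common final output $f_n(x_{\sigma^{-1}(1)},\ldots,x_{\sigma^{-1}(n)})$. Since node $\sigma(i)$ on $G'$ reproduces the entire state trajectory of node $i$ on $G$, their limiting outputs must agree, yielding $f_n(x_1,\ldots,x_n)=f_n(x_{\sigma^{-1}(1)},\ldots,x_{\sigma^{-1}(n)})$. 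As $\sigma$, and hence $\sigma^{-1}$, ranges over all of $S_n$, this is exactly the claimed invariance of $f_n$ under arbitrary permutations of its arguments.

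The only real subtlety, and thus the step I expect to require the most care, is the bookkeeping in the middle paragraph: defining $\mathcal{L}'$ so that the port-label matching hypothesis of Lemma \ref{switching} holds verbatim, and checking that $\mathcal{L}'$ remains edge-consistent so that the construction is valid even when the hypothesis is restricted to edge-labeled networks. Everything else is a direct transcription of Lemma \ref{switching} together with the definition of termination and computability; no estimates or analytic arguments are involved.
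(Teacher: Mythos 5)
Your proof is correct and follows essentially the same route as the paper: run the algorithm on the complete graph under two edge-labelings related by the permutation, and invoke Lemma \ref{switching} to identify the two state trajectories and hence the two final outputs. The only cosmetic difference is that you treat an arbitrary permutation $\sigma$ directly, whereas the paper treats transpositions $\pi_{ij}$ and then uses the fact that transpositions generate the symmetric group; both are equally valid, and your label-pushforward construction (including the check that edge-consistency is preserved) is exactly the bookkeeping the paper's proof relies on implicitly.
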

\begin{proof}
\blue{Let ${\pi_{ij}}$ be \jnt{the} permutation that swaps $i$ with $j$ \jnt{(leaving the other nodes intact)}; with a slight abuse of notation, we also denote by $\pi_{ij}$} the mapping from \blue{$X^n$ to $X^n$} that swaps the
$i$th and $j$th elements of a vector.
\jnt{(Note that $\pi_{ij}^{-1}=\pi_{ij}$.)}
We show that for all $x \in
\blue{X^n}$, $f_{\blue{n}}(x) = f_{\blue{n}}( \pi_{ij}( x))$.

We run our \green{distributed} algorithm \green{on} the $\blue{n}$-node
\aoc{complete graph with an edge labeling.} \aoc{Note that at least one
edge labeling for the complete graph exists: for example, nodes $i$ and $j$ can use port number $(i+j) ~{\rm mod }~ n$ for the edge connecting them.}. Consider two different \jnt{sets of} initial
\jnt{values, namely the vectors  (i) $x$,  and (ii) $\pi_{ij}(x)$.
Let the port labeling in} case (i) be arbitrary;
in case (ii), let the \jnt{port labeling be}
\blue{such that the conditions in Lemma \ref{switching} are satisfied  (which is easily accomplished).}
Since the \jnt{final value} is $f(x)$ \jnt{in case (i)} and
$f(\pi_{ij}(x))$ \jnt{in case (ii)}, we obtain $f(x)=f(\pi_{ij}(x))$.
\blue{Since the permutations $\pi_{ij}$ generate the group of permutations, permutation invariance follows.}
\end{proof}

Let \blue{$x\in X^n$.}  We will \green{denote by} $x^2$ the
concatenation of $x$ with itself, and, generally, \jnt{by} $x^k$ the
concatenation of $k$ copies of $x$. We now prove that
self-concatenation does not affect the value of a computable
family of functions.

\begin{lemma} \label{repetition} Suppose that the family
\jnt{$(f_n)_{n=1,2,\ldots}$} is computable with
infinite memory \aoc{on edge-labeled networks}. Then, \red{for every $n\geq 2$}, every sequence
\blue{$x\in X^n$}, and \green{every positive} integer~$m$, \[ f_n(x) = f_{mn} (x^m).\]
\end{lemma}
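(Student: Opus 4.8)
The plan is to prove $f_n(x)=f_{mn}(x^m)$ by running the computable algorithm on two rings—one on $n$ nodes carrying the initial values $x$, and one on $mn$ nodes carrying the periodic values $x^m$—and showing that corresponding nodes pass through identical states at every time step. Since the family is computable on edge-labeled networks, we are free to choose any edge-labeled network for each side of the identity, and rings are the natural choice because every node has degree $2$, so both executions are governed by the single automaton $A_2$; this is what makes a state-by-state comparison across graphs of different sizes meaningful.

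First I would set up the networks and their labelings. Take the ring $R_N$ on nodes $\{0,1,\ldots,N-1\}$ with edges $e_j=(j,\,j+1 \bmod N)$. Choose a proper cyclic edge-coloring $c_0,c_1,\ldots,c_{n-1}$ of $R_n$ with port numbers in $\{0,1,2\}$ (such a coloring exists because cycles are $3$-edge-colorable, and cyclicity gives $c_{n-1}\neq c_0$). Use this directly as the edge-consistent port labeling of $R_n$, and extend it periodically to $R_{mn}$ by assigning $e_j$ the color $c_{j \bmod n}$. The seam condition $c_{n-1}\neq c_0$ ensures the extension is again a proper, edge-consistent labeling of $R_{mn}$, and by construction node $i+kn$ of $R_{mn}$ assigns exactly the same two port numbers to its incident edges as node $i$ of $R_n$ does. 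Together with the periodicity $(x^m)_{i+kn}=x_i$, this makes the local port-labeled views of node $i$ in $R_n$ and node $i+kn$ in $R_{mn}$ indistinguishable.

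The heart of the argument is then an induction on $t$ establishing that, for all $i\in\{0,\ldots,n-1\}$ and $k\in\{0,\ldots,m-1\}$, the state $S_i(t)$ of node $i$ in the $R_n$-execution equals the state $S_{i+kn}(t)$ of node $i+kn$ in the $R_{mn}$-execution. The base case holds because corresponding nodes start with equal initial values and all remaining state components are initialized uniformly to $\emptyset$. For the inductive step I would use that $A_2$ determines $S_i(t+1)$ solely from $S_i(t)$ and the two messages arriving at $i$'s matching ports; by the induction hypothesis these incoming messages coincide in the two executions—the neighbors $i\pm 1$ of $i$ correspond to the neighbors $i\pm 1 + kn$ of $i+kn$, and the port numbers were arranged to agree—so $A_2$ yields identical successor states. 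This is the same mechanism underlying Lemma~\ref{switching}, now applied across two rings of different sizes rather than between isomorphic graphs of the same size. Since computability forces both executions to terminate, with common outputs $f_n(x)$ and $f_{mn}(x^m)$ respectively, and since the induction gives $y_i(t)=y_{i+kn}(t)$ for all $t$, the two stabilized outputs must be equal, which is the claim.

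The main obstacle, and the only genuinely delicate point, is the labeling construction: one must produce labelings of $R_n$ and $R_{mn}$ that are simultaneously edge-consistent (so that the weaker ``edge-labeled'' hypothesis of Theorem~\ref{unbounded-bound} suffices) and periodic with period $n$ (so that corresponding nodes have matching local views); the cyclic $3$-edge-coloring together with its periodic lift accomplishes exactly this. The only case not covered by the ring construction is the degenerate $n=2$, where the two-node connected graph is a single edge of degree $1$ rather than a degree-$2$ ring; this boundary case requires a minor separate argument, which I would treat at the end after the main $n\geq 3$ construction is in place.
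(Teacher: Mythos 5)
Your proposal follows essentially the same route as the paper's own proof: two rings of $n$ and $mn$ nodes with a period-$n$, edge-consistent port labeling (the paper uses the explicit labels $0,1,2,1,2,\ldots$ repeated $m$ times, which is an instance of your cyclic $3$-edge-coloring), followed by induction on $t$ showing that nodes congruent mod $n$ receive identical messages through identically labeled ports and hence remain in identical states, so the terminal outputs coincide. Your explicit flagging of the degenerate $n=2$ case is actually a point of greater care than the paper, whose ring construction likewise presupposes $n\geq 3$.
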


\begin{proof}
Consider a ring of \jnt{$n$ nodes,} where the $i$th \jnt{node}
\aoc{clockwise} begins with the $i$th element of $x$; and consider
a ring of \jnt{$mn$ nodes}, where the \jnt{nodes}
$i,i+n,i+2n,\ldots$ (\aoc{clockwise}) begin with the $i$th element
of $x$. \aoc{Suppose that the labels in the first ring are $0,1,2,1,2,\ldots$.
That is, the label of the edge $(1,2)$ is $0$ and the labels of the subsequent edges
alternate between $1$ and $2$. In the second ring, we simply repeat $m$ times the labels in
the first ring. See Figure \ref{rings} for an example with
$n=5$, $m=2$.}

\begin{center}
\begin{figure}[h] \hspace{1cm}
\begin{minipage}[t]{.45\textwidth}
  \begin{center}        \epsfig{file=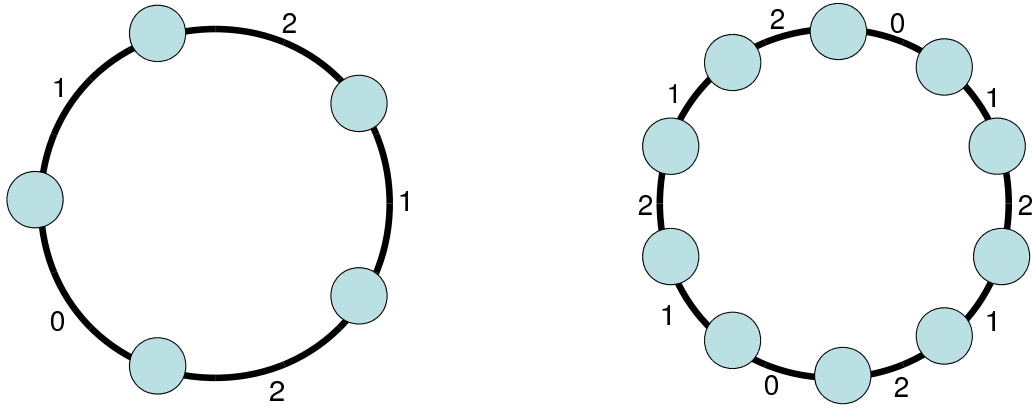, scale=0.55}
  \end{center}
\end{minipage}
\caption{Example of two situations that are \jnt{algorithmically} \blue{indistinguishable}. \ora{The numbers next to each edge are the edge labels.}
} \label{rings}
\end{figure}   \end{center}

Initially, the state \jnt{$S_i(t)=
[ \blue{x_i},y_i(t),z_i(t),m_{i,1}(t), m_{i, 2}(t)]$, with $t=0$,} of node $i$ in the first ring is exactly the
same as the state of the nodes \blue{$j=i,i+n,i+2n,\ldots$} in the second ring. We show by induction \green{that this property} must hold at all
times $t$. \blue{(To keep notation simple, we assume, without any real loss of generality,
 that $i\neq 1$ and $i\neq \aoc{n}$.)}

Indeed, suppose this \green{property} holds up to time $t$. At time $t$, node $i$ in
the first ring receives a message from node $i-1$ and a message
from node $i+1$; and in the second ring, node \red{$j$} satisfying $\red{j \ (\mbox{mod } n)  } =
i$ receives one message from $j-1$ and $j+1$.
Since $\red{j-1 \ (\mbox{mod } n)  } =
i-1 \aoe{\ (\mbox{mod } n) }$ and $\red{j+1 \ (\mbox{mod } n)  } =
i+1 \aoe{\ (\mbox{mod } n)}$,
the states of \green{$j-1$ and $i-1$} are identical at time $t$,
\green{and similarly for}
$j+1$ and $i+1$. Thus, \aoc{because of periodicity of the edge labels, nodes} $i$ (in the first ring) and $j$ (in the second ring) \aoe{receive
identical messages
\aoc{through identically labeled ports}} at time $t$. Since $i$ and $j$ were in the same state at time $t$, they
must be in the same state at time $t+1$. This proves that they are \jnt{always}
in the same state.
It follows that $y_i(t) = y_j(t)$ for all $t$, whenever
$\red{j \ (\mbox{mod } n)  } =
i$, and therefore $f_n(x)=f_{mn}(x^m)$.
\end{proof}

\begin{proof}[Proof of Theorem \ref{unbounded-bound}]
Let $x$ and $y$ be two sequences of $n$ and $m$
elements, respectively,  such that $p_k(x_1,\dots,x_n)$ \jnt{and} $p_k(y_1,\dots,y_m)$ \jnt{are equal to a common value} $\hat
p_k$, for $\jnt{k\in X}$; \jnt{thus,} the number of occurrences
of $k$ in $x$ and $y$ are $n\hat p_k$ and $m\hat
p_k$, respectively. Observe that \blue{for any $k\in X$, the vectors}  $x^m$ and $y^n$ have the same number $mn$ of
elements, and both contain $mn\hat p_k$ occurrences of \green{$k$.}
The sequences $y^n$ and $x^m$ can thus be obtained
from each other by a permutation, which by \jnt{Lemma}
\ref{symmetry} implies that $f_{nm}(x^m) = f_{nm}(y^n)$. From \jnt{Lemma} \ref{repetition}, \jnt{we have} that $f_{nm}(x^m) = f_n(x)$
and $f_{mn}(y^n)=f_m(y)$. \jnt{Therefore,} $f_n(x) = f_m(y)$. This proves
that the value of $f_n(x)$ is determined \aoc{by the values of $p_k(x_1,\ldots,x_n), k = 0,1, \ldots, K$.}
\end{proof}


\section{Reduction of generic functions to the computation of averages}\label{sec:reduction_to_avg}

\jnt{In this section, we turn to positive results, aiming at a converse of
Theorem
\ref{unbounded-bound}.
The centerpiece of our development is Theorem \ref{thm:average_computation2}, \aoc{proved in the previous chapter},  which states that a certain average-like function is computable. Theorem \ref{thm:average_computation2} then implies the computability of a large class of functions, yielding an approximate converse to Theorem
\ref{unbounded-bound}.}

\jnt{The average-like functions that we consider correspond to the ``interval consensus'' problem studied in \cite{BTV08}. They are defined as follows.
Let $X = \{0,\ldots,K\}$. Let $Y$ be the following set of single-point sets and intervals:
\[ Y = \{ \{0\}, (0,1), \{1\}, (1,2),  \ldots,
\{K-1\}, (K-1,K), \{K\} \}\]
(or equivalently, an indexing of this finite collection of intervals).
For any $n$, let $f_n$ be the function that maps $(x_1,x_2,\ldots,x_n)$ to the element
of $Y$ which contains the average $\sum_i x_i/n$. We refer to the function family $(f_n)_{n=1,2,\ldots}$ as the \emph{interval-averaging} family.}

\jnt{The motivation for this function family comes from the fact that the exact average $\sum_i x_i/n$ takes values in a countably infinite set, and cannot be computed when the set $Y$ is finite. In the quantized averaging problem considered in the literature, one settles for an approximation of the average. However, such approximations do not necessarily define a single-valued function from $X^n$ into $Y$. In contrast, the above defined function $f_n$ is both single-valued and delivers an approximation with an error of size at most one. Note also that once the interval-average is computed, we can readily determine the value of the average rounded down to an integer.}

\begin{theorem}\label{thm:average_computation2} \jnt{The interval-averaging function family}  is computable.
\end{theorem}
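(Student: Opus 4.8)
The plan is to observe that this theorem is, modulo a change of model, exactly Theorem \ref{thm:average_computation} from the previous chapter, and then to verify that the algorithm constructed there qualifies as a computation under the formal finite-automaton model of Section \ref{sec:formal_descr}. Thus the real work is not in re-deriving the algorithm but in checking model compliance. First I would recall the algorithm: it combines the pebble-matching (load-balancing) scheme of Section \ref{sec:comput_average} with the maximum- and minimum-tracking subroutines of Section \ref{sec:max_tracking}. I would then define the required family of automata $(A_d)_{d=1,2,\ldots}$ by reading off, from the flowcharts of Figures \ref{fig:maxtracking} and \ref{fig:descr_avg_algo}, the transition rule used by a node of degree $d$. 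The state of $A_d$ packages the pebble count $u_i\in\{0,\ldots,K\}$, the tracking variables $M_i,P_i$ (for both the max and the min versions), the mode in $\{\text{Free},\text{Blocked}\}$, the pointers $\Rin_i,\Rout_i$, and the incoming and outgoing message slots; since a product of finite automata is again a finite automaton, running the tracking and matching routines in parallel poses no difficulty.

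Next I would check each of the model's requirements in turn. First, identical nodes and anonymity: the flowcharts prescribe the same rule for all nodes of a given degree and reference neighbors only through port labels; the sole point of apparent non-determinism---selecting a neighbor attaining the tracked maximum in step O2---is removed by the convention of choosing the smallest port label, as already noted after Figure \ref{fig:maxtracking}. Hence the algorithm is deterministic and anonymous, and works for an arbitrary port labeling. Second, finite and degree-bounded memory: the pointers $P_i,\Rin_i,\Rout_i$ each take one of at most $d(i)+1$ values, and $u_i,M_i$ take values in the finite set $\{0,\ldots,K\}$, so each $Z_d$ is finite (of size growing with $d$, which the model permits), while the message alphabet $M=\{\text{restart}\}\cup\{(\text{estimate},y),(\text{request},r),(\text{accept},w):y,r,w\in\{0,\ldots,K\}\}$ is a single finite set independent of $d$. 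Note also that, because the present model permits messages drawn from a finite set $M$ rather than only single bits, we do not even incur the $\log K$ slowdown of the previous chapter. Third, absence of global information: no transition rule refers to $n$. Fourth, convergence without termination detection: Theorem \ref{thm:average_computation}, via Proposition \ref{prop:final_proof_average}, guarantees that on every connected graph the pebble counts settle to values differing by at most one while conserving their sum, and that the tracking subroutines then let every node correctly identify the interval containing $\sum_i x_i/n$ and hold that output forever, while nodes are never required to detect that this has occurred---exactly as the model allows.

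Having verified these properties, I would conclude that the family $(A_d)$ computes the interval-averaging family $(f_n)$ in the sense of Section \ref{sec:formal_descr}, establishing the theorem. The main obstacle in this argument is purely one of model-checking rather than new mathematics: one must be careful that every quantity the algorithm keeps or transmits lives in a finite set (with internal memory allowed to depend on degree but messages drawn from a single degree-independent alphabet), and that the only non-determinism in the original description can be legitimately eliminated using local port labels without disturbing the correctness and timing guarantees of Theorem \ref{thm:average_computation}.
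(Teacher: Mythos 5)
Your proposal is correct and matches the paper's own treatment: the paper proves this theorem simply by pointing back to Theorem \ref{thm:average_computation} of Chapter \ref{ch:constantstorage}, whose pebble-matching algorithm combined with the maximum/minimum tracking subroutines is exactly the construction you recall. Your additional model-compliance checks (degree-indexed finite automata, port-label tie-breaking to remove the non-determinism in O2, a degree-independent message alphabet, and convergence without termination detection) are precisely the points the paper leaves implicit, so your argument is the same proof carried out with more care.
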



\jnt{The proof of Theorem \ref{thm:average_computation2} (and the
corresponding algorithm) was given in Chapter \ref{ch:constantstorage}} \aoc{In this section, we show that the
computation of a broad class of functions can be reduced to
interval-averaging.}

\jnt{Since only \pb\ function families can be computable (Theorem \ref{unbounded-bound}), we can restrict attention to the corresponding
functions $h$.}
\green{We
will say that a function $h$ on the \jnt{unit simplex $D$} is
\emph{computable} if it it corresponds to a \pb\
computable family $(f_n)$.  The level sets of $h$ are defined as
the sets $L(y) = \{ \red{p} \in D ~|~ h(p) = y\}$, for $y \in Y$.}

\begin{theorem}[Sufficient condition for computability] \label{bounded-alg}
Let $h$ be a function from the \jnt{unit simplex} $D$ to $Y$. \green{Suppose that} every level set $L(y)$ can be written as a
finite union,
\[ L(y) =  \bigcup_k C_{i,k},\] where each $C_{i,k}$ can in turn be
written as a finite intersection of \green{linear} inequalities \green{of the form}
\[ \aoc{\alpha_0 p_0} + \alpha_1 p_1 + \alpha_2 p_2 + \cdots + \alpha_K p_K \leq \alpha,
\] or
\[ \aoc{\alpha_0 p_0} + \alpha_1 p_1 + \alpha_2 p_2 + \cdots + \alpha_K p_K < \alpha,
\]
with rational \green{coefficients}  $\alpha,\aoc{\alpha_0}, \alpha_1,\ldots,\alpha_K$. Then, $h$
is computable.
\end{theorem}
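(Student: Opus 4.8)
The plan is to reduce the evaluation of $h$ to a finite number of parallel invocations of the interval-averaging algorithm of Theorem \ref{thm:average_computation2}. The starting observation is that each linear form appearing in the description of the level sets is, after a change of variables performed locally at each node, an average. Indeed, writing $p_k = p_k(x_1,\ldots,x_n) = |\{i : x_i = k\}|/n$, we have for any coefficients $\alpha_0,\ldots,\alpha_K$ that
\[ \sum_{k=0}^K \alpha_k p_k = \frac{1}{n}\sum_{i=1}^n \alpha_{x_i}, \]
so that testing a linear inequality in the $p_k$ is the same as testing an inequality on the average of the transformed node values $\alpha_{x_i}$. Since interval-averaging lets the nodes compute which unit interval this average lies in, and since the coefficients are rational, this will turn out to be enough.

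First I would show how a single inequality $\sum_k \alpha_k p_k \leq \alpha$ (and similarly with $<$) can be decided by every node. As the coefficients are rational, choose a common denominator $Q>0$ and set $a_k = Q\alpha_k$, $a = Q\alpha$, which are integers, so the inequality becomes $\sum_k a_k p_k \leq a$. Let $C = \max(0,-\min_k a_k)$ and $b_k = a_k + C \geq 0$; using $\sum_k p_k = 1$ we get $\sum_k a_k p_k = \sum_k b_k p_k - C$, so the inequality is equivalent to
\[ \mu := \frac{1}{n}\sum_{i=1}^n b_{x_i} \leq a + C. \]
Now each node $i$ replaces its initial value $x_i$ by $b_{x_i} \in \{0,1,\ldots,K''\}$, where $K'' = \max_k b_k$, and the nodes run the interval-averaging algorithm on these new values. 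By Theorem \ref{thm:average_computation2} the algorithm terminates with every node holding the element of the partition $\{0\},(0,1),\{1\},\ldots,\{K''\}$ that contains $\mu$. The crucial point is that the threshold $a+C$ is an \emph{integer}; consequently the truth value of $\mu \leq a+C$ (and of $\mu < a+C$) is constant across each interval of this partition. On $\{m\}$ we have $\mu = m$, while on $(m,m+1)$ every value satisfies $m<\mu<m+1$, and since $a+C$ is an integer it is either $\leq m$ or $\geq m+1$; in all cases the inequality resolves the same way throughout the interval. Thus from its final output each node decides the inequality correctly, and the (finitely many) cases where $a+C$ falls outside $[0,K'']$ are trivially true or false and can be hardcoded into the automaton.

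The last step combines these tests. Because $Y$ is finite and each level set $L(y)$ is a finite union of finite intersections of such inequalities, only finitely many distinct inequalities $\ell_1,\ldots,\ell_L$ occur in the entire description of $h$, and $L$ depends only on $h$, not on $n$. I would run $L$ independent copies of the above computation in parallel, one per inequality, the $j$th copy using its own node transformation $x_i \mapsto b^{(j)}_{x_i}$. The product of $L$ finite automata is again a finite automaton, and since $L$ is a fixed constant the per-node memory stays bounded in the same sense as for interval-averaging (growing only with the node degree). Once all $L$ copies have terminated, each node knows the truth values of $\ell_1,\ldots,\ell_L$ and evaluates the fixed Boolean combination of unions and intersections defining the level sets to produce the unique $y$ with $p \in L(y)$; this output equals $h(p_0,\ldots,p_K)$, so the corresponding family $(f_n)$ is computed, proving the theorem. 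The main obstacle, and really the only non-mechanical point, is the threshold-matching in the middle step: one must arrange, by clearing denominators and shifting to nonnegative integer inputs, that the right-hand side is an integer, so that the coarse information returned by interval-averaging — which only locates the average within a unit interval — nonetheless resolves both strict and non-strict inequalities exactly. Everything else reduces to parallel composition of automata and the correctness of Theorem \ref{thm:average_computation2}.
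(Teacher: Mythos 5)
Your proposal is correct and follows essentially the same route as the paper's proof: both clear denominators of the rational coefficients, shift the node-local transformed values to be nonnegative integers (you shift by $\max(0,-\min_k a_k)$, the paper by $\sum_{k\in P^c}\beta_k$ via the $1-\chi_k$ trick), reduce each inequality to an interval-averaging instance against an integer threshold, and combine finitely many such tests by running the automata of Theorem \ref{thm:average_computation2} in parallel. Your explicit justification that an integer threshold lets the coarse interval output resolve both strict and non-strict inequalities is a point the paper leaves implicit, but it is the same argument.
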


\begin{proof}
Consider one such linear inequality, \jnt{which we assume, for \aoc{concreteness}, to be of the first type.} Let $P$ be the set of indices
\jnt{$i$} for which $\alpha_k \geq 0$. Since all coefficients are rational, we
can clear \aoc{their} denominators and rewrite \green{the inequality} as
\begin{equation}\label{eq:integ_equiv_density_ineq}
\sum_{k \in P} \beta_k p_k - \sum_{\jnt{k} \in P^c} \beta_k p_k \leq
\beta , \end{equation} for \red{nonnegative} integers $\beta_k$ and
$\beta$. Let $\chi_k$ be the indicator function \green{associated with initial
value} $k$, i.e., $\chi_k(i)=1$ if $x_i = k$, and $\chi_k(i) =0$
otherwise, so that $p_k=\frac{1}{n}\sum_{i}\chi_k(i)$.
\green{Then, (\ref{eq:integ_equiv_density_ineq}) becomes} $$\frac{1}{n} \sum_{i=1}^n \left(\sum_{k\in P}\beta_k
\chi_k(i) + \sum_{k\in P^c}\beta_k  (1-\chi_k(i))\right) \leq
\beta + \sum_{k\in \red{P^c}}\beta_k, $$ or \[ \frac{1}{n}\sum_{i\aoc{=1}}^{\aoc{n}} q_i
\leq  q^*, \] where $q_i = \sum_{k\in P}\beta_k \chi_k(i) +
\sum_{k\in P^c}\beta_k (1-\chi_k(i))$ and $q^* =\beta + \sum_{k\in
P^c}\beta_k$.

To determine \jnt{whether} the \aoc{last} inequality is satisfied, each node can
compute $q_i$ and $q^*$, and then apply a distributed algorithm that
computes \jnt{the integer part of} $\frac{1}{n}\sum_{i=1}^n q_i$;
\jnt{this is possible by virtue of Theorem
\ref{thm:average_computation2}, with $K$ set to $\sum_k\beta_k$ (the largest possible value of $q_i$).} To check any finite collection of
inequalities, the nodes can perform the computations for each
inequality in parallel.

To compute $h$, the nodes simply need to check which \green{set} $L(y)$ the
\green{frequencies} $\aoc{p_0}, p_1,\ldots,p_K$ lie in, and this can be done by
checking the inequalities defining each $L(y)$.
\green{All of these computations} can be accomplished
with finite automata: indeed, we do nothing more than run finitely
many copies of the automata provided by Theorem
\ref{thm:average_computation2}, one for each inequality.
\jnt{The total memory used by the automata depends on the number of sets $C_{i,k}$ and the magnitude of the coefficients $\beta_k$, but not on
$n$, as required.}
\end{proof}

Theorem \ref{bounded-alg} shows the computability of functions $h$
whose level-sets \green{can} be defined by linear inequalities
with rational coefficients. \red{On the other hand, it is clear
that not every function $h$ can be computable.} \blue{(This can be
shown by a counting argument: there are uncountably many possible
functions $h$ \jnt{on the rational elements of $D$,} but for the special case of bounded degree graphs,
only countably \aoc{many} possible algorithms.)} \green{Still, the next \jnt{result}
shows that the set of computable functions is rich enough, in the
sense that \jnt{computable}} functions can approximate any measurable
function, \jnt{everywhere except possibly on a low-volume set.}

We will call a set of the form $\prod_{\red{k}=\jnt{0}}^K (a_k,b_k)$,
\green{with every $a_k,b_k$ rational,} {\em a \red{rational open}
box}, where $\prod$ \green{stands for Cartesian} product. A
function that can be written as a finite sum $\sum_i a_i
\red{1_{B_i}}$, where \green{the $B_i$ are rational open boxes and
the $1_{B_i}$ are the associated indicator functions,} will be
referred to as a \emph{box function.} \green{Note that box
functions are computable by Theorem \ref{bounded-alg}.}

\begin{corollary}\label{cor:approx_set_epsilon}
If every level set of a function $h:D\to Y$ on the \jnt{unit simplex $D$}
is Lebesgue measurable, then, for every $\epsilon >0$, there exists a
computable box function $h_\epsilon:D\to Y$ such that the
set \aoc{$\{ p \in D ~|~ h(p) \neq h_{\epsilon}(p) \}$} has measure at most $\epsilon$.
\end{corollary}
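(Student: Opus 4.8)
The plan is to approximate an arbitrary measurable function $h$ by a box function $h_\epsilon$ that agrees with $h$ outside a set of small measure, and then invoke Theorem \ref{bounded-alg} (or its stated consequence that box functions are computable) to conclude computability of $h_\epsilon$. Since $Y$ is a finite set, the unit simplex $D$ is partitioned into finitely many level sets $L(y) = h^{-1}(y)$, each assumed Lebesgue measurable. It therefore suffices to approximate this finite measurable partition by a partition into finite unions of rational open boxes, incurring total error at most $\epsilon$ in measure. I would first reduce to a statement purely about measurable sets: it is enough to find, for each $y \in Y$, a finite union of rational open boxes $U_y$ such that the symmetric differences $L(y) \triangle U_y$ have small total measure, and then define $h_\epsilon$ to take value $y$ on (a cleaned-up version of) $U_y$.

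The key technical step is a standard regularity fact from measure theory: any Lebesgue measurable subset $L$ of the bounded set $D \subset \R^{K+1}$ can be approximated from outside by an open set, and any open set can be written as a countable union of boxes; truncating to finitely many boxes, and rounding their endpoints to nearby rationals, yields a finite union of rational open boxes $U$ with $\mathrm{vol}(L \triangle U) < \delta$ for any prescribed $\delta > 0$. I would apply this with $\delta = \epsilon/(2|Y|)$ to each level set $L(y)$. The only subtlety is that the approximating sets $U_y$ obtained independently need not be disjoint, whereas a box function assigns a single value at each point. I would resolve this by fixing an arbitrary ordering $y_1, \ldots, y_m$ of the elements of $Y$ and defining $h_\epsilon(p) = y_j$ where $j$ is the smallest index with $p \in U_{y_j}$ (and setting $h_\epsilon$ to some default value, say $y_1$, off $\bigcup_j U_{y_j}$). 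This ``first-match'' rule keeps $h_\epsilon$ a box function, since each region $U_{y_j} \setminus \bigcup_{i<j} U_{y_i}$ is again a finite Boolean combination of rational open boxes, which in turn is expressible through finitely many rational linear inequalities, so $h_\epsilon$ falls squarely within the hypotheses of Theorem \ref{bounded-alg} and is therefore computable.

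It remains to bound the measure of the disagreement set. A point $p$ can satisfy $h(p) \neq h_\epsilon(p)$ only if $p$ lies in some $L(y) \triangle U_y$: indeed, if $p$ lay in none of these symmetric differences, then $p \in U_{h(p)}$ while $p \notin U_{y'}$ for every $y' \neq h(p)$, so the first-match rule would correctly return $h(p)$. Hence
\[
\{ p \in D ~|~ h(p) \neq h_\epsilon(p) \} \subseteq \bigcup_{y \in Y} \bigl( L(y) \triangle U_y \bigr),
\]
whose measure is at most $\sum_{y \in Y} \mathrm{vol}(L(y) \triangle U_y) < |Y| \cdot \frac{\epsilon}{2|Y|} = \epsilon/2 \leq \epsilon$, as required.

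I expect the main obstacle to be organizational rather than deep: the measure-theoretic approximation of a measurable set by finitely many rational boxes is routine, so the real care is in handling overlaps among the independently chosen $U_y$ while keeping $h_\epsilon$ a genuine box function, and in verifying that the first-match construction both stays inside the class covered by Theorem \ref{bounded-alg} and only introduces errors controlled by the symmetric differences. A minor point worth checking is that working on the relative topology of the simplex $D$ (which is lower-dimensional inside $\R^{K+1}$ because of the constraint $\sum_k p_k = 1$) does not disrupt the box approximation; this is handled by carrying out the approximation in the affine hull of $D$, or equivalently by parametrizing $D$ by the free coordinates $p_1, \ldots, p_K$ and approximating there, which leaves the rational-box and rational-linear-inequality structure intact.
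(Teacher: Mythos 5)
Your proposal is correct and follows essentially the same route as the paper's own (outline) proof: approximate each measurable level set $L(y)$ by a finite union of rational open boxes via the standard regularity fact, then conclude computability of the resulting box function through Theorem \ref{bounded-alg}. The only difference is that you carefully resolve the overlaps among the approximating sets $U_y$ with the first-match rule (and verify the resulting level sets are still rational Boolean combinations, hence covered by Theorem \ref{bounded-alg}), a detail the paper's sketch glosses over when it says $h$ ``can be approximated by a box function.''
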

\begin{proof} \jnt{(Outline)}
The proof relies on the following elementary result from measure
theory.
Given a Lebesgue measurable set $E \jnt{\subseteq D}$ and some $\epsilon
> 0$, there \green{exists a set $E'$ which is a finite union of
disjoint open boxes, and which satisfies}
\[ \mu( \aoc{E \Delta E'} ) < \epsilon,\] where $\mu$ is
the Lebesgue measure.
\red{By a routine argument, these boxes can be taken to be rational.
By applying this fact to the level sets of the function $h$ (assumed measurable), the function $h$ can be approximated by a box function}  \blue{$h_{\epsilon}$. Since box functions are computable, the result follows.}
\end{proof}

The following corollary \green{states} that  
continuous functions are approximable.

\begin{corollary}\label{cor:approx_continuous}
If a function $h:D\to [L,U]\subseteq \Re$ is continuous, then for
every $\epsilon>0$ there exists a computable function
$h_\epsilon:D \blue{\to  [L,U]}$ such that
$\|h-h_\epsilon\|_\infty < \epsilon$
\end{corollary}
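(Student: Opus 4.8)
The plan is to derive this as a direct consequence of Corollary \ref{cor:approx_set_epsilon} combined with a uniform continuity argument. Since $D$ is the unit simplex, it is compact, and a continuous function $h$ on a compact set is uniformly continuous. First I would fix $\epsilon > 0$ and use uniform continuity to select $\delta > 0$ such that $\|p - p'\|_\infty < \delta$ implies $|h(p) - h(p')| < \epsilon/2$. The idea is to partition $D$ into small rational open boxes of diameter less than $\delta$, and on each such box define $h_\epsilon$ to be a constant equal to (a rounded rational approximation of) the value of $h$ at some representative point of the box. Because each box has diameter less than $\delta$, this piecewise-constant approximation will be within $\epsilon/2$ of $h$ on the interior of each box, and the extra $\epsilon/2$ slack absorbs the error from rounding the constant values to rationals so that the resulting function genuinely takes finitely many values in a finite output set.

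The key technical point is that Corollary \ref{cor:approx_set_epsilon} gives approximation only off a small-measure set, whereas here we want an $\infty$-norm (everywhere) bound. So I would not invoke Corollary \ref{cor:approx_set_epsilon} as a black box but rather imitate its underlying mechanism: a box function built from rational open boxes is computable by Theorem \ref{bounded-alg}, because each rational open box is a finite intersection of strict linear inequalities with rational coefficients, and the level sets of a box function are therefore finite unions of such boxes. The main obstacle is that a finite collection of disjoint open boxes cannot cover the compact simplex $D$ exactly: the boundaries between adjacent boxes (a set of measure zero, but nonempty) are left uncovered, so a box function defined as $\sum_i a_i 1_{B_i}$ with disjoint open $B_i$ is necessarily undefined or zero on these boundary seams. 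This means a literal $\|h - h_\epsilon\|_\infty < \epsilon$ bound holding at \emph{every} point of $D$ is delicate to achieve with box functions in the stated sense.

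To handle this obstacle I would proceed as follows. I would cover $D$ by finitely many rational open boxes $B_1, \ldots, B_m$, each of diameter less than $\delta$, which overlap slightly so that their union contains all of $D$ (an open cover, extracting a finite subcover by compactness). I would then resolve the overlaps by assigning to each point the value associated with the lowest-indexed box containing it; concretely, setting $C_i = B_i \setminus (B_1 \cup \cdots \cup B_{i-1})$, each $C_i$ is still a finite boolean combination of rational linear inequalities, hence its indicator is computable, and $h_\epsilon = \sum_i a_i 1_{C_i}$ is a computable function defined on all of $D$. Choosing each constant $a_i \in [L,U]$ to be a rational within $\epsilon/2$ of $h$ evaluated at a point of $C_i$ (or of $B_i$), and recalling that any two points of $B_i$ differ by less than $\delta$ in $h$-value by at most $\epsilon/2$, yields $|h(p) - h_\epsilon(p)| < \epsilon$ for every $p \in D$.

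One subtlety I would flag is matching the codomain and computability requirements: $h_\epsilon$ must take values in the finite output set $Y$ appropriate to the model, so the $a_i$ should be drawn from a finite grid of rationals in $[L,U]$ with spacing at most $\epsilon/2$, guaranteeing finitely many output values. Since $h_\epsilon$ is then a box function with rational-coefficient level sets, Theorem \ref{bounded-alg} certifies its computability, completing the argument. I would present this briefly, noting that the construction is a standard uniform-continuity discretization and that the only real content beyond Corollary \ref{cor:approx_set_epsilon} is the upgrade from an almost-everywhere guarantee to an everywhere $\infty$-norm guarantee, made possible because continuity lets us control $h$ on entire small boxes rather than merely up to a small-measure exceptional set.
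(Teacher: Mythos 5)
Your proposal is correct and follows essentially the same route as the paper's proof: uniform continuity on the compact simplex, a partition of $D$ into finitely many cells described by rational linear inequalities on which $h$ oscillates by less than $\epsilon$, piecewise-constant values drawn from a finite grid in $[L,U]$, and computability via Theorem \ref{bounded-alg}. The only difference is that you make explicit what the paper leaves implicit---constructing the partition from a finite subcover of rational open boxes and resolving overlaps by set differences $C_i = B_i \setminus (B_1 \cup \cdots \cup B_{i-1})$---which is a faithful elaboration rather than a different argument.
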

\begin{proof}
Since $D$ is compact, \jnt{$h$} is uniformly continuous. One can
therefore \green{partition} $D$ into a finite number of subsets,
$A_1,A_2,\dots,A_q$, that can be \green{described} by linear inequalities
with rational coefficients, so that $\max_{p\in
A_j}h(p)-\min_{p\in A_j}h(p) <\epsilon$ holds for all $A_j$. The
function $h_\epsilon$ is then built by assigning to each $A_j$ an
\green{appropriate} value \green{in} $\{L,L+\epsilon, L+2\epsilon, \dots, U\}$.
\end{proof}

To illustrate \jnt{these} results, let us consider again
some examples.

\texitem{(a)} \green{Majority \jnt{voting} between two options is} equivalent
to checking whether $p_1 \leq 1/2$, with alphabet $\{0,1\}$, and is
therefore computable.
\texitem{(b)}
Majority \jnt{voting} when some nodes can ``abstain'' amounts to checking whether $p_1 -
p_{\aoc{0}} \geq 0$, with \jnt{input set} $\jnt{X=}\{0,1,{\rm abstain}\}$. This function family is
computable.
\texitem{(c)}
We can ask for the second
most popular value out of four, for example. In this case, the
sets $A_i$ can be decomposed into constituent sets defined by
inequalities such as $p_2 \leq p_3 \leq p_4 \leq p_1$, each of which
obviously has rational coefficients.
\texitem{(d)}
\green{For any subsets $I,I'$ of
$\{\aoc{0}, 1,\ldots,K\}$, the indicator function of the set
where} $\sum_{i \in I} p_i
\jnt{>} \sum_{i \in I'} p_i$  is computable. This is equivalent to checking
whether more nodes have a value in $I$ than do in $I'$.
\texitem{(e)}
The \green{indicator functions of the sets defined by} $p_1^2 \leq 1/2$ and
$p_1 \leq \pi/4$ are measurable, so they are approximable. We are
unable to say whether they are computable.
\texitem{(f)}
The \green{indicator function of the set defined by} $p_1 p_2 \leq 1/8$ is approximable, but we are unable
to say whether it is computable.

\subsection{\jnt{Computability with infinite memory}}\label{se:inf}
Finally, we show that with infinite memory, it is possible to
recover the exact \green{frequencies $p_k$.} \red{(Note that this
is impossible with finite memory, because $n$ is unbounded, and
the number of bits needed to represent $p_k$ is also unbounded.)}
\jnt{The main difficulty is that $p_k$ is a rational number whose denominator can be arbitrarily large, depending on the unknown value of $n$. The idea is to run separate algorithms for each possible value of the denominator (which is possible with infinite memory), and reconcile their results.}

\begin{theorem} \label{thm:infinite_mem_positive}
The vector $(\aoc{p_0}, p_1,\ldots,p_K)$ is computable with infinite memory.
\end{theorem}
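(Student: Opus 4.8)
The plan is to reduce the exact computation of each frequency $p_k$ to a countable family of interval-averaging computations, one for each candidate denominator, exploiting the fact (Theorem \ref{thm:average_computation2}) that interval-averaging is computable. The starting observation is that $p_k = n_k/n$ is rational, and if $a/b$ is its representation in lowest terms, then $b$ is the \emph{smallest} integer $q\ge 1$ for which $q\,p_k$ is an integer, and for that $q$ we have $q\,p_k = a$. Hence it suffices to find the smallest denominator $q$ that ``clears'' $p_k$, together with the resulting integer, and this pins down $p_k$ exactly. Note that, unlike $n$ itself, $p_k$ is frequency-based, so Theorem \ref{unbounded-bound} does not rule out its computability.

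For a fixed $q$, I would run a copy $B_q$ of the interval-averaging algorithm on the derived inputs $x_i'=q$ if $x_i=k$ and $x_i'=0$ otherwise, so that the alphabet is $\{0,\ldots,q\}$ and the average equals $q\,p_k$. Its output is the element of $\{\{0\},(0,1),\{1\},\ldots,\{q\}\}$ containing $q\,p_k$; thus $B_q$ eventually reports whether $q\,p_k\in\mathbb{Z}$ and, when it does, the integer value $q\,p_k$. Each $B_q$ uses only finite memory and a finite message alphabet (both depending on $q$), and by Theorem \ref{thm:average_computation2} its output settles in finite time to the correct value. Using the countably infinite memory set $Z_d$, every node maintains all computations $B_1,B_2,\ldots$ concurrently and, at each time, outputs the fraction $(q^*\,p_k)/q^*$, where $q^*$ is the smallest $q$ whose current report is ``integer'' (and a default symbol if no report is yet ``integer'').

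For correctness and reconciliation, let $b$ be the reduced denominator of $p_k$. For $0<q<b$ the correct verdict is ``$q\,p_k\notin\mathbb{Z}$'' (since $\gcd(a,b)=1$ forces $b\nmid q$), and for $q=b$ it is ``integer'' with value $a$. Only the finitely many computations $B_1,\ldots,B_b$ are relevant, and each settles after some finite time; after that time the smallest $q$ reporting ``integer'' is exactly $b$, so the node's output is the exact reduced fraction $a/b=p_k$ and remains so forever. Crucially, no termination detection is needed: the node simply recomputes the smallest clearing denominator at every step, and eventual correctness follows from the eventual settling of the finitely many subroutines with $q\le b$; transient spurious ``integer'' reports from not-yet-settled $B_q$ with $q<b$ do no permanent harm. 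Repeating this for each $k$ and bundling the results yields the full vector $(p_0,\ldots,p_K)$, which therefore also converges.

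The main obstacle is the purely model-level issue of running a countable family of bounded-resource automata inside a single node, and in particular the messaging, since $B_q$ for large $q$ needs a large (though finite) message alphabet and we cannot transmit all of them in one round over a finite alphabet. I would resolve this by serialization: because the model is synchronous and the global time index $t$ is common to all nodes, the nodes can follow a fixed deterministic schedule that dovetails the subroutines $B_q$ and transmits each subroutine's message bit-by-bit over successive rounds, keeping the per-round alphabet finite while giving every $B_q$ infinitely many in-order update steps; each $B_q$ then still settles, only on a stretched time axis. (If one allows the infinite-memory model to carry countably infinite messages as well, this step is unnecessary and the $B_q$ can be run in lockstep.) I expect that writing out this scheduling carefully---ensuring each $B_q$ receives a valid, correctly ordered message stream and still converges---is the one genuinely delicate piece; everything else follows from the reduced-denominator identity together with Theorem \ref{thm:average_computation2}.
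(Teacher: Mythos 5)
Your proposal is correct and takes essentially the same route as the paper's own proof: the paper likewise runs, for every positive integer $m$, an interval-averaging subroutine $Q_m$ on the scaled inputs ($m$ if $x_i=1$, else $0$), dovetails the countably many subroutines under a fixed time-dependent schedule, and has each node output $y_{i,m^*}(t)/m^*$ where $m^*$ is the smallest index whose current output is a singleton, relying on eventual settling of the finitely many relevant subroutines rather than termination detection. The only minor differences are cosmetic: the paper certifies that some index reports an integer by observing that $Q_n$ computes $\sum_i 1_{x_i=1}$ exactly (instead of your $\gcd$ argument identifying the reduced denominator $b$), and your bit-by-bit serialization of messages carefully addresses a point the paper glosses over, namely that its interleaving implicitly uses a countably infinite message alphabet.
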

\begin{proof}
We show that $p_1$ is computable exactly, which is sufficient to
prove the theorem. Consider the following algorithm, \jnt{to be referred to as $Q_m$,} parametrized
by a \green{positive integer} \red{$m$.} The \jnt{input} set $X_m$ \jnt{is} $\{0,1,\ldots,
m\}$ and the output set $Y_m$
\jnt{is the same as in the interval-averaging problem:}
$Y_m = \{ \{0\}, (0,1), \{1\}, (1,2), \{2\}, (2, 3),
\ldots, \{m-1\}, (m-1,m), \{m\} \}$. If $x_i=1$, then node sets
its initial value $x_{i,m}$ to $m$; else, the node sets its
initial value $x_{i,m}$ to $0$. \green{The algorithm computes} the
function family $(f_n)$ which maps $X_m^n$ to the element of $Y_m$
containing $(1/n) \sum_{i=1}^n x_{i,m}$, \green{which is possible, by} Theorem
\ref{thm:average_computation2}.

The nodes run \green{the algorithms} $Q_m$ for every
\green{positive integer value of $m$, in an interleaved manner.
Namely, at each time step, a node runs one step of a particular
algorithm $Q_m$, according to} the following order:
\[ Q_1, \hspace{0.2cm} Q_1, Q_2, \hspace{0.2cm} Q_1, Q_2, Q_3, \hspace{0.2cm} Q_1, Q_2, Q_3,
Q_4, \hspace{0.2cm} Q_1, Q_2, \ldots \]

At each time $t$, let \red{$m_i(t)$} be the smallest $m$
\green{(if it exists)} such that \green{the output
\red{$y_{i,m}(t)$} of $Q_m$ at node $i$}  is a \green{singleton}
(not an interval). \green{We identify this singleton with the
numerical value of its single element, and we} set $y_i(t) = y_{i,
m_i(t)}(t)/m_i(t)$. If \green{$m_i(t)$ is undefined},  then
$y_i(t)$ is set to some default value, \jnt{e.g., $\emptyset$.}

\jnt{Let us fix a value of $n$. For any $m\leq n$,}
the definition of $Q_m$ and Theorem
\ref{thm:average_computation2} \jnt{imply} that there exists a time after which
the outputs $\red{y_{i,m}}$ of \jnt{$Q_m$}
do not change, and are \jnt{equal to a common value, denoted $y_m$, for every $i$.} Moreover, at least one of \jnt{the
algorithms $Q_1,\ldots,Q_n$} has an integer output $y_m$. Indeed,  observe that $Q_n$
computes $(1/n) \sum_{i=1}^n n1_{x_i=1} = \sum_{i=1}^n 1_{x_i=1}$,
which is clearly an integer. \red{In particular, $m_i(t)$ is
eventually well-defined and bounded above by $n$.} We conclude
that there exists a time after which the output \jnt{$y_i(t)$} of our
\green{overall} algorithm is fixed, shared by all nodes, and
different from the default value \jnt{$\emptyset$}.

We now argue that this value is indeed $p_1$. \red{Let
$m^*$ be} the smallest $m$ for which the eventual output of $Q_m$
is a single integer $y_m$. Note that $y_{m^*}$  is the exact
average
of the $x_{i,m^*}$, i.e., \[ y_{m^*} = \frac{1}{n} \sum_{i=1}^n
m^* 1_{x_i=1} = m^* p_1.\] \red{For large $t$, we have
\jnt{$m_i(t)=m^*$ and therefore} $y_i(t) =
y_{i, m^*}(t)/m^* =p_1$, \jnt{as desired.}}

Finally, it remains to argue that the algorithm described here can
be implemented with a sequence of \jnt{infinite memory} automata. All the above
algorithm does is run a copy of all the automata implementing
$Q_1, Q_2, \ldots$ with time-dependent transitions.  This can be
accomplished with an automaton whose state space is \green{the
countable set} $\mathcal{N} \times \red{\cup_{m
=1}^{\infty}\prod_{i=1}^{m} {\cal Q}_i}$, where ${\cal Q}_i$ is
the state \green{space} of $Q_i$, and the set $\mathcal{N}$ of
integers is used to keep track of time.
\end{proof}

\section{\rt{Conclusions}}\label{sec:conclusions}

We have proposed a model of \ora{deterministic anonymous distributed computation,} inspired by the \ora{wireless sensor network and} multi-agent
control literature. We have given an almost tight characterization
of the functions that are computable in our model. We have shown
that computable functions must depend only on the \ora{the frequencies with which}
the different initial conditions \ora{appear,} and that if this dependence can
be expressed in term of linear inequalities with rational
coefficients, the function is indeed computable. Under weaker
conditions, the function can be approximated with arbitrary
precision. It remains open to exactly characterize  the class of
computable function {\ora families}.

Our positive results are proved constructively, by providing \aoc{a}
generic algorithm for computing the desired functions. Interestingly,
the finite memory requirement is not used in our negative
results, which remain thus valid in the infinite memory case. In
particular, we have no examples of functions that can be computed
with infinite memory but are provably not computable with
finite memory. We suspect though that simple examples
exist; a good candidate could be the indicator function
$1_{p_1<1/\pi}$, which checks whether the fraction of nodes with a
particular initial condition is smaller than
$1/\pi$.

\chapter{Concluding remarks and a list of open problems}

This thesis investigated several aspects of the convergence times of averaging algorithms and the effects of quantized
communication and storage on performance. Our goal has been to try to understand some aspects of the tradeoffs between
robustness, storage, and convergence speed in distributed systems. 

The main results of this thesis are:

\begin{enumerate} \item An $O \left( (n^2/\eta) B \log (1/\epsilon) \right)$ upper bound on the convergence time of products of doubly
stochastic matrices (which lead to a class of averaging algorithms) in Chapter \ref{chapter:poly}. This is
the first polynomial upper bound on the convergence time of averaging algorithms. 
\item An $O(n^2 B \log 1/\epsilon)$ averaging algorithm in Chapter \ref{nsquared}. This is currently the averaging
algorithm with the best theoretical guarantees.  
\item An $\Omega(n^2)$ lower bound in Chapter \ref{ch:optimality} on the convergence of any distributed averaging algorithm
 that uses a single scalar state variable at each agent and satisfies a natural ``smoothness'' condition. 
 \item The finding of Chapter \ref{qanalysis} that storing and transmitting $c \log n$ bits can lead to arbitrarily
 accurate computation of the average by simply quantizing any linear, doubly stochastic averaging scheme. 
 \item The deterministic algorithm in Chapter \ref{ch:constantstorage} which, given initial values of $0$ or $1$ at each node, can compute which value has the majority with only a constant number of bits stored per link at each node. 
 \item The computability and non-computability results of Chapter \ref{ch:function}, which tell us that with deterministic anonymous
 algorithms and a constant number of bits per link at each node we can compute only functions depending on proportions; and that all the ``nice'' functions of proportions are computable. 
\end{enumerate} 

Many questions remain unanswered. We begin by listing several that are central
to understanding the fundamentals of network information aggregation questions.

\begin{enumerate} \item Is it possible to come up with local averaging algorithms which are robust to link failures, smoothly update a collection of real numbers of fixed size, and average faster than $O(n^2 B \log 1/\epsilon)$ on $B$-connected graph sequences?
\item What if in addition to the above requirements we also ask that the convergence time be on the order of the diameter for a (time-invariant) geometric random graph?
\item What if in addition to the above requirements  we also ask that the convergence time be on the order of the diameter for any fixed graph?
\item Suppose every node starts out with a $0$ or $1$. Is it possible to deterministically compute which node has the majority initially
if the graph sequence $G(t)$ changes unpredictably? However, we do insist that each $G(t)$ be undirected, and we require that each node store only a constant number of bits per each link
it maintains.  Naturally, some additional connectivity assumptions on $G(t)$ will have to be imposed for any positive result. 
\item More generally, characterize exactly which functions of binary initial values can be computed with a deterministic
algorithm which maintains a constant number of bits per link at each node. The graph sequence $G(t)$ is undirected, but may
be either fixed or change unpredictably.
\end{enumerate}

We next list some other open questions motivated by the problems considered here.

\begin{enumerate} 
\item For which classes of algorithms can an $\Omega(n^2)$ lower bound on convergence time be proven? In particular, is
it possible to replace the assumption of Chapter \ref{ch:optimality} that the update function be differentiable by the weaker
assumption that the update function be piecewise differentiable?  What if we add some memory to the algorithm?
\item Here is a concrete instance of the previous question. Is it true that any doubly stochastic matric ``on the ring''
mixes as $\Omega(n^2)$? That is, let $P$ be a doubly stochastic matrix such that $P_{ij}=0$ if $|i-j| ~{\rm mod}~ n > 1 $. Is it true
that $\max_{\lambda(A) \neq 1} |\lambda(A)| \geq 1 - c/n^2$ for some constant $c$?

Major progress on this question was recently made in \cite{G10}.
\item Where is the dividing line in Chapter \ref{chapter:poly} between polyynomial and exponential convergence time? In particular, how far may we relax the double stochasticity Assumption \ref{assumpt:ds} and still maintain polynomial convergence 
time? For example, does polynomial time convergence still hold if we replace Assumption \ref{assumpt:ds} with the 
requirement that the matrices $A(t)$ be (row) stochastic and each column sum is in $[1-\epsilon, 1+\epsilon]$ for
some small $\epsilon$?
\item Let $A_k$ be an irreducible, aperiodic stochastic matrix in $R^{k \times k}$. Let $\pi_k$ be its 
stationary distribution. Can we identify interesting classes of sequences 
\[ A_1, A_2, A_3, \ldots,\] which satisfy $\lim_{k \rightarrow \infty} \pi_k(i) = 0$ for any $i$? In words, we are asking
for each agent to have a negligible influence on the final result in the limit. This is sometimes a useful property in 
estimation problems; see \cite{GJ10}.
\item Is there a decentralized way to pick a symmetric ($a_{ij}=a_{ji}$), stochastic linear update rule which minimizes
\[ \sum_{\lambda(A) \neq 1} \frac{1}{1-\lambda_i(A)^2} \] This corresponds to handling the effect of white noise disturbances optimally;
see \cite{XBK07} for a centralized algorithm based on convex optimization.
\item For which classes of correlated random variables can we design decentralized algorithms for maximum likelihood 
estimation as done in Chapter \ref{why}?
\item Suppose every node starts out with a $0$ or $1$. The communication graph is undirected, fixed, but unknown to the nodes. Each node can maintain a fixed number of bits per each link it maintains. 
It is possible to (deterministically) decide whether $(1/n) \sum_{i=0}^n x_i > 1/\sqrt{2}$? What about $(1/n) \sum_{i=0}^n x_i > 1/\pi$?
\item Is it possible to improve the running time of the interval-averaging algorithm of Chapter \ref{ch:constantstorage}  
to $O(n^2 K \log K)$ communication rounds?
\item Given a connected graph $G=(\{1,\ldots,n\}, E)$, suppose that every link $e$ is an erasure channel with erasure probability $p_e$.
That is, every node can send a message on each link at each time step, but that the message is lost with probability $p_e$. Nodes
do not know whether their messages are lost. What is the time complexity of average or sum computation in such a setting? How
does it relate to various known graph connectivity measures?
\item How should one design averaging algorithms which send as few messages as possible? We ask that 
these algorithms work on arbitrary $B$-connected undirected graph sequences.
\end{enumerate} 

\appendix
\chapter{On quadratic Lyapunov functions for averaging}

\aoc{This appendix focuses on a technical issue appearing in Chapter
\ref{basic}; its content has previous appeared in the M.S. thesis
\cite{O04} and the paper \cite{OT08}.}

\aoc{Specifically, in chapter
\ref{basic}, a number of theorems on the convergence of the process
\[ x(t+1) = A(t) x(t),\] were proved, e.g. Theorems \ref{thm:simplestconvergence}, \ref{thm:symmetric}, \ref{thm:boundedstuff}, \ref{thm:symmetric}. These theorems were proved by showing that the ``span norm''
$\max_{i} x_i(t) - \min_i x_i(t)$ is guaranteed to decrease after a certain number
of iterations. Unfortunately, this proof method usually gives an overly conservative bound on the convergence time of the algorithm. Tighter bounds on the convergence time would have to rely on alternative Lyapunov functions, such as quadratic ones, of the form $x^T M x$, if they exist. }

\aoc{Moreover, in Chapter \ref{chapter:poly}, we developed bounds for
convergence of averaging algorithms based on quadratic Lyapunov functions. Thus we are led to the question: is it possible
to find quadratic Lyapunov functions for the non-averaging convergence theorems of Chapter \ref{basic}? A positive
answer might lead to improved convergence times.}

\alexo{Although quadratic Lyapunov functions can always be found for
linear systems, they may fail to exist when the system is allowed to
switch between a fixed number of linear modes. On the other hand,
there are classes of such switched linear systems that do admit
quadratic Lyapunov functions. See \cite{LM99} for a broad overview
of the literature on this subject.}

\aoc{The simplest version of this question deals with the symmetric, equal-neighbor
model and was investigated in \cite{jad}.} The authors
write:

\begin{quote} ``...no such common Lyapunov matrix $M$ exists. While we have not been able to construct a simple analytical example which demonstrates
this, we have been able to determine, for example, that no common
quadratic Lyapunov function exists for the class of all [graphs
which have] $10$ vertices and are connected. One can verify that
this is so by using semidefinite programming...'' \end{quote}

The aim of this appendix is to provide an analytical
example that proves this fact.

\section{The Example}

Let us fix a positive integer $n$. We start by defining a class
$\alexo{\Q}$ of functions with some minimal desired properties of
quadratic Lyapunov functions. Let $\e$ be the vector in $\Re^n$ with
all components equal to 1. A square matrix is said to be {\it
stochastic} if it is nonnegative and the sum of the entries in each
row is equal to one. Let $\A \subset \Re^{n \times n}$ be the set of
stochastic matrices $A$ such that: (i) $a_{ii}>0$, for all $i$; (ii)
all positive entries on any given row of $A$ are equal; (iii)
$a_{ij} > 0$ if and only if $a_{ji}>0$; (iv) the graph associated
with the set of edges $\{ (i,j) ~|~ a_{ij}>0 \}$ is connected. These
are precisely the matrices that correspond to a single iteration of
the equal-neighbor algorithm on symmetric, connected graphs.

\begin{definition} \label{lyap-def}
\label{d:l} A function $\alexo{Q}:\Re^n\to\Re$ belongs to the class
$\alexo{\Q}$ if it is of the form $\alexo{Q}(x)=x^TMx$, where:
\begin{itemize}
\item[(a)]
The matrix $M \in \Re^{n \times n}$ is nonzero, symmetric, and
nonnegative definite.
\item[(b)] For every $A\in\A$, and $x\in\Re^n$, we have
$\alexo{Q}(Ax) \leq \alexo{Q}(x)$.
\item[(c)] We have $\alexo{Q}(\e)=0$.
\end{itemize}
\end{definition}

Note that condition (b) may be rewritten in matrix form as
\begin{equation} \label{cond:b} x^TA^TMAx \leq x^T M x, \qquad
\mbox{for all } A \in \A, \mbox{ and } x \in \Re^n.
\end{equation} The rationale behind condition (c) is as follows.
Let $S$ be the subspace spanned by the vector $\e$. Since we are
interested in convergence to the set $S$, and every element of $S$
is a fixed point of the algorithm, it is natural to require that
$\alexo{Q}(\e)=0$, or, equivalently,
$$M\e=0.$$
Of course, for a Lyapunov function to be useful, additional
properties would be desirable. For example we should require some
additional condition that guarantees that $\alexo{Q}(x(t))$
eventually decreases. However, according to Theorem~\ref{th:main},
even the minimal requirements in Definition \alexo{\ref{lyap-def}}
are sufficient to preclude the existence of a quadratic Lyapunov
function.

\begin{theorem} \label{th:main}
Suppose that $n \geq 8$. Then, the class $\alexo{\Q}$ (cf.\
Definition \ref{d:l}) is empty.
\end{theorem}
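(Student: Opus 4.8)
The plan is to show that the defining conditions on $\Q$ are invariant under relabeling the nodes, use this to symmetrize any hypothetical Lyapunov function down to a single canonical candidate (the sample variance), and then exhibit one graph and one vector on which that candidate strictly increases, so that no admissible function can exist.

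First I would record what Definition~\ref{d:l} gives: $M$ is symmetric, nonnegative definite, nonzero, and $M\e=0$. The matrix $I-\tfrac1n\e\e^T$ satisfies all of these and its quadratic form is exactly the sample variance $V(x)=\|x-\bar x\e\|^2$. Next I would observe that $\A$ is closed under conjugation by permutation matrices: if $A\in\A$ and $P$ is a permutation matrix, then $PAP^T\in\A$, since relabeling preserves connectivity, symmetry of the support, the self-loops, and the equal-neighbor structure. From this, whenever $Q\in\Q$ the relabeled form $x\mapsto Q(Px)$ again lies in $\Q$; indeed condition (b) follows by writing $Q(APx)=Q\bigl((PAP^T)(Px)\bigr)$ and applying (b) for $Q$ to the matrix $PAP^T\in\A$ and the vector $Px$, while (a) and (c) are immediate. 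Since conditions (a)--(c) are also preserved under addition and positive scaling, $\Q$ is a convex cone; and because each summand is a nonzero nonnegative definite matrix, traces add with no cancellation, so $\Q$ is closed under the symmetrization $\bar Q=\tfrac1{n!}\sum_P Q(P\,\cdot\,)$.

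The symmetrized matrix $\bar M=\tfrac1{n!}\sum_P P^TMP$ commutes with every permutation matrix, hence lies in the two-dimensional commutant spanned by $I$ and $\e\e^T$; imposing $\bar M\e=0$ and nonnegative definiteness forces $\bar M=c\,(I-\tfrac1n\e\e^T)$ with $c>0$, i.e.\ $\bar Q=c\,V$. Thus $\Q\neq\emptyset$ would imply $V\in\Q$, and the theorem reduces to the single statement that the sample variance is \emph{not} a common Lyapunov function, i.e.\ that there exist $A\in\A$ and $x$ with $V(Ax)>V(x)$. This is precisely the point at which the equal-neighbor matrices being merely row-stochastic (and not doubly stochastic) is essential, since for doubly stochastic $A$ one always has $V(Ax)\le V(x)$.

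The main obstacle, and the place where the hypothesis $n\ge 8$ enters, is producing this variance-increasing pair. Here I would use a ``two-hub'' graph: hubs $h_1,h_2$ joined by an edge, each hub also joined to its own set of leaves, and a self-loop at every node. For $n=8$, take three leaves per hub, so each hub has degree $5$ and each leaf degree $2$; with $x=(x_{h_1},x_{h_2},\text{leaves of }h_1,\text{leaves of }h_2)=(8,-8,3,3,3,-3,-3,-3)$ one gets $\bar x=0$ and $V(x)=182$, while $Ax$ has leaf values $\pm\tfrac{11}{2}$ and hub values $\pm\tfrac{9}{5}$, so $V(Ax)=\tfrac{9399}{50}>182=V(x)$. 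Intuitively, a large value on a hub is pulled toward the mean (it sees the opposite hub across the joining edge) yet is simultaneously copied at half strength onto each of its leaves, and once there are enough leaves this copying creates more spread than the hubs lose. To finish for every $n\ge 8$ I would use the same family with $\lceil(n-2)/2\rceil$ and $\lfloor(n-2)/2\rfloor$ leaves on the two hubs and a test vector with leaves $\pm p$ and hubs $\pm u$: for even $n$ a one-variable optimization of $V(Ax)/V(x)$ over $u/p$ shows (with the hub value $u\approx$ the number of leaves per hub being near-optimal) that the ratio exceeds $1$ for every balanced two-hub graph with at least three leaves per hub, which is exactly the regime $n\ge 8$, and an analogous unbalanced computation covers odd $n$. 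With $V\notin\Q$ established, the reduction above yields $\Q=\emptyset$, proving the theorem.
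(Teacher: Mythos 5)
Your proposal is correct and follows essentially the same route as the paper's own proof: symmetrize any candidate $x^TMx$ over the permutation group to force it to be a positive multiple of the sample variance $V$, and then exhibit a connected equal-neighbor graph and an initial vector on which $V$ strictly increases (your $n=8$ two-hub example checks out: $V(x)=182$ while $V(Ax)=9399/50>182$). The only differences are cosmetic: the paper pins down the symmetrized matrix by an explicit entry-by-entry computation rather than a commutant argument, and its counterexample joins the two hubs through a central zero-valued node to which all remaining zero-valued nodes are attached, so that a single graph family covers every $n\geq 8$ at once, whereas your two-hub family needs the separate (sketched but valid) unbalanced analysis for odd $n$.
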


The idea of the proof is as follows. Using the fact the dynamics of
the system are essentially the same when we rename the components,
we show that if $x^TMx$ has the desired properties, so does $x^T
\alexo{Z} x$ for a matrix $\alexo{Z}$ that has certain
permutation-invariance properties. This leads us to the conclusion
that there is essentially a single candidate Lyapunov function, for
which a counterexample is easy to develop.

Recall that a permutation of $n$ elements is a bijective mapping
$\sigma:\{1,\ldots,n\}\to \{1,\ldots,n\}$. Let $\Pi$ be the set of
all permutations of $n$ elements. For any $\sigma\in\Pi$, we
define a corresponding permutation matrix $P_{\sigma}$ by letting
the $i$th component of $P_{\sigma}x$ be equal to $x_{\sigma(i)}$.
Note that $\Ps^{-1}=\Ps^T$, for all $\sigma\in \Pi$. Let $\Pm$ be the
set of all permutation matrices corresponding to permutations in
$\Pi$.
\begin{lemma} \label{sumpermutations} Let $M \in \alexo{\Q}$. Define $\alexo{Z}$ as
\[ \alexo{Z} = \sum_{P\in\Pm} P^T M P. \] Then, $\alexo{Z} \in \alexo{\Q}$.
\end{lemma}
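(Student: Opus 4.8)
The plan is to verify that $Z=\sum_{P\in\Pm}P^TMP$ inherits each of the three defining properties (a), (b), (c) of the class $\Q$ from $M$. The key observation driving the whole argument is that the set $\A$ of admissible stochastic matrices is invariant under the symmetry group $\Pm$: if $A\in\A$ then $P^TAP\in\A$ for every permutation matrix $P$, since conjugation by a permutation merely relabels the vertices of the underlying connected symmetric graph, preserving connectivity, the positivity of the diagonal, the equal-positive-entries-per-row condition, and the symmetry of the support. This relabeling invariance of $\A$ is exactly what lets the averaged matrix $Z$ remain a valid Lyapunov matrix.

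First I would dispose of property (a). Each summand $P^TMP$ is symmetric because $M$ is symmetric and $(P^TMP)^T=P^TM^TP=P^TMP$; it is nonnegative definite because for any $x$, $x^TP^TMPx=(Px)^TM(Px)\ge 0$ by nonnegative definiteness of $M$. A finite sum of symmetric nonnegative definite matrices is again symmetric and nonnegative definite, so these properties transfer to $Z$. For the requirement that $Z$ be nonzero, I would note that $M$ is nonzero and nonnegative definite, so $x^TMx>0$ for some $x$; then $x^TZx=\sum_{P}(Px)^TM(Px)$ is a sum of nonnegative terms, at least one of which (taking $P=I$) equals $x^TMx>0$, hence $Z\neq 0$. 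Property (c) is equally direct: since $P\e=\e$ for every permutation matrix $P$ (permuting the all-ones vector leaves it fixed), we get $Z\e=\sum_P P^TMP\e=\sum_P P^TM\e=0$, using $M\e=0$ from property (c) for $M$.

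The substantive step is property (b), the inequality $x^TZ^TAZx$... more precisely that $(Ax)^TZ(Ax)\le x^TZx$ for all $A\in\A$, $x\in\Re^n$. Here I would write, for fixed $A\in\A$,
\[
(Ax)^TZ(Ax)=\sum_{P\in\Pm}(PAx)^TM(PAx)=\sum_{P\in\Pm}\big((PAP^T)(Px)\big)^T M\big((PAP^T)(Px)\big).
\]
For each $P$, set $A'=PAP^T$ and $y=Px$. The crucial point is that $A'=PAP^T=(P^T)^TA(P^T)\in\A$ by the conjugation-invariance of $\A$ described above, so property (b) for $M$ applied to $A'$ and $y$ gives $(A'y)^TM(A'y)\le y^TMy$. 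Summing this over all $P$ yields
\[
(Ax)^TZ(Ax)\le\sum_{P\in\Pm}(Px)^TM(Px)=x^TZx,
\]
which is exactly condition (b) for $Z$. I expect the main obstacle to be stating the conjugation-invariance of $\A$ cleanly and matching the permutation indices correctly in the reindexing $PAx=(PAP^T)(Px)$; once that identity is set up and one checks that $PAP^T\in\A$, the inequality follows termwise and the sum preserves it. No nontrivial estimate is needed beyond this bookkeeping.
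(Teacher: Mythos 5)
Your proof is correct and takes essentially the same approach as the paper: the paper likewise rests on the conjugation-invariance $PAP^T\in\A$ and the reindexing $x^TA^TP^TMPAx = (Px)^T B^T M B (Px)$ with $B=PAP^T$, merely packaging it as ``each $P^TMP\in\Q$, and $\Q$ is closed under sums'' rather than verifying (a), (b), (c) directly for the sum $Z$.
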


\noindent {\bf Proof:} For every matrix $A\in\A$, and any $P\in
\Pm$, it is easily seen that $P A P^T \in\A$. This is because
the transformation $A\mapsto P A P^T$ amounts to permuting the
rows and columns of $A$, which is the same as permuting (renaming)
the nodes of the graph.

We claim that if $M\in \alexo{\Q}$ and $P\in\Pm$, then $P^T M P\in
\alexo{\Q}$. Indeed, if $M$ is nonzero, symmetric, and nonnegative
definite, so is $P^T M P$. Furthermore, since $P\e =\e$, if $M\e=0$,
then $P^T M P \e= 0$. To establish condition (b) in Definition
\ref{d:l}, let us introduce the notation $\alexo{Q}_P(x) = x^T (P^T
M P) x$. Fix a vector $x\in\Re^n$, and $A \in \A$; define $B=P A
P^T\in\A$. We have
\begin{eqnarray*} \alexo{Q}_P(Ax) & = & x^T A^T P^T M P A x \\
&=&x^T P^T P A^T P^T M P A P^T P x\\
&=&x^T P^T B^{T} M B P x\\
&\leq& x^T P^T M P x\\
& = & \alexo{Q}_P(x),
\end{eqnarray*}
where the inequality follows by applying Eq. (\ref{cond:b}), which
is satisfied by $M$, to the vector $Px$ and the matrix $B$. We
conclude that $\alexo{Q}_P \in \alexo{\Q}$.

Since the sum of matrices in $\alexo{\Q}$ remains in $\alexo{\Q}$,
it follows that $\alexo{Z}= \sum_{P\in\Pm} P^T M P$ belongs to
$\alexo{\Q}$. \rule{7pt}{7pt}

We define the ``sample variance'' $V(x)$ of the values
$x_1,\ldots,x_n$, by
$$V(x)= \sum_{i=1}^n (x_i - \bar{x})^2,$$ where
$\bar{x} = (1/n) \sum_{i=1}^n x_i$. This is a nonnegative quadratic
function  of $x$, and therefore, $V(x)=x^T C x$, for a suitable
nonnegative definite, nonzero symmetric matrix $C \in \Re^{n \times
n}$.

\begin{lemma} \label{qvariance} There exists some $\alpha>0$ such that
\[ x^T \alexo{Z} x = \alpha V(x), \qquad \mbox{for all } x\in \Re^n. \] \end{lemma}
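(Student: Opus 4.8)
The plan is to exploit the permutation symmetry that was built into the definition $\alexo{Z}=\sum_{P\in\Pm}P^TMP$. First I would show that $\alexo{Z}$ is invariant under conjugation by every permutation matrix. For any $Q\in\Pm$,
\[ Q^T \alexo{Z} Q = \sum_{P\in\Pm} Q^T P^T M P Q = \sum_{P\in\Pm} (PQ)^T M (PQ) = \alexo{Z}, \]
where the last equality holds because the map $P\mapsto PQ$ is a bijection of $\Pm$ onto itself. Since $Q^T=Q^{-1}$, this says equivalently that $\alexo{Z}Q=Q\alexo{Z}$ for all $Q\in\Pm$.

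Next I would translate this matrix identity into a statement about entries. Writing $z_{ij}$ for the entries of $\alexo{Z}$, the relation $Q^T \alexo{Z} Q = \alexo{Z}$ for the permutation matrix corresponding to $\sigma$ reads $z_{\sigma(i)\,\sigma(j)}=z_{ij}$ for all $i,j$ and all $\sigma\in\Pi$. The key (and most delicate) step is to observe that this forces the two-parameter form: choosing suitable $\sigma$ shows that all diagonal entries share a common value $a$ and all off-diagonal entries share a common value $b$, so that $\alexo{Z}=(a-b)I+bJ$, where $J=\e\e^T$ is the all-ones matrix. I would then impose the remaining structure coming from $\alexo{Z}\in\alexo{\Q}$: by condition (c) we have $\alexo{Z}\e=0$, and since $\alexo{Z}\e=(a-b+bn)\e$, this gives $a=-(n-1)b$, whence
\[ \alexo{Z} = -b(nI-J) = -bn\Bigl(I-\tfrac1n J\Bigr). \]

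To finish, I would identify the matrix in parentheses with the sample-variance matrix. Indeed $V(x)=\sum_i(x_i-\bar x)^2=x^Tx-\tfrac1n(\e^Tx)^2=x^T\bigl(I-\tfrac1n J\bigr)x$, so that $C=I-\tfrac1n J$ and $\alexo{Z}=\alpha C$ with $\alpha=-bn$; thus $x^T\alexo{Z}x=\alpha V(x)$ for all $x\in\Re^n$. It remains only to check $\alpha>0$. Here $C$ is the orthogonal projection onto $\e^\perp$, with eigenvalues $0$ and $1$, so $\alexo{Z}=\alpha C$ has eigenvalues $0$ and $\alpha$. Nonnegative definiteness of $\alexo{Z}$ (property (a) of the class $\alexo{\Q}$) forces $\alpha\ge 0$, while $\alexo{Z}\neq 0$ rules out $\alpha=0$; hence $\alpha>0$. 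The only nonroutine point in the whole argument is the entrywise deduction that permutation invariance collapses $\alexo{Z}$ to the form $aI+bJ$; once that is in hand, the constraint $\alexo{Z}\e=0$ and the spectral picture make the remainder immediate.
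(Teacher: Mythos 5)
Your proof is correct and follows essentially the same route as the paper's: conjugation invariance of $Z$ under $\Pm$ via the bijection $P\mapsto PQ$, the entrywise collapse to equal diagonal and equal off-diagonal values, and the kernel condition $Z\e=0$ pinning $Z$ down to a scalar multiple of the variance matrix $C=I-\tfrac1n\e\e^T$. If anything, your explicit spectral argument for $\alpha>0$ (using nonnegative definiteness and $Z\neq 0$) spells out a point the paper leaves implicit, but this is a refinement rather than a different approach.
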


\noindent {\bf Proof}: We observe that the matrix $\alexo{Z}$
satisfies
\begin{equation}
\alexo{R}^T \alexo{Z} \alexo{R} = \alexo{Z}, \qquad \mbox{for all }
R \in \Pm. \label{eq:p}
\end{equation}
To see this, fix $R$ and notice that the mapping $P\mapsto P R$ is a
bijection of $\Pm$ onto itself, and therefore,
$$R^T \alexo{Z} R= \sum_{P\in\Pm} (PR)^T M (P R)
=\sum_{P\in\Pm} P^T M P =\alexo{Z}.$$

We will now show that condition (\ref{eq:p}) determines $\alexo{Z}$,
up to a multiplicative factor. Let $\alexo{z}_{ij}$ be the $(i,j)$th
entry of $\alexo{Z}$. Let $\ei$ be the $i$th unit vector, so that
$\ei^T \alexo{Z} \ei = \alexo{z}_{ii}$. Let $P\in\Pm$ be a
permutation matrix that satisfies $P\ei=\ej$. Then,
$\alexo{z}_{ii}=\ei^T \alexo{Z} \ei =\ei^T P^T \alexo{Z} P \ei=\ej^T
 \alexo{Z} \ej =\alexo{z}_{jj}$. Therefore, all diagonal entries of $\alexo{Z}$ have a common
value, to be denoted by $\alexo{z}$.

Let us now fix three distinct indices $i,j,k$, and let $y=\ei+\ej$,
$\alexo{w}=\ei+\ek$.  Let $P\in\Pm$ be a permutation matrix such
that $P\ei=\ei$ and $P\ej=\ek$, so that $Py=\alexo{w}$. We have
$$
2 \alexo{z} +2 \alexo{z}_{ij} = y^T \alexo{Z} y= y^T P^T \alexo{Z} P
y = \alexo{ w^T Z w =2z + 2 z_{ik}}.$$ By repeating this argument
for different choices of $i,j,k$, it follows that all off-diagonal
entries of $\alexo{Z}$ have a common value to be denoted by $r$.
Using also the property that $\alexo{Z} \e=0$, we obtain that
$\alexo{z}+(n-1)r=0$. This shows that the matrix $\alexo{Z}$ is
uniquely determined, up to a multiplicative factor.

We now observe that permuting the components of a vector $x$ does
not change the value of $V(x)$. Therefore, $V(x)=V(Px)$ for every
$P\in\Pm$, which implies that $x^TP^T CPx=x^T C x$, for all $P\in
\Pm$ and $x\in\Re^n$. Thus, $C$ satisfies (\ref{eq:p}). Since all
matrices that satisfy (\ref{eq:p}) are scalar multiples of each
other, the desired result follows. \rule{7pt}{7pt}
\bc \begin{figure}[h] \bc
        \epsfig{file=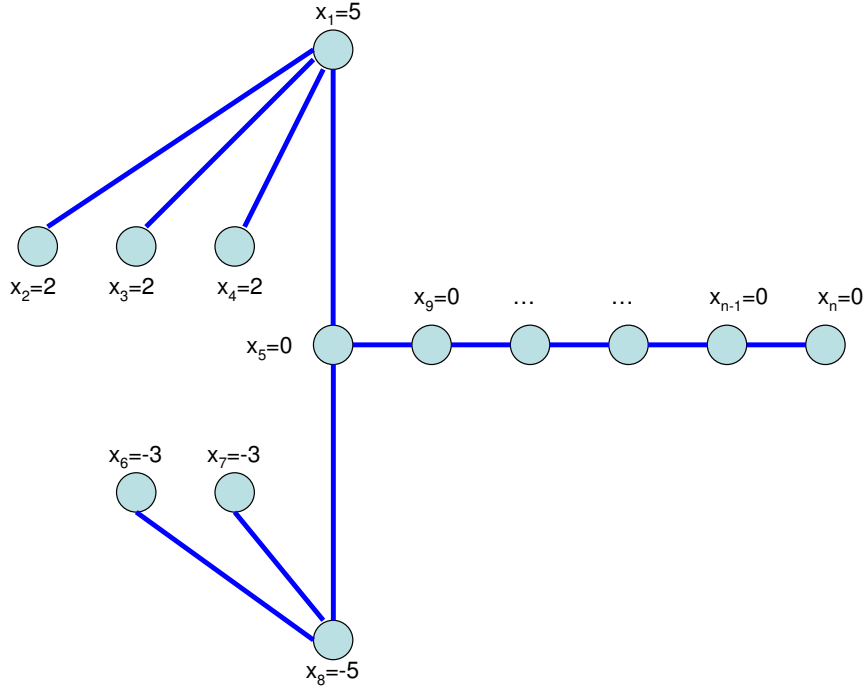,width=11.5cm} \caption{\label{f1}
 A connected graph
on $n$ nodes showing that $V(x)$ is not a Lyapunov function when $n
\geq 8$. All arcs of the form $(i,i)$ are also assumed to be
present, but are not shown. The nodes perform an iteration of the
symmetric, equal-neighbor model according to this graph. } \ec
  \end{figure} \ec

\noindent {\bf Proof of Theorem \ref{th:main}:} In view of Lemmas
\ref{sumpermutations} and \ref{qvariance}, if $\alexo{\Q}$ is
nonempty, then $V \in \alexo{\Q}$. Thus,  it suffices to show that
$V \notin \alexo{\Q}$. Suppose that $n \geq 8$, and consider the
vector $x$ with components $x_1=5$, $x_2=x_3=x_4=2$, $x_5=0$,
$x_6=x_7=-3$, $x_8=-5$, and $x_9 = \cdots = x_n = 0$. We then have
$V(x)=80$. Consider the outcome of one iteration of the symmetric,
equal-neighbor algorithm, if the graph has the form shown in Figure
\ref{f1}. After the iteration, we obtain the vector $y$ with
components  $y_1 = 11/5, y_2=y_3=y_4=7/2, y_5=0, y_6=y_7= -4, y_8 =
-11/4$, and $y_9=\cdots=y_n=0$. We have
\begin{eqnarray} V_n(y) & = & \sum_{i=1}^n (y_i - \bar{y})^2\nonumber \\
& \geq & \sum_{i=1}^8 (y_i - \bar{y})^2 \nonumber\\
& \geq & \sum_{i=1}^8 \Big(y_i - \frac{1}{8} \sum_{i=1}^8 y_i\Big)^2,
\label{eq:v8}
\end{eqnarray}
where we used that $\sum_{i=1}^k (y_i - z)^2$ is minimized
          when $z=(1/k) \sum_{i=1}^k y_i$. A
simple calculation shows that
the expression (\ref{eq:v8}) evaluates to $10246/127 \approx 80.68$,
which implies that $V(y)> V(x)$. Thus, if $n\geq8$, $V \notin
\alexo{\Q}$, and the set $\alexo{\Q}$ is empty. \rule{7pt}{7pt}

\section{Conditions for the Existence of a Quadratic Lyapunov Function}
Are there some additional conditions (e.g., restricting the matrices
$A$ to a set smaller than $\A$), under which a quadratic Lyapunov
function is guaranteed to exist? We start by showing that the answer
is positive for the case of a fixed matrix (that is, if the graph
$G(t)$ is the same for all $t$).

Let $A$ be a stochastic matrix, and suppose that there exists a
positive vector $\pi$ such that $\pi^T A = \pi^T$. Without loss of
generality, we can assume that $\pi^T \e = 1$. It is known that in
this case,
\begin{equation}\label{contr}
x^T A^T D A x \leq x^T D x,\qquad \forall\ x\in\Re^n,
\end{equation}
where $D$ is a diagonal matrix, whose $i$th diagonal entry is equal
to $\pi_i$ (cf.\ Lemma 6.4 in \cite{NDP}). However, $x^T D x$ cannot
be used as a Lyapunov function because $D\e \neq 0$ (cf.\ condition
(c) in Definition \ref{d:l}). To remedy this, we argue as in
\cite{BHOT05} and define the matrix $H=I-\e \pi^T$, and consider the
choice $M=H^T D H$. Note that $M$ has rank $n-1$.

We have $H\e= (I-\e \pi^T) \e =\e -\e (\pi^T \e) =\e-\e=0$, as
desired. Furthermore,
$$HA=A -\e \pi^T A =A-\e \pi^T = A - A \e \pi^T = AH.$$ Using this property, we obtain, for every $x\in\Re^n$,
$$x^T A^T M A x = x^T A^T H^T D H A x
=(x^T H^T) A^T D A (H x) \leq x^T H^T D H x = x^T M x,$$ where the
inequality was obtained from (\ref{contr}), applied to $Hx$. This
shows that $H^T D H$ has the desired properties (a)-(c) of
Definition \ref{d:l}, provided that $\A$ is replaced with $\{A\}$.

We have just shown that every stochastic matrix (with a positive
left eigenvector associated to the eigenvalue 1) is guaranteed to
admit a quadratic Lyapunov function, in the sense of Definition
\ref{d:l}. \alexo{Moreover, our discussion implies that there are
some classes of stochastic matrices $\B$ for which the same Lyapunov
function can be used for all matrices in the class.}
\begin{itemize}
\item[(a)]
Let $\B$ be a set of stochastic matrices. Suppose that there exists
a positive vector $\pi$ such that $\pi^T \e =1$, and $\pi^T A
=\alexo{\pi^T}$ for all $A\in\B$. Then, there exists a nonzero,
symmetric, nonnegative definite matrix $M$, of rank $n-1$, such that
$M\e=0$, and $x^T A^T M Ax \leq x^T M x$, for all $x$ and $A\in\B$.
\item[(b)]
The condition in (a) above is automatically true if all the matrices
in $\B$ are doubly stochastic (recall that a matrix $A$ is doubly
stochastic if both $A$ and $A^T$ are stochastic); in that case, we
can take $\pi=\e$.
\item[(c)] The condition in (a) above holds if and only if there exists a positive vector $\pi$, such that $\pi^T A x = \pi^T x$, for all $A\in\B$ and all $x$. In words, there must be a positive linear functional of the agents' opinions which is conserved at each iteration. For the case of doubly stochastic matrices, this linear functional is any positive multiple of the sum $\sum_{i=1}^n x_i$ of the agents' values (e.g., the average of
these values).
\end{itemize}

\chapter{Averaging with deadlock avoidance}

In this appendix, we describe a solution of a certain averaging
problem communicated to the author and J.N. Tsitsiklis by A.S. Morse. This is the
problem of averaging with ``deadlock avoidance,'' namely averaging with 
the requirement that each node participate in at most one pairwise average at every time step.

More concretely, we will describe a deterministic distributed algorithm for nodes $1, \ldots,n$,
with starting values $x_1(0), \ldots, x_n(0)$, to compute the average $(1/n) \sum_{i=1}^n x_i(0)$.
Our algorithm selects pairs of nodes to average at each time step. We will
allow the communication graphs $G(t) = (\{1,\ldots,n\}, E(t))$ to change with time, but we
will assume that each graph G(t) is undirected.  Moreover, our algorithm will involve three
rounds of message exchanges between nodes and their neighbors; we assume
these messages can be sent before the graph $G(t)$ changes.
We do not assume nodes have unique identifers; however, we do assume
each node has a ``port labeling'' which allows it to tell neighbors apart, that is,
if node $i$ has $d(i)$ neighbors in the graph $G(t)$, it assigns them labels $1,\ldots,d(i)$
in some arbitrary way.

\section{Algorithm description}

We will partition every time step into several synchronous
``periods.'' At the end of each period, nodes send each other
messages, which all of them synchronously read at the beginning of
the following period. During the first period, nodes exchange their
values with their neighbors; after that, nodes will send messages
from the binary alphabet $\{+,-\}$. Eventually, as a consequence of
these messages, nodes will "pair up" and each node will average its
value with that of a selected neighbor.

Intuitively, nodes will send ``$+$''s to neighbors they would like to
average with, and ``$-$''s to decline averaging requests. Their goal
will be to average with the neighbor whose value is the farthest
from their own.

The following is a description of the actions undertaken by node $i$ at
time $t$. At this point the nodes have values $x_1(t), \ldots, x_n(t)$.  Node $i$ will
keep track of a variable $N(i)$, representing the node it would like to average
with; and a variable $g_i)$, which will always be equal to $|x_i(t)-x_{N(i)}(t)|$.

We find it convenient to label the nodes $1,\ldots,n$ to make
statements like ``node $i$ sets $N(i) = j$." Of course, due to the
absence of unique identifiers, node $i$ will set its local variable
$N(i)$ to its port number for its neighbor $j$, and the above
statement should be understood as such.

\noindent {\bf The algorithm (at node $i$, time $t$):}

\begin{enumerate} \item Initialize $N(i) = i$ and $g_i = 0$. Node $i$ broadcasts its
value to all its neighbors.

\item Node $i$ reads the incoming messages. Next, node $i$ sends a ``$+$'' to
a neighbor $k$ with the smallest value among neighbors with value
smaller than $x_i(t)$. Ties can be broken
arbitrarily\footnote{...but deterministically to stay within our
assumption of deterministic algorithms. For example, node $i$ can
break ties in favor of lower port number.}. It sets $N(i) = k$ and
$g_i = |x_i(t)-x_k(t)|$.

If no such $k$ exists, node $i$ does nothing during this period.

\bibitem Node $i$ reads incoming messages. If node $i$ received at least
one ``$+$'' at the previous step, it will compute the gap
$|x_i(t)-x_j(t)|$ for every neighbor $j$ in the set $J$ of neighbors
that have sent it a ``$+$.''
\begin{itemize} \item If the node $m \in J$ with the largest\footnote{Ties can be broken arbitrarily, but
deterministically as before.} gap has $|x_m(t)-x_i(t)|
> g_i$, $i$ will update $N(i) = m$, $g_i = |x_m(t)-x_i(t)$; next, $i$ will
send a ``$+$'' to $m$ and ``$-$'' to all the other nodes in $J$.
Moreover, if $i$ sent a ``$+$'' to a node $k$ in step 2, it now sends
$k$ a ``$-$.''
\item On the other hand, if $|x_m(t)-x_i(t)| \leq  g_i$, node $i$ will send a
"$-$" to everyone in J.  \end{itemize} \item Node $i$ reads incoming
messages. If $i$ receives a ``$-$'' from node $N(i)$, it sets $N(i) =
i$, $g_i = 0$. \item Finally, $i$ sets \[ x_i(t+1) \leftarrow
\frac{x_i(t) + x_{N(i)}(t)}{2}. \] Observe that if $N(i) = i$ at the
execution of step 5, then the value of node $i$ is left unchanged.
\end{enumerate}

\section{Sketch of proof}

It is not hard to see that this algorithm results in convergence to
the average which is geometric with rate $1-c/n^3$, for some
constant $c$. To put it another way, if initial values $x_i(t)$  are
in $[0,1]$ then everyone is within $\epsilon$ of the average in
$O(n^3 \log(n \epsilon))$ time. An informal outline of the proof of
this statement follows.

\bigskip

\noindent {\bf Sketch of proof.}

\bigskip

First, we informally describe the main idea. First, one needs to argue that the
algorithm we just described 
results a collection of pairwise averages, i.e. if node $i$ sets $x_i(t+1) = (x_i(t)+x_j(t))/2$ 
in the final step, then node $j$ sets $x_j(t+1) = (x_j(t)+x_i(t))/2$. Moreover, 
we will argue that at least one of these pairwise averages occurs across an edge with a
``large'' gap; more precisely, we will argue that among the edges $(i,j)$ maximizing $|x_i(t)-x_j(t)|$, at least one
pair $i,j$ match up. This fact implies, after some simple analysis, that the ``sample variance'' defined as
$\sum_{i=1}^n (x_i(t) - (1/n) \sum_j x_j(t))^2$ shrinks by at least $1-1/2n^3$ from time $t$ to
$t+1$. The desired convergence result then follows. 

\bigskip

\begin{enumerate} \item At the beginning of Step 5, if $N(i) = j$ then $N(j) = i$.

In words, the last step always results in the execution of a number
of pairwise averages for disjoint pairs. This can be proven with a
case-by-case analysis.

\item Moreover, one of these averages has to happen on an edge with
the largest ``gap'' $\max_{(i,j) \in E(t)} |x_i(t)-x_j(t)|$, where
$E(t)$ is the set of edges in the communication graph at time $t$.

Indeed, let $J$ be the set of nodes incident on one of these
maximizing edges, and let $J'$ be the set of nodes in $J$ with the
smallest value. Then, at least one node $j' \in J'$ will receive an
offer at Step 2 that comes along a maximizing edge. Any offer $j'$
makes at Step 2 will be along an edge with strictly smaller gap.
Consequently, at step 3, $j'$ will send a ``$+$'' to a sender of an
offer along the maximizing edge --- let us call this sender $i'$ --- and a
``$-$'' to everyone else that sent $j'$ offers, as well as to any node
which made $j'$ an offer in Step $2$.

Since $i'$ and $j'$ are connected by a maximizing edge, there is no
way $i'$ receives a ``$+$''  in Step 2 associated with a gap larger
than $|x_{i'}(t)-x_{j'}(t)|$, so that $i'$ does not send $j'$ a
``$-$'' at step 3. Finally, $i'$ and $j'$ average at Step 5.

\item Without loss of generality, we will assume that
$\sum_i x_i(0)=0$. Since every averaging operation preserves the
sum, it follows that $\sum_i x_i(t)=0$. Let $V(t)=\sum_i x_i^2(t)$.
Its easy to see that $V(t)$ is nonincreasing, and moreover, an
averaging operation of nodes $a$ and $b$ at time $t$ reduces $V$ 
by $(1/2)(x_a-x_b)^2$.

\item Let $U(t) = \max_{i} |x_i(t)|$. Clearly, $V(t) \leq n U^2$.
However, $\sum_i x_i(t) = 0$ implies that at least one $x_i(t)$ is
negative, so that
\[ \max_{(i,j) \in E} |x_i(t)-x_j(t)| \geq \frac{U}{n},\] which implies that
\[ V(t+1) \leq V(t) - \frac{U^2}{2n^2},\] or
\[ V(t+1) \leq V(t) - \frac{1}{2n^3} V(t). \] It follows that after $O(n^3)$ steps, $V(t)$ shrinks by a
constant factor. Thus if the initial values $x_i(t)$  are in $[0,1]$
then everyone is within $\epsilon$ of the average in $O(n^3 log(n
\epsilon))$ time.
\end{enumerate}

\chapter{List of assumptions}

\newtheorem{assumpt}{Assumption}

\vspace{5mm}\noindent{\bf Assumption 2.1} (connectivity) \nonumber The graph \[ \cup_{s \geq t} G(s) \] is connected for every $t$. 

\bigskip

\vspace{5mm}\noindent{\bf Assumption 3.1} (non-vanishing weights) The matrix $A(t)$ is nonnegative, stochastic, and has
positive diagonal. Moreover, there exists some $\eta>0$ such that if
$a_{ij}>0$ then $a_{ij}>\eta$.

\bigskip

\vspace{5mm}\noindent{\bf Assumption 3.2} ($B$-connectivity) There exists an integer $B>0$
such that the directed graph \[ (N, E(kB) \cup E((k+1)B) \cup \cdots
\cup E((k+1)B-1)) \] is strongly connected for all integer $k \geq
0$.

\bigskip

\vspace{5mm}\noindent{\bf Assumption 3.3} (bounded delays)

\noindent (a) If $a_{ij}(t)=0$, then
$\tau^i_j(t)=t$.\\
(b) $\lim_{t\to\infty} \tau^i_j(t)=\infty$, for all $i$, $j$.\\
(c) $\tau^i_i(t)=t$, for all $i$, $t$.\\
(d) There exists some $B>0$ such that $t-B+1\leq \tau^i_j(t)\leq t$,
for all $i$, $j$, $t$.

\bigskip

\vspace{5mm}\noindent{\bf Assumption 3.4}  (double stochasticity)
matrix $A(t)$ is column-stochastic for all $t$, i.e., \[ \sum_{i=1}^n
a_{ij}(t) = 1,\] for al $j$ and $t$.

\bigskip

\vspace{5mm}\noindent{\bf Assumption 3.5}  (Bounded round-trip times)
There exists some $B>0$ such that whenever $(i,j)\in E(t)$, then
there exists some $\tau$ that satisfies $|t-\tau|<B$ and $(j,i)\in
E(\tau)$.

\bigskip

\vspace{5mm}\noindent{\bf Assumption 4.1} (connectivity relaxation)  Given an integer $t\ge 0$,
suppose that the components of $x(tB)$ have been reordered so that
they are in nonincreasing order. We assume that for every
$d\in\{1,\ldots,n-1\}$, we either have $x_d(tB)=x_{d+1}(tB)$, or
there exist some time $t\in\{tB,\ldots,(t+1)B-1\}$ and some
$i\in\{1,\ldots,d\}$, $j\in\{d+1,\ldots,n\}$ such that $(i,j)$ or
$(j,i)$ belongs to $\E(A(t))$.

\bigskip

\vspace{5mm}\noindent{\bf Assumption 7.1} 
For all $i$, $x_i(0)$ is a multiple of
$1/Q$.



\end{singlespace}
\end{document}